\documentclass[11pt, reqno]{amsart}

\usepackage{amsmath,amssymb}
\usepackage{iftex}
\usepackage{tabularray}
\usepackage{subcaption}
\usepackage{amsthm,amsmath,amssymb,amsfonts}
\usepackage{natbib}
\usepackage{txfonts}
\usepackage{booktabs}
\usepackage{graphicx}
\usepackage{graphics}
\usepackage{bm}
\usepackage[colorlinks=true, allcolors=blue]{hyperref}
\usepackage{url}
\usepackage{enumitem}

\hypersetup{colorlinks=true,linkcolor=blue,citecolor=blue,urlcolor=blue}

\newtheorem{theorem}{Theorem}[section]
\newtheorem{proposition}[theorem]{Proposition}
\newtheorem{lemma}[theorem]{Lemma}
\newtheorem{corollary}[theorem]{Corollary}
\theoremstyle{definition}

\theoremstyle{remark}
\newtheorem{remark}[theorem]{Remark}

\newcommand{\R}{\mathbb{R}}
\newcommand{\E}{\mathbb{E}}
\newcommand{\Pbb}{\mathbb{P}}
\newcommand{\Var}{\mathrm{Var}}
\newcommand{\1}{\mathbf{1}}
\newcommand{\ph}{\varphi}
\newcommand{\Ph}{\Phi}
\newcommand{\barPh}{\overline{\Phi}}

\newcommand{\dto}{\xrightarrow[]{d}}

\title[Fixed-level calibration of the Cauchy combination test]{Fixed-level calibration of the Cauchy combination test}
\author{Hirofumi Ota}
\address{Komaba Institute for Science, Graduate School of Arts and Sciences, The University of Tokyo}
\email{hirofumi-ota@g.ecc.u-tokyo.ac.jp}
\date{First version: Mar. 26, 2026. This version: \today}
\keywords{$p$-value combination; analytic correction; dependence; calibration; Gaussian copula.}

\begin{document}

\begin{abstract}
The Cauchy combination test (CCT) is widely used because it yields a closed-form combined $p$-value and is known to be asymptotically valid as the nominal level $\alpha\downarrow0$ under broad dependence structures. We study a different asymptotic question: whether the usual Cauchy cutoff remains accurate at an ordinary fixed level when the number $K$ of combined $p$-values grows under dependence. Under a canonical one-factor equicorrelated Gaussian copula model, we show that the raw CCT is generally not asymptotically exact at fixed $\alpha$. With fixed positive correlation, the statistic converges to a random latent-factor limit, so there is no universal fixed-level reference law. When the common correlation $\rho_K$ weakens with $K$, fixed-level behaviour is governed by the boundary-layer scale $s_K=\sqrt{\rho_K}(\log K)^{3/2}$, and the raw CCT is asymptotically exact if and only if $\rho_K(\log K)^3\to0$. Because the size distortion arises entirely from the reference law and not from the statistic, it can be corrected without modifying the test statistic itself. We propose the boundary-layer calibrated CCT (BL-CCT), which replaces the standard Cauchy reference by a one-parameter Gaussian-smoothed Cauchy family. Unlike recent variants that modify the test statistic, BL-CCT leaves the statistic unchanged and corrects only the reference law. BL-CCT is asymptotically exact under the weaker condition $\rho_K\log K\to0$ and provides a useful finite-$K$ approximation on bounded boundary layers. We also conduct several power analyses: although BL-CCT only raises the cutoff, it incurs no first-order power loss relative to the raw CCT on the exactness scale, under local dense, sparse, and dense Gaussian alternatives. Numerical experiments support the calibration theory.
\end{abstract}

\maketitle

\section{Introduction}

\subsection{Overview}
Combining many $p$-values into a single global test is a classical problem in mathematical statistics. Among analytic combination rules, the Cauchy combination test (CCT) of Liu and Xie \cite{LiuXie2020} is attractive because it yields a closed-form combined $p$-value and remains numerically stable at very small significance levels. Its standard asymptotic justification, however, is a vanishing-level result: under broad conditions on the null dependence structure, the CCT statistic is asymptotically standard Cauchy as the nominal level $\alpha\downarrow0$. In practice the test is routinely applied at ordinary levels such as $0.05$ or $0.01$ with a large number $K$ of component tests; see, for example, \cite{LiSTAAR2020,ZhouEtAl2022SAIGEGenePlus}. This creates a gap between theory and practice: does the usual Cauchy cutoff remain asymptotically exact when $\alpha$ is fixed and $K\to\infty$ under dependence? Recent numerical work indicates that the answer can be negative \cite{LongLiZhangLi2023,OuyangEtAl2024,GuiJiangWang2025,AlsulamiLiverani2025}.

We study this question under the one-factor equicorrelated Gaussian copula model, in which all dependence among the $p$-values is generated by a single shared Gaussian factor. It is analytically tractable: conditioning on the factor renders the component tests independent. Although the CCT is known for its asymptotic validity under broad dependence, that validity is a vanishing-level guarantee ($\alpha\downarrow0$); restricting to this simplest nontrivial form of dependence is deliberate, since fixed-level exactness already fails here, and its tractability is what makes the mechanism, the sharp transition scale, and the calibration explicit. The key finding is that, although the conditional Cauchy limit familiar from vanishing-level theory still holds, a deterministic centring term induced by the latent factor grows with $K$ and $\rho_K$ and displaces the null distribution of the statistic. This centring is the source of the fixed-level size distortion. It is of lower order than the statistic itself and therefore vanishes under the rescaling implicit in $\alpha\downarrow0$, which is why vanishing-level analysis does not detect it.

\subsection{Main contributions}
Our first result concerns the raw CCT developed by \cite{LiuXie2020}. When the common correlation $\rho$ is fixed and positive, the CCT statistic converges to a random latent-factor limit, so there is no universal fixed-level reference law. When instead $\rho_K\downarrow0$ as $K\to\infty$, the transition between exact and inexact behaviour is governed by the boundary-layer scale
\[
  s_K=\sqrt{\rho_K}(\log K)^{3/2}.
\]
When $s_K$ is bounded, the conditional $1$-stable fluctuation reduces to a standard Cauchy law, but a deterministic centring term of order $s_K$ persists in the null distribution. The raw CCT is therefore asymptotically exact at fixed $\alpha$ \textit{if and only if} $\rho_K(\log K)^3\to0$.

A key consequence is that the size distortion originates from a mismatch between the standard Cauchy reference law and the actual null distribution of $T_K$, not from the heavy-tailed fluctuation of the statistic. The statistic $T_K$ itself does not need to be modified. We exploit this by introducing the boundary-layer calibrated CCT (BL-CCT), which replaces only the standard Cauchy cutoff by a one-parameter Gaussian-smoothed Cauchy family indexed by $s_K$. Under this correction, exactness holds under the weaker condition $\rho_K\log K\to0$, and on bounded boundary layers the reference family already provides a useful finite-$K$ approximation. On the broader scale $c_K=\rho_K\log K$, the remaining size distortion is explicit and strictly conservative at conventional levels such as $0.05$ and $0.1$.

A final contribution asks what this calibration costs in power. Because BL-CCT only raises the cutoff, it rejects less readily than the raw CCT and could in principle detect fewer signals. We show that it essentially does not. Comparing the two tests under three standard classes of alternatives, local dense, sparse, and dense Gaussian, we find that, whenever the raw CCT is exact, BL-CCT attains the same first-order power; the size correction is therefore free to first order.

\subsection{Related work}

Existing asymptotic theory for the CCT and its extensions~\cite{LiuXie2020,LongLiZhangLi2023,FangEtAl2023,GuiJiangWang2025} operates in the regime $\alpha\downarrow0$ with $K$ fixed or growing. The present paper instead fixes $\alpha$ at a conventional level and lets $K\to\infty$, which reveals a latent-factor drift mechanism that vanishing-level analysis does not detect. Concurrently and independently, \cite{GuiMaoWangWang2025MRV} and \cite{chakraborty2025universal} used multivariate regular variation to show that, at vanishing level, heavy-tailed combination tests beat Bonferroni only under tail-dependence and match it under tail-independence. Our fixed-level analysis is complementary: under the tail-independent Gaussian copula the issue is not power against Bonferroni but the first-order-negligible cost of the size calibration, and, consistent with their verdict, the CCT's sparse detection boundary coincides with the Bonferroni/max boundary.

Several recent studies address the finite-$K$ size distortion of the CCT by modifying the test statistic: a positive adjustment to the combination weights~\cite{OuyangEtAl2024}, a right-tail-weighted combination encompassing Cauchy-type statistics~\cite{LiuMengPillai2025}, a truncated variant~\cite{ChenXuGao2025TCCT}, and a stepwise procedure~\cite{Bouamara2025StepC}. Calibration-based viewpoints are also well established in the broader $p$-value-combination literature, including likelihood-ratio guidance for choosing a combiner~\cite{HeardRubinDelanchy2018} and admissible merging functions under arbitrary dependence~\cite{VovkWangWang2022}. BL-CCT belongs to this calibration family: it keeps the statistic unchanged and corrects only the reference law. The contribution is not the calibration paradigm itself, but the explicit one-parameter analytic reference family and a sharp fixed-level transition scale under the one-factor benchmark: $\rho_K(\log K)^3\to0$ for the raw CCT and $\rho_K\log K\to0$ for BL-CCT (necessary and sufficient at conventional levels).

Beyond the genomic applications mentioned above~\cite{LiSTAAR2020,ZhouEtAl2022SAIGEGenePlus}, the CCT has been adopted for microbiome studies~\cite{YuEtAl2025} and high-dimensional regression~\cite{ZhaoSongMa2026}. For general background on $p$-value combination, see~\cite{xie2011confidence,Wilson2019HMP,VovkWangWang2022,ChenLiuTanWang2023}. The connection between Cauchy averages and $1$-stable laws, identified by Pillai and Meng~\cite{PillaiMeng2016}, underpins the stable-limit analysis developed throughout this paper.

\subsection{Notation}
We write $\ph$ and $\Ph$ for the standard normal density and distribution function, $\barPh=1-\Phi$ for the upper tail, and denote convergence in probability and in distribution by $\xrightarrow{p}$ and $\xrightarrow{d}$. We write $\mathsf{C}(a,b)$ for the Cauchy distribution with location $a$ and scale $b>0$, and let $\mathsf{C}\sim\mathsf{C}(0,1)$ denote a standard Cauchy variable. Unless stated otherwise, fixed-level exactness statements concern $\alpha\in(0,1/2)$; the calibrated reference family itself is defined for all $\alpha\in(0,1)$.

\subsection{Organization}

The remainder of the paper is organized as follows. Section~\ref{sec:setup} introduces the Gaussian copula model and formulates the fixed-level problem. Section~\ref{sec:raw cct} analyses the raw CCT and establishes the sharp exactness threshold. Section~\ref{sec:calibration} develops BL-CCT and its broader-scale behaviour. Section~\ref{sec:power} analyses the power of the calibrated test under local dense, sparse, and dense Gaussian alternatives. Section~\ref{sec:numerics} presents numerical illustrations, and Section~\ref{sec:discussion} concludes with discussion. Proofs and auxiliary lemmas are collected in the appendices.

\section{Setup and the standard Gaussian benchmark}\label{sec:setup}

This section defines the one-factor equicorrelated Gaussian copula benchmark used throughout the paper and formulates the fixed-level calibration problem.

For $\rho\in[0,1)$, let
\[
  Z=(Z_1,\dots,Z_K)^\top \sim N(0,\Sigma_\rho),
\]
where $\Sigma_\rho$ is the equicorrelated covariance matrix given by
\[
  (\Sigma_\rho)_{ii}=1,
  \qquad
  (\Sigma_\rho)_{ij}=\rho
  \quad (i\neq j).
\]
Since $\Sigma_\rho=(1-\rho)I_K+\rho \mathbf 1\mathbf 1^\top$, this model admits the one-factor representation
\begin{equation}\label{eq:main-onefactor}
  Z_i=\sqrt{\rho}V+\sqrt{1-\rho}\varepsilon_i,
  \qquad i=1,\dots,K,
\end{equation}
where $V\sim N(0,1)$, the $\varepsilon_i$ are i.i.d.\ $N(0,1)$, and $V$ is independent of $(\varepsilon_1,\dots,\varepsilon_K)$. Thus the null dependence is generated by a single latent Gaussian factor. Conditional on $V$, the coordinates are independent and satisfy
\begin{equation}\label{eq:main-condlaw}
  Z_i \mid V \sim N(\sqrt{\rho}V,1-\rho),
  \qquad i=1,\dots,K.
\end{equation}

We work with one-sided Gaussian $p$-values
\[
  p_i=\barPh(Z_i)=1-\Phi(Z_i),
\]
and with the standard Cauchy quantile transform
\begin{equation}\label{eq:main-score}
  f(z)=\tan\{\pi(\Phi(z)-1/2)\}.
\end{equation}
The equal-weight Cauchy combination statistic is given by the average
\begin{equation}\label{eq:main-stat}
  T_K=\frac1K\sum_{i=1}^K f(Z_i).
\end{equation}
At level $\alpha\in(0,1/2)$, the usual Cauchy combination test rejects when
\begin{equation}\label{eq:main-reject}
  T_K>t_\alpha,
  \qquad
  t_\alpha:=\cot(\pi\alpha)
\end{equation}
holds.

Conditioning on $V$ renders the coordinates independent, so the rejection probability can be analysed by first conditioning on $V$ and then integrating over the latent factor. The independent case $\rho=0$ serves as the reference point, and the transition of interest arises when $\rho>0$, in triangular regimes with $\rho=\rho_K\downarrow0$ as $K\to\infty$. We use one-sided Gaussian $p$-values because they retain the effect of the common Gaussian factor most directly.

\begin{remark}[One-sided versus two-sided $p$-values]
The main analysis is for one-sided Gaussian $p$-values. If instead $p_i=2\barPh(|Z_i|)$, the transformation is even in $Z_i$, so the conditional mean of the statistic is an even function of the latent factor and the linear drift analysed below cancels exactly. The leading centring should then be quadratic in the factor and appear on a different scale. A sharp two-sided analogue therefore requires a separate expansion and is not claimed here.
\end{remark}

\section{Raw CCT at fixed levels}\label{sec:raw cct}

This section analyses the fixed-level behaviour of the raw CCT. We first consider the case of fixed positive correlation and then the triangular regime $\rho=\rho_K\downarrow0$.

\subsection{Fixed positive correlation: a latent-factor limit}

When $\rho>0$ is fixed, the loading $\sqrt{\rho}$ on the latent factor $V$ remains constant as $K\to\infty$. Conditional on $V$, the Cauchy-transformed scores are i.i.d., so by the law of large numbers $T_K$ converges to a limit that depends on $V$.

For $\rho>0$, define the conditional mean
\begin{equation}\label{eq:main-mu}
  \mu_\rho(v):=\E\left[f(\sqrt{\rho} v+\sqrt{1-\rho} \varepsilon)\right],
  \qquad \varepsilon\sim N(0,1).
\end{equation}

\begin{theorem}[Random latent-factor limit at fixed positive correlation]\label{thm:randomlimit}
Fix $\rho\in(0,1)$. Then we have
\begin{equation}\label{eq:main-fixed-limit}
  T_K \xrightarrow[K\to\infty]{a.s.} \mu_\rho(V).
\end{equation}
Moreover, we have
\begin{equation}\label{eq:main-fixed-size}
  \Pbb(T_K>t_\alpha)\to \Pbb\left(\mu_\rho(V)>t_\alpha\right).
\end{equation}
\end{theorem}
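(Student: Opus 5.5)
The plan is to condition on $V$ and apply the strong law of large numbers. The one point needing care is integrability. The score $f$ has Cauchy-type tails: $f(z)\sim 1/(\pi\barPh(z))$ as $z\to\infty$, which grows like $z e^{z^2/2}$. So I first verify that $\E|f(\sqrt{\rho}v+\sqrt{1-\rho}\varepsilon)|<\infty$ for every fixed $v$ when $\rho\in(0,1)$. The Gaussian density of $Z_i$ given $V=v$ decays like $\exp\{-(z-\sqrt\rho v)^2/(2(1-\rho))\}$. Since $1/(1-\rho)>1$, this beats the growth $e^{z^2/2}$: the integrand behaves like $z\exp\{z^2/2-(z-\sqrt\rho v)^2/(2(1-\rho))\}$, and the exponent equals $-\rho z^2/(2(1-\rho))+O(|z|)$, which is integrable. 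The same bound holds at $-\infty$ by symmetry of $f$, since $f(-z)=-f(z)$. Hence $\mu_\rho(v)$ is finite for all $v$, and it is continuous in $v$ by dominated convergence, using a dominating function that is uniform over $v$ in compacts.

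With integrability in hand, fix $v$. Under the conditional law \eqref{eq:main-condlaw}, the variables $f(Z_i)$ are i.i.d.\ with finite mean $\mu_\rho(v)$. Kolmogorov's strong law gives $T_K\to\mu_\rho(v)$ almost surely under $\Pbb(\cdot\mid V=v)$. Formally, I take the event $A=\{T_K\to\mu_\rho(V)\}$ and write $\Pbb(A)=\int \Pbb(A\mid V=v)\,\ph(v)\,dv$. By independence of $V$ and $(\varepsilon_i)$ and Fubini, this integral equals $\int 1\cdot\ph(v)\,dv=1$. This proves \eqref{eq:main-fixed-limit}.

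For \eqref{eq:main-fixed-size}, almost sure convergence implies convergence in distribution of $T_K$ to $\mu_\rho(V)$. By the portmanteau theorem, $\Pbb(T_K>t_\alpha)\to\Pbb(\mu_\rho(V)>t_\alpha)$ provided $\Pbb(\mu_\rho(V)=t_\alpha)=0$. Alternatively, one can argue directly that $\1\{T_K>t_\alpha\}\to\1\{\mu_\rho(V)>t_\alpha\}$ almost surely off the event $\{\mu_\rho(V)=t_\alpha\}$, and then apply bounded convergence.

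The main obstacle is therefore showing that $\{\mu_\rho(V)=t_\alpha\}$ is a null set. I would do this by proving that $\mu_\rho$ is strictly increasing. Since $f$ is strictly increasing, for $v<v'$ the integrand satisfies $f(\sqrt\rho v'+\sqrt{1-\rho}\varepsilon)>f(\sqrt\rho v+\sqrt{1-\rho}\varepsilon)$ pointwise, and taking expectations gives $\mu_\rho(v)<\mu_\rho(v')$. A strictly increasing function takes any given value at most once. So $\{v:\mu_\rho(v)=t_\alpha\}$ has at most one point, which has Gaussian probability zero. This completes the argument, and it holds for every $\alpha\in(0,1)$.
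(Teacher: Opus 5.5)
Your proposal is correct and follows the paper's proof: the paper likewise gets finiteness of $\mu_\rho(v)$ from the Gaussian conditional density beating the $e^{z^2/2}$ growth of $f$, applies the conditional strong law and integrates over $V$, and concludes by portmanteau using strict monotonicity of $\mu_\rho$ to rule out an atom at $t_\alpha$. Your remark that strict monotonicity alone already makes the law of $\mu_\rho(V)$ atomless is accurate, so the continuity of $\mu_\rho$ that the paper also cites is not needed for this step.
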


Because $\rho>0$ is fixed, the law of large numbers yields a conditional limit $\mu_\rho(v)$ that varies with the realization of $V$. The unconditional limit $\mu_\rho(V)$ is therefore a nondegenerate random variable, and the fixed-level rejection probability is determined by the distribution of $\mu_\rho(V)$ rather than by a universal Cauchy reference.

To connect this regime with independence, we next examine $\mu_\rho(v)$ as $\rho\downarrow0$. The two iterated limits $\lim_{\rho\downarrow0}\lim_{K}$ and $\lim_{K}\lim_{\rho\downarrow0}$ differ, so the triangular regime $\rho_K\downarrow0$ with $K\to\infty$ must be analysed directly.

\begin{proposition}\label{prop:noncommuting-main}
For each fixed $v\in\R$, we have
\begin{equation}\label{eq:main-mu-smallrho}
  \rho \mu_\rho(v)\to \sqrt{2/\pi} v e^{v^2/2}
  \qquad (\rho\downarrow0).
\end{equation}
In particular, $\Pbb\left(\mu_\rho(V)>t_\alpha\right)\to \frac12$, so
\begin{equation}\label{eq:main-double-limit}
  \lim_{\rho\downarrow0}\lim_{K\to\infty}\Pbb(T_K>t_\alpha)=\frac12,
  \qquad
  \lim_{K\to\infty}\lim_{\rho\downarrow0}\Pbb(T_K>t_\alpha)=\alpha.
\end{equation}
\end{proposition}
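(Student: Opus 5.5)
The plan is to compute $\mu_\rho(v)$ by a direct asymptotic analysis of a Gaussian integral, exploiting the oddness of $f$ and its explicit tail behaviour. First I record the facts about $f$ that I need. Since $\Phi(-z)=1-\Phi(z)$, $f$ is odd. As $z\to\infty$ we have $f(z)=\cot(\pi\barPh(z))\sim 1/(\pi\barPh(z))$, and Mills' ratio gives $\barPh(z)\sim\ph(z)/z$. Hence $f(z)\ph(z)=z/\pi+r(z)$, where $r$ is bounded on $[0,\infty)$ and $r(z)=O(1/z)$ at infinity; this follows from the Mills-ratio expansion together with $\cot x=1/x+O(x)$.

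Next I write $a=\sqrt{\rho}\,v$ and $b^2=1-\rho$, and let $\ph_b$ denote the $N(0,b^2)$ density. Using oddness, $\mu_\rho(v)=\int_0^\infty f(z)\{\ph_b(z-a)-\ph_b(z+a)\}\,dz$. Expanding the difference gives $2e^{-a^2/(2b^2)}\ph_b(z)\sinh(az/b^2)$. Writing $\ph_b(z)=\ph(z)\,b^{-1}e^{-\rho z^2/(2b^2)}$, I obtain
\[
\mu_\rho(v)=\frac{2e^{-a^2/(2b^2)}}{b}\int_0^\infty\Bigl(\frac z\pi+r(z)\Bigr)e^{-\rho z^2/(2b^2)}\sinh(az/b^2)\,dz .
\]
For the main term I substitute $z=u/\sqrt\rho$, under which $az=vu$. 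The main term then equals $\rho^{-1}\cdot\frac{2}{\pi b}e^{-a^2/(2b^2)}\int_0^\infty u\,e^{-u^2/(2b^2)}\sinh(vu/b^2)\,du$. By dominated convergence (the integrand is dominated by $u e^{-u^2/4+|v|u}$ for $\rho\le1/2$), $\rho$ times this tends to $\frac1\pi\int_{\R}u e^{-u^2/2}e^{vu}\,du=\frac{\sqrt{2\pi}}{\pi}v e^{v^2/2}=\sqrt{2/\pi}\,v e^{v^2/2}$. For the remainder I bound $|r|\le C$ and apply the same substitution, which gives a contribution of order $\rho^{-1/2}$. Multiplied by $\rho$, this is $O(\sqrt\rho)\to0$, which proves \eqref{eq:main-mu-smallrho}. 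The step needing most care is this uniform control of the remainder $r$ and the domination. It is routine once $f\ph$ is shown to be bounded on compacts and $z/\pi+O(1/z)$ at infinity.

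For the probability statement, set $g(v)=\sqrt{2/\pi}\,v e^{v^2/2}$. Then $\Pbb(\mu_\rho(V)>t_\alpha)=\Pbb(\rho\mu_\rho(V)>\rho t_\alpha)$. Pointwise, $\rho\mu_\rho(V)-\rho t_\alpha\to g(V)$. Since $g(V)$ has no atom at $0$, almost-sure convergence implies convergence of the indicator, and bounded convergence gives the limit $\Pbb(g(V)>0)=\Pbb(V>0)=1/2$. Combined with \eqref{eq:main-fixed-size} of Theorem~\ref{thm:randomlimit}, this yields the first iterated limit in \eqref{eq:main-double-limit}.

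For the other order, fix $K$. At $\rho=0$ the $f(Z_i)$ are i.i.d.\ standard Cauchy, so by stability $T_K\sim\mathsf C(0,1)$ exactly and $\Pbb(T_K>t_\alpha)=\alpha$. Using the coupling \eqref{eq:main-onefactor} with fixed $(V,\varepsilon)$, $T_K$ is continuous in $\rho$ almost surely. The limit law at $\rho=0$ is continuous, so $\Pbb(T_K=t_\alpha)=0$ there, and hence $\Pbb(T_K>t_\alpha)\to\alpha$ as $\rho\downarrow0$ for every $K$. This gives the second limit.
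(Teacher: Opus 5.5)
Your proof is correct, but your route to \eqref{eq:main-mu-smallrho} differs from the paper's in how the remainder is controlled. The paper splits $f=g+r$ with $g(z)=\sqrt{2/\pi}\,z e^{z^2/2}$ and evaluates $\E[g(Z_\rho)]=\rho^{-1}\sqrt{2/\pi}\,v e^{v^2/2}$ exactly, by differentiating the Gaussian identity for $\E[e^{\lambda Z^2}]$. It then kills the remainder using only the first-order fact $r=o(g)$: an $\epsilon$--$R_\epsilon$ split, followed by Cauchy--Schwarz against the exact moments $\E[Z_\rho^2e^{Z_\rho^2/2}]$ and $\E[e^{Z_\rho^2/2}]$. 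This shows $\rho\,\E[r(Z_\rho)]\to0$ but gives no rate.

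You instead symmetrize via oddness into the $\sinh$ kernel, the same representation the paper later uses for $b_K(v)$ in Lemma~\ref{lem:bK-exact}. You then use the second-order fact that $f\ph-z/\pi$ is bounded on $[0,\infty)$, which follows from the two-sided Mills bounds $z\le\ph(z)/\barPh(z)\le z+1/z$ together with $\cot x-1/x=O(x)$. After rescaling $z=u/\sqrt\rho$, this yields the quantitative, locally uniform rate $\rho\mu_\rho(v)=\sqrt{2/\pi}\,v e^{v^2/2}+O(\sqrt\rho)$, which the paper's argument does not provide.

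Your main term $\int_0^\infty (z/\pi)\,\ph(z)^{-1}\{\ph_b(z-a)-\ph_b(z+a)\}\,dz$ is exactly the paper's $\E[g(Z_\rho)]$. It can therefore be computed in closed form as a Gaussian integral, giving $\rho^{-1}\sqrt{2/\pi}\,v e^{v^2/2}$ for every $\rho$, so your dominated-convergence step is not actually needed.

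For the probability limit, you pass to the limit in the indicators on $\{V\neq0\}$ by bounded convergence. This is shorter than the paper's argument, which goes through the inverse $v_\rho(t_\alpha)$ of the strictly increasing $\mu_\rho$ (Lemma~\ref{lem:mu-props}) and shows $v_\rho\to0$. That monotonicity still enters your proof indirectly, through the continuity of the law of $\mu_\rho(V)$ used in Theorem~\ref{thm:randomlimit}.

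Your fixed-$K$ limit is essentially the paper's argument, with an almost-sure coupling in place of the continuous-mapping step.
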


The non-commutativity \eqref{eq:main-double-limit} confirms that the fixed-$\rho$ regime and the independent case $\rho=0$ are separated by a nontrivial phase transition: taking $K\to\infty$ first preserves the latent-factor effect, whereas taking $\rho\downarrow0$ first recovers the standard Cauchy reference.

\subsection{Weakening dependence and the fixed-level boundary layer}

We now consider the triangular regime $\rho_K\downarrow0$, which interpolates between the random-limit regime above and the independent Cauchy limit at $\rho=0$. The fixed-level behaviour turns out to depend on two logarithmic scales: the broader scale $c_K=\rho_K\log K$ and the finer boundary-layer scale $s_K=\sqrt{\rho_K}(\log K)^{3/2}$.

Both scales arise from the same source: a nonzero latent factor $V$ shifts each $Z_i$ by $\sqrt{\rho_K}V$, but this shift affects the CCT statistic only through the extreme upper tail of the $Z_i$, where values near $\sqrt{2\log K}$ produce Cauchy scores of order~$K$. The parameter $c_K$ measures whether the shift is large enough to change the density of such extreme values, while $s_K$ captures the resulting net effect on the mean of the statistic. Theorem~\ref{thm:stablelimit} below makes this decomposition precise.

Now let $\rho=\rho_K\downarrow0$ with $K$, and write
\begin{equation}\label{eq:main-scales}
  c_K:=\rho_K\log K,
  \qquad
  s_K:=\sqrt{\rho_K}(\log K)^{3/2}.
\end{equation}
The two scales satisfy $c_K=s_K^2/(\log K)^2$, so bounded boundary layers always lie inside the small-$c_K$ regime. For each fixed $v\in\R$, let
\begin{equation}\label{eq:main-triangular-array}
  Z_{i,K}^{(v)}:=\sqrt{\rho_K} v+\sqrt{1-\rho_K} \varepsilon_i,
  \qquad
  X_{i,K}^{(v)}:=f \left(Z_{i,K}^{(v)}\right),
  \qquad
  T_K^{(v)}:=\frac1K\sum_{i=1}^K X_{i,K}^{(v)}.
\end{equation}
The truncated-mean centring term is defined by
\begin{equation}\label{eq:main-bk-def}
  b_K(v)
  :=
  \E \left[
    X_{1,K}^{(v)} \1 \left\{|X_{1,K}^{(v)}|\le K\right\}
  \right]
  =
  K \E \left[
    \frac{X_{1,K}^{(v)}}{K} \1 \left\{\left|\frac{X_{1,K}^{(v)}}{K}\right|\le 1\right\}
  \right].
\end{equation}
If $c_K\to c\in[0,\infty)$, define
\begin{equation}\label{eq:main-lambda}
  \lambda^{+}(v):=\exp \left(-c+\sqrt{2c} v\right),
  \qquad
  \lambda^{-}(v):=\exp \left(-c-\sqrt{2c} v\right),
\end{equation}
and let
\begin{equation}\label{eq:main-levy}
  \Lambda_v(dx)
  :=
  \frac{\lambda^{+}(v)}{\pi x^2} \1\{x>0\} dx
  +
  \frac{\lambda^{-}(v)}{\pi x^2} \1\{x<0\} dx.
\end{equation}

The next theorem decomposes the conditional null distribution of $T_K$ into a $1$-stable fluctuation and a deterministic centring term, and gives the small-$c_K$ expansion of the latter.

\begin{theorem}[Conditional decomposition into stable fluctuation and centring]\label{thm:stablelimit}
Assume $\rho_K\downarrow0$ and $c_K\to c\in[0,\infty)$. Fix $v\in\R$. Then, conditionally on $V=v$, we have
\begin{equation}\label{eq:main-conditional-stable}
  T_K-b_K(v)\dto S_{c,v},
\end{equation}
where $S_{c,v}$ is the $1$-stable law determined by the Lévy measure $\Lambda_v$ in \eqref{eq:main-levy}. Moreover,
\begin{equation}\label{eq:main-bk-limit}
  \rho_K b_K(v)\to
  B_c(v):=
  \frac{2}{\pi}\int_0^{\sqrt{2c}} t e^{-t^2/2}\sinh(vt) dt.
\end{equation}
If, in addition, $c_K\to0$, then
\begin{equation}\label{eq:main-bk-smallc}
  b_K(v)=\kappa v s_K+o(s_K),
  \qquad
  \kappa:=\frac{4\sqrt2}{3\pi}.
\end{equation}
\end{theorem}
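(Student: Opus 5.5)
The plan is to treat the three claims separately, always working conditionally on $V=v$, so that $X_{1,K}^{(v)},\dots,X_{K,K}^{(v)}$ form a row-wise i.i.d.\ triangular array. For \eqref{eq:main-conditional-stable} I will invoke the classical convergence theorem for sums of infinitesimal i.i.d.\ triangular arrays to infinitely divisible laws (Gnedenko--Kolmogorov), applied to the rescaled summands $Y_{i,K}:=X_{i,K}^{(v)}/K$. With truncation at level $1$, the natural centring is exactly $K\,\E[Y_{1,K}\1\{|Y_{1,K}|\le1\}]=b_K(v)$, as written in \eqref{eq:main-bk-def}. The theorem then requires two conditions:
\[
  K\,\Pbb(Y_{1,K}>x)\to \frac{\lambda^+(v)}{\pi x},\qquad K\,\Pbb(Y_{1,K}<-x)\to\frac{\lambda^-(v)}{\pi x}\quad(x>0),
\]
and $\lim_{\varepsilon\downarrow0}\limsup_K K\,\E[Y_{1,K}^2\1\{|Y_{1,K}|\le\varepsilon\}]=0$. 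The Lévy measure is then $\Lambda_v$, and there is no Gaussian part.

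\textbf{Tail computation.} This is the core of the proof. Since $f(z)=\tan\{\pi(\Phi(z)-1/2)\}=\cot(\pi\barPh(z))$, we have $f(z)>y$ iff $\barPh(z)<\pi^{-1}\operatorname{arccot}(y)$. Hence $z_y:=f^{-1}(y)$ satisfies $\barPh(z_y)=1/(\pi y)+O(y^{-3})$.
\begin{itemize}
\item For $y=Kx$ this gives $z_{Kx}^2=2\log K+O(\log\log K)$.
\item Then
\[
  \Pbb(Y>x)=\barPh\bigl((z_{Kx}-\sqrt{\rho_K}v)/\sqrt{1-\rho_K}\bigr).
\]
\item Expanding the exponent, $(z-a)^2/(1-\rho)=z^2+\rho z^2-2az+O(\rho^2z^2+a^2+\rho a z)$ with $a=\sqrt{\rho_K}v$. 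Using $\rho_K z^2\to2c$ and $az\to v\sqrt{2c}$, together with the Mills ratio, gives
\[
  \Pbb(Y>x)/\barPh(z_{Kx})\to e^{-c+\sqrt{2c}v}.
\]
\item Multiplying by $K$ and using $\barPh(z_{Kx})\sim 1/(\pi Kx)$ yields the first tail limit. The lower tail is identical by oddness of $f$, with $v\mapsto -v$.
\end{itemize}
For the truncated second moment I will need the same ratio bound uniformly over $x\in[K^{-1}\cdot M,\,1]$, say $K\Pbb(|Y|>x)\le C/x$ with $C$ depending only on $v$ and $\sup_K c_K$. Integrating then gives $K\E[Y^2\1\{|Y|\le\varepsilon\}]\le 2C\varepsilon$. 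The region $|X|\le M$ contributes only $O(M^2/K)$.

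\textbf{Centring limit \eqref{eq:main-bk-limit}.} Let $\varphi_{K,v}$ be the $N(\sqrt{\rho_K}v,1-\rho_K)$ density and $z_K:=f^{-1}(K)$. Since $f$ is odd,
\[
  b_K(v)=\int_0^{z_K} f(z)\bigl[\varphi_{K,v}(z)-\varphi_{K,v}(-z)\bigr]dz .
\]
Using $f(z)\sim z/(\pi\ph(z))$ for large $z$, the integrand is
\[
  \frac{z}{\pi}\,2\sinh\!\Bigl(\frac{\sqrt{\rho_K}vz}{1-\rho_K}\Bigr)\exp\!\Bigl(-\frac{\rho_K z^2}{2(1-\rho_K)}\Bigr)\,(1+o(1))
\]
up to a bounded factor tending to $1$. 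Substituting $t=\sqrt{\rho_K}z$ gives
\[
  \rho_K b_K(v)\approx\frac{2}{\pi}\int_0^{\sqrt{\rho_K}z_K}t\sinh(vt)e^{-t^2/2}dt ,
\]
and $\sqrt{\rho_K}z_K\to\sqrt{2c}$. Dominated convergence gives $B_c(v)$. The region $z\le M$, where the asymptotic for $f$ fails, contributes $O(\sqrt{\rho_K})$ to $b_K$, since the bracket is $O(\sqrt{\rho_K}z\,\ph(z))$ and $f\ph$ is locally bounded. This contribution is negligible here after multiplying by $\rho_K$.

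\textbf{Small-$c$ refinement \eqref{eq:main-bk-smallc}.} When $c_K\to0$ I must keep the unrescaled error terms explicit, because the target accuracy is $o(s_K)$ rather than $o(1/\rho_K)$.
\begin{itemize}
\item On $[M,z_K]$, write $\sinh(u)=u+O(u^3)$ and $e^{-\rho z^2/2}=1+O(\rho z^2)$. The main term is
\[
  \frac{2}{\pi}\sqrt{\rho_K}v\int_M^{z_K}z^2dz=\frac{2}{3\pi}\sqrt{\rho_K}v z_K^3+O(\sqrt{\rho_K}).
\]
\item With $z_K^3=(2\log K)^{3/2}(1+o(1))$, this equals $\kappa v s_K(1+o(1))$.
\item The cubic and Gaussian corrections are $O(\sqrt{\rho_K}z_K^3\cdot\rho_K z_K^2)=O(s_Kc_K)=o(s_K)$.
\item The relative error $O(z^{-2})$ in $f(z)\sim z/(\pi\ph(z))$ contributes $O(\sqrt{\rho_K}z_K)=o(s_K)$.
\end{itemize}

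The main obstacle is obtaining these expansions with error control uniform enough: uniform in $x$ for the tail ratio (needed for the variance condition), and with absolute rather than relative errors in the small-$c$ centring. I would handle both by working directly with the exact identity $\barPh(f^{-1}(y))=\pi^{-1}\operatorname{arccot}y$ and explicit Mills-ratio bounds, rather than with asymptotic equivalences.
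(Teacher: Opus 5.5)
Your proposal is correct and follows essentially the same route as the paper. Both arguments get the tail limits by comparing $f^{-1}(Kx)$ with the shifted, rescaled Gaussian through Mills' ratio, and both turn a uniform $C/x$ tail bound into the small-jump $L^2$ estimate. For the centring, both use the odd-symmetrised integral $\int_0^{f^{-1}(K)} f(z)\{\varphi_{K,v}(z)-\varphi_{K,v}(-z)\}\,dz$ with $t=\sqrt{\rho_K}z$, expanded to first order in $\sinh$ when $c_K\to0$.

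The differences are only cosmetic. The paper proves the triangular-array limit by hand (compound-Poisson large jumps plus the small-jump bound, via characteristic functions) rather than citing Gnedenko--Kolmogorov. It also controls the error in $f(z)\sim z/(\pi\varphi(z))$ by an $\varepsilon$--$M_\varepsilon$ argument instead of your explicit $O(z^{-2})$ relative error.
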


The stable law $S_{c,v}$ in \eqref{eq:main-conditional-stable} is characterized by
\begin{equation}\label{eq:main-stable-cf}
  \E \left[e^{itS_{c,v}}\right]
  =
  \exp \left\{
    \int_{\R\setminus\{0\}}
    \left(e^{itx}-1-itx\1\{|x|\le1\}\right) \Lambda_v(dx)
  \right\},
  \qquad t\in\R,
\end{equation}
with $\Lambda_v$ built from $\lambda^{\pm}(v)$ in \eqref{eq:main-lambda}.

Because the stable fluctuation $S_{c,v}$ does not depend on $K$, the $K$-dependence of the size distortion is carried entirely by the deterministic centring $b_K(v)$. Taking the expectation over $V$ yields the unconditional rejection probability. The first corollary specialises to $c_K\to0$, where $S_{c,v}$ reduces to the standard Cauchy law and $b_K(v)\approx\kappa vs_K$.

\begin{corollary}\label{cor:phasediagram}
Let $\alpha\in(0,1/2)$ be fixed and assume $\rho_K\downarrow0$.
\begin{enumerate}
\item If $s_K\to s\in[0,\infty)$, then we have
\begin{equation}\label{eq:main-Psi}
  \Pbb(T_K>t_\alpha)\to
  \Psi_\alpha(s):=
  \E \left[
    \frac12-\frac1\pi\arctan \left(
      t_\alpha-\kappa sV
    \right)
  \right].
\end{equation}
\item If $s_K\to\infty$, then we have
\begin{equation}\label{eq:main-raw-half}
  \Pbb(T_K>t_\alpha)\to \frac12.
\end{equation}
\end{enumerate}
\end{corollary}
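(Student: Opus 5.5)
The plan is to condition on the latent factor $V$, apply Theorem~\ref{thm:stablelimit} pointwise in $v$, and then integrate over $V$ by dominated convergence. Since the conditional rejection probabilities lie in $[0,1]$, dominated convergence applies once we have pointwise-in-$v$ limits of $\Pbb(T_K>t_\alpha\mid V=v)$.

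\emph{Case (i).} Suppose $s_K\to s<\infty$. Then $c_K=s_K^2/(\log K)^2\to0$, so Theorem~\ref{thm:stablelimit} applies with $c=0$. In that case $\lambda^\pm(v)=1$, and $\Lambda_v$ is the symmetric Lévy measure $\pi^{-1}x^{-2}dx$. With the truncation $\1\{|x|\le1\}$ the compensator is odd and integrates to zero, so the characteristic exponent is $-|t|$. Hence $S_{0,v}\sim\mathsf C(0,1)$ for every $v$.

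By \eqref{eq:main-bk-smallc}, $b_K(v)=\kappa v s_K+o(s_K)\to\kappa v s$. Slutsky's lemma then gives, conditionally on $V=v$,
\[
T_K\dto \kappa v s+\mathsf C .
\]
The Cauchy law is continuous, so
\[
\Pbb(T_K>t_\alpha\mid V=v)\to \tfrac12-\tfrac1\pi\arctan(t_\alpha-\kappa s v).
\]
Integrating against $\ph(v)\,dv$ with dominated convergence yields \eqref{eq:main-Psi}.

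\emph{Case (ii).} Suppose $s_K\to\infty$. It suffices to show that for each fixed $v>0$ the conditional rejection probability tends to $1$, and for each fixed $v<0$ it tends to $0$; integrating then gives $\Pbb(V>0)=1/2$. We do not have control of $c_K$ here, so we first pass to subsequences. Along a subsequence, either $c_K\to c\in[0,\infty)$, or $c_K\to\infty$.

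\emph{Subcase $c_K\to c<\infty$.} Theorem~\ref{thm:stablelimit} gives tightness of $T_K-b_K(v)$ conditionally on $V=v$. It remains to show $b_K(v)\to+\infty$ for $v>0$ (and $-\infty$ for $v<0$).
\begin{itemize}[leftmargin=*]
\item If $c>0$, then $\rho_K b_K(v)\to B_c(v)$, which is positive for $v>0$ since $\sinh(vt)>0$ there. Because $\rho_K\to0$, we get $b_K(v)\to\infty$.
\item If $c=0$, then $b_K(v)=\kappa v s_K(1+o(1))\to\infty$ for $v>0$.
\end{itemize}
By oddness the signs reverse for $v<0$. Tightness of the fluctuation plus divergence of the centring then gives conditional rejection probability tending to $1$ or $0$ according to the sign of $v$.

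\emph{Subcase $c_K\to\infty$.} This regime is not covered by Theorem~\ref{thm:stablelimit}, and it is the main obstacle. Here I would argue directly by monotone comparison. The map $\rho\mapsto$ (law of $Z_i$ given $V=v$) shifts the mean $\sqrt\rho v$ upward when $v>0$, up to the variance factor $\sqrt{1-\rho_K}\to1$. A cleaner route is to compute the conditional median of $T_K$ through a truncated first moment. I would take the truncation level $L_K\to\infty$ chosen so that the truncated conditional mean $\E[f(Z)\1\{|f|\le K\}\mid V=v]$ dominates, with a bound of the type $\gtrsim \rho_K^{-1}e^{\sqrt{2c_K}v-c_K}\cdots$. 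This appears delicate because $e^{-c_K}$ decays, so for moderately large $c_K$ the density of extremes is actually suppressed.

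Instead I would exploit the fixed-$\rho$ intuition of Proposition~\ref{prop:noncommuting-main}. When $c_K\to\infty$, the shift $\sqrt{\rho_K}v$ is much larger than $1/\sqrt{\log K}$, and a law-of-large-numbers-type argument applies. The contribution of scores with $|f|\le M_K$ (for suitable $M_K\ll K$) concentrates around its mean, which is of order $v e^{\rho_K v^2/2}/\rho_K\cdot$(positive), as in \eqref{eq:main-mu-smallrho}. The truncated tail has conditional probability $o(1)$ of contributing, since $K\,\Pbb(|f(Z)|>M_K)\to0$ can be arranged. The variance of the truncated part is about $M_K/K$, so Chebyshev gives concentration around a mean that diverges with sign $\mathrm{sgn}(v)$.

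Combining the subsequence cases yields convergence of every subsequence to $1/2$, hence \eqref{eq:main-raw-half}.
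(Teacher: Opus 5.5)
Part (i) and the subcase $c_K\to c<\infty$ of part (ii) coincide with the paper's proof (Corollaries~\ref{cor:raw-smallc}, \ref{cor:raw-size-function} and~\ref{cor:half-limit-large}). The one genuine difference is the subcase $c_K\to\infty$, and your argument there is correct but follows a different route.

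\emph{The paper's route.} It uses Proposition~\ref{prop:large-mean}. Von Bahr--Esseen with the $K$-dependent exponent $q_K=1+\rho_K/\{2(1-\rho_K)\}$, together with a Gaussian-MGF bound $\E[|Y_{1,K}^{(v)}|^{q_K}\mid V=v]\le C_v\rho_K^{-(q_K+1)/2}$, gives
$\Pbb(|T_K-\mu_{\rho_K}(v)|>|\mu_{\rho_K}(v)|/2\mid V=v)\le C_v e^{-c_K/\{2(1-\rho_K)\}}$.
Only the untruncated mean asymptotics of Theorem~\ref{thm:mu-smallrho} enter, and no tail estimate beyond bounded $c_K$ is needed.

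\emph{Your route.} Truncation plus Chebyshev is sound, but your three claims hold only for a suitable window of $M_K$, and you should verify them:
(a) $\Pbb(|f(Z)|>x\mid V=v)\le C_v/x$ for $1\le x\le K$ still holds when $c_K\to\infty$. In the proof of Lemma~\ref{lem:uniform-tail-xinv}, keep the term $-\rho_Kq_u^2/2$ and use $-\rho_Kq^2/2+|v|\sqrt{\rho_K}\,q\le v^2/2$. This gives variance at most $C_vM_K/K$.
(b) The same Mills-ratio computation gives $K\Pbb(|f(Z)|>M_K\mid V=v)\le\exp\{\log(K/M_K)-(1+o(1))c_K+O(\sqrt{c_K})\}$, so you need $\log(K/M_K)\le(1-\epsilon)c_K$.
(c) If $\rho_K\log M_K\to\infty$, run the dominated-convergence argument of Theorem~\ref{thm:bK-general} on $[0,\infty)$ with dominating function $Cte^{-t^2/2+2|v|t}$. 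This gives $\rho_K\E[f(Z)\1\{|f(Z)|\le M_K\}\mid V=v]\to\sqrt{2/\pi}\,ve^{v^2/2}$. Note the order is $ve^{v^2/2}/\rho_K$, not $ve^{\rho_Kv^2/2}/\rho_K$ as you wrote.
Any $M_K\in[Ke^{-c_K/2},K]$ satisfies all three, and the failure probability is then $O(\rho_K^2M_K/K)+o(1)\to0$.

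In particular $M_K=K$ is admissible, so the first idea you abandoned was actually fine. The factor $e^{-c_K}$ suppresses the large-jump intensity $K\Pbb(|f(Z)|>K\mid V=v)$, not the truncated mean. One therefore gets, conditionally on $V=v$, $T_K-b_K(v)=O_p(1)$ with $\rho_Kb_K(v)\to\sqrt{2/\pi}\,ve^{v^2/2}$, even when $c_K\to\infty$.

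\emph{Comparison.} Your version gives a single ``tight fluctuation plus divergent centring'' argument across all subcases. The price is re-deriving the tail and centring estimates outside the bounded-$c_K$ regime. The paper's moment bound is shorter and gives an explicit rate $e^{-c_K/2}$.
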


Thus $s_K$ is the sole parameter governing the fixed-level size of the raw CCT in this regime: $s_K\to0$ yields exactness, bounded $s_K$ yields the explicit distortion $\Psi_\alpha(s)$, and $s_K\to\infty$ drives the rejection probability to $1/2$.

\begin{corollary}\label{cor:iffexact}
Assume $\rho_K\downarrow0$. Then, for every fixed $\alpha\in(0,1/2)$, we have
\begin{equation}\label{eq:main-raw-exactness}
  \Pbb(T_K>t_\alpha)\to \alpha
  \quad\Longleftrightarrow\quad
  \rho_K(\log K)^3\to0.
\end{equation}
\end{corollary}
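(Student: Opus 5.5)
Corollary~\ref{cor:iffexact} follows from Corollary~\ref{cor:phasediagram} together with a strict-monotonicity property of the limiting size $\Psi_\alpha$. Since $s_K^2=\rho_K(\log K)^3$, the condition $\rho_K(\log K)^3\to0$ is the same as $s_K\to0$.

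\emph{Sufficiency.} If $s_K\to0$, part (1) of Corollary~\ref{cor:phasediagram} with $s=0$ gives
\[
\Pbb(T_K>t_\alpha)\to\Psi_\alpha(0)=\tfrac12-\tfrac1\pi\arctan(\cot(\pi\alpha)).
\]
For $\alpha\in(0,1/2)$ we have $\arctan(\cot\pi\alpha)=\pi/2-\pi\alpha$, so this limit equals $\alpha$.

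\emph{Necessity.} I argue by contraposition with a subsequence argument. Suppose $s_K\not\to0$. Then there exist $\epsilon>0$ and a subsequence along which $s_K\ge\epsilon$. Passing to a further subsequence, either $s_K\to s\in[\epsilon,\infty)$ or $s_K\to\infty$. Corollary~\ref{cor:phasediagram} applies along subsequences, since its proof uses only the limiting behaviour of $\rho_K$ and $s_K$. If $s_K\to\infty$, the limit of $\Pbb(T_K>t_\alpha)$ is $1/2\neq\alpha$. If $s_K\to s>0$, the limit is $\Psi_\alpha(s)$, so it suffices to show $\Psi_\alpha(s)\neq\alpha$ for every $s>0$.

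\emph{Strict monotonicity of $\Psi_\alpha$.} Write $g(x)=\tfrac12-\tfrac1\pi\arctan(t_\alpha-x)$, so that $\Psi_\alpha(s)=\E[g(\kappa sV)]$ and $\Psi_\alpha(0)=g(0)=\alpha$. Because $V$ is symmetric,
\[
\Psi_\alpha(s)-\alpha=\E\Bigl[\tfrac12\bigl(g(\kappa sV)+g(-\kappa sV)\bigr)-g(0)\Bigr].
\]
Now $g''(x)=\tfrac{2}{\pi}\,\frac{t_\alpha-x}{(1+(t_\alpha-x)^2)^2}$. Hence $g$ is strictly convex on $(-\infty,t_\alpha)$, which contains $[-t_\alpha,t_\alpha]$ because $t_\alpha>0$ for $\alpha<1/2$, and concave beyond $t_\alpha$. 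So pointwise convexity does not settle the sign globally.

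The cleanest route is to express the function through a Cauchy variable. Write $g(x)=\Pbb(\mathsf C>t_\alpha-x)$ with $\mathsf C$ independent of $V$. Then
\[
\Psi_\alpha(s)=\Pbb(\mathsf C+\kappa sV>t_\alpha).
\]
The variable $\mathsf C+\kappa sV$ is a Gaussian--Cauchy (Voigt) convolution. Its scale mixture over the Gaussian makes its tails heavier at every fixed point beyond the median, compared with $\mathsf C$ alone. To make this precise, I would differentiate in $s$:
\[
\frac{d}{ds}\Psi_\alpha(s)=\kappa\,\E\bigl[V\,\ph_{\mathsf C}(t_\alpha-\kappa sV)\bigr],
\]
where $\ph_{\mathsf C}(y)=1/\{\pi(1+y^2)\}$. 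Gaussian integration by parts turns this into $\kappa^2 s\,\E[-\ph_{\mathsf C}'(t_\alpha-\kappa sV)]$. Equivalently, it equals $\kappa^2 s$ times the derivative of the Voigt density at $t_\alpha$, up to sign, so it is positive exactly when the Voigt density is decreasing at $t_\alpha$. The Voigt density is symmetric and unimodal at $0$, being the convolution of two symmetric unimodal densities. Therefore it is strictly decreasing on $(0,\infty)$, and $t_\alpha>0$. This gives $\tfrac{d}{ds}\Psi_\alpha(s)>0$ for $s>0$, hence $\Psi_\alpha(s)>\alpha$ and necessity follows.

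The main obstacle is this last step: establishing strict monotonicity, or at least $\Psi_\alpha(s)\neq\alpha$, for \emph{every} $s>0$, rather than only for small $s$ where a second-order Taylor expansion would suffice. The Voigt unimodality argument resolves it. The remaining care is in checking that the differentiation under the expectation is justified, which holds by bounded derivatives.
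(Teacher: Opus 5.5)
Your structure is the paper's: sufficiency from Corollary~\ref{cor:phasediagram}(1) with $s=0$, and necessity via the subsequence dichotomy $s_K\to\infty$ (limit $1/2$) versus $s_K\to s>0$ (limit $\Psi_\alpha(s)$). You even use the same first-derivative formula $\Psi_\alpha'(s)=\kappa\,\E[V\ph_{\mathsf C}(t_\alpha-\kappa sV)]$.

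The gap is in the step you yourself flag as decisive: the \emph{strict} sign of $\Psi_\alpha'(s)$. That a convolution of symmetric unimodal densities is symmetric unimodal only gives that the Voigt density $f_s$ is non-increasing on $(0,\infty)$. It does not give strict decrease: the convolution of the uniform densities on $[-1,1]$ and $[-2,2]$ is constant on $[-1,1]$. Even strict decrease of $f_s$ would not force $f_s'(t_\alpha)<0$ at the particular point $t_\alpha$. As written you therefore only get $\Psi_\alpha'(s)=-\kappa^2 s f_s'(t_\alpha)\ge0$, i.e.\ $\Psi_\alpha(s)\ge\alpha$. That does not contradict exactness, and strictness is exactly what the necessity direction needs.

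The repair is short but must use the specific densities, not unimodality alone. Let $\ph_{\kappa s}$ be the $N(0,\kappa^2s^2)$ density, and note that $\ph_{\mathsf C}'$ is odd and negative on $(0,\infty)$. Since $|t-y|<t+y$,
\[
  f_s'(t)=\int_0^\infty \ph_{\mathsf C}'(y)\bigl\{\ph_{\kappa s}(t-y)-\ph_{\kappa s}(t+y)\bigr\}\,dy<0\qquad(t>0,\ s>0).
\]
Alternatively, skip the Gaussian integration by parts and symmetrize the first-derivative formula over $\pm V$, as the paper does:
\[
  \Psi_\alpha'(s)=\frac{4\kappa^2 t_\alpha s}{\pi}\int_0^\infty\frac{v^2\ph(v)}{\{1+(t_\alpha-\kappa sv)^2\}\{1+(t_\alpha+\kappa sv)^2\}}\,dv>0 .
\]
A third option is to note that $\Psi_\alpha(s)=p_{\mathrm{BL}}(t_\alpha;s)$. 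Then use the paper's derivative-free argument (Gaussian semigroup plus strict decrease of $f_s$, Propositions~\ref{app-prop:bl-structure} and~\ref{prop:bl-small-s}), which shows that $s\mapsto p_{\mathrm{BL}}(t;s)$ is strictly increasing for $t>0$. With any of these fixes your proof is complete.
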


The exponent $3$ on $\log K$ comes from the small-$c$ behaviour of $B_c(v)$.
Expanding $\sinh(vt)\approx vt$ in \eqref{eq:main-bk-limit} gives
\[
  B_c(v)
  =\frac{2v}{\pi}\int_0^{\sqrt{2c}} t^2 e^{-t^2/2}dt + O(c^{5/2})
  =\kappa v c^{3/2}+O(c^{5/2}),
\]
so $b_K(v)\approx \kappa vs_K$ with $s_K=\sqrt{\rho_K}(\log K)^{3/2}$, and the exactness condition $s_K\to0$ reduces to $\rho_K(\log K)^3\to0$.
Because $b_K(v)$ is a centering term rather than a component of the stable limit $S_{c,v}$, this distortion does not appear in the vanishing-level regime $\alpha_K\to0$ studied by \cite{LiuXie2020}.

\section{Boundary-layer calibration}\label{sec:calibration}

This section introduces a calibrated version of the CCT that corrects the fixed-level size distortion identified in Section~\ref{sec:raw cct}. The statistic $T_K$ is kept unchanged; only the reference law is replaced so that the cutoff accounts for the leading boundary-layer centring.

\subsection{Calibrated reference family}

Specifically, we replace the standard Cauchy tail probability by the boundary-layer family
\begin{equation}\label{eq:main-pbl}
  p_{\mathrm{BL}}(t;s)
  :=
  \E \left[
    \frac12-\frac1\pi\arctan \left(t-\kappa sV\right)
  \right],
  \qquad s\ge0.
\end{equation}
For $s=0$ this reduces to the standard Cauchy tail probability used by the raw CCT. The parameter $s$ is the boundary-layer scale defined in \eqref{eq:main-scales}, so the reference family is parametrised by the same quantity that governs the size distortion of the raw test.

\begin{proposition}\label{prop:bl-structure}
Let $\mathsf{C}$ and $V\sim N(0,1)$ be independent. Then we have
\begin{equation}\label{eq:main-bl-law}
  T_s\stackrel{d}{=}\mathsf{C}+\kappa sV,
  \qquad
  \phi_{T_s}(u)=\exp \left(-|u|-\frac{\kappa^2s^2u^2}{2}\right).
\end{equation}
Consequently, for each $\alpha\in(0,1)$ the calibrated cutoff
\begin{equation}\label{eq:main-qalpha}
  q_\alpha(s):=\inf\{t:\ p_{\mathrm{BL}}(t;s)\le \alpha\}
\end{equation}
is uniquely defined. If $\alpha\in(0,1/2)$, then the map $s\mapsto q_\alpha(s)$ is strictly increasing on $[0,\infty)$ and satisfies
\begin{equation}\label{eq:main-qalpha-small}
  q_\alpha(0)=t_\alpha,
  \qquad
  q_\alpha(s)=t_\alpha+\frac{\kappa^2 t_\alpha}{1+t_\alpha^2}s^2+O_\alpha(s^4)
  \quad (s\downarrow0),
\end{equation}
while
\begin{equation}\label{eq:main-qalpha-large}
  q_\alpha(s)=\kappa z_{1-\alpha}s+O_\alpha(1)
  \quad (s\to\infty),
\end{equation}
where $z_{1-\alpha}:=\Ph^{-1}(1-\alpha)$.
\end{proposition}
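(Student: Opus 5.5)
Here $T_s$ denotes a random variable whose upper-tail function is $p_{\mathrm{BL}}(\cdot;s)$. The plan is to identify this law directly and then read off every claimed property of $q_\alpha(s)$ from it.

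\emph{Step 1: the law of $T_s$.} Let $\mathsf{C}$ and $V$ be independent. Conditioning on $V$ gives
\[
\Pbb(\mathsf{C}+\kappa sV>t)=\E\!\left[\tfrac12-\tfrac1\pi\arctan(t-\kappa sV)\right]=p_{\mathrm{BL}}(t;s).
\]
Hence $T_s\stackrel{d}{=}\mathsf{C}+\kappa sV$. Independence then yields the characteristic function as the product $e^{-|u|}e^{-\kappa^2s^2u^2/2}$.

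\emph{Step 2: the cutoff is well defined.} $T_s$ has a density: it is the convolution of the Cauchy density with a Gaussian, and is therefore strictly positive and continuous. So $t\mapsto p_{\mathrm{BL}}(t;s)$ is continuous and strictly decreasing from $1$ to $0$. Consequently, for each $\alpha\in(0,1)$, $q_\alpha(s)$ is the unique solution of $p_{\mathrm{BL}}(t;s)=\alpha$.

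\emph{Step 3: monotonicity in $s$.} Write $\sigma=\kappa s$ and $G(t,\sigma)=\E[\bar F(t-\sigma V)]$, where $\bar F(x)=\tfrac12-\tfrac1\pi\arctan x$.
\begin{itemize}
\item Gaussian smoothing gives $\partial_\sigma G=\sigma\,\E[\bar F''(t-\sigma V)]$, either via Stein's identity or the heat equation in the variance $\sigma^2$.
\item Since $\bar F''(x)=2x/(\pi(1+x^2)^2)$, this equals $\sigma$ times the Cauchy density's derivative with sign flipped, convolved with the Gaussian. Equivalently, $\partial_\sigma G(t,\sigma)=-\sigma\, g_\sigma'(t)$, where $g_\sigma$ is the density of $\mathsf{C}+\sigma V$.
\item $g_\sigma$ is the convolution of two symmetric unimodal densities. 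It is therefore symmetric and unimodal, and strictly decreasing on $(0,\infty)$ because $g_\sigma'(t)=\int \phi'(y)\,[\ldots]$ pairs antisymmetrically.
\item Hence $\partial_\sigma G(t,\sigma)>0$ for $t>0$ and $\sigma>0$.
\end{itemize}
For $\alpha<1/2$ we have $q_\alpha(s)>0$, since the median is $0$. The implicit function theorem then gives
\[
\frac{dq_\alpha}{ds}=\kappa\,\frac{\partial_\sigma G}{g_\sigma(q_\alpha)}>0 .
\]
I expect this step to be the main obstacle: it needs strict unimodality, which I would obtain by writing $g_\sigma'(t)=\int_0^\infty \phi_\sigma(y)\,[c'(t-y)+c'(t+y)]\,dy$ with $c$ the Cauchy density, and checking the sign directly for $t>0$.

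\emph{Step 4: small $s$.} Expand in $\sigma$:
\[
G(t,\sigma)=\bar F(t)+\tfrac{\sigma^2}{2}\bar F''(t)+O(\sigma^4),
\]
with the odd terms vanishing by symmetry of $V$. Solve $G(q,\sigma)=\alpha$ with $q=t_\alpha+a\sigma^2+O(\sigma^4)$. Using $\bar F'(t)=-1/(\pi(1+t^2))$, this gives
\[
a=\tfrac12\bar F''(t_\alpha)\cdot\pi(1+t_\alpha^2)=\frac{t_\alpha}{1+t_\alpha^2}.
\]
Multiplying by $\kappa^2$ recovers \eqref{eq:main-qalpha-small}. The remainder is uniform because $\bar F^{(4)}$ is bounded.

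\emph{Step 5: large $s$.} Write $T_s/(\kappa s)=V+\mathsf{C}/(\kappa s)$.
\begin{itemize}
\item Upper bound: for $t=\kappa s z_{1-\alpha}+M$, condition on $\mathsf{C}$ to get
\[
p_{\mathrm{BL}}=\E\!\left[\barPh\!\big((t-\mathsf{C})/(\kappa s)\big)\right].
\]
Split on $\{|\mathsf{C}|\le M/2\}$ and its complement. This shows $p_{\mathrm{BL}}<\alpha$ once $M$ is a large constant independent of $s$, provided the Gaussian derivative term $\varphi(z_{1-\alpha})M/(2\kappa s)$ dominates the Cauchy tail contribution $O(1/M)\cdot O(1/s)$. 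Both are of order $1/s$, so choose $M$ large.
\item Lower bound: the symmetric argument gives $p_{\mathrm{BL}}>\alpha$ for $t=\kappa s z_{1-\alpha}-M$.
\end{itemize}
Together these yield $q_\alpha(s)=\kappa z_{1-\alpha}s+O_\alpha(1)$.
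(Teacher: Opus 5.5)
Steps 1--4 are essentially sound (see the remark on Step 3 below). Your heat-equation plus implicit-function route to monotonicity in $s$ is a valid alternative to the paper's semigroup and symmetric-interval comparison. Both rest on the same fact: the density of $\mathsf{C}+\sigma V$ is strictly decreasing on $(0,\infty)$.

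The gap is in Step 5. The complement $\{|\mathsf{C}|>M/2\}$ does not contribute $O(1/M)\cdot O(1/s)$. On that event the shift $(M-\mathsf{C})/(\kappa s)$ is unbounded, and since $\mathsf{C}$ has no mean it is not $O(1/s)$ on average. Concretely, the positive part of $\barPh\bigl(z_{1-\alpha}+(M-\mathsf{C})/(\kappa s)\bigr)-\alpha$ lives on $\{\mathsf{C}>M\}$ and has expectation of exact order $\log(s/M)/s$. For fixed $M$ this eventually exceeds your gain term $\ph(z_{1-\alpha})M/(2\kappa s)$. So if you bound the complement in absolute value, or discard its negative part from $\{\mathsf{C}<-M/2\}$, you are forced to take $M\gtrsim\log s$. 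You then get only $q_\alpha(s)=\kappa z_{1-\alpha}s+O(\log s)$. The logarithm disappears only through cancellation between the two Cauchy tails, and your sketch never uses that cancellation. What you actually need, and what is true, is an $O(1/s)$ bound on the complement that is uniform in $M$. The gain, which is linear in $M$, then wins.

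A clean way to get this is as follows. Put $\varepsilon=1/(\kappa s)$ and $w=z_{1-\alpha}+M\varepsilon$. By symmetry of $\mathsf{C}$,
\[
  p_{\mathrm{BL}}(\kappa s z_{1-\alpha}+M;s)-\alpha
  =\bigl[\barPh(w)-\barPh(z_{1-\alpha})\bigr]
  +\tfrac12\E\bigl[\barPh(w-\varepsilon\mathsf{C})+\barPh(w+\varepsilon\mathsf{C})-2\barPh(w)\bigr].
\]
The second difference is bounded by $\min\{2,\ph(1)\varepsilon^2\mathsf{C}^2\}$, because $\sup_x|x\ph(x)|=\ph(1)$. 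Its expectation is therefore $O(\varepsilon)$ uniformly in $w$. The first bracket is at most $-\ph(z_{1-\alpha}+1)M\varepsilon$ when $M\varepsilon\le1$. A fixed large $M$ thus makes the right-hand side negative for all large $s$. The lower bound, with $w=z_{1-\alpha}-M\varepsilon$, is symmetric, and no split on $\{|\mathsf{C}|\le M/2\}$ is needed. The paper obtains the same $O(1/s)$ control differently. It uses the Fourier bound $\sup_x|f^R_s(x)-\ph(x)|\le 1/(\pi\kappa s)$ for the density of $T_s/(\kappa s)$, followed by a quantile comparison.

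A smaller point concerns Step 3. The representation you say you would check, $g_\sigma'(t)=\int_0^\infty\phi_\sigma(y)[c'(t-y)+c'(t+y)]\,dy$, does not have a sign-definite integrand: at $t=1$, $y=1.6$ the bracket is about $+0.18$. Put the derivative on the Gaussian instead, as in your own bullet: $g_\sigma'(t)=\int_0^\infty\phi_\sigma'(y)[c(t-y)-c(t+y)]\,dy$. Its integrand is strictly negative for $t,y>0$ because $|t-y|<t+y$. This is the differential form of the paper's pairing argument.
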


The calibrated law $T_s\stackrel{d}{=}\mathsf{C}+\kappa sV$ is a location mixture of the Cauchy distribution with Gaussian mixing. The cutoff $q_\alpha(s)$ departs from the standard Cauchy cutoff $t_\alpha$ by $O(s^2)$ for small $s$ and grows as $\kappa z_{1-\alpha}s$ for large $s$.

\subsection{Uniform validity on bounded boundary layers}

The next result establishes a uniform distributional approximation of $T_K$ by the calibrated family \eqref{eq:main-pbl}, valid whenever $s_K$ remains bounded.

\begin{theorem}[Boundary-layer approximation]\label{thm:bluniform}
Assume $\rho_K\downarrow0$ and $\sup_K s_K<\infty$. Let $F_s(t)=1-p_{\mathrm{BL}}(t;s)$. Then we have
\begin{equation}\label{eq:main-bl-cdf}
  \sup_{t\in\R}
  \left|
    \Pbb(T_K\le t)-F_{s_K}(t)
  \right|
  \to0.
\end{equation}
Consequently, we have
\begin{equation}\label{eq:main-bl-unif}
  \sup_{u\in[0,1]}
  \left|
    \Pbb \left(p_{\mathrm{BL}}(T_K;s_K)\le u\right)-u
  \right|
  \to0.
\end{equation}
If $\widehat s_K\ge0$ and $\widehat s_K-s_K\to0$ in probability, then the same uniformity conclusion holds with $\widehat s_K$ in place of $s_K$.
\end{theorem}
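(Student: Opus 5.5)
The plan is to reduce the uniform statement to a pointwise convergence along subsequences, and then let continuity of the limit law and Pólya's theorem supply uniformity.

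\emph{Subsequence reduction.} Suppose \eqref{eq:main-bl-cdf} fails. Then there are $\eta>0$, a subsequence $K'$ and points $t_{K'}$ with $|\Pbb(T_{K'}\le t_{K'})-F_{s_{K'}}(t_{K'})|\ge\eta$. Since $s_K$ is bounded, we may pass to a further subsequence along which $s_{K'}\to s\in[0,\infty)$. Along it $c_{K'}=s_{K'}^2/(\log K')^2\to0$, so Theorem~\ref{thm:stablelimit} applies. With $c=0$ we have $\lambda^\pm\equiv1$, and $\Lambda_v$ becomes the symmetric Lévy measure $dx/(\pi x^2)$. Hence $S_{0,v}$ is standard Cauchy; by symmetry the truncated compensator contributes no drift.

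\emph{Conditional limit and integration over $V$.} Conditionally on $V=v$, Theorem~\ref{thm:stablelimit} gives $T_{K'}-b_{K'}(v)\dto\mathsf C$, and \eqref{eq:main-bk-smallc} gives $b_{K'}(v)=\kappa v s_{K'}+o(s_{K'})\to\kappa v s$. Therefore $\Pbb(T_{K'}\le t\mid V=v)\to \tfrac12+\tfrac1\pi\arctan(t-\kappa s v)$ for every $t$, since the limit cdf is continuous. Dominated convergence over $V\sim N(0,1)$ then yields $\Pbb(T_{K'}\le t)\to F_s(t)$ pointwise. This is the same computation behind Corollary~\ref{cor:phasediagram}(1), now for general $t$.

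\emph{Uniformity in $t$.} By Proposition~\ref{prop:bl-structure}, $F_s$ is the cdf of $\mathsf C+\kappa sV$, which is continuous. Pólya's theorem then upgrades pointwise convergence to $\sup_t|\Pbb(T_{K'}\le t)-F_s(t)|\to0$. It remains to show $\sup_t|F_{s_{K'}}(t)-F_s(t)|\to0$. The characteristic functions in \eqref{eq:main-bl-law} converge, so the laws converge weakly to a continuous limit, and Pólya applies again. Alternatively, the bound $|\arctan(a)-\arctan(b)|\le|a-b|$ gives the estimate $\kappa|s_{K'}-s|\,\E|V|$ directly. The triangle inequality contradicts the choice of $t_{K'}$, which proves \eqref{eq:main-bl-cdf}.

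\emph{Consequences.} For \eqref{eq:main-bl-unif}, note that $t\mapsto p_{\mathrm{BL}}(t;s)=1-F_s(t)$ is continuous and strictly decreasing. Hence $\{p_{\mathrm{BL}}(T_K;s_K)\le u\}=\{T_K\ge F_{s_K}^{-1}(1-u)\}$, and by \eqref{eq:main-bl-cdf} its probability equals $1-F_{s_K}(F_{s_K}^{-1}(1-u))+o(1)=u+o(1)$, uniformly in $u$. Boundary atoms are handled by the fact that the limit is continuous, so $\Pbb(T_K=t)\to0$ uniformly. For the plug-in $\widehat s_K$, I would use the Lipschitz bound $|p_{\mathrm{BL}}(t;\widehat s)-p_{\mathrm{BL}}(t;s)|\le\kappa\E|V|\,|\widehat s-s|/\pi$, which is uniform in $t$. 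Then for any $\delta>0$, $\{p_{\mathrm{BL}}(T_K;\widehat s_K)\le u\}$ is sandwiched between the events with $u\pm\delta$ and $s_K$, up to an event of probability $\Pbb(|\widehat s_K-s_K|>\pi\delta/(\kappa\E|V|))\to0$. Letting $\delta\downarrow0$ finishes the argument.

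\emph{Main obstacle.} The delicate step is the conditional convergence. Theorem~\ref{thm:stablelimit} is stated for $c_K\to c$ with fixed $v$ and requires $b_K(v)-\kappa vs_K\to0$, not merely $o(s_K)$. When $s>0$ the two are equivalent. When $s=0$ both terms tend to $0$ anyway, so this step is fine. The remaining point is to justify dominated convergence in $v$, which holds trivially because conditional probabilities are bounded by $1$. The subtle issue is only that Theorem~\ref{thm:stablelimit} is stated for a fixed sequence rather than a subsequence; I would note that its proof applies verbatim along subsequences.
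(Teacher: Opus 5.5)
Your proposal is correct and follows essentially the paper's proof. Both arguments pass to a subsequence with $s_{K_j}\to s$ and use the conditional Cauchy limit $\mathsf{C}(\kappa sv,1)$, which is Theorem~\ref{thm:stablelimit} with $c=0$ (the paper's Corollary~\ref{cor:raw-smallc}); they then apply bounded convergence over $V$, P\'olya's theorem twice with the triangle inequality, quantile inversion for the $p$-value statement, and the Lipschitz-in-$s$ bound for the plug-in case. The differences are cosmetic: you control atoms of $T_K$ through the uniform cdf bound, whereas the paper shows the law of $T_K$ is atomless, and you write the plug-in step as an explicit sandwich rather than citing Slutsky and P\'olya.
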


Since BL-CCT modifies only the cutoff and not the statistic, the approximation \eqref{eq:main-bl-cdf} immediately yields calibrated rejection probabilities. The next corollary records the resulting exactness criterion.

\begin{corollary}\label{cor:bl-c0}
Assume $\rho_K\downarrow0$ and $c_K=\rho_K\log K\to0$. Then we have
\[
  \sup_{u\in[0,1]}
  \left|
    \Pbb \left(p_{\mathrm{BL}}(T_K;s_K)\le u\right)-u
  \right|
  \to0.
\]
Equivalently, for every fixed $\alpha\in(0,1)$, we have
\begin{equation}\label{eq:main-bl-exactness}
  \Pbb \left(T_K>q_\alpha(s_K)\right)\to \alpha.
\end{equation}
\end{corollary}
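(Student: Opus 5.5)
The plan is a subsequence argument that splits according to the behaviour of $s_K$. Since $c_K\to0$ only, $s_K=\sqrt{c_K}\,\log K$ may be bounded, diverge, or oscillate. It suffices to show that every subsequence has a further subsequence along which
\[
D_K:=\sup_{u\in[0,1]}\bigl|\Pbb(p_{\mathrm{BL}}(T_K;s_K)\le u)-u\bigr|\to0 .
\]
Passing to a further subsequence, we may assume $s_K\to s\in[0,\infty]$. If $s<\infty$, then $s_K$ is bounded along that subsequence. Theorem~\ref{thm:bluniform}, whose proof only uses $\rho_K\downarrow0$ and boundedness of $s_K$ and so applies verbatim along a subsequence, then gives $D_K\to0$ directly.

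The substantive case is $s_K\to\infty$ with $c_K\to0$. Fix $v$. By Theorem~\ref{thm:stablelimit} with $c=0$, we have $\lambda^\pm\equiv1$, so conditionally on $V=v$,
\[
T_K-b_K(v)\dto S_{0,v},
\]
which is a (possibly shifted) Cauchy law and in any case a fixed tight law. By \eqref{eq:main-bk-smallc}, $b_K(v)=\kappa v s_K+o(s_K)$. Dividing by $s_K\to\infty$ gives, conditionally on $V=v$, $T_K/s_K\xrightarrow{p}\kappa v$. Integrating over $V$ with dominated convergence applied to bounded test functions then yields
\[
T_K/s_K\dto \kappa V
\]
unconditionally. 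Pointwise convergence in $v$ is enough here, so no uniformity in $v$ is needed.

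Next we show that $p_{\mathrm{BL}}(\cdot;s)$ is asymptotically a function of $t/s$. By Proposition~\ref{prop:bl-structure},
\[
p_{\mathrm{BL}}(t;s)=\Pbb(\mathsf C+\kappa sV'>t)=\E_{\mathsf C}\bigl[\barPh\bigl(\tfrac{t}{\kappa s}-\tfrac{\mathsf C}{\kappa s}\bigr)\bigr].
\]
Since $\barPh$ is bounded and Lipschitz and $\mathsf C/s\xrightarrow{p}0$, we get
\[
\sup_{x\in\R}\bigl|p_{\mathrm{BL}}(xs;s)-\barPh(x/\kappa)\bigr|\to0\qquad(s\to\infty).
\]
Consequently $p_{\mathrm{BL}}(T_K;s_K)=\barPh\bigl(T_K/(\kappa s_K)\bigr)+o(1)$ uniformly. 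By the continuous mapping theorem, $\barPh(T_K/(\kappa s_K))\dto\barPh(V)\sim\mathrm{U}(0,1)$, and Slutsky gives $p_{\mathrm{BL}}(T_K;s_K)\dto\mathrm{U}(0,1)$. Because the limit distribution function is continuous, Pólya's theorem upgrades this to uniform convergence, so $D_K\to0$. Together with the bounded case and the subsequence principle, this proves the first display.

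For the equivalent form \eqref{eq:main-bl-exactness}, by Proposition~\ref{prop:bl-structure} the law of $T_s$ is continuous with a positive density, so $t\mapsto p_{\mathrm{BL}}(t;s)$ is continuous and strictly decreasing with $p_{\mathrm{BL}}(q_\alpha(s);s)=\alpha$. Hence
\[
\{T_K>q_\alpha(s_K)\}=\{p_{\mathrm{BL}}(T_K;s_K)<\alpha\}.
\]
The uniform convergence to the uniform law, combined with continuity of the limit at $\alpha$ (so that $<$ versus $\le$ is immaterial), gives $\Pbb(T_K>q_\alpha(s_K))\to\alpha$. Conversely, the statements for all fixed $\alpha$ recover pointwise convergence of the distribution function of $p_{\mathrm{BL}}(T_K;s_K)$, and Pólya's theorem upgrades this back to uniform convergence.

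The step I expect to be the main obstacle is justifying that the stable fluctuation is negligible relative to $b_K(v)$ when $s_K\to\infty$. This relies on Theorem~\ref{thm:stablelimit} being valid for $c=0$ without any boundedness assumption on $s_K$, and on \eqref{eq:main-bk-smallc} holding with an $o(s_K)$ remainder that is pointwise in $v$. If the latter needed care, I would instead go directly through \eqref{eq:main-bk-limit}: show $\rho_K b_K(v)/c_K^{3/2}\to\kappa v$ via the expansion $\sinh(vt)\approx vt$ on $[0,\sqrt{2c_K}]$.
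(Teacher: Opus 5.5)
Your proposal is correct and follows the paper's proof essentially step for step. It uses the same subsequence split: bounded $s_K$ is handled by Theorem~\ref{thm:bluniform}, and $s_K\to\infty$ is handled by showing $T_K/(\kappa s_K)\dto N(0,1)$ from the conditional Cauchy limit together with $b_K(v)=\kappa v s_K+o(s_K)$, then comparing $p_{\mathrm{BL}}(T_K;s_K)$ uniformly with $\barPh(T_K/(\kappa s_K))$ and finishing with Slutsky and P\'olya. The only difference is cosmetic: you get the uniform Gaussian approximation of $p_{\mathrm{BL}}(\cdot;s)$ directly from the mixture representation with a Lipschitz bound, whereas the paper cites Proposition~\ref{prop:bl-large-s}. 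Your worry about the $c=0$ inputs is unnecessary, since Theorem~\ref{thm:stablelimit} and \eqref{eq:main-bk-smallc} require only $c_K\to0$ and no bound on $s_K$.
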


Comparing with Corollary~\ref{cor:iffexact}, the exactness threshold improves from $\rho_K=o((\log K)^{-3})$ for the raw CCT to $\rho_K=o((\log K)^{-1})$ for BL-CCT, because the boundary-layer centring has been absorbed into the reference family. The next subsection examines the residual size distortion when $c_K=\rho_K\log K$ remains bounded away from zero.

\subsection{Behaviour on the broader scale}

When $c_K=\rho_K\log K$ does not vanish, the boundary-layer correction no longer eliminates all size distortion. The remaining discrepancy is governed by $c_K$.

\begin{proposition}\label{prop:bl-c-phase}
Fix $\alpha\in(0,1/2)$ and assume $\rho_K\downarrow0$.
If $c_K=\rho_K\log K\to c\in(0,\infty)$, then we have
\begin{equation}\label{eq:main-Xi}
  \Pbb \left(T_K>q_\alpha(s_K)\right)
  \to
  \Xi_\alpha(c)
  :=
  \Pbb \left(M_c>\kappa z_{1-\alpha}\sqrt c\right),
\end{equation}
where $M_c=B_c(V)/c$.
If $c_K\to\infty$, then we have
\[
  \Pbb \left(T_K>q_\alpha(s_K)\right)\to0.
\]
\end{proposition}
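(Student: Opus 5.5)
We condition on the latent factor and work on the scale $\rho_K^{-1}$, where Theorem~\ref{thm:stablelimit} shows that the centring $b_K(V)$ dominates. Throughout, take $c_K\to c\in(0,\infty)$. Then $s_K=c_K^{3/2}/\rho_K\to\infty$.

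\textbf{Step 1 (the cutoff).} By \eqref{eq:main-qalpha-large} in Proposition~\ref{prop:bl-structure},
\[
\rho_K q_\alpha(s_K)=\kappa z_{1-\alpha}\rho_K s_K+O(\rho_K)=\kappa z_{1-\alpha}c_K^{3/2}+o(1)\to \kappa z_{1-\alpha}c^{3/2}.
\]

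\textbf{Step 2 (the statistic).} Conditionally on $V=v$, \eqref{eq:main-conditional-stable} gives $T_K-b_K(v)=O_p(1)$. Hence $\rho_K(T_K-b_K(v))\to0$ in conditional probability, while $\rho_K b_K(v)\to B_c(v)$ by \eqref{eq:main-bk-limit}. Therefore $\rho_K T_K\to B_c(v)$ in probability, conditionally on $V=v$.

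\textbf{Step 3 (integrating over $V$).} For fixed $v$ with $B_c(v)\neq \kappa z_{1-\alpha}c^{3/2}$, Steps 1 and 2 give
\[
\Pbb(T_K>q_\alpha(s_K)\mid V=v)\to \1\{B_c(v)>\kappa z_{1-\alpha}c^{3/2}\}.
\]
The map $v\mapsto B_c(v)$ is strictly increasing and continuous, since $\partial_v\sinh(vt)=t\cosh(vt)>0$ for $t>0$. So the exceptional set is a single point and has $N(0,1)$-measure zero. Dominated convergence then yields
\[
\Pbb\left(T_K>q_\alpha(s_K)\right)\to\Pbb\left(B_c(V)>\kappa z_{1-\alpha}c^{3/2}\right)=\Pbb\left(M_c>\kappa z_{1-\alpha}\sqrt c\right).
\]
The last equality divides by $c>0$. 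This is exactly $\Xi_\alpha(c)$.

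\textbf{Case $c_K\to\infty$.} Theorem~\ref{thm:stablelimit} assumes a finite limit $c$, so I would argue directly, conditioning on $V=v$ and using a first-moment bound on a truncated mean.

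The cutoff satisfies $q_\alpha(s_K)\sim \kappa z_{1-\alpha}s_K$.

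I would show that $b_K(v)=o(s_K)$. Substitute $z=\sqrt{\rho_K}v+\sqrt{1-\rho_K}\,x$ and use $f(z)\approx 1/(\pi\barPh(z))$ in the upper tail. The truncation at $K$ caps $z$ near $\sqrt{2\log K}$, so
\[
b_K(v)\approx \frac{1}{\pi}\int^{\sqrt{2\log K}}\frac{\varphi(x)}{\barPh(\sqrt{\rho_K}v+x)}\,dx .
\]
On the relevant range the ratio $\varphi(x)/\barPh(x+\delta)$ grows like $x\,e^{x\delta}$ with $\delta=\sqrt{\rho_K}v$. The lower-tail contribution is symmetric, with $\delta\mapsto-\delta$. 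Together these give
\[
b_K(v)\lesssim \sqrt{\log K}\,\exp\bigl(\sqrt{2c_K}\,|v|\bigr)+\text{(lower order)}.
\]
Since $s_K=c_K^{3/2}/\rho_K$ and $\rho_K\to0$, this quantity is $o(s_K)$ as long as $\sqrt{2c_K}|v|$ grows more slowly than $\log(1/\rho_K)$. That holds for each fixed $v$ because $c_K\le\rho_K\log K$.

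The fluctuation $T_K-b_K(v)$ is conditionally tight on the scale $e^{\sqrt{2c_K}|v|}$, with the Lévy mass bounded by $\lambda^\pm$, so it is also $o(s_K)$.

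It follows that $\Pbb(T_K>q_\alpha(s_K)\mid V=v)\to0$ for every $v$, and dominated convergence gives the limit $0$.

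\textbf{Main obstacle.} The hard part is this $c_K\to\infty$ bound: it requires a uniform tail estimate for $b_K(v)$ outside the regime covered by Theorem~\ref{thm:stablelimit}. If that direct estimate is awkward, I would fall back on monotonicity in $\rho$ through a coupling argument, though coupling is delicate here.
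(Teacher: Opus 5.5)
Your argument for $c_K\to c\in(0,\infty)$ is correct and is essentially the paper's. The paper divides by $\log K$ and shows $T_K/\log K\dto M_c$ unconditionally before applying Slutsky. You multiply by $\rho_K$ and integrate the conditional indicator limits. Since $\rho_K\log K\to c>0$, these are the same argument.

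The case $c_K\to\infty$ has a genuine gap.

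\textbf{The centring heuristic drops the key factor.} You replace $\barPh(\sqrt{\rho_K}v+\sqrt{1-\rho_K}\,x)$ by $\barPh(\sqrt{\rho_K}v+x)$, but the factor $\sqrt{1-\rho_K}$ is exactly what matters in this regime. The correct ratio is $\varphi(x)/\barPh(\sqrt{1-\rho_K}\,x+\delta)\approx x\exp(-\rho_K x^2/2+\delta x)$. The Gaussian damping $e^{-\rho_K x^2/2}$, which equals $e^{-c_K}$ at $x\approx\sqrt{2\log K}$, is what keeps the centring small. Without it the integral is of order $\delta^{-1}\sqrt{\log K}\,e^{\sqrt{2c_K}|v|}$. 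Its ratio to $s_K=\sqrt{c_K}\log K$ is $\asymp e^{\sqrt{2c_K}|v|}/c_K\to\infty$, so it is never $o(s_K)$.

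\textbf{Your stated bound is also not $o(s_K)$ in general.} The justification ``because $c_K\le\rho_K\log K$'' is vacuous, since $c_K=\rho_K\log K$ by definition. For example, $\rho_K=(\log K)^{-1/2}$ gives $s_K=(\log K)^{5/4}$, while $e^{\sqrt{2c_K}|v|}=e^{\sqrt2|v|(\log K)^{1/4}}$ outgrows every power of $\log K$.

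\textbf{The fluctuation claim fails for the same reason and has no source.} The same example defeats the claimed fluctuation scale $e^{\sqrt{2c_K}|v|}$. Moreover, Theorem~\ref{thm:stablelimit}, Proposition~\ref{prop:tri-calibration} and Lemma~\ref{lem:uniform-tail-xinv} all assume $c_K$ convergent or bounded. When $c_K\to\infty$, the intensities $\lambda_K^{\pm}(v)=e^{-c_K\pm\sqrt{2c_K}v}$ tend to $0$ rather than to a fixed $\lambda^{\pm}$. The constant in Lemma~\ref{lem:uniform-tail-xinv} blows up, because its proof discards the damping term. So nothing you cite gives tightness on any scale.

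\textbf{How to repair it.} The conclusion is true. You need the correct order $\rho_K^{-1}$ for the centre, plus a concentration bound that does not use the stable machinery. The paper uses the untruncated conditional mean. Proposition~\ref{prop:large-mean} applies the von Bahr--Esseen inequality with exponent $q_K=(1+\theta_K)/2$, where $\theta_K=(1-\rho_K)^{-1}$. This gives $\Pbb(|T_K-\mu_{\rho_K}(v)|>|\mu_{\rho_K}(v)|/2\mid V=v)\le C_vK^{1-q_K}=C_v e^{-c_K/(2(1-\rho_K))}\to0$. Theorem~\ref{thm:mu-smallrho} gives $|\mu_{\rho_K}(v)|\sim\sqrt{2/\pi}\,|v|e^{v^2/2}/\rho_K$. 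Hence $q_\alpha(s_K)/|\mu_{\rho_K}(v)|\asymp\rho_K s_K=c_K^{3/2}\to\infty$, and rejection has vanishing conditional probability for every $v\neq0$.

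If you prefer to keep your truncated centring, substitute $t=\sqrt{\rho_K}z$ as in Lemma~\ref{lem:bK-exact}. With $u_K\to\infty$ and dominating function $Cte^{-t^2/2+2|v|t}$, this gives $\rho_K b_K(v)\to\sqrt{2/\pi}\,ve^{v^2/2}$, hence $b_K(v)=o(s_K)$. You would still need separate bounds on $K\Pbb(|X_{1,K}^{(v)}|>K\mid V=v)$ and on the truncated variance that remain valid when $c_K\to\infty$. Your fallback ``monotonicity in $\rho$ via coupling'' is not an argument as it stands.
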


The limiting size $\Xi_\alpha(c)$ is thus determined by the distribution of $M_c=B_c(V)/c$, a univariate transformation of the latent factor. The next result extends this to a finite-$K$ uniform approximation on compact positive $c$-windows.

\begin{proposition}\label{prop:bl-c-current}
Fix $0<c_-<c_+<\infty$ and assume
\[
  c_K\in[c_-,c_+]
\]
for all sufficiently large $K$. Let $H_c$ denote the distribution function of $M_c$. Then we have
\begin{equation}\label{eq:main-current-cdf}
  \sup_{x\in\R}
  \left|
    \Pbb \left(\frac{T_K}{\log K}\le x\right)-H_{c_K}(x)
  \right|
  \to 0.
\end{equation}
Consequently, for every fixed $\alpha\in(0,1/2)$, we have
\begin{equation}\label{eq:main-current-size}
  \Pbb \left(T_K>q_\alpha(s_K)\right)
  =
  \Xi_\alpha(c_K)+o(1).
\end{equation}
\end{proposition}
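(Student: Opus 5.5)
The plan is to prove a uniform-in-$c$ version of the conditional decomposition in Theorem~\ref{thm:stablelimit}, rescaled by $\log K$, and then integrate over $V$.

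\textbf{Step 1: reduction to the centring term.} Conditionally on $V=v$, Theorem~\ref{thm:stablelimit} gives $T_K-b_K(v)=O_p(1)$. By \eqref{eq:main-bk-limit},
\[
\frac{b_K(v)}{\log K}=\frac{\rho_K b_K(v)}{c_K}\approx \frac{B_{c_K}(v)}{c_K}.
\]
Since the stable fluctuation is $O_p(1)$, it vanishes after division by $\log K\to\infty$. So conditionally,
\[
\frac{T_K}{\log K}-\frac{B_{c_K}(v)}{c_K}\xrightarrow{p}0 .
\]

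\textbf{Step 2: uniformity in $c$.} Theorem~\ref{thm:stablelimit} assumes $c_K\to c$, but here $c_K$ only stays in the window $[c_-,c_+]$. I will argue along subsequences. Every subsequence has a further subsequence with $c_K\to c\in[c_-,c_+]$, and along it Theorem~\ref{thm:stablelimit} applies. On this window:
\begin{itemize}
\item $(c,v)\mapsto B_c(v)/c$ is jointly continuous;
\item tightness of the stable parts holds locally uniformly in $v$, since $\lambda^{\pm}(v)$ are continuous in $(c,v)$.
\end{itemize}
Hence $\Pbb(T_K/\log K\le x\mid V=v)-\1\{B_{c_K}(v)/c_K\le x\}\to0$ for every $v$ with $B_{c}(v)/c\ne x$.

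\textbf{Step 3: integration over $V$ and uniformity in $x$.} Dominated convergence in $v$ gives pointwise convergence of $\Pbb(T_K/\log K\le x)-H_{c_K}(x)$ along each subsequence, except at $x$ where $\Pbb(M_c=x)>0$. This set is empty because $v\mapsto B_c(v)$ is strictly increasing in $v$: $\partial_v\sinh(vt)=t\cosh(vt)>0$. So $M_c$ has a continuous law, and the family $\{H_c:c\in[c_-,c_+]\}$ is equicontinuous in $x$. Pólya's argument (pointwise convergence to a continuous limit, plus monotonicity) then upgrades this to the supremum in \eqref{eq:main-current-cdf}. Since the subsequence was arbitrary, the full sequence converges.

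\textbf{Step 4: the size statement.} By \eqref{eq:main-qalpha-large} and $s_K=\sqrt{c_K}\log K\to\infty$,
\[
\frac{q_\alpha(s_K)}{\log K}=\kappa z_{1-\alpha}\sqrt{c_K}+O\!\left(\frac{1}{\log K}\right).
\]
Apply \eqref{eq:main-current-cdf} with $x$ equal to this quantity. The $O(1/\log K)$ shift is absorbed by the equicontinuity of $H_c$. This yields \eqref{eq:main-current-size}, with $\Xi_\alpha(c_K)=1-H_{c_K}(\kappa z_{1-\alpha}\sqrt{c_K})$.

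\textbf{Main obstacle.} The hard part is Step~1–2: making the approximation $\rho_K b_K(v)=B_{c_K}(v)+o(1)$ hold locally uniformly in $v$ and along varying $c_K$, rather than at a fixed limit $c$. I would first revisit the proof of \eqref{eq:main-bk-limit}, writing $b_K(v)$ as a Gaussian integral over the region where $f(z)\le K$. I expect the error terms to be controlled by continuous functions of $(c_K,v)$, which gives the uniformity. If that fails, the subsequence argument with continuity in $c$ should still suffice.
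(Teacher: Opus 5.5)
Your proposal is correct and follows essentially the paper's route. The paper likewise passes to a sub-subsequence with $c_K\to c$, uses the conditional stable limit together with $\rho_K b_K(v)\to B_c(v)$ to get $T_K/\log K\to B_c(v)/c$ in conditional probability, integrates over $V$ by dominated convergence, upgrades to uniform convergence via P\'olya, and treats the cutoff through $q_\alpha(s)=\kappa z_{1-\alpha}s+O_\alpha(1)$.

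Two small remarks:
\begin{itemize}
\item Equicontinuity of $\{H_c\}$ does not follow from strict monotonicity of each $B_c$ alone. It follows from the uniform bound $\partial_v\{B_c(v)/c\}\ge \frac{2}{\pi c_+}\int_0^{\sqrt{2c_-}}t^2e^{-t^2/2}\,dt>0$ on the window, which gives a uniform density bound for $M_c$; this is what the paper's lemma on $M_c$ provides.
\item Your ``main obstacle'' is not actually one. Pointwise-in-$v$ convergence is enough for the dominated-convergence step over $V$, so no local uniformity in $v$ is required.
\end{itemize}
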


Together with Proposition~\ref{prop:bl-c-phase}, this shows that $c_K$ fully determines the calibrated size beyond the boundary layer. At conventional levels such as $\alpha=0.05$ and $0.1$, the size function $\Xi_\alpha(c)$ is strictly below $\alpha$ for all $c>0$, so the residual distortion is conservative.

\begin{corollary}\label{cor:bl-iff-common}
Assume $\rho_K\downarrow0$.  If $\alpha\in[\barPh(\sqrt{3}),1/2)$, then $\Xi_\alpha(c)<\alpha$ for every $c>0$, and therefore
\begin{equation}\label{eq:main-bl-common-exactness}
  \Pbb \left(T_K>q_\alpha(s_K)\right)\to\alpha
  \quad\Longleftrightarrow\quad
  \rho_K\log K\to0.
\end{equation}
In particular, this covers standard levels such as $\alpha=0.05$ and $\alpha=0.1$.
\end{corollary}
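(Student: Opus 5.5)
The plan is to prove two things. The first is the strict inequality $\Xi_\alpha(c)<\alpha$ for all $c>0$ when $\alpha\ge\barPh(\sqrt3)$. The second is the equivalence \eqref{eq:main-bl-common-exactness}: sufficiency is Corollary~\ref{cor:bl-c0}, and necessity comes from a subsequence argument built on Proposition~\ref{prop:bl-c-phase}.

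\textbf{The inequality.} Since $M_c=B_c(V)/c$ and $t\mapsto\sinh(vt)$ is strictly increasing in $v$ for $t>0$, the map $v\mapsto B_c(v)$ is strictly increasing and odd. Hence
\[
\Xi_\alpha(c)=\Pbb(V>v_c),
\]
where $v_c>0$ is the unique solution of $B_c(v_c)=\kappa z_{1-\alpha}c^{3/2}$. Thus $\Xi_\alpha(c)<\alpha$ is equivalent to $v_c>z_{1-\alpha}$, i.e.\ to
\[
B_c(z_{1-\alpha})<\kappa z_{1-\alpha}c^{3/2}.
\]
Substituting $t=\sqrt{2c}\,u$ and using $\kappa c^{3/2}=\frac{2}{\pi}\int_0^{\sqrt{2c}}t^2\,dt$, this becomes, with $z=z_{1-\alpha}$,
\[
\int_0^{\sqrt{2c}} t\bigl(e^{-t^2/2}\sinh(zt)-zt\bigr)\,dt<0 .
\]
A sufficient condition is pointwise negativity of the integrand:
\[
g(t):=e^{-t^2/2}\sinh(zt)/(zt)<1 \quad\text{for all } t>0.
\]
Expanding, $\log g(t)=-t^2/2+z^2t^2/6+O(t^4)$, so $g<1$ near $0$ requires $z^2\le3$. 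This is exactly where the threshold $\alpha\ge\barPh(\sqrt3)$ enters. For the global claim I would use the product expansion
\[
\frac{\sinh x}{x}=\prod_{n\ge1}\Bigl(1+\frac{x^2}{n^2\pi^2}\Bigr)\le\exp\Bigl(\frac{x^2}{6}\Bigr),
\]
which holds since $\log(1+y)\le y$ and $\sum n^{-2}=\pi^2/6$, with strict inequality for $x\neq0$. Hence $g(t)<\exp(-t^2/2+z^2t^2/6)\le1$ whenever $0<z\le\sqrt3$. Because $\alpha<1/2$ forces $z>0$, and $\alpha\ge\barPh(\sqrt3)$ gives $z\le\sqrt3$, the integrand is strictly negative on $(0,\sqrt{2c})$ and the inequality follows. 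The standard levels are covered since $\barPh(\sqrt3)\approx0.0416<0.05$.

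\textbf{Necessity.} Suppose $\rho_K\log K\not\to0$. Then along some subsequence either $c_K\to c\in(0,\infty)$ or $c_K\to\infty$. Proposition~\ref{prop:bl-c-phase} applies along subsequences, since its proof only uses the limit of $c_K$. Along such a subsequence the size therefore tends to $\Xi_\alpha(c)<\alpha$ or to $0<\alpha$, so the size does not converge to $\alpha$.

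\textbf{Main obstacle.} The delicate point is the global (not just local) bound $g<1$, which the product-formula bound handles cleanly. I would also double-check that $\Xi_\alpha(c)$ is computed with strict inequality and continuity of $V$, so the reduction to $v_c>z$ is exact.
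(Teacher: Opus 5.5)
Your proposal is correct and follows the paper's proof essentially step for step. You reduce $\Xi_\alpha(c)<\alpha$ to $B_c(z_{1-\alpha})<\kappa z_{1-\alpha}c^{3/2}$ via the strictly increasing map $v\mapsto B_c(v)$, obtain this from the pointwise bound $e^{-t^2/2}\sinh(zt)<zt$ (valid for $0<z\le\sqrt3$) using $\sinh x/x<e^{x^2/6}$, and get the equivalence from Corollary~\ref{cor:bl-c0} together with a subsequence argument based on Proposition~\ref{prop:bl-c-phase}.

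The only difference is cosmetic. You derive $\sinh x/x<e^{x^2/6}$ from the Euler product $\prod_{n}(1+x^2/(n^2\pi^2))$ and $\log(1+y)<y$, whereas the paper compares power series termwise using $(2m+1)!\ge 6^m m!$. Both arguments work. You should also add one line showing $B_c(v)\to\infty$ as $v\to\infty$, so that $v_c$ actually exists, as the paper does.
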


Corollary~\ref{cor:bl-iff-common} establishes that $\rho_K\log K\to0$ is both necessary and sufficient for exactness of BL-CCT at conventional levels, paralleling Corollary~\ref{cor:iffexact} for the raw CCT. When $c_K$ is bounded away from zero, the calibrated size converges to $\Xi_\alpha(c)<\alpha$, which can be computed numerically. Section~\ref{sec:numerics} illustrates the finite-sample behaviour.

\section{Power analysis}\label{sec:power}

This section studies alternatives of the form
\[
  Z_i=\mu_i+\sqrt{\rho_K}V+\sqrt{1-\rho_K}\varepsilon_i,
  \qquad i=1,\ldots,K,
\]
while keeping the CCT statistic
\[
  T_K=\frac{1}{K}\sum_{i=1}^K f(Z_i)
\]
unchanged; the null model of Section~\ref{sec:setup} is the case $\mu_i\equiv0$.  Throughout this section,
\[
  \bar F_\gamma(x):=\frac12-\frac1\pi\arctan(x/\gamma),\qquad \gamma>0,
\]
is the survival function of a centred Cauchy law with scale $\gamma$.  For an alternative law, define
\[
  \Pi^{\rm raw}_{K}:=\Pbb_{\rm alt}(T_K>t_\alpha),
  \qquad
  \Pi^{\rm BL}_{K}:=\Pbb_{\rm alt}(T_K>q_\alpha(s_K)).
\]
Since $q_\alpha(s_K)\ge t_\alpha$, the BL rejection region is contained in that of the raw CCT, so the calibration can only lower power; the question is whether the loss is asymptotically negligible. We examine three canonical classes of alternatives: \emph{local dense} shifts that perturb every coordinate at the Pitman scale; \emph{sparse} signals, a vanishing fraction of large effects and the regime for which the CCT was designed~\cite{LiuXie2020}, where one small $p$-value already yields a $K$-scale Cauchy score that dominates the polylogarithmic cutoff gap; and \emph{dense Gaussian random effects}, a variance-type departure with no fixed shift. In each case the signal enters the limit of $T_K$ through the same channel as the latent factor, by tilting the marginal mean, so the analysis reuses the stable-limit machinery of Sections~\ref{sec:raw cct}--\ref{sec:calibration}.

\subsection{Local dense alternatives}

A marginal mean shift $\mu$ tilts the truncated mean of the Cauchy score by approximately $\kappa\mu(\log K)^{3/2}$, the same channel through which the latent factor produces the null centring $b_K(v)\approx\kappa\sqrt{\rho_K}\,v\,(\log K)^{3/2}$ in Theorem~\ref{thm:stablelimit}; shifts of order $(\log K)^{-3/2}$ therefore produce a nondegenerate limiting drift. The theorem below is stated in a heterogeneous form: the first condition rules out individually large shifts, which would activate the sparse mechanism of the next subsection, and the other two normalise the aggregate drift to a limit $h$. The homogeneous alternative $\mu_i\equiv h/(\kappa (\log K)^{3/2})$ is the special case.

\begin{theorem}[Local dense power]\label{thm:power-local}
Assume
\[
  \rho_K\downarrow0,\qquad
  s_K=\sqrt{\rho_K}(\log K)^{3/2}\to s\in[0,\infty).
\]
Let $\mu_{1,K},\ldots,\mu_{K,K}$ be deterministic shifts satisfying
\[
  \max_{1\le i\le K}|\mu_{i,K}|\sqrt{\log K}\to0,
\]
and
\[
  \frac{(\log K)^{3/2}}{K}\sum_{i=1}^K|\mu_{i,K}|=O(1),
  \qquad
  \frac{\kappa (\log K)^{3/2}}{K}\sum_{i=1}^K\mu_{i,K}\to h .
\]
Then, conditionally on $V=v$,
\[
  T_K\dto \mathsf{C}(h+\kappa sv,1).
\]
Consequently,
\[
  \Pi^{\rm raw}_{K}\to
  \Pi^{\rm raw}_\alpha(h,s)
  :=
  \E\left[\bar F_1(t_\alpha-h-\kappa sV)\right],
\]
and
\[
  \Pi^{\rm BL}_{K}\to
  \Pi^{\rm BL}_\alpha(h,s)
  :=
  \E\left[\bar F_1(q_\alpha(s)-h-\kappa sV)\right]
  =
  p_{\rm BL}(q_\alpha(s)-h;s).
\]
Moreover,
\[
  0\le
  \Pi^{\rm raw}_\alpha(h,s)-\Pi^{\rm BL}_\alpha(h,s)
  =
  \frac{1}{\pi}\E\left[
    \arctan(q_\alpha(s)-h-\kappa sV)
    -
    \arctan(t_\alpha-h-\kappa sV)
  \right],
\]
and, as $s\downarrow0$,
\[
  \Pi^{\rm raw}_\alpha(h,s)-\Pi^{\rm BL}_\alpha(h,s)=O_\alpha(s^2).
\]
In particular, on the exactness scale $\rho_K(\log K)^3\to0$, raw CCT and BL-CCT have the
same local asymptotic power,
\[
  \bar F_1(t_\alpha-h)
  =
  \frac{1}{2}-\frac{1}{\pi}\arctan(t_\alpha-h).
\]
\end{theorem}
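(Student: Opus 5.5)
The plan is to condition on $V=v$ and treat $T_K$ as a triangular-array sum of independent, non-identically distributed heavy-tailed variables
\[
X_{i,K}=f(\mu_{i,K}+\sqrt{\rho_K}v+\sqrt{1-\rho_K}\varepsilon_i).
\]
I would apply the classical convergence criterion for infinitely divisible limits, exactly as in the proof of Theorem~\ref{thm:stablelimit}. The conditions are:
\begin{itemize}
\item[(i)] convergence of the summed tail measures $\sum_i\Pbb(X_{i,K}/K\in\cdot)$ to a Lévy measure;
\item[(ii)] asymptotic negligibility of the truncated second moments near zero;
\item[(iii)] convergence of the centring $\frac1K\sum_i\E[X_{i,K}\1\{|X_{i,K}|\le K\}]$.
\end{itemize}

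For (i), use the tail expansion $f(z)\sim 1/(\pi\barPh(z))$ as $z\to\infty$. Then, with $a_i:=\mu_{i,K}+\sqrt{\rho_K}v$,
\[
\Pbb(X_{i,K}>Kx)\approx\frac{1}{\pi Kx}\exp\bigl(a_i\sqrt{2\log K}+O(a_i^2\log K)\bigr),
\]
uniformly in $x$ on compacts. Since $c_K\to0$ (because $s_K$ is bounded) and $\max_i|\mu_{i,K}|\sqrt{\log K}\to0$, every exponent is $o(1)$ uniformly in $i$. Hence the summed tail measure converges to $\Lambda$ with $\lambda^\pm=1$, the standard Cauchy Lévy measure. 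The left tail is handled symmetrically. Condition (ii) follows from the same uniform bounds, as in the null case.

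The main step is (iii). Write $g_K(a):=\E[f(a+\sqrt{1-\rho_K}\varepsilon)\1\{|f|\le K\}]$, so the centring is $\frac1K\sum_i g_K(a_i)$. I would show
\[
g_K(a)=\kappa a(\log K)^{3/2}+R_K(a),
\qquad
\sup_{|a|\le\delta_K/\sqrt{\log K}}\frac{|R_K(a)|}{|a|(\log K)^{3/2}}\to0
\]
for any $\delta_K\to0$. Two facts drive this. First, $g_K(0)$ vanishes up to $o(1)$ by symmetry of $f$; the factor $\sqrt{1-\rho_K}$ costs only $O(\rho_K\cdot\text{polylog})=o(1)$. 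Second, $g_K'(a)$ equals $\kappa(\log K)^{3/2}(1+o(1))$ uniformly on this range. To see the second fact, differentiate under the Gaussian density via Stein's identity, or reuse the computation behind \eqref{eq:main-bk-limit}: the calculation $b_K(v)\approx\kappa vs_K$ is precisely $g_K(\sqrt{\rho_K}v)$, and the derivation of \eqref{eq:main-bk-smallc} used only that the shift is $o((\log K)^{-1/2})$. Rewriting that derivation with a general shift $a$ in place of $\sqrt{\rho_K}v$, keeping track of uniformity, is the obstacle I expect. I would carry it out by substituting $z=\sqrt{2\log K}\,t$ in the integral against $f\approx1/(\pi\barPh)$, up to the truncation point where $\barPh(z)=1/(\pi K)$.

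Summing the expansion gives
\[
\frac1K\sum_i g_K(a_i)=\kappa(\log K)^{3/2}\Bigl(\sqrt{\rho_K}v+\frac1K\sum_i\mu_{i,K}\Bigr)+o(1)\to\kappa sv+h.
\]
The remainder is controlled by $o(1)\cdot\frac{(\log K)^{3/2}}{K}\sum_i|a_i|$, which is $o(1)$ by the $O(1)$ hypothesis. The triangular-array theorem then yields $T_K\dto\mathsf{C}(h+\kappa sv,1)$ conditionally on $V=v$.

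The remaining claims follow quickly. Bounded convergence over $V$ gives $\Pi^{\rm raw}_K\to\E[\bar F_1(t_\alpha-h-\kappa sV)]$. Since $s_K\to s$ and $q_\alpha$ is continuous by Proposition~\ref{prop:bl-structure}, the same argument gives the BL limit, which equals $p_{\rm BL}(q_\alpha(s)-h;s)$ by definition \eqref{eq:main-pbl}. The difference formula is immediate from $\bar F_1=\frac12-\frac1\pi\arctan$. Its sign follows from $q_\alpha(s)\ge t_\alpha$ and monotonicity of $\arctan$. The bound $O_\alpha(s^2)$ follows because $\arctan$ is $1$-Lipschitz and $q_\alpha(s)-t_\alpha=O_\alpha(s^2)$ by \eqref{eq:main-qalpha-small}. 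Finally, setting $s=0$ gives the common power $\bar F_1(t_\alpha-h)$.
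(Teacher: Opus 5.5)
Your proposal is correct and follows essentially the paper's route: the paper packages your steps (i)--(iii) as Lemma~\ref{lem:power-local-hetero}, applied conditionally on $V=v$ with $a_{i,K}=\mu_{i,K}+\sqrt{\rho_K}v$ and $\delta_K=-\rho_K$, and computes the centring by rerunning the small-$c$ integral with a general shift, as you propose. It then obtains the power limits and the $O_\alpha(s^2)$ gap exactly as you do, via bounded convergence over $V$, the $1/\pi$ bound on the Cauchy density, and the small-$s$ expansion of $q_\alpha$; the only minor point is that $g_K(0)=0$ holds exactly, since $f$ is odd and the truncation $\{|f|\le K\}$ is symmetric, so no $o(1)$ correction from $\sqrt{1-\rho_K}$ is needed there.
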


At $h=0$ the two limits reduce to the null limits $\Psi_\alpha(s)$ of Corollary~\ref{cor:phasediagram} and $\alpha$, so for $s>0$ part of the raw CCT test's apparent power advantage is its size inflation; the difference display above quantifies this effect, and it vanishes at rate $s^2$.

\begin{remark}[Homogeneous local shift]\label{rem:power-hom-local}
For the homogeneous alternative $\mu_i\equiv h/(\kappa (\log K)^{3/2})$, the assumptions of Theorem~\ref{thm:power-local} hold automatically, so $(\log K)^{-3/2}$ is the Pitman scale for a dense deterministic mean shift of the CCT score.
\end{remark}

\subsection{Sparse signals}

Let $S_K\subset\{1,\ldots,K\}$ be a signal set, independent of $V$ and of the
$\varepsilon_i$'s, such that
\[
  |S_K|=K^{1-\beta+o_p(1)},\qquad \beta\in(1/2,1).
\]
For $i\in S_K$, set
\[
  \mu_i=\sqrt{2r\log K},\qquad r\in(0,1),
\]
and for $i\notin S_K$, set $\mu_i=0$.  This includes the Bernoulli sparse mixture in which each coordinate is independently a signal with probability $K^{-\beta}$.

The first-order boundary for CCT is the Bonferroni/max boundary
\[
  r_{\rm max}(\beta):=\bigl(1-\sqrt{1-\beta}\bigr)^2.
\]
An aggregate heuristic, which asks when the expected sum of signal scores reaches the order $K$ of the cutoff fluctuations, would instead point to the larger value $r=\beta$. But the CCT can already reject once a single coordinate produces a $K$-scale Cauchy score: the largest of the $|S_K|\approx K^{1-\beta}$ signal coordinates is of size $\{\sqrt{2r\log K}+\sqrt{2(1-\beta)\log K}\}\{1+o_p(1)\}$, and this exceeds the $K$-score threshold $\sqrt{2\log K}$ precisely when $r>r_{\rm max}(\beta)$.

\begin{theorem}[Sparse power and the CCT detection boundary]\label{thm:power-sparse}
Assume
\[
  \rho_K\downarrow0,\qquad \rho_K\log K=O(1),
\]
and let $d_K>0$ be any deterministic rejection threshold satisfying
\[
  \max\{\log d_K,0\}=o(\log K).
\]
Thus $d_K$ may grow, but more slowly than any positive power of $K$. This class contains $d_K=t_\alpha$ and, under $\rho_K\log K=O(1)$, also
$d_K=q_\alpha(s_K)$.  Under the sparse alternative above,
\[
  \Pbb_{\rm alt}(T_K>d_K)\to1
  \qquad\text{if } r>r_{\rm max}(\beta).
\]
If $r<r_{\rm max}(\beta)$, then
\[
  T_K=T_K^{(0)}+o_p(1),
\]
where $T_K^{(0)}$ is the null CCT statistic with the same one-factor dependence and the
same noises.  Consequently, for the same deterministic threshold sequence $d_K$,
\[
  \Pbb_{\rm alt}(T_K>d_K)-\Pbb_0(T_K^{(0)}>d_K)\to0 .
\]

In particular, if $r>r_{\rm max}(\beta)$, then
\[
  \Pi^{\rm raw}_{K}\to1,\qquad \Pi^{\rm BL}_{K}\to1,
\]
so calibration has no first-order power cost in the detectable sparse regime.  If
$r<r_{\rm max}(\beta)$, then the signal is invisible to CCT at first order: on the
exactness scale $\rho_K(\log K)^3\to0$, both raw CCT and BL-CCT have limiting power
$\alpha$; on a bounded boundary layer $s_K\to s<\infty$, raw CCT has the null limit
$\Psi_\alpha(s)$ and BL-CCT has the calibrated null limit $\alpha$.
\end{theorem}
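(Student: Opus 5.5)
The plan splits $T_K$ into a null part and a signal part. Write $T_K=T_K^{(0)}+D_K$ with
\[
D_K=\frac1K\sum_{i\in S_K}\bigl\{f(Z_i^{(0)}+\mu_i)-f(Z_i^{(0)})\bigr\},
\qquad Z_i^{(0)}=\sqrt{\rho_K}V+\sqrt{1-\rho_K}\varepsilon_i .
\]
Since $f$ is increasing, every summand is nonnegative, so $D_K\ge0$. Throughout we use the tail behaviour $f(z)\sim 1/(\pi\bar\Phi(z))\sim \sqrt{2\pi}\,z e^{z^2/2}/\pi$ as $z\to\infty$. We also use $|f(z)|\lesssim (1+|z|)e^{z^2/2}$ and $f(z)\ge -1/(\pi\Phi(z))$ for $z\to-\infty$.

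The null part is handled by results already in the paper. Since $c_K=O(1)$, Theorem~\ref{thm:stablelimit} and Proposition~\ref{prop:bl-c-current} (along subsequences with $c_K\to c$) show that $T_K^{(0)}/\log K$ is tight. More precisely, $|T_K^{(0)}|=O_p(\log K)$: the centring is $b_K(V)=O_p(\rho_K^{-1})$ with $\rho_K^{-1}\asymp\log K/c_K$ when $c_K\asymp1$, and it is $O(s_K)=o(\log K)$ otherwise. In every case $|T_K^{(0)}|=K^{o(1)}$ in probability. The same results show that $q_\alpha(s_K)=O(s_K)$ and $s_K=O(\log K)$, so $d_K=q_\alpha(s_K)$ satisfies $\log d_K=o(\log K)$, as claimed in the statement.

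\emph{Detectable regime $r>r_{\max}(\beta)$.} Because $D_K\ge0$ with nonnegative summands, $D_K\ge K^{-1}\{f(Z_{j}^{(0)}+\mu_j)-f(Z_j^{(0)})\}$ for $j=\arg\max_{i\in S_K}\varepsilon_i$. Conditionally on $|S_K|$ and $V$, the maximum of $K^{1-\beta+o(1)}$ i.i.d.\ normals is $\sqrt{2(1-\beta)\log K}\,(1+o_p(1))$. The factor contribution is $\sqrt{\rho_K}V=O_p((\log K)^{-1/2})$, which is negligible. Hence $Z_j=\{\sqrt{2r}+\sqrt{2(1-\beta)}\}\sqrt{\log K}\,(1+o_p(1))$. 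This exceeds $(1+\delta)\sqrt{2\log K}$ for some $\delta>0$ exactly when $\sqrt r+\sqrt{1-\beta}>1$, i.e.\ when $r>r_{\max}(\beta)$. Then $f(Z_j)\ge K^{1+\delta'}$ with probability tending to one. The subtracted term satisfies $f(Z_j^{(0)})\le K^{1-\beta+o(1)}$. Therefore $D_K\ge K^{\delta'}/2$ with probability tending to one, which dominates both $|T_K^{(0)}|=K^{o(1)}$ and $d_K=K^{o(1)}$. This gives $\Pbb_{\rm alt}(T_K>d_K)\to1$.

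\emph{Undetectable regime $r<r_{\max}(\beta)$.} We show $D_K\to0$ in probability by a truncated first-moment bound conditional on $V$ (with $|V|\le M$) and on $|S_K|=m=K^{1-\beta+o(1)}$. Let $a=\sqrt r+\sqrt{1-\beta}<1$.
\begin{itemize}
\item With probability tending to one, every signal coordinate satisfies $Z_i\le (a+\eta)\sqrt{2\log K}<\sqrt{2\log K}$, by the maximum computation above.
\item On this event, $\E[D_K]\le (m/K)\,\E\bigl[f(\sqrt{1-\rho}\,\varepsilon+\mu+\sqrt{\rho}v)\1\{\cdot\le (a+\eta)\sqrt{2\log K}\}\bigr]$.
\item Use the bound $f(z)\lesssim z e^{z^2/2}$ and complete the square, $e^{(\varepsilon+\mu)^2/2}e^{-\varepsilon^2/2}=e^{\mu\varepsilon+\mu^2/2}$.
\item The truncated integral of $e^{\mu\varepsilon}$ up to $\varepsilon\le (a+\eta)\sqrt{2\log K}-\mu$ is of order $K^{2\sqrt r(a+\eta-\sqrt r)+o(1)}$. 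Together with the factor $K^{r}$ from $e^{\mu^2/2}$, this gives the exponent $2\sqrt r(a+\eta)-r$.
\end{itemize}
The total exponent is therefore $-\beta+2\sqrt r(\sqrt r+\sqrt{1-\beta})-r+O(\eta)$. Writing $x=\sqrt r$, this equals $x^2+2x\sqrt{1-\beta}-\beta+O(\eta)=(x+\sqrt{1-\beta})^2-1+O(\eta)$, which is negative precisely when $a<1$. Hence $D_K\to0$ in probability.

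The subtracted terms $f(Z_i^{(0)})$ for $i\in S_K$ also need control. Their positive part is handled by the same computation with $\mu=0$. Their negative part satisfies $|f|\lesssim 1/\Phi(z)$, whose contribution is $m/K\cdot K^{o(1)}\to0$ after truncating at $\varepsilon_i\ge-\sqrt{2(1+\eta)\log K}$.

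With $T_K=T_K^{(0)}+o_p(1)$ in hand, the difference $\Pbb_{\rm alt}(T_K>d_K)-\Pbb_0(T_K^{(0)}>d_K)$ tends to zero. This follows because the law of $T_K^{(0)}$ is asymptotically continuous uniformly in $t$: it is uniformly approximated by the continuous laws $F_{s_K}$ from Theorem~\ref{thm:bluniform} or $H_{c_K}$ from Proposition~\ref{prop:bl-c-current}, so an $o_p(1)$ perturbation changes probabilities by $o(1)$. When $c_K$ is bounded away from zero, the perturbation is $o_p(1)$ relative to the scale $\log K$, which suffices. The final statements then follow from Corollaries~\ref{cor:phasediagram} and~\ref{cor:bl-c0}.

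The main obstacle is the exponent bookkeeping in the truncated moment bound. It must be uniform over the random $|S_K|$ and over $V$, and it must handle the regime $\beta>1/2$, where the maximum of the signal normals lies below the truncation point so that the truncation is active.
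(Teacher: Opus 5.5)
Your argument is correct, but it is organised differently from the paper's.

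\emph{The paper's route.} The paper splits $T_K$ into signal and non-signal coordinates and controls each with sign-aware bounds. In the detectable case it shows that the number of signal scores exceeding $K^{1+\gamma}$ is binomial with diverging mean. It then rules out cancellation separately: the non-signal part is $O_p(\log K)$ via the deletion Lemma~\ref{lem:power-delete-null}, no signal score falls below $-K^{1+\gamma}$, and the truncated negative signal mass is $o_p(1)$ by Markov. In the undetectable case it truncates at $|f|\le K\eta$ and shows the actual signal contribution is $o_p(1)$. It then invokes Lemma~\ref{lem:power-delete-null} again to remove the null scores at the signal positions, which requires the truncated mean $b_K(v)$ and a second-moment small-jump bound.

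\emph{Your route.} Your monotone coupling $T_K=T_K^{(0)}+D_K$ has increments $f(Z_i^{(0)}+\mu_i)-f(Z_i^{(0)})\ge0$. In the detectable case this makes cancellation a non-issue, since a single maximal coordinate suffices. In the undetectable case it lets you bound $D_K\le K^{-1}\sum_{i\in S_K}\{f(Z_i)_++f(Z_i^{(0)})_-\}$ by one-sided first moments on a high-probability event, so the two-sided deletion lemma is never needed. Your truncation at $(a+\eta)\sqrt{2\log K}$ gives the exponent $a^2-1$. The paper's truncation at $f^{-1}(K\eta)\approx\sqrt{2\log K}$ gives $(1-\beta)-(1-\sqrt r)^2$. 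Both are negative exactly when $r<r_{\max}(\beta)$.

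\emph{Trade-off.} The paper's decomposition is more modular because it does not rely on the increments having a fixed sign, which would fail for signed alternatives. Yours is shorter and more elementary for the positive-shift model actually stated.

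One step needs tightening: the anti-concentration of $T_K^{(0)}$ at $d_K$. Theorem~\ref{thm:bluniform} requires $\sup_K s_K<\infty$, and Proposition~\ref{prop:bl-c-current} requires $c_K$ bounded away from $0$. So the regime $c_K\to0$, $s_K\to\infty$, which $\rho_K\log K=O(1)$ allows, is not covered by the results you cite. Since $c_K$ need not converge, you also need an explicit subsequence argument.

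The repair is immediate:
\begin{itemize}
\item Corollary~\ref{cor:bl-c0}, together with continuity of the law of $T_K^{(0)}$, gives $\sup_t|\Pbb(T_K^{(0)}\le t)-F_{s_K}(t)|\to0$ for every sequence with $c_K\to0$.
\item Each $F_s$ has density bounded by $1/\pi$, being a Gaussian smoothing of the Cauchy density. Hence $\Pbb(|T_K^{(0)}-d_K|\le\varepsilon)\le 2\varepsilon/\pi+o(1)$.
\item Along subsequences with $c_K\to c>0$, use Proposition~\ref{prop:bl-c-current} on the $\log K$ scale, as you indicate.
\end{itemize}
This is essentially the content of Lemma~\ref{lem:power-null-anticonc}, which treats the middle regime through $T_K^{(0)}/s_K\dto\kappa V$.
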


\subsection{Dense Gaussian random effects}

Now let
\[
  \mu_i\overset{\rm iid}{\sim}N(0,\tau_K^2),\qquad
  \tau_K^2\log K\to w\in[0,\infty)
\]
with the $\mu_i$ independent of $V$ and of the $\varepsilon_i$. The probabilities in this subsection are annealed over the Gaussian random effects: conditionally on $V=v$, we integrate out the $\mu_i$, so that the coordinates are independent with marginal law
\[
  Z_i\mid V=v\sim
  N\left(\sqrt{\rho_K}v,1-\rho_K+\tau_K^2\right).
\]

\begin{theorem}[Dense Gaussian random-effects power]\label{thm:power-random-effects}
Assume $\tau_K^2\log K\to w\in[0,\infty)$.  The rejection probabilities below are annealed
over the Gaussian random effects.

\emph{(i) Boundary-layer scale.}  If $\rho_K\log K\to0$ and $s_K\to s\in[0,\infty)$,
then, conditionally on $V=v$,
\[
  T_K\dto \mathsf{C}(\kappa_wsv,e^w),
  \qquad
  \kappa_w:=\frac{2}{\pi}\int_0^{\sqrt2}y^2e^{wy^2/2}dy ,
\]
and consequently
\[
  \Pi^{\rm raw}_{K}\to
  \Pi^{\rm raw,G}_\alpha(w,s)
  :=
  \E\left[\bar F_{e^w}(t_\alpha-\kappa_wsV)\right],
  \qquad
  \Pi^{\rm BL}_{K}\to
  \Pi^{\rm BL,G}_\alpha(w,s)
  :=
  \E\left[\bar F_{e^w}(q_\alpha(s)-\kappa_wsV)\right],
\]
with
\[
  0\le
  \Pi^{\rm raw,G}_\alpha(w,s)-\Pi^{\rm BL,G}_\alpha(w,s)
  =O_\alpha(s^2)
  \qquad(s\downarrow0).
\]
In particular, on the exactness scale $\rho_K(\log K)^3\to0$, both powers converge to
$\bar F_{e^w}(t_\alpha)$.

\emph{(ii) Broader common-correlation scale.}  If $\rho_K\log K\to c\in(0,\infty)$, then,
with
\[
  D_{c,w}(v):=
  \frac{2}{\pi}\int_0^{\sqrt2}
       y\exp\left\{\frac{(w-c)y^2}{2}\right\}
       \sinh(\sqrt cvy)dy ,
\]
we have
\[
  \Pi^{\rm raw}_{K}\to \frac{1}{2},
  \qquad
  \Pi^{\rm BL}_{K}\to
  \Pbb\left\{
    D_{c,w}(V)>\kappa z_{1-\alpha}\sqrt c
  \right\},
\]
and, for each $v>0$, the map $w\mapsto D_{c,w}(v)$ is increasing.
\end{theorem}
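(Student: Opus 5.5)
The plan is to reduce both parts to the conditional stable-limit machinery of Theorem~\ref{thm:stablelimit}, applied to the annealed marginal. Conditionally on $V=v$, after integrating out the $\mu_i$, the coordinates are i.i.d.\ $N(\sqrt{\rho_K}v,\sigma_K^2)$ with $\sigma_K^2=1-\rho_K+\tau_K^2$. So $T_K$ is again a normalised sum of i.i.d.\ heavy-tailed scores. I will redo the three ingredients of the stable-limit argument for this variance-inflated law: the tail intensities, the truncated-mean centring, and the negligibility of the remainder.

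\textbf{Tail intensities.} We have $f(z)\sim 1/(\pi\barPh(z))$ as $z\to\infty$, so $f(Z)>Kx$ essentially when $Z>z_K(x)\approx\sqrt{2\log K}$. For the law $N(m,\sigma^2)$,
\[
K\,\Pbb\bigl(Z>z_K\bigr)\approx K\barPh\bigl((z_K-m)/\sigma\bigr)/\barPh(z_K)\cdot\barPh(z_K).
\]
I would compute the exponent
\[
-\tfrac{(z-m)^2}{2\sigma^2}+\tfrac{z^2}{2}
\]
with $z^2\approx 2\log K$, $\sigma^{-2}\approx 1+\rho_K-\tau_K^2$ and $m=\sqrt{\rho_K}v$. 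This gives
\[
(\tau_K^2-\rho_K)\log K+\sqrt{2\rho_K\log K}\,v\;\to\; w-c+\sqrt{2c}\,v .
\]
The upper Lévy intensity is therefore $\lambda^+=e^{w-c+\sqrt{2c}v}$, and symmetrically $\lambda^-=e^{w-c-\sqrt{2c}v}$. The polynomial prefactor ratio $\sigma$ tends to $1$ because $\tau_K\to0$. In part (i) ($c=0$) the measure is symmetric with intensity $e^w$, so the stable part is $\mathsf{C}(0,e^w)$. In part (ii) it is asymmetric but $K$-free.

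\textbf{Centring.} I would write $b_K(v)=K\int_{|f|\le K}f\,dP$ and change variables to $y=z/\sqrt{\log K}\in(-\sqrt2,\sqrt2)$, as in the proof of \eqref{eq:main-bk-limit}. Using $f(z)\approx 1/(\pi\barPh(z))$ on the upper range and its mirror on the lower range, the odd part of the integrand yields
\[
b_K(v)\approx\frac{2}{\pi}\sqrt{\log K}\int_0^{\sqrt2}\log K\, y\, e^{(\tau_K^2-\rho_K)\log K\,y^2/2}\sinh\bigl(\sqrt{\rho_K}\,v\,y\log K\bigr)\,dy
\]
up to lower-order terms. The even part cancels by symmetry of the variance inflation.

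In regime (i) I expand $\sinh$ linearly, giving $b_K(v)=\kappa_w s_K v+o(1)$. In regime (ii), $b_K(v)/\log K\to D_{c,w}(v)/\sqrt c\cdot\sqrt c$ after normalisation; matching the conventions of Proposition~\ref{prop:bl-c-current}, $T_K/\log K\to D_{c,w}(V)/\sqrt{c}$-type limit. The precise normalisation has to be checked against $M_c=B_c(V)/c$, and that check reduces to $w=0$ recovering $B_c$. Controlling the error terms uniformly, from the Mills-ratio corrections and from the truncation edge at $y\approx\sqrt2$ where $\tau_K^2\log K$ enters the exponent, is the main obstacle. I would handle it with dominated convergence on $y\in[0,\sqrt2-\delta]$ plus a separate bound near the edge, reusing the edge lemma from Theorem~\ref{thm:stablelimit}.

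\textbf{Power limits.} For (i), conditional convergence to $\mathsf{C}(\kappa_w s v,e^w)$ plus bounded convergence over $V$ gives both power limits, since $q_\alpha(s_K)\to q_\alpha(s)$ by continuity (Proposition~\ref{prop:bl-structure}). The difference is
\[
\frac1\pi\E\bigl[\arctan\{(q_\alpha(s)-\kappa_wsV)/e^w\}-\arctan\{(t_\alpha-\kappa_wsV)/e^w\}\bigr].
\]
It is nonnegative because $q_\alpha(s)\ge t_\alpha$, and it is bounded by $(q_\alpha(s)-t_\alpha)/(\pi e^w)=O(s^2)$ by \eqref{eq:main-qalpha-small}. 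At $s=0$ both powers equal $\bar F_{e^w}(t_\alpha)$.

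For (ii), $T_K/\log K$ converges in law to a limit driven by $D_{c,w}(V)$, because the stable fluctuation is $O_p(1)=o_p(\log K)$. Also $q_\alpha(s_K)/\log K\to\kappa z_{1-\alpha}\sqrt c$ by \eqref{eq:main-qalpha-large}, since $s_K=\sqrt{c_K}\log K$. This gives the BL limit, with continuity of the law of $D_{c,w}(V)$ at the threshold coming from strict monotonicity in $v$. The raw limit $1/2$ follows because $t_\alpha/\log K\to0$ and $D_{c,w}(V)$ is odd in $V$ and nonzero a.s. Finally, monotonicity in $w$ for $v>0$ holds because the integrand is positive and increasing in $w$ for $y>0$.
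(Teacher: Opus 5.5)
Your route is the paper's. You anneal out the $\mu_i$ so that, given $V=v$, the scores are i.i.d.\ transforms of $N(\sqrt{\rho_K}v,\sigma_{K,w}^2)$ with $\sigma_{K,w}^2=1-\rho_K+\tau_K^2$. The tail exponent picks up $(\tau_K^2-\rho_K)\log K\to w-c$, so the L\'evy measure becomes $e^w\Lambda_v$, and you rerun the null triangular-array argument to get $T_K-b_{K,w}(v)\dto S_{c,v}^{(w)}$. Then you compute the centring and integrate over $V$, using $q_\alpha(s)-t_\alpha=O(s^2)$ in (i) and $q_\alpha(s_K)=\kappa z_{1-\alpha}\sqrt c\log K+o(\log K)$ in (ii). The tail computation and the power and monotonicity steps are fine.

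The problem is the centring. It is the only step that actually produces $\kappa_w$ and $D_{c,w}$, and it is wrong as written. First, $b_{K,w}(v)=\E[f(Z)\mathbf 1\{|f(Z)|\le K\}\mid V=v]$, not $K\int_{|f|\le K}f\,dP$. Second, your display misplaces two factors of $\sqrt{\log K}$. Write $b_{K,w}(v)=\int_0^{a_K}f(z)\{g_{K,w}(z)-g_{K,w}(-z)\}\,dz$ with $a_K=f^{-1}(K)$. Insert the closed form of the odd part of the $N(m_K,\sigma_{K,w}^2)$ density and $f(z)=\sqrt{2/\pi}\,ze^{z^2/2}\{1+o(1)\}$, and substitute $z=\sqrt{\log K}\,y$. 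One $\sqrt{\log K}$ comes from the factor $z$ and one from $dz$, and the $\sinh$ argument is $m_Kz/\sigma_{K,w}^2$ with $m_Kz=\sqrt{\rho_K\log K}\,vy$. This gives
\[
  b_{K,w}(v)=\frac{2\log K}{\pi}\int_{M/\sqrt{\log K}}^{a_K/\sqrt{\log K}}
  y\,\exp\Bigl\{\frac{(\tau_K^2-\rho_K)\log K\,y^2}{2\sigma_{K,w}^2}\Bigr\}
  \sinh\Bigl(\frac{\sqrt{c_K}\,vy}{\sigma_{K,w}^2}\Bigr)\bigl\{1+\omega_{K,M}(y)\bigr\}\,dy
  +O\bigl(\sqrt{\rho_K}\bigr).
\]
Here $c_K=\rho_K\log K$, the last term is the contribution of $z\in[0,M]$, and $\limsup_K\sup_y|\omega_{K,M}(y)|\to0$ as $M\to\infty$. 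Your version has prefactor $(\log K)^{3/2}$ and $\sinh$ argument $\sqrt{\rho_K}\,vy\log K=\sqrt{c_K\log K}\,vy$. Linearising it in regime (i) gives $\kappa_ws_Kv\log K$, not the $\kappa_ws_Kv$ you claim. In regime (ii) it grows faster than any power of $\log K$. So it contradicts both conclusions you draw from it.

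With the corrected display both limits are routine. Since $(\tau_K^2-\rho_K)\log K$ and $c_K$ are bounded, the integrand is uniformly bounded for $y\in[0,\sqrt2+1]$, and $a_K/\sqrt{\log K}\to\sqrt2$ by Lemma~\ref{lem:inv-asymp}. One dominated-convergence step with the indicator of $\{y\le a_K/\sqrt{\log K}\}$, exactly as in Theorem~\ref{thm:bK-general}, followed by $M\to\infty$, disposes of the truncation edge. Nothing delicate happens at $y\approx\sqrt2$, and there is no edge lemma to reuse. The only genuine non-uniformity is at small $z$, where $f$ is far from $\sqrt{2/\pi}\,ze^{z^2/2}$, and the split at $M$ handles it.

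In (ii) this gives $b_{K,w}(v)/\log K\to D_{c,w}(v)$, hence $T_K/\log K\to D_{c,w}(V)$ with no factor $1/\sqrt c$. Your tentative ``$D_{c,w}(V)/\sqrt c$-type'' limit would move the BL threshold from $\kappa z_{1-\alpha}\sqrt c$ to $\kappa z_{1-\alpha}c$. The $w=0$ check you mention does settle it: the substitution $t=\sqrt c\,y$ gives $D_{c,0}(v)=B_c(v)/c$, consistent with $T_K/\log K\dto M_c$ in Proposition~\ref{prop:bl-c-phase}.

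In (i), use $\sinh x=x\{1+O(x^2)\}$ with $x=O(\sqrt{c_K})\to0$. The exponent tends to $wy^2/2$ and $\log K\sqrt{c_K}=s_K$, so $b_{K,w}(v)=\kappa_ws_Kv+o(1)$.

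Finally, when you redo the remainder, note a change in Lemma~\ref{lem:uniform-tail-xinv}. The term discarded there as nonpositive becomes $(\tau_K^2-\rho_K)q_u^2/(2\sigma_{K,w}^2)$, which can now be positive. It is $O(\tau_K^2\log K)=O(1)$, so the $C_v/x$ tail bound and the small-jump estimate still hold.
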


The constants tie both parts back to the null theory. Since $\kappa_0=\kappa$ and $e^0=1$, setting $w=0$ in part~(i) recovers the null boundary-layer limit underlying Corollary~\ref{cor:phasediagram}, and the calibration cost on a bounded layer is of the same second order in $s$ as for local dense alternatives. In part~(ii) the centring, of order $\log K$, dominates the $O_p(1)$ stable fluctuation, and the threshold $\kappa z_{1-\alpha}\sqrt c$ is the large-$s$ form of the calibrated cutoff, $q_\alpha(s_K)=\kappa z_{1-\alpha}\sqrt c\log K+o(\log K)$ by Proposition~\ref{prop:bl-structure}. On this scale the two tests separate sharply: the raw CCT rejects with probability tending to $1/2$ under the null and the alternative alike, so it ceases to discriminate, whereas the BL limiting power is increasing in the random-effects strength $w$. At $w=0$ it reduces to the null size $\Xi_\alpha(c)$ of Proposition~\ref{prop:bl-c-phase}, since $D_{c,0}(V)=B_c(V)/c=M_c$, and as $c\downarrow0$ it tends to $\alpha$, matching the BL exactness boundary $\rho_K\log K\to0$.

\begin{remark}[Plug-in calibration under alternatives]\label{rem:power-plugin}
The BL power $\Pi^{\rm BL}_K$ is defined with the oracle cutoff $q_\alpha(s_K)$. Under an alternative, the plug-in estimator \eqref{eq:main-rhohat} need not be consistent for $\rho_K$, because the sample variance of the $z_i$ also absorbs the signal. This biases $\widehat\rho$ downwards and moves $q_\alpha(\widehat s_K)$ towards the raw cutoff $t_\alpha$, so the estimation error acts in the direction of more rejections rather than fewer; the null size guarantees are unaffected, since under the null $\widehat\rho$ is consistent (Proposition~\ref{prop:plugin}). A full analysis of plug-in power is not pursued here.
\end{remark}

\section{Numerical experiments}
\label{sec:numerics}

This section describes the computation of BL-CCT and presents several Monte Carlo experiments to validate the theoretical results of Sections~\ref{sec:raw cct} and~\ref{sec:calibration}. 

\subsection{Implementation}

The calibrated $p$-value $p_{\mathrm{BL}}(t;s)$ is a one-dimensional integral with respect to the standard normal density of $V$, which can be evaluated by standard numerical integration. Under the equicorrelated Gaussian copula, a natural plug-in estimator of $\rho_K$ is given by
\begin{equation}\label{eq:main-rhohat}
  \widehat\rho
  :=
  \max \left\{0,
    1-\frac{1}{K-1}\sum_{i=1}^K (z_i-\bar z)^2
  \right\},
\end{equation}
with
\begin{equation}\label{eq:main-shat}
  z_i=\Ph^{-1}(1-p_i),
  \qquad
  \bar z=\frac1K\sum_{i=1}^K z_i,
  \qquad
  \widehat s_K=\sqrt{\widehat\rho} (\log K)^{3/2},
  \qquad
  \widehat c_K=\widehat\rho\log K.
\end{equation}
Proposition~\ref{prop:plugin} in the Appendix shows that $\widehat\rho-\rho_K=O_p(K^{-1/2})$ under the null model, so $\widehat s_K-s_K\to0$ on compact boundary layers and the plug-in BL-CCT has the same asymptotic calibration as the oracle procedure (Theorem~\ref{thm:bluniform}). The same proposition also records a sharper rate, $|\widehat s_K-s_K|=O_p((\log K)^{3/2}/\sqrt{K\rho_K})$, when $K\rho_K/(\log K)^3\to\infty$, showing that the plug-in reference tracks the oracle reference in larger-correlation regimes as well.

\subsection{Monte Carlo experiments}

All simulations are conducted under the equicorrelated Gaussian null with one-sided $p$-values at nominal level $\alpha=0.05$. Each configuration uses $2000$ Monte Carlo repetitions. The scale parameters are held fixed while $K$ varies, so the correlation $\rho_K$ is determined by the identity $s_K=\sqrt{\rho_K}(\log K)^{3/2}$ or $c_K=\rho_K\log K$: specifically, we set $\rho_K=s^2/(\log K)^3$ so that $s_K\equiv s$ in panel~(a), and $\rho_K=c/\log K$ so that $c_K\equiv c$ in panel~(b). Since $\rho_K$ decreases with $K$ for each fixed $s$ or $c$, larger $K$ corresponds to weaker dependence.

\begin{figure}[tbp]
\centering
\includegraphics[width=0.98\linewidth]{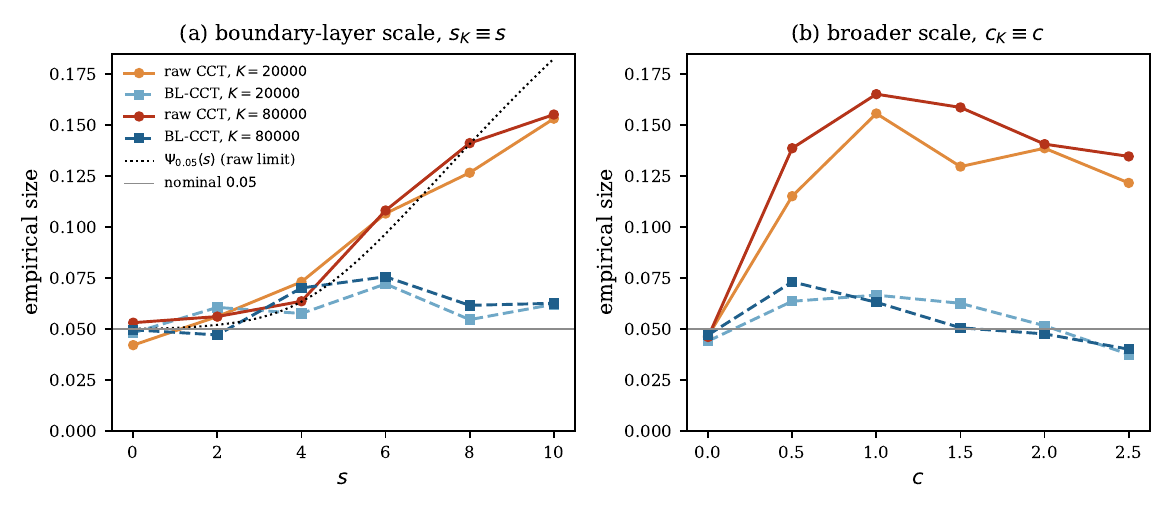}
\caption{Empirical size at level $0.05$ under the equicorrelated Gaussian null: raw CCT (solid) versus plug-in BL-CCT (dashed) on a common axis, for $K\in\{20000,80000\}$. Panel~(a): boundary-layer scale $s_K\equiv s$, with the raw-CCT limit $\Psi_{0.05}(s)$ of Corollary~\ref{cor:phasediagram} shown dotted. Panel~(b): broader scale $c_K\equiv c$.}
\label{fig:sim}
\end{figure}

Figure~\ref{fig:sim} overlays the empirical size of the raw CCT (solid) and of the plug-in BL-CCT (dashed) as functions of the scale parameter, for $K\in\{20000,80000\}$, so that the distortion and its correction can be compared on a single axis. Panel~(a) uses the boundary-layer parametrisation, with $s$ ranging over $\{0,2,4,6,8,10\}$. The raw CCT increases monotonically with $s$ and closely follows the theoretical limit $\Psi_{0.05}(s)$ of Corollary~\ref{cor:phasediagram}: its empirical size is close to the nominal level at $s=0$ and exceeds $0.15$ at $s=10$, illustrating the practical severity of the fixed-level distortion. The BL-CCT instead stays near the nominal level throughout, never exceeding about $0.075$; the small finite-$K$ excess at moderate $s$ is consistent with the remainder term in Theorem~\ref{thm:bluniform}, and by Proposition~\ref{prop:plugin} the plug-in calibration used here is asymptotically equivalent to the oracle calibration.

Panel~(b) uses the broader scale $c_K=\rho_K\log K$, with $c$ ranging over $\{0,0.5,1,1.5,2,2.5\}$. The two procedures separate already at small $c$: the raw CCT is strongly anti-conservative, its empirical size rising to roughly $0.16$, whereas the BL-CCT stays close to the nominal level and becomes conservative as $c$ grows, in agreement with the conservative limit $\Xi_{0.05}(c)$ of Proposition~\ref{prop:bl-c-phase} and the exactness characterisation of Corollary~\ref{cor:bl-iff-common}. The overlay confirms that the boundary-layer calibration removes most of the latent-factor size inflation suffered by the raw CCT under both scalings.

\section{Conclusion}\label{sec:discussion}

This paper has studied the fixed-level calibration of the Cauchy combination test under dependence. In the one-factor Gaussian copula model, the fixed-level size distortion of the raw CCT is driven by a deterministic latent-factor centring term, and exactness holds if and only if $\rho_K(\log K)^3\to0$. Replacing only the reference law by a Gaussian-smoothed Cauchy family yields BL-CCT, which is asymptotically exact under the weaker condition $\rho_K\log K\to0$, necessary and sufficient at the conventional levels covered by Corollary~\ref{cor:bl-iff-common}.

Because BL-CCT only raises the cutoff, the calibration could in principle cost power. Section~\ref{sec:power} shows that it essentially does not in the regimes where the stable-limit analysis applies, namely deterministic local dense shifts, positive off-boundary sparse signals, and annealed dense Gaussian random effects: on the exactness scale there is no first-order power cost, and above the sparse max boundary both tests have power tending to one. Sharper results at the knife-edge $r=r_{\mathrm{max}}(\beta)$, quenched random-effects power, and signed or non-Gaussian alternatives are left for future work.

The equicorrelated model isolates a single latent factor; under multi-factor or heterogeneous-block dependence each factor may contribute its own centring term, and whether the one-parameter BL-CCT family still suffices is an open question. Two further extensions are natural: pairing the statistic-modification strategies of \cite{OuyangEtAl2024,LiuMengPillai2025,ChenXuGao2025TCCT,Bouamara2025StepC} with a boundary-layer calibration of their own reference laws, the two corrections being largely orthogonal; and a unified fixed-level theorem for combination rules based on regularly varying scores with tail index $1$, which should exhibit analogous centring effects under the same copula model and include the harmonic mean $p$-value \cite{Wilson2019HMP} and the classes of \cite{FangEtAl2023} and \cite{GuiJiangWang2025}.

\bibliographystyle{alpha}
\bibliography{refs}

\newpage

\appendix

\begin{center}
  \Large Appendix
\end{center}
\section{Auxiliary lemmas}\label{app:common}

We retain the notation of Sections~\ref{sec:setup}--\ref{sec:calibration}. In particular, \eqref{eq:main-onefactor}, \eqref{eq:main-score}, \eqref{eq:main-stat}, \eqref{eq:main-mu}, and \eqref{eq:main-scales} remain in force throughout the appendix. When conditioning on $V=v$ in the triangular regime, we write
\[
  Z_{i,K}^{(v)}=\sqrt{\rho_K} v+\sqrt{1-\rho_K} \varepsilon_i,
  \qquad
  X_{i,K}^{(v)}=f \left(Z_{i,K}^{(v)}\right),
  \qquad
  T_K^{(v)}=\frac1K\sum_{i=1}^K X_{i,K}^{(v)}.
\]

\begin{lemma}\label{lem:f-basic}
The function $f$ defined in \eqref{eq:main-score} satisfies:
\begin{enumerate}
\item $f$ is odd: $f(-z)=-f(z)$ for all $z\in\R$.
\item $f$ is strictly increasing on $\R$.
\item For all $z\in\R$,
\begin{equation}\label{eq:f-cot}
  f(z) = \cot\left(\pi \barPh(z)\right).
\end{equation}
\end{enumerate}
\end{lemma}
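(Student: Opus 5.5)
All three properties follow from two facts: $f$ is a composition of monotone maps, and $\Phi$ has a reflection symmetry. I would verify them in the order (2), (1), (3), since the last two use the same identity.

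For (2), write $f=g\circ\Phi$ with $g(u)=\tan\{\pi(u-1/2)\}$ on $(0,1)$. Here $\Phi:\R\to(0,1)$ is strictly increasing because $\ph>0$. The map $u\mapsto\pi(u-1/2)$ sends $(0,1)$ increasingly onto $(-\pi/2,\pi/2)$, where $\tan$ is strictly increasing. A composition of strictly increasing maps is strictly increasing, so $f$ is. The same observation shows that $f$ is a bijection $\R\to\R$.

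For (1), I use the symmetry $\Phi(-z)=1-\Phi(z)$, which holds because $\ph$ is even. It gives
\[
  \pi\bigl(\Phi(-z)-\tfrac12\bigr)=\pi\bigl(\tfrac12-\Phi(z)\bigr)=-\pi\bigl(\Phi(z)-\tfrac12\bigr).
\]
Since $\tan$ is odd, $f(-z)=-f(z)$.

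For (3), set $\theta=\pi\barPh(z)\in(0,\pi)$, so that $\pi(\Phi(z)-1/2)=\pi/2-\theta$. The cofunction identity $\tan(\pi/2-\theta)=\cot\theta$ is valid for $\theta\in(0,\pi)$, because there $\sin\theta\neq0$ and both sides equal $\cos\theta/\sin\theta$. Hence $f(z)=\cot(\pi\barPh(z))$.

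None of these steps is a real obstacle. The only point needing care is in (3): one must check that the argument of $\tan$ stays in $(-\pi/2,\pi/2)$ and that of $\cot$ stays in $(0,\pi)$. This holds because $\Phi(z)\in(0,1)$ strictly, so no singularity of $\tan$ or $\cot$ is hit.
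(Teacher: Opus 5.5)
Your proposal is correct and follows essentially the same route as the paper: oddness from $\Phi(-z)=1-\Phi(z)$ together with oddness of $\tan$, strict monotonicity from composing strictly increasing maps, and the cotangent form from the cofunction identity $\tan(\pi/2-\theta)=\cot\theta$ with $\theta=\pi\barPh(z)$. Your explicit check that $\Phi(z)\in(0,1)$ keeps the arguments away from the poles of $\tan$ and $\cot$ is a small addition the paper leaves implicit.
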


\begin{proof}
Oddness follows from $\Ph(-z)=1-\Ph(z)$, monotonicity from the monotonicity of $z\mapsto \Ph(z)$ and $u\mapsto \tan(\pi u)$ on $(-1/2,1/2)$, and \eqref{eq:f-cot} from
\[
  \tan\left(\pi(\Ph(z)-1/2)\right)
  =
  \tan\left(\pi(1/2-\barPh(z))\right)
  =
  \cot\left(\pi\barPh(z)\right).
\]
\end{proof}

\begin{lemma}\label{lem:inv-asymp}
Let $t(x)=f^{-1}(x)$ for $x>0$.
Then $t(x)\to\infty$ as $x\to\infty$ and
\[
\frac{t(x)}{\sqrt{2\log x}}\to 1,
\qquad x\to\infty.
\]
Moreover,
\[
e^{-t(x)^2/2} = \sqrt{2/\pi}\frac{t(x)}{x} (1+o(1)),
\qquad x\to\infty.
\]
\end{lemma}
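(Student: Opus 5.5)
We work directly from the explicit form of $f$. For $x>0$, set $t=t(x)=f^{-1}(x)$, so that $x=f(t)=\cot(\pi\barPh(t))$ by \eqref{eq:f-cot} in Lemma~\ref{lem:f-basic}. The map $f$ is strictly increasing and continuous by Lemma~\ref{lem:f-basic}. Moreover, $f(z)\to\infty$ as $z\to\infty$, because $\pi\barPh(z)\downarrow0$ and $\cot u\to\infty$ as $u\downarrow0$. Hence $f^{-1}$ is increasing and unbounded on $(0,\infty)$, and $t(x)\to\infty$ as $x\to\infty$. This gives the first claim.

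Next I relate $x$ to $\barPh(t)$. Since $\cot u=u^{-1}(1+O(u^2))$ as $u\downarrow0$, and $u=\pi\barPh(t)\to0$, we get
\[
x=\frac{1}{\pi\barPh(t)}\,(1+o(1)).
\]
I then insert the Mills-ratio asymptotic
\[
\barPh(t)=\frac{\ph(t)}{t}\,(1+o(1)),\qquad t\to\infty,
\]
which follows from the standard bounds $\frac{t}{1+t^2}\ph(t)\le\barPh(t)\le\ph(t)/t$. Combining the two displays gives
\[
x=\frac{t\sqrt{2\pi}}{\pi}\,e^{t^2/2}\,(1+o(1))=\sqrt{2/\pi}\;t\,e^{t^2/2}\,(1+o(1)).
\]
Rearranging, $e^{-t^2/2}=\sqrt{2/\pi}\,\frac{t}{x}\,(1+o(1))$, which is the second claim. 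Inverting a factor $1+o(1)$ is harmless.

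For the ratio, I take logarithms in the last display:
\[
\log x=\frac{t^2}{2}+\log t+O(1).
\]
Since $t\to\infty$, the terms $\log t+O(1)$ are $o(t^2)$. Therefore $2\log x=t^2(1+o(1))$, and so $t/\sqrt{2\log x}\to1$.

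The main point needing care is the order of the argument: the $o(1)$ terms must be expressed in terms of $t\to\infty$, which is justified by the first step. The only analytic input is the Mills-ratio asymptotic. Everything else is elementary manipulation of $\cot$ near $0$.
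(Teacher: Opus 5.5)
Your proof is correct and follows essentially the paper's route: the paper cites the asymptotic $f(z)\sim\sqrt{2/\pi}\,z e^{z^2/2}$ from Lemma~\ref{lem:f-growth}, whose proof is the same $\cot$-near-zero plus Mills-ratio computation you carry out inline. It then rearranges to get the exponential identity and takes logarithms. Your final step divides by $t^2$ instead of bounding $\log t(x)$ by $O(\log\log x)$ as the paper does, which is a slightly more direct way to conclude $t(x)/\sqrt{2\log x}\to1$.
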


\begin{proof}
Monotonicity of $f$ (Lemma~\ref{lem:f-basic}) and $f(z)\sim\sqrt{2/\pi}z e^{z^2/2}\to\infty$ (Lemma~\ref{lem:f-growth}) imply $t(x)\to\infty$ and
\[
  e^{-t(x)^2/2}=\sqrt{2/\pi}\frac{t(x)}{x}(1+o(1)),\qquad x\to\infty.
\]
Taking logarithms,
\[
  t(x)^2=2\log x-2\log t(x)-\log(2/\pi)+o(1).
\]
Since $t(x)\to\infty$, eventually $\log t(x)\ge 0$, hence $t(x)^2\le 2\log x+O(1)$ and $\log t(x)=O(\log\log x)=o(\log x)$. Dividing the logarithmic identity by $2\log x$ gives $t(x)/\sqrt{2\log x}\to 1$.
\end{proof}

\begin{lemma}\label{lem:f-growth}
We have
\begin{equation}\label{eq:f-asymp}
  \lim_{|z|\to\infty}\frac{f(z)}{\sqrt{2/\pi} z e^{z^2/2}} = 1.
\end{equation}
Moreover, there exists a constant $C>0$ such that for all $z\in\R$,
\begin{equation}\label{eq:f-upper}
  |f(z)| \le C(1+|z|) e^{z^2/2}.
\end{equation}
\end{lemma}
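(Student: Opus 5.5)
By oddness of $f$ (Lemma~\ref{lem:f-basic}) it suffices to treat $z\to+\infty$ in \eqref{eq:f-asymp}. For $z\ge0$ large, the identity \eqref{eq:f-cot} gives $f(z)=\cot(\pi\barPh(z))$ with $\barPh(z)\to0$. I will use the expansion $\cot(x)=x^{-1}(1+O(x^2))$ as $x\downarrow0$ to write
\[
  f(z)=\frac{1}{\pi\barPh(z)}\bigl(1+O(\barPh(z)^2)\bigr).
\]
I then combine this with the Mills-ratio asymptotic $\barPh(z)=\ph(z)z^{-1}(1+O(z^{-2}))$. That gives
\[
  \pi\barPh(z)=\frac{\pi}{\sqrt{2\pi}}\,\frac{e^{-z^2/2}}{z}\,(1+o(1))=\sqrt{\pi/2}\,\frac{e^{-z^2/2}}{z}\,(1+o(1)),
\]
so $f(z)=\sqrt{2/\pi}\,z e^{z^2/2}(1+o(1))$, which is \eqref{eq:f-asymp}. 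The negative side follows from $f(-z)=-f(z)$, since the comparison function $z e^{z^2/2}$ is also odd.

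For the global bound \eqref{eq:f-upper}, I split $\R$ into a compact part and tails. By the first part, there is $z_0$ such that $|f(z)|\le 2\sqrt{2/\pi}\,|z|e^{z^2/2}$ for $|z|\ge z_0$. On $[-z_0,z_0]$, $f$ is continuous because $\Phi(z)-1/2\in(-1/2,1/2)$ keeps the tangent away from its poles. Hence $f$ is bounded there by some $M$, while $(1+|z|)e^{z^2/2}\ge1$. Taking $C=\max\{M,2\sqrt{2/\pi}\}$ then gives \eqref{eq:f-upper} on all of $\R$.

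The only point needing care is the Mills-ratio step. I would invoke the standard two-sided bound
\[
  \frac{z}{1+z^2}\,\ph(z)\le\barPh(z)\le\frac{\ph(z)}{z},\qquad z>0,
\]
which immediately yields $\barPh(z)\sim\ph(z)/z$. No other obstacle is expected. Note that Lemma~\ref{lem:inv-asymp} relies on this lemma, so the proof must not use Lemma~\ref{lem:inv-asymp}; the argument above does not.
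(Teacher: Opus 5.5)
Your proposal is correct and follows essentially the same route as the paper: $f(z)=\cot(\pi\barPh(z))$, the expansion $\cot(\pi y)\sim 1/(\pi y)$, the Mills-ratio asymptotic $\barPh(z)\sim\ph(z)/z$, and oddness to handle $z\to-\infty$. The only difference is in the global bound. The paper gets the tail estimate on $|z|>1$ non-asymptotically from $\barPh(z)\ge\ph(z)/(2z)$ and $\cot(\pi y)\le 1/(\pi y)$, which gives an explicit constant; you instead extract a threshold $z_0$ from the limit and handle $[-z_0,z_0]$ by compactness, and both work.
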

\begin{proof}
We first prove \eqref{eq:f-asymp} for $z\to+\infty$.
By Lemma~\ref{lem:f-basic}\eqref{eq:f-cot},
\[
f(z)=\cot(\pi \barPh(z)).
\]
As $z\to\infty$, $\barPh(z)\downarrow 0$, and the elementary expansion $\cot(\pi y)\sim 1/(\pi y)$ as $y\downarrow0$ gives 
\[
\cot(\pi \barPh(z)) \sim \frac{1}{\pi \barPh(z)}\qquad (z\to\infty).
\]
Using the standard Mills ratio asymptotic $\barPh(z)\sim \ph(z)/z$ as $z\to\infty$, we obtain
\[
f(z)\sim \frac{z}{\pi\ph(z)}
= \sqrt{2/\pi} z e^{z^2/2}.
\]
This proves \eqref{eq:f-asymp} for $z\to+\infty$.
For $z\to-\infty$, use oddness $f(-z)=-f(z)$ from Lemma~\ref{lem:f-basic} and the already proved $z\to+\infty$ asymptotic.

For the global upper bound \eqref{eq:f-upper}, we split into $|z|\le 1$ and $|z|>1$.
On the compact set $|z|\le 1$, $f$ is continuous hence bounded: $|f(z)|\le C_0$. Since $e^{z^2/2}\ge 1$ and $1+|z|\ge 1$, this implies \eqref{eq:f-upper} on $|z|\le 1$ with $C\ge C_0$.

For $|z|>1$, the standard Mills ratio bounds imply for $z>1$:
\[
\barPh(z)\ge \frac{\ph(z)}{z+1/z}\ge \frac{\ph(z)}{2z},
\]
hence by $\cot(\pi y)\le 1/(\pi y)$ for $y\in(0,1/2]$ (since $\tan u\ge u$ for $u\in[0,\pi/2)$),
\[
f(z)=\cot(\pi\barPh(z))\le \frac{1}{\pi\barPh(z)}\le \frac{2z}{\pi\ph(z)} = 2\sqrt{2/\pi} z e^{z^2/2}.
\]
By oddness, the same bound holds for $z<-1$ in absolute value. Absorbing constants yields \eqref{eq:f-upper}.
\end{proof}

\section{Proofs in Section~\ref{sec:raw cct}: fixed positive correlation}\label{app:fixedcorr}

\subsection{Proof of Theorem 3.1}
\begin{proposition}\label{prop:cond-L1}
Fix $\rho\in(0,1)$. For every $v\in\R$, if $Z\sim N(\sqrt{\rho} v, 1-\rho)$ and $X=f(Z)$, then we have
\[
\E\left[|X| \mid V=v\right] < \infty.
\]
In particular, the conditional mean
\[
  \mu_\rho(v)=\E\left[f(\sqrt{\rho} v+\sqrt{1-\rho} \varepsilon)\right],\qquad \varepsilon\sim N(0,1),
\]
is well-defined and finite for all $v\in\R$.
\end{proposition}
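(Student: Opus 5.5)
The plan is to bound $|f|$ by the global estimate \eqref{eq:f-upper} of Lemma~\ref{lem:f-growth} and then integrate against the Gaussian density of $Z$. The key point is that the conditional variance $1-\rho$ is strictly less than $1$ when $\rho>0$. Therefore the growth factor $e^{z^2/2}$ is dominated by the density factor $e^{-(z-m)^2/(2(1-\rho))}$, where $m=\sqrt{\rho}v$.

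Concretely, Lemma~\ref{lem:f-growth} gives
\[
\E|X|\le C\int_{\R}(1+|z|)\,e^{z^2/2}\,\frac{1}{\sqrt{2\pi(1-\rho)}}\exp\Big(-\frac{(z-m)^2}{2(1-\rho)}\Big)\,dz .
\]
Next, collect the exponent:
\[
\frac{z^2}{2}-\frac{(z-m)^2}{2(1-\rho)}=-\frac{\rho}{2(1-\rho)}z^2+\frac{m}{1-\rho}z-\frac{m^2}{2(1-\rho)} .
\]
This is a concave quadratic in $z$ with leading coefficient $-\rho/\{2(1-\rho)\}<0$. Completing the square shows that the integrand is bounded by a constant multiple of $(1+|z|)$ times a Gaussian kernel centred at $m/\rho$ with variance $(1-\rho)/\rho$. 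A Gaussian density has finite first absolute moment, so the integral is finite for every fixed $v$. This gives $\E[|X|\mid V=v]<\infty$.

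The claim that $\mu_\rho(v)$ is well-defined and finite follows immediately, since an integrable random variable has a finite expectation.

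There is no real obstacle here. The only care needed is to note that the argument uses $\rho>0$ essentially. At $\rho=0$ the leading coefficient vanishes, the integrand behaves like $|z|$ with no decay, and integrability fails; this matches the Cauchy tails of $f(Z)$ in the independent case.
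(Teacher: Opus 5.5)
Your proposal is correct and follows essentially the same approach as the paper: both bound $|f(Z)|$ by the envelope $C(1+|Z|)e^{Z^2/2}$ from Lemma~\ref{lem:f-growth} and use $1-\rho<1$ so that the Gaussian density dominates $e^{z^2/2}$. The only cosmetic difference is that you complete the square explicitly, obtaining a Gaussian kernel centred at $v/\sqrt{\rho}$ with variance $(1-\rho)/\rho$. The paper instead absorbs the factor $(1+|z|)$ into $e^{\eta z^2}$ and invokes the closed-form Gaussian identity $\E[e^{\lambda Z^2}]<\infty$ for some $\lambda$ slightly above $1/2$.
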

\begin{proof}
Let $Z\sim N(m,\sigma^2)$ with $m=\sqrt\rho v$ and $\sigma^2=1-\rho\in(0,1)$. By Lemma~\ref{lem:f-growth}\eqref{eq:f-upper}, $|f(Z)|\le C(1+|Z|)e^{Z^2/2}$, so it suffices to bound $\E[(1+|Z|)e^{\lambda Z^2}]$ for some $\lambda>1/2$. Since $1/(2\sigma^2)>1/2$, the Gaussian MGF identity
\begin{equation}\label{eq:gauss-mgf}
  \E\bigl[e^{\lambda Z^2}\bigr]
  =\frac{1}{\sqrt{1-2\lambda\sigma^2}}\exp\left(\frac{\lambda m^2}{1-2\lambda\sigma^2}\right),\qquad 2\lambda\sigma^2<1,
\end{equation}
yields $\E[e^{\lambda Z^2}]<\infty$ for every $\lambda<1/(2\sigma^2)$. Choosing $\lambda$ slightly larger than $1/2$ and absorbing the polynomial factor via $(1+|z|)\le e^{|z|}\le e^{\eta z^2+1/(4\eta)}$ (with $\eta$ small enough) gives $\E|f(Z)|<\infty$.
\end{proof}

\begin{proof}[Proof of Theorem~\ref{thm:randomlimit}]
Conditional on $V=v$, the $X_i=f(Z_i)$ are i.i.d.\ with $\E[|X_1|\mid V=v]<\infty$ and mean $\mu_\rho(v)$ (Proposition~\ref{prop:cond-L1}). The conditional strong law yields $\Pbb(T_K\to\mu_\rho(v)\mid V=v)=1$ for every $v\in\R$; integrating against the law of $V$ gives $\Pbb(T_K\to\mu_\rho(V))=1$. Since $\mu_\rho(V)$ has a continuous law (Lemma~\ref{lem:mu-props}: continuity and strict monotonicity of $\mu_\rho$), Portmanteau yields $\Pbb(T_K>t_\alpha)\to\Pbb(\mu_\rho(V)>t_\alpha)$.
\end{proof}

\subsection{Proof of Proposition 3.2}\label{subsec:rho0}

At $\rho=0$ the scores $X_i=f(Z_i)$ are i.i.d.\ standard Cauchy and the CCT is exact at every fixed $\alpha$, whereas for fixed $\rho>0$ Theorem~\ref{thm:randomlimit} shows $T_K\to\mu_\rho(V)$ almost surely. The following results establish properties of $\mu_\rho$ and show that $\lim_{\rho\downarrow0}\lim_K$ and $\lim_K\lim_{\rho\downarrow0}$ yield different limits.

\begin{lemma}\label{lem:mu-props}
Fix $\rho\in(0,1)$ and recall the definition \eqref{eq:main-mu}.
Then the function $v\mapsto \mu_\rho(v)$ is finite, continuous, odd, and strictly increasing on $\R$.
Consequently, for every $t>0$ there exists a unique value $v_\rho(t)>0$ such that $\mu_\rho \left(v_\rho(t)\right)=t$.
Moreover, for $V\sim N(0,1)$,
\begin{equation}\label{eq:limit-size-vrho}
  \Pbb\left(\mu_\rho(V)>t\right)=\barPh \left(v_\rho(t)\right).
\end{equation}
\end{lemma}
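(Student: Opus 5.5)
We write $m=\sqrt\rho\,v$ and $\sigma=\sqrt{1-\rho}$, so that $\mu_\rho(v)=\E[f(m+\sigma\varepsilon)]$. Proposition~\ref{prop:cond-L1} already gives finiteness for every $v$, and each property of $\mu_\rho$ will be read off this representation.

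\emph{Oddness.} Since $\varepsilon\stackrel{d}{=}-\varepsilon$ and $f$ is odd (Lemma~\ref{lem:f-basic}), we have $\mu_\rho(-v)=\E[f(-m-\sigma\varepsilon)]=-\mu_\rho(v)$.

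\emph{Strict monotonicity.} For $v<v'$ we have $f(\sqrt\rho v+\sigma\varepsilon)<f(\sqrt\rho v'+\sigma\varepsilon)$ pointwise, because $f$ is strictly increasing. Both sides are integrable, so taking expectations gives $\mu_\rho(v)<\mu_\rho(v')$; strictness survives because the inequality holds everywhere.

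\emph{Continuity.} This is the only step needing care, and I will use dominated convergence. Fix a bound $|v|\le A$. By \eqref{eq:f-upper}, $|f(\sqrt\rho v+\sigma\varepsilon)|\le C(1+|z|)e^{z^2/2}$ with $z=\sqrt\rho v+\sigma\varepsilon$. On $|v|\le A$ we have $z^2\le(1+\delta)\sigma^2\varepsilon^2+(1+1/\delta)\rho A^2$. Choose $\delta>0$ small so that $(1+\delta)\sigma^2<1$. Then the dominating function is a constant times $(1+\sqrt\rho A+\sigma|\varepsilon|)\,e^{(1+\delta)\sigma^2\varepsilon^2/2}$. This is integrable against $\ph(\varepsilon)$ because the total exponent $-(1-(1+\delta)\sigma^2)\varepsilon^2/2$ is negative. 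Continuity of $f$ then gives continuity of $\mu_\rho$ on $[-A,A]$, and since $A$ is arbitrary, on all of $\R$.

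\emph{Existence and uniqueness of $v_\rho(t)$.} By oddness $\mu_\rho(0)=0$, so for $t>0$ any solution of $\mu_\rho(v)=t$ must satisfy $v>0$. Uniqueness follows from strict monotonicity. For existence we need $\mu_\rho(v)\to\infty$ as $v\to\infty$, after which the intermediate value theorem finishes. I would show this by splitting the expectation. On $\{\varepsilon\ge0\}$, $f(m+\sigma\varepsilon)\ge f(m)\to\infty$ with probability $1/2$. On $\{\varepsilon<0\}$, the integrand is bounded below by $f(\sigma\varepsilon)$ once $m\ge0$, and $f(\sigma\varepsilon)$ is integrable by the same estimate as above. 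Hence $\mu_\rho(v)\ge f(m)/2+\E[f(\sigma\varepsilon)\1\{\varepsilon<0\}]\to\infty$. This avoids any Fatou subtleties.

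\emph{Identity \eqref{eq:limit-size-vrho}.} By strict monotonicity, $\{\mu_\rho(V)>t\}=\{V>v_\rho(t)\}$, whose probability is $\barPh(v_\rho(t))$. The same equivalence shows that $\mu_\rho(V)$ has no atoms, which is what the proof of Theorem~\ref{thm:randomlimit} uses.

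The main obstacle is the uniform integrable domination in the continuity step. It rests entirely on the strict inequality $\sigma^2=1-\rho<1$: the growth $e^{z^2/2}$ of $f$ is beaten by the Gaussian density only because $\rho>0$.
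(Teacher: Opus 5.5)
Your proof is correct and follows the paper's argument step by step: finiteness from Proposition~\ref{prop:cond-L1}, oddness by symmetry, strict monotonicity via the common-$\varepsilon$ coupling, continuity by dominated convergence with a Young-type bound exploiting $\sigma^2<1$, and the identity via $\{\mu_\rho(V)>t\}=\{V>v_\rho(t)\}$. The one deviation is in showing $\mu_\rho(v)\to\infty$: your comparison $f(m+\sigma\varepsilon)\ge f(\sigma\varepsilon)$ on $\{\varepsilon<0\}$ yields a fixed integrable lower bound, which is slightly simpler than the paper's Gaussian-tail estimate showing $\E[|f(Z)|\1\{Z<0\}]\to0$.
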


\begin{proof}
Finiteness of $\mu_\rho(v)$ for each $v$ is proved in Proposition~\ref{prop:cond-L1}.

\paragraph{Oddness.}
Let $v\in\R$ and let $\varepsilon\sim N(0,1)$.
Using symmetry $\varepsilon \dto -\varepsilon$ and oddness of $f$,
\begin{align*}
\mu_\rho(-v)
&=\E \left[f \left(-\sqrt{\rho} v+\sqrt{1-\rho} \varepsilon\right)\right]\\
&=\E \left[f \left(-\sqrt{\rho} v-\sqrt{1-\rho} \varepsilon\right)\right]\\
&=\E \left[f \left(-\left(\sqrt{\rho} v+\sqrt{1-\rho} \varepsilon\right)\right)\right]\\
&=\E \left[-f \left(\sqrt{\rho} v+\sqrt{1-\rho} \varepsilon\right)\right]
= -\mu_\rho(v).
\end{align*}

\paragraph{Strict monotonicity.}
Let $v_1<v_2$.
Couple the corresponding Gaussians using the same $\varepsilon\sim N(0,1)$:
\[
Z_1:=\sqrt{\rho} v_1+\sqrt{1-\rho} \varepsilon,\qquad
Z_2:=\sqrt{\rho} v_2+\sqrt{1-\rho} \varepsilon.
\]
Then $Z_1<Z_2$ almost surely since $\sqrt{\rho}(v_2-v_1)>0$ is constant.
Because $f$ is strictly increasing,
$f(Z_1)<f(Z_2)$ almost surely, hence $\mu_\rho(v_1)=\E[f(Z_1)]<\E[f(Z_2)]=\mu_\rho(v_2)$.

\paragraph{Continuity.}
Let $v_n\to v$ and $Z_n=\sqrt\rho v_n+\sigma\varepsilon$ with $\sigma^2=1-\rho<1$, $\varepsilon\sim N(0,1)$, and $|v_n|\le B$. Then $f(Z_n)\to f(Z)$ a.s.\ by continuity of $f$. Setting $M_\rho:=\sqrt\rho B$ and applying Young's inequality $2M_\rho\sigma|\varepsilon|\le M_\rho^2/\delta+\delta\sigma^2\varepsilon^2$ with $\delta:=(1-\sigma^2)/(2\sigma^2)$ gives
\[
  Z_n^2\le (M_\rho+\sigma|\varepsilon|)^2
  \le M_\rho^2(1+1/\delta)+\sigma^2(1+\delta)\varepsilon^2
  =C_{\rho,B}+\tfrac{1+\sigma^2}{2}\varepsilon^2.
\]
Combined with Lemma~\ref{lem:f-growth}\eqref{eq:f-upper}, this provides the $n$-uniform envelope
\[
  |f(Z_n)|\ph(\varepsilon)\le C(1+|\varepsilon|)\exp\left(-\frac{1-\sigma^2}{4}\varepsilon^2\right),
\]
which is integrable. Dominated convergence yields $\mu_\rho(v_n)\to\mu_\rho(v)$.

\paragraph{Existence of the inverse and \eqref{eq:limit-size-vrho}.}
For fixed $\rho\in(0,1)$, $\mu_\rho$ is odd, continuous, and strictly increasing, with $\mu_\rho(0)=0$. To establish $\mu_\rho(v)\to+\infty$ as $v\to+\infty$, fix $M>0$ and $R>0$ with $f(z)\ge 2M$ for $z\ge R$. Writing $Z=\sqrt\rho v+\sigma\varepsilon$ with $m=\sqrt\rho v$,
\[
  \mu_\rho(v)\ge 2M\Pbb(Z\ge R)-\E[|f(Z)|\1\{Z<0\}].
\]
$\Pbb(Z\ge R)\to 1$ since $m\to\infty$. For the second term, the bound $z^2/2-(z-m)^2/(2\sigma^2)\le -(\rho z^2+m^2)/(2\sigma^2)$ for $z\le 0$, $m>0$, together with Lemma~\ref{lem:f-growth}\eqref{eq:f-upper}, gives $\E[|f(Z)|\1\{Z<0\}]=O(e^{-m^2/(2\sigma^2)})\to 0$. Hence $\mu_\rho(v)\to\infty$. By strict monotonicity, $\{\mu_\rho(V)>t\}=\{V>v_\rho(t)\}$ a.s., proving \eqref{eq:limit-size-vrho}.
\end{proof}

The next result gives the precise rate at which $\mu_\rho(v)$ diverges as $\rho\downarrow 0$ for fixed $v\neq 0$: the product $\rho\mu_\rho(v)$ has a finite nonzero limit.

\begin{theorem}[Small-$\rho$ asymptotic of $\mu_\rho$]\label{thm:mu-smallrho}
For each fixed $v\in\R$, we have
\begin{equation}\label{eq:mu-smallrho}
  \lim_{\rho\downarrow 0}\ \rho \mu_\rho(v)=\sqrt{2/\pi} v e^{v^2/2}.
\end{equation}
Moreover, the convergence in \eqref{eq:mu-smallrho} is uniform in $v$ over compact sets.
In particular, for every $v\neq 0$, $\mu_\rho(v)\to \mathrm{sign}(v)\cdot\infty$ as $\rho\downarrow 0$.
\end{theorem}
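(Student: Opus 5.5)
The plan is to compare $f$ with its tail profile $g(z):=\sqrt{2/\pi}\,z\,e^{z^2/2}$ from Lemma~\ref{lem:f-growth}. The expectation of $g(Z)$ turns out to be computable in closed form, and the error $f-g$ should be negligible after multiplication by $\rho$.

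\emph{Step 1: exact computation for $g$.} Write $Z=m+\sigma\varepsilon$ with $m=\sqrt\rho\,v$ and $\sigma^2=1-\rho$. The density of $Z$ is $(2\pi\sigma^2)^{-1/2}e^{-(z-m)^2/(2\sigma^2)}$. Multiplying by $e^{z^2/2}$ and completing the square gives the exponent
\[
  \frac{z^2}{2}-\frac{(z-m)^2}{2\sigma^2}
  =\frac{-\rho z^2+2\sqrt\rho\,v z-\rho v^2}{2\sigma^2}
  =\frac{v^2}{2}-\frac{\rho\,(z-v/\sqrt\rho)^2}{2\sigma^2}.
\]
Hence $e^{z^2/2}$ times the density of $Z$ equals $e^{v^2/2}\rho^{-1/2}$ times the $N(v/\sqrt\rho,\sigma^2/\rho)$ density. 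It follows that
\[
  \E[g(Z)]=\sqrt{2/\pi}\,e^{v^2/2}\rho^{-1/2}\,\E[W],
  \qquad W\sim N(v/\sqrt\rho,\sigma^2/\rho),
\]
and since $\E[W]=v/\sqrt\rho$ this gives $\rho\,\E[g(Z)]=\sqrt{2/\pi}\,v\,e^{v^2/2}$ exactly. The same identity also gives
\[
  \E\bigl[|Z|e^{Z^2/2}\bigr]=e^{v^2/2}\rho^{-1/2}\,\E|W|
  \le e^{v^2/2}\rho^{-1}\bigl(|v|+\sigma\sqrt{2/\pi}\bigr),
\]
which is $O(\rho^{-1})$ uniformly for $v$ in compacts. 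Absolute integrability of $g(Z)$ follows from this bound, in the same way as in Proposition~\ref{prop:cond-L1}.

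\emph{Step 2: control of $f-g$.} Fix $\eta>0$. By \eqref{eq:f-asymp} and oddness of both functions, there is an $R$ such that $|f(z)-g(z)|\le\eta|z|e^{z^2/2}$ for $|z|>R$. On $|z|\le R$ both $f$ and $g$ are continuous, so $|f-g|\le C_R$ there. Then
\[
  \rho\,\bigl|\E[f(Z)]-\E[g(Z)]\bigr|
  \le\rho\,C_R+\eta\,\rho\,\E\bigl[|Z|e^{Z^2/2}\bigr]
  \le\rho\,C_R+\eta\,e^{v^2/2}\bigl(|v|+1\bigr).
\]
Letting $\rho\downarrow0$ and then $\eta\downarrow0$ yields \eqref{eq:mu-smallrho}. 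Every bound above depends on $v$ only through $e^{v^2/2}(|v|+1)$, so the convergence is uniform over compact $v$-sets. The final claim about $v\neq0$ follows because the limit $\sqrt{2/\pi}\,v\,e^{v^2/2}$ is nonzero with the sign of $v$, while $\rho\to0$.

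The only delicate point is Step 1: the integrand's growth factor $e^{z^2/2}$ nearly cancels the Gaussian decay when $\sigma^2=1-\rho$ is close to $1$. The completing-the-square identity handles this exactly, and it is what makes the $\rho^{-1}$ rate, and hence the limit, transparent. The rest is a routine $\varepsilon$--$R$ splitting.
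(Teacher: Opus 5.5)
Your proof is correct and follows the paper's structure exactly. You split $f=g+r$ with $g(z)=\sqrt{2/\pi}\,z e^{z^2/2}$, compute $\rho\,\E[g(Z_\rho)]=\sqrt{2/\pi}\,v e^{v^2/2}$ exactly, and control the remainder by an $\epsilon$--$R$ split using $\rho\,\E[|Z_\rho|e^{Z_\rho^2/2}]=O(1)$ uniformly on compact $v$-sets.

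The only difference is technical. You get both Gaussian moment facts at once from the completing-the-square identity: $e^{z^2/2}$ times the law of $Z_\rho$ equals $\rho^{-1/2}e^{v^2/2}$ times the $N(v/\sqrt{\rho},\sigma^2/\rho)$ law. The paper instead differentiates the Gaussian MGF identity in $m$ for the first moment, and uses Cauchy--Schwarz with a $\lambda$-derivative for the absolute moment. Your route is slightly shorter and avoids differentiating under the integral sign.
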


\begin{proof}
Fix $v\in\R$ and $\rho\in(0,1)$.
Let $\varepsilon\sim N(0,1)$ and set
\[
Z_\rho:=\sqrt{\rho} v+\sqrt{1-\rho} \varepsilon.
\]
Define
\[
g(z):=\sqrt{2/\pi} z e^{z^2/2},\qquad r(z):=f(z)-g(z).
\]
Then we have
\[
\mu_\rho(v)=\E[f(Z_\rho)]=\E[g(Z_\rho)]+\E[r(Z_\rho)].
\]

\paragraph{Leading term.}
Write $m:=\sqrt{\rho} v$ and $\sigma^2:=1-\rho$.
Then $Z_\rho\sim N(m,\sigma^2)$ and $1-\sigma^2=\rho$.
For $Z\sim N(m,\sigma^2)$ and $\lambda<1/(2\sigma^2)$, the standard Gaussian identity gives
\[
  \E[e^{\lambda Z^2}]
  =
  \frac{1}{\sqrt{1-2\lambda\sigma^2}}
  \exp \left(\frac{\lambda m^2}{1-2\lambda\sigma^2}\right).
\]
Applying this with $\lambda=1/2$ yields
\begin{equation}\label{eq:EZexp-half}
\E \left[e^{Z_\rho^2/2}\right]
=\frac{1}{\sqrt{\rho}}\exp \left(\frac{m^2}{2\rho}\right)
=\frac{1}{\sqrt{\rho}}e^{v^2/2}.
\end{equation}
To compute $\E[Z_\rho e^{Z_\rho^2/2}]$, differentiate the same identity with respect to $m$.
Since
\[
  \partial_m \E[e^{Z^2/2}]
  =
  \frac{1}{\sigma^2}\E[(Z-m)e^{Z^2/2}],
\]
we obtain
\[
  \E[Z e^{Z^2/2}]
  =
  m\E[e^{Z^2/2}]
  +
  \sigma^2 \partial_m \E[e^{Z^2/2}]
  =
  \frac{m}{(1-\sigma^2)^{3/2}}
  \exp \left(\frac{m^2}{2(1-\sigma^2)}\right).
\]
Applying this to $Z=Z_\rho$ with $m=\sqrt{\rho} v$ and $1-\sigma^2=\rho$ gives
\[
\E \left[Z_\rho e^{Z_\rho^2/2}\right]
=\frac{\sqrt{\rho} v}{\rho^{3/2}}e^{v^2/2}
=\frac{v}{\rho}e^{v^2/2}.
\]
Hence
\[
\rho \E[g(Z_\rho)]
=\rho\sqrt{2/\pi}\E \left[Z_\rho e^{Z_\rho^2/2}\right]
=\sqrt{2/\pi} v e^{v^2/2}.
\]

\paragraph{Remainder.}
By Lemma~\ref{lem:f-growth}\eqref{eq:f-asymp},
\[
\lim_{|z|\to\infty}\frac{r(z)}{\sqrt{2/\pi} z e^{z^2/2}}=0.
\]
Fix $\epsilon>0$.
Choose $R_\epsilon>0$ such that for all $|z|\ge R_\epsilon$,
\begin{equation}\label{eq:r-eps}
|r(z)|\le \epsilon \sqrt{2/\pi} |z|e^{z^2/2}.
\end{equation}
Then we have
\[
\rho |\E[r(Z_\rho)]|
\le \rho \E \left[|r(Z_\rho)| \1\{|Z_\rho|<R_\epsilon\}\right]
+\rho \E \left[|r(Z_\rho)| \1\{|Z_\rho|\ge R_\epsilon\}\right].
\]
The first term is bounded by
\[
\rho \sup_{|z|<R_\epsilon}|r(z)| \ \xrightarrow[\rho\downarrow 0]{}\ 0
\]
since the supremum is finite and does not depend on $\rho$.
For the second term, apply \eqref{eq:r-eps}:
\[
\rho \E \left[|r(Z_\rho)| \1\{|Z_\rho|\ge R_\epsilon\}\right]
\le \epsilon \sqrt{2/\pi}\rho \E \left[|Z_\rho|e^{Z_\rho^2/2}\right].
\]
To bound $\rho \E[|Z_\rho|e^{Z_\rho^2/2}]$, use Cauchy--Schwarz as in the proof of Lemma~\ref{lem:mu-props}:
\[
\E \left[|Z_\rho|e^{Z_\rho^2/2}\right]
\le \sqrt{\E \left[Z_\rho^2e^{Z_\rho^2/2}\right]} \sqrt{\E \left[e^{Z_\rho^2/2}\right]}.
\]
As above, $Z_\rho\sim N(m,\sigma^2)$ with $m=\sqrt{\rho} v$ and $\sigma^2=1-\rho$.
Differentiating the same Gaussian identity in $\lambda$ and evaluating at $\lambda=1/2$ yields
\[
\E \left[Z_\rho^2e^{Z_\rho^2/2}\right]
=\left(\frac{\sigma^2}{1-\sigma^2}+\frac{m^2}{(1-\sigma^2)^2}\right)\E \left[e^{Z_\rho^2/2}\right]
=\left(\frac{1-\rho}{\rho}+\frac{\rho v^2}{\rho^2}\right)\frac{1}{\sqrt{\rho}}e^{v^2/2}
=\frac{1-\rho+v^2}{\rho^{3/2}}e^{v^2/2}.
\]
Together with \eqref{eq:EZexp-half}, this gives
\[
\E \left[|Z_\rho|e^{Z_\rho^2/2}\right]
\le \frac{\sqrt{1-\rho+v^2}}{\rho}e^{v^2/2}.
\]
Therefore, we have
\[
\limsup_{\rho\downarrow 0}\ \rho |\E[r(Z_\rho)]|
\le \epsilon \sqrt{2/\pi}\sqrt{1+v^2} e^{v^2/2}.
\]
Since $\epsilon>0$ was arbitrary, we conclude $\rho \E[r(Z_\rho)]\to 0$ as $\rho\downarrow 0$.

Combining the leading-term and remainder estimates yields \eqref{eq:mu-smallrho} for each fixed $v$.

\paragraph{Uniformity on compact $v$-sets.}
Let $|v|\le V_0$.
The bound $\rho \E[|Z_\rho|e^{Z_\rho^2/2}]\le \sqrt{1+V_0^2} e^{V_0^2/2}$ is uniform over $|v|\le V_0$ by the preceding calculation.
Hence the same $\epsilon$--$R_\epsilon$ argument gives uniform convergence on compact sets.

Finally, if $v\neq 0$, \eqref{eq:mu-smallrho} implies
\[
  |\mu_\rho(v)|\sim \frac{\sqrt{2/\pi}}{\rho}|v|e^{v^2/2}\to\infty,
\]
and the sign matches $\mathrm{sign}(v)$ because the leading constant is positive.
\end{proof}

\begin{proof}[Proof of Proposition~\ref{prop:noncommuting-main}]
Fix $\alpha\in(0,1/2)$, let $t_\alpha=\cot(\pi\alpha)$, and write
\[
  s_{\alpha,K}(\rho):=\Pbb_\rho(T_K>t_\alpha).
\]

\medskip\noindent\textit{Fixed-$K$ limit.}
Let $Z^{(\rho)}=(Z_1^{(\rho)},\dots,Z_K^{(\rho)})$ denote the equicorrelated Gaussian vector with correlation $\rho$, i.e.\ $Z^{(\rho)}\sim N(0,\Sigma_\rho)$ with $\Sigma_\rho=(1-\rho)I_K+\rho\mathbf{1}\mathbf{1}^\top$.
As $\rho\downarrow 0$, $\Sigma_\rho\to I_K$, hence $Z^{(\rho)}\dto Z^{(0)}$ where $Z^{(0)}\sim N(0,I_K)$ has independent standard normal coordinates.

Since $f$ is continuous on $\R$, the map
\[
  z\mapsto \frac1K\sum_{i=1}^K f(z_i)
\]
is continuous on $\R^K$. By the continuous mapping theorem, we therefore have
\[
  T_K=\frac1K\sum_{i=1}^K f\left(Z_i^{(\rho)}\right)
  \dto
  \frac1K\sum_{i=1}^K f\left(Z_i^{(0)}\right)
  =:T_K^{(0)}
  \qquad\text{as }\rho\downarrow 0.
\]

When $\rho=0$, $U_i:=\Ph(Z_i^{(0)})$ are i.i.d.\ $\mathrm{Unif}(0,1)$ and
\[
  X_i=f(Z_i^{(0)})=\tan(\pi(U_i-1/2))
\]
are i.i.d.\ standard Cauchy. By $1$-stability of the Cauchy law,
\[
  T_K^{(0)}=\frac1K\sum_{i=1}^K X_i
\]
is again standard Cauchy. Therefore
\[
  \Pbb(T_K^{(0)}>t_\alpha)=\alpha,
\]
since $t_\alpha=\cot(\pi\alpha)$ is the upper $\alpha$-quantile of the standard Cauchy distribution.
Finally, because the standard Cauchy law is continuous, $\Pbb(T_K^{(0)}=t_\alpha)=0$, so convergence in distribution implies convergence of tail probabilities:
\[
\lim_{\rho\downarrow 0}\Pbb_\rho(T_K>t_\alpha)=\Pbb(T_K^{(0)}>t_\alpha)=\alpha.
\]

\medskip\noindent\textit{Iterated limit.}
For each fixed $\rho\in(0,1)$, Theorem~\ref{thm:randomlimit} and continuity of the law of $\mu_\rho(V)$ from Lemma~\ref{lem:mu-props} imply
\[
\lim_{K\to\infty}s_{\alpha,K}(\rho)=\Pbb\left(\mu_\rho(V)>t_\alpha\right)=:s_\infty(\rho).
\]
Let $v_\rho>0$ be the unique solution of $\mu_\rho(v_\rho)=t_\alpha$ from Lemma~\ref{lem:mu-props}. Then we have
\[
  s_\infty(\rho)=\barPh(v_\rho).
\]
Fix any $\delta>0$. By Theorem~\ref{thm:mu-smallrho}, $\mu_\rho(\delta)\to\infty$ as $\rho\downarrow0$, so for all sufficiently small $\rho$ we have $\mu_\rho(\delta)>t_\alpha$. Since $\mu_\rho$ is increasing, this forces $v_\rho<\delta$. As $\delta>0$ was arbitrary, $v_\rho\to0$, and therefore we have
\[
  s_\infty(\rho)=\barPh(v_\rho)\to\barPh(0)=\frac12.
\]
Combining these yields
\[
\lim_{\rho\downarrow 0}\ \lim_{K\to\infty}s_{\alpha,K}(\rho)=\frac12.
\]
\end{proof}

\section{Proofs in Section~\ref{sec:raw cct}: weakening dependence}\label{app:rawboundary}

This section proves the results for the triangular regime $\rho_K\downarrow0$: Theorem~\ref{thm:stablelimit} and Corollaries~\ref{cor:phasediagram}--\ref{cor:iffexact}.

\subsection{Deviation of the statistic from its conditional mean}\label{subsec:devTK}

Fix $\rho\in(0,1)$ and recall that, conditional on $V=v$, the $X_i=f(Z_i)$ are i.i.d.\ and
\[
T_K-\mu_\rho(v)=\frac1K\sum_{i=1}^K \left(X_i-\mu_\rho(v)\right)
\qquad\text{under }(V=v).
\]
Write $Y_i:=X_i-\mu_\rho(v)$ for the centered variables under $(V=v)$.

\begin{proposition}\label{prop:finiteK-dev}
Fix $\rho\in(0,1)$ and $v\in\R$.
Fix any $q$ satisfying
\[
1<q<\min\left\{2,\frac{1}{1-\rho}\right\}.
\]
Then $\E[|Y_1|^q \mid V=v]<\infty$ and, for every $\delta>0$,
\begin{align}
\E\left[|T_K-\mu_\rho(v)|^q \mid V=v\right]
&\le 2K^{1-q} \E\left[|Y_1|^q \mid V=v\right],\label{eq:TK-qmoment}\\
\Pbb\left(|T_K-\mu_\rho(v)|>\delta \mid V=v\right)
&\le 2\delta^{-q} K^{1-q} \E\left[|Y_1|^q \mid V=v\right].\label{eq:TK-dev}
\end{align}
\end{proposition}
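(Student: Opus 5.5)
The plan has three steps: a Gaussian moment computation, the von Bahr--Esseen inequality, and Markov's inequality.

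\textbf{Step 1: the $q$-th moment.} Conditional on $V=v$, the variable $Z=\sqrt\rho v+\sigma\varepsilon$ has $\sigma^2=1-\rho$. By Lemma~\ref{lem:f-growth}\eqref{eq:f-upper},
\[
  |X_1|^q\le C^q(1+|Z|)^q e^{qZ^2/2}.
\]
The Gaussian MGF identity \eqref{eq:gauss-mgf} gives $\E[e^{\lambda Z^2}]<\infty$ whenever $2\lambda\sigma^2<1$. The condition $q<1/(1-\rho)$ is exactly $q\sigma^2<1$, so one can pick $\lambda\in(q/2,\,1/(2\sigma^2))$. The polynomial factor is absorbed as in Proposition~\ref{prop:cond-L1}, via
\[
  (1+|z|)^q\le C_\eta e^{\eta z^2}\quad\text{with } \eta=\lambda-q/2 .
\]
This yields $\E[|X_1|^q\mid V=v]<\infty$. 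Since $q>1$, $\mu_\rho(v)$ is finite, and
\[
  |Y_1|^q\le 2^{q-1}\bigl(|X_1|^q+|\mu_\rho(v)|^q\bigr)
\]
shows $\E[|Y_1|^q\mid V=v]<\infty$. This step is the only place the constraint $q<1/(1-\rho)$ enters, and it is the step needing care, though it is routine given the earlier MGF computation.

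\textbf{Step 2: the moment bound \eqref{eq:TK-qmoment}.} Conditional on $V=v$, the $Y_i$ are i.i.d.\ and mean zero with finite $q$-th moment, where $1<q\le 2$. Apply the von Bahr--Esseen inequality:
\[
  \E\Bigl|\sum_{i=1}^K Y_i\Bigr|^q\le 2\sum_{i=1}^K\E|Y_i|^q = 2K\,\E|Y_1|^q .
\]
Dividing by $K^q$ gives $\E|T_K-\mu_\rho(v)|^q\le 2K^{1-q}\E|Y_1|^q$, which is \eqref{eq:TK-qmoment}. If a self-contained argument is preferred, the constant $2$ can also be obtained by symmetrization combined with the inequality $\E|\sum\epsilon_i a_i|^q\le\sum|a_i|^q$ for Rademacher signs and $q\le2$, which follows from Jensen. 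Citing von Bahr--Esseen is the cleanest route.

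\textbf{Step 3: the deviation bound \eqref{eq:TK-dev}.} This is Markov's inequality applied to $|T_K-\mu_\rho(v)|^q$, combined with \eqref{eq:TK-qmoment}.
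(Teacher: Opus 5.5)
Your proposal is correct and follows the paper's proof essentially verbatim: the paper likewise gets $\E[|X_1|^q\mid V=v]<\infty$ from the Gaussian MGF bound with some $\lambda\in(q/2,1/(2\sigma^2))$ (Proposition~\ref{prop:uniform-qmoment}, where $\lambda=(q+\theta)/4$), passes to $Y_1$ via $|Y_1|^q\le 2^{q-1}(|X_1|^q+|\mu_\rho(v)|^q)$, and then applies von Bahr--Esseen and Markov. One small caveat on your optional symmetrization aside: combined with the crude bound $\E|Y_1-Y_1'|^q\le 2^{q-1}\cdot 2\,\E|Y_1|^q$ it only yields the constant $2^q$, so recovering the constant $2$ also needs the sharper inequality $\E|Y_1-Y_1'|^q\le 2\,\E|Y_1|^q$ for $q\le 2$ (this is harmless here, since any absolute constant suffices downstream).
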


\begin{proof}
By Proposition~\ref{prop:uniform-qmoment} (with $M>|v|$), $\E[|X_1|^q\mid V=v]<\infty$, and the bound $|Y_1|^q\le 2^{q-1}(|X_1|^q+|\mu_\rho(v)|^q)$ with Jensen yields $\E[|Y_1|^q\mid V=v]<\infty$. Since $1<q<2$, the von Bahr--Esseen inequality \cite{vonBahrEsseen1965} gives $\E[|\sum_i Y_i|^q\mid V=v]\le 2K\E[|Y_1|^q\mid V=v]$, hence \eqref{eq:TK-qmoment}; Markov's inequality applied to $|T_K-\mu_\rho(v)|^q$ then yields \eqref{eq:TK-dev}.
\end{proof}

\subsection{Uniform control on bounded factor events}\label{subsec:uniformM}

To convert the conditional bound in Proposition~\ref{prop:finiteK-dev} into an unconditional statement, we bound the conditional $q$-moment $\E[|Y_1|^q \mid V=v]$ uniformly over $|v|\le M$.

For $q>0$ and $\eta>0$, write
\[
  A_{q,\eta}:=\sup_{z\in\R}(1+|z|)^q e^{-\eta z^2}.
\]
Since the Gaussian factor dominates polynomial growth, we have $A_{q,\eta}<\infty$ and hence
\[
  (1+|z|)^q\le A_{q,\eta} e^{\eta z^2}
  \qquad\text{for all }z\in\R.
\]

\begin{proposition}\label{prop:uniform-qmoment}
Fix $\rho\in(0,1)$, set $\sigma^2=1-\rho\in(0,1)$ and $\theta=1/\sigma^2$.
Let $q\in[1,\theta)$ and $M>0$.
Then we have
\[
\sup_{|v|\le M}\E\left[|X_1|^q \mid V=v\right] < \infty,
\qquad
\sup_{|v|\le M}\E\left[|Y_1|^q \mid V=v\right] < \infty,
\]
where $X_1=f(Z_1)$ and $Y_1=X_1-\mu_\rho(v)$ under $(V=v)$.
More explicitly, letting $C_f$ be the constant in Lemma~\ref{lem:f-growth}\eqref{eq:f-upper} and setting
\[
\eta:=\frac{\theta-q}{4}>0,
\qquad
\lambda:=\frac{q}{2}+\eta=\frac{q+\theta}{4},
\]
we have for all $|v|\le M$,
\begin{equation}\label{eq:qmoment-bound}
\E\left[|X_1|^q \mid V=v\right]
\le C_f^q A_{q,\eta} 
\frac{1}{\sqrt{1-2\lambda\sigma^2}} 
\exp \left(\frac{\lambda\rho v^2}{1-2\lambda\sigma^2}\right),
\end{equation}
and consequently
\begin{equation}\label{eq:qmoment-bound-Y}
\E\left[|Y_1|^q \mid V=v\right]\le 2^q \E\left[|X_1|^q \mid V=v\right]
\le 2^q C_f^q A_{q,\eta} 
\frac{1}{\sqrt{1-2\lambda\sigma^2}} 
\exp \left(\frac{\lambda\rho M^2}{1-2\lambda\sigma^2}\right).
\end{equation}
\end{proposition}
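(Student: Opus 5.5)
The plan is a direct pointwise bound on $|f(Z)|^q$ followed by the Gaussian moment generating function identity \eqref{eq:gauss-mgf}. Conditionally on $V=v$ we have $Z_1\sim N(m,\sigma^2)$ with $m=\sqrt\rho\,v$ and $\sigma^2=1-\rho$.

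\emph{Step 1 (pointwise bound).} Raise Lemma~\ref{lem:f-growth}\eqref{eq:f-upper} to the power $q$, which gives $|f(z)|^q\le C_f^q(1+|z|)^q e^{qz^2/2}$. Then absorb the polynomial factor via $(1+|z|)^q\le A_{q,\eta}e^{\eta z^2}$. The result is
\[
  |X_1|^q\le C_f^qA_{q,\eta}\,e^{\lambda Z_1^2},
  \qquad \lambda=\frac q2+\eta .
\]

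\emph{Step 2 (integrability check).} We need $2\lambda\sigma^2<1$, i.e.\ $\lambda<\theta/2$. With $\eta=(\theta-q)/4$ we get $\lambda=(q+\theta)/4<\theta/2$, since $q<\theta$. So $1-2\lambda\sigma^2=1-(q+\theta)/(2\theta)=(\theta-q)/(2\theta)>0$, and this quantity is independent of $v$. Applying \eqref{eq:gauss-mgf} with $m^2=\rho v^2$ then yields exactly \eqref{eq:main-bl-law}'s companion bound \eqref{eq:qmoment-bound}. The right-hand side is increasing in $v^2$, so taking the supremum over $|v|\le M$ simply replaces $v^2$ by $M^2$, and it is finite.

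\emph{Step 3 (the centred variable).} Use $|a-b|^q\le 2^{q-1}(|a|^q+|b|^q)$ with $a=X_1$ and $b=\mu_\rho(v)$. By Jensen (valid since $q\ge1$), $|\mu_\rho(v)|^q=|\E[X_1\mid V=v]|^q\le\E[|X_1|^q\mid V=v]$. Hence $\E[|Y_1|^q\mid V=v]\le 2^{q-1}\cdot 2\,\E[|X_1|^q\mid V=v]=2^q\E[|X_1|^q\mid V=v]$. Combining this with Step 2 and $v^2\le M^2$ gives \eqref{eq:qmoment-bound-Y}.

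The only delicate point is the choice of $\eta$ in Step 2. It must be positive so that the polynomial factor can be absorbed, yet small enough to keep $\lambda$ strictly below the critical value $1/(2\sigma^2)$. The stated choice $\eta=(\theta-q)/4$ splits the available gap and makes both constraints explicit. The remaining steps are routine.
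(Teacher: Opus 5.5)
Your proposal is correct and follows the paper's proof step for step: the pointwise bound $|X_1|^q\le C_f^qA_{q,\eta}e^{\lambda Z_1^2}$ from Lemma~\ref{lem:f-growth}, the Gaussian MGF identity with $2\lambda\sigma^2=(q+\theta)/(2\theta)<1$, and the convexity-plus-Jensen bound for $Y_1$ that yields the factor $2^q$. The only blemish is the stray citation of \eqref{eq:main-bl-law} in Step~2; the bound you obtain there is simply \eqref{eq:qmoment-bound}.
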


\begin{proof}
Fix $\rho\in(0,1)$, $q\in[1,\theta)$ and $M>0$.
Let $v\in[-M,M]$ and consider the conditional law $(V=v)$.
Then $Z_1\sim N(m,\sigma^2)$ with $m=\sqrt{\rho} v$ and $\sigma^2=1-\rho$.
By Lemma~\ref{lem:f-growth}\eqref{eq:f-upper},
\[
|X_1|^q = |f(Z_1)|^q \le C_f^q(1+|Z_1|)^q e^{qZ_1^2/2}.
\]
By the definition of $A_{q,\eta}$,
\[
  (1+|Z_1|)^q\le A_{q,\eta}e^{\eta Z_1^2}.
\]
Therefore,
\[
|X_1|^q \le C_f^q A_{q,\eta}  e^{(q/2+\eta)Z_1^2}
= C_f^q A_{q,\eta}  e^{\lambda Z_1^2}.
\]
Taking conditional expectation and using the standard Gaussian identity
\[
  \E[e^{\lambda Z_1^2}]
  =
  \frac{1}{\sqrt{1-2\lambda\sigma^2}}
  \exp \left(\frac{\lambda m^2}{1-2\lambda\sigma^2}\right),
  \qquad 2\lambda\sigma^2<1,
\]
we obtain
\[
\E\left[|X_1|^q \mid V=v\right]
\le C_f^q A_{q,\eta} 
\frac{1}{\sqrt{1-2\lambda\sigma^2}}
\exp \left(\frac{\lambda m^2}{1-2\lambda\sigma^2}\right).
\]
Since $m^2=\rho v^2$, this is exactly \eqref{eq:qmoment-bound}.
Moreover, the right-hand side is finite because $2\lambda\sigma^2<1$:
indeed,
\[
2\lambda\sigma^2
=2\cdot\frac{q+\theta}{4}\cdot\frac{1}{\theta}
=\frac{q+\theta}{2\theta}<1
\qquad\text{since }q<\theta.
\]
Taking the supremum over $|v|\le M$ yields the first claimed uniform bound.

For the centered variable $Y_1=X_1-\mu_\rho(v)$, the restriction $q\ge1$ is important: it lets us use the standard inequality
\[
|Y_1|^q = |X_1-\mu_\rho(v)|^q \le 2^{q-1}\left(|X_1|^q+|\mu_\rho(v)|^q\right).
\]
By Jensen, $|\mu_\rho(v)|^q\le \E[|X_1|^q \mid V=v]$.
Hence
\[
\E[|Y_1|^q \mid V=v] \le 2^{q-1}\left(\E[|X_1|^q \mid V=v]+\E[|X_1|^q \mid V=v]\right)
=2^q \E[|X_1|^q \mid V=v],
\]
which gives \eqref{eq:qmoment-bound-Y} after applying \eqref{eq:qmoment-bound} and bounding $v^2\le M^2$.
\end{proof}

\subsection{Uniform calibration of the conditional $p$-value tail}\label{subsec:size-sandwich}

The factor $\lambda_K(v)$ defined below accounts for the multiplicative distortion of the conditional p-value tail $\Pbb(P_{i,K}^{(v)}\le u)$ relative to $u$ at the scale $u\asymp 1/K$.

Fix $v\in\R$ throughout this section.
Conditionally on $V=v$, define
\[
  Z_{i,K}^{(v)} := \sqrt{\rho_K} v + \sqrt{1-\rho_K} \varepsilon_i,\qquad
  P_{i,K}^{(v)} := \barPh \left(Z_{i,K}^{(v)}\right),\qquad i=1,\dots,K,
\]
where $\varepsilon_1,\varepsilon_2,\dots$ are i.i.d.\ $N(0,1)$.
Set
\begin{equation}\label{eq:def-lambdaK-v}
  \lambda_K(v) := \exp \left(-c_K+\sqrt{2c_K} v\right).
\end{equation}
Since $c_K\to c$, we have
\begin{equation}\label{eq:lambdaK-limit}
  \lambda_K(v)\to \lambda(v):=\exp \left(-c+\sqrt{2c} v\right)\in(0,\infty).
\end{equation}

\begin{proposition}\label{prop:tri-calibration}
Assume $\rho_K\downarrow 0$ and $c_K:=\rho_K\log K\to c\in[0,\infty)$.
Fix $v\in\R$ and let $\lambda_K(v)$ be defined by \eqref{eq:def-lambdaK-v}.
For $x>0$, define
\[
  u_{K,x}(v):=\frac{x}{\lambda_K(v) K},
  \qquad
  q_{K,x}(v):=\barPh^{-1} \left(u_{K,x}(v)\right).
\]
Then, for every compact interval $I=[a,b]\subset(0,\infty)$,
\begin{equation}\label{eq:uniform-tail-calibration}
  \sup_{x\in I}
  \left|
  K \Pbb \left(P_{1,K}^{(v)}\le \frac{x}{\lambda_K(v) K} \mid V=v\right)-x
  \right|
  \to 0.
\end{equation}
\end{proposition}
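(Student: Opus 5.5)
The plan is to reduce the conditional tail probability to a ratio of two Gaussian upper tails and then apply the Mills-ratio asymptotic uniformly in $x\in I$. Conditionally on $V=v$ we have $Z_{1,K}^{(v)}=m_K+\sigma_K\varepsilon_1$ with $m_K:=\sqrt{\rho_K}v$ and $\sigma_K:=\sqrt{1-\rho_K}$. Since $\barPh$ is strictly decreasing,
\[
  \Pbb\left(P_{1,K}^{(v)}\le u_{K,x}(v)\mid V=v\right)
  =\Pbb\left(Z_{1,K}^{(v)}\ge q_{K,x}(v)\right)
  =\barPh\left(w_{K,x}\right),
  \qquad
  w_{K,x}:=\frac{q_{K,x}(v)-m_K}{\sigma_K}.
\]
By definition, $K\barPh(q_{K,x})=x/\lambda_K(v)$. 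Writing $q=q_{K,x}(v)$ and $w=w_{K,x}$, the claim \eqref{eq:uniform-tail-calibration} is therefore equivalent to
\[
  \sup_{x\in I}\left|\frac{\barPh(w)}{\barPh(q)}\,\lambda_K(v)^{-1}-1\right|\to0.
\]
This suffices because $x\le b$ and $\lambda_K(v)\to\lambda(v)\in(0,\infty)$ by \eqref{eq:lambdaK-limit}.

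The first step is a uniform expansion of $q$. Because $u_{K,x}(v)=x/(\lambda_K(v)K)$ and $\log(x/\lambda_K(v))$ stays bounded uniformly on $I$ (as $\lambda_K\to\lambda>0$), the Mills ratio $\barPh(q)\sim\ph(q)/q$ gives
\[
  q^2=2\log K-2\log q-\log(2\pi)-2\log\bigl(x/\lambda_K(v)\bigr)+o(1),
\]
uniformly in $x\in I$, exactly as in the proof of Lemma~\ref{lem:inv-asymp}. Consequently $q=\sqrt{2\log K}\,(1+O(\log\log K/\log K))$, and hence
\[
  q^2=2\log K+O(\log\log K),
  \qquad
  q-\sqrt{2\log K}=O\bigl(\log\log K/\sqrt{\log K}\bigr),
\]
both uniformly on $I$.

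The second step expands $w$. Using $1/(1-\rho_K)=1+\rho_K+O(\rho_K^2)$,
\[
  \frac{q^2-w^2}{2}
  =-\frac{\rho_K q^2}{2}+qm_K-\frac{m_K^2}{2}+O(\rho_K^2q^2+\rho_K q|m_K|).
\]
Each piece is then controlled as follows.
\begin{itemize}
\item The term $-\rho_K q^2/2$ equals $-c_K+O(\rho_K\log\log K)=-c_K+o(1)$, since $\rho_K\log K\to c<\infty$.
\item The term $qm_K=\sqrt{\rho_K}\,v\,q$ equals $\sqrt{2c_K}\,v+\sqrt{\rho_K}\,v\,(q-\sqrt{2\log K})=\sqrt{2c_K}\,v+o(1)$.
\item The terms $m_K^2$, $\rho_K^2q^2=O(\rho_K c_K)$ and $\rho_K q|m_K|=O(\rho_K\sqrt{c_K})$ all tend to $0$.
\end{itemize}
Together these give $(q^2-w^2)/2=-c_K+\sqrt{2c_K}\,v+o(1)=\log\lambda_K(v)+o(1)$, uniformly on $I$. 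In particular $w/q\to1$ and $w\to\infty$ uniformly.

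The final step applies the Mills ratio to both tails:
\[
  \frac{\barPh(w)}{\barPh(q)}=\frac{q}{w}\,e^{(q^2-w^2)/2}\,(1+o(1))=\lambda_K(v)\,(1+o(1)),
\]
with all $o(1)$ terms uniform on $I$. This holds because $q$ and $w$ range over intervals tending to $+\infty$ uniformly, and $\barPh(z)z/\ph(z)\to1$ as $z\to\infty$. The case $c=0$ needs no separate treatment, since then $\lambda_K\to1$.

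The main obstacle is bookkeeping rather than any conceptual difficulty. One must verify that the $O(\log\log K)$ correction in $q^2$ is killed by the factor $\rho_K$ (here $\rho_K\log\log K\to0$ uses $c_K$ bounded). One must also check that $q-\sqrt{2\log K}$ is small enough that multiplying it by $\sqrt{\rho_K}$ vanishes, and that every error is uniform over $x\in I$. I would organize this by first writing $q$ through Lemma~\ref{lem:inv-asymp}-type logarithmic identities with $x$ entering only via the bounded quantity $\log(x/\lambda_K)$.
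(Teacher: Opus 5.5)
Your proposal is correct and follows essentially the paper's route. Both arguments reduce to the Gaussian tail ratio $\barPh(w)/\barPh(q)$, establish $q^2=2\log K+O(\log\log K)$ uniformly on $I$ via Mills' ratio, show $(q^2-w^2)/2=\log\lambda_K(v)+o(1)$ uniformly, and finish by applying the Mills ratio at both points. The differences are cosmetic: you Taylor-expand $1/(1-\rho_K)$ and use the explicit rate for $q-\sqrt{2\log K}$, whereas the paper works with the exact identity for $y^2-q^2$, proves $y-q\to0$ directly, and handles $m_Kq$ through continuity of the square root.
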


\begin{proof}
Fix a compact interval $I=[a,b]\subset(0,\infty)$.
For each $x\in I$, set
\[
  u_{K,x}:=u_{K,x}(v)=\frac{x}{\lambda_K(v) K},
  \qquad
  q_{K,x}:=q_{K,x}(v)=\barPh^{-1}(u_{K,x}).
\]
Since $\lambda_K(v)\to \lambda(v)\in(0,\infty)$ by \eqref{eq:lambdaK-limit}, there exist constants $0<\lambda_-<\lambda_+<\infty$ and $K_0\in\mathbb N$ such that
\[
  \lambda_- \le \lambda_K(v)\le \lambda_+
\]
for all $K\ge K_0$.
Consequently, we have
\[
  \frac{a}{\lambda_+ K}\le u_{K,x}\le \frac{b}{\lambda_- K}
\]
for all $x\in I$ and $K\ge K_0$.
In particular, $u_{K,x}\downarrow 0$ uniformly in $x\in I$ as $K\to\infty$.
Since $\barPh$ is strictly decreasing on $(0,\infty)$, it follows that
\[
  q_{K,x}\to\infty
\]
uniformly in $x\in I$.

We first show that
\begin{equation}\label{eq:q-asymp-logK}
  \sup_{x\in I}\left|\frac{q_{K,x}}{\sqrt{2\log K}}-1\right|\to 0.
\end{equation}
By the standard Mills ratio inequalities, for every $x\in I$ and every $K\ge K_0$,
\[
  \frac{\ph(q_{K,x})}{q_{K,x}+q_{K,x}^{-1}}
  \le
  u_{K,x}
  \le
  \frac{\ph(q_{K,x})}{q_{K,x}}.
\]
Using $\ph(t)=(2\pi)^{-1/2}e^{-t^2/2}$, this becomes
\begin{equation}\label{eq:q-mills-two-sided}
  \frac{1}{\sqrt{2\pi}}
  \frac{e^{-q_{K,x}^2/2}}{q_{K,x}+q_{K,x}^{-1}}
  \le
  u_{K,x}
  \le
  \frac{1}{\sqrt{2\pi}}
  \frac{e^{-q_{K,x}^2/2}}{q_{K,x}}.
\end{equation}
Taking logarithms of \eqref{eq:q-mills-two-sided} and using $u_{K,x}\in[a/(\lambda_+K),b/(\lambda_-K)]$, we obtain
\[
  \frac{q_{K,x}^2}{2}=\log K-\log q_{K,x}+O(1)
  \qquad\text{uniformly in }x\in I.
\]
Since $q_{K,x}\to\infty$ uniformly, a one-step bootstrap gives $q_{K,x}^2=2\log K+O(\log\log K)$ uniformly, hence \eqref{eq:q-asymp-logK}.

Now define
\[
  m_K:=\sqrt{\rho_K} v,\qquad \sigma_K:=\sqrt{1-\rho_K},
  \qquad
  y_{K,x}:=\frac{q_{K,x}-m_K}{\sigma_K}.
\]
Since $Z_{1,K}^{(v)} \mid V=v\sim N(m_K,\sigma_K^2)$, we have
\begin{equation}\label{eq:cond-prob-y}
  \Pbb \left(P_{1,K}^{(v)}\le \frac{x}{\lambda_K(v) K} \mid V=v\right)
  =
  \Pbb \left(Z_{1,K}^{(v)}\ge q_{K,x} \mid V=v\right)
  =
  \barPh(y_{K,x}).
\end{equation}

We next compare $y_{K,x}$ with $q_{K,x}$.
Since
\[
  \frac{1}{\sigma_K}-1
  = \frac{1-\sigma_K}{\sigma_K}
  = \frac{\rho_K}{\sigma_K(1+\sigma_K)},
\]
we obtain
\[
  y_{K,x}-q_{K,x}
  =
  q_{K,x} \left(\frac{1}{\sigma_K}-1\right) - \frac{m_K}{\sigma_K}
  =
  \frac{\rho_K q_{K,x}}{\sigma_K(1+\sigma_K)} - \frac{\sqrt{\rho_K} v}{\sigma_K}.
\]
Because $\sigma_K\to 1$, $q_{K,x}=O(\sqrt{\log K})$ uniformly in $x$, and $\rho_K\log K=c_K\to c$, it follows that
\[
  \sup_{x\in I}\left|\frac{\rho_K q_{K,x}}{\sigma_K(1+\sigma_K)}\right|
  \le C \rho_K \sqrt{\log K}
  = C \frac{c_K}{\sqrt{\log K}}
  \to 0,
\]
while
\[
  \left|\frac{\sqrt{\rho_K} v}{\sigma_K}\right|
  \le C_v \sqrt{\rho_K}\to 0.
\]
Therefore,
\begin{equation}\label{eq:y-minus-q}
  \sup_{x\in I}|y_{K,x}-q_{K,x}|\to 0.
\end{equation}
Since $q_{K,x}\to\infty$ uniformly, \eqref{eq:y-minus-q} implies
\begin{equation}\label{eq:q-over-y}
  \sup_{x\in I}\left|\frac{q_{K,x}}{y_{K,x}}-1\right|\to 0.
\end{equation}

We now derive the exponential factor.
A direct calculation gives
\begin{align}
  y_{K,x}^2-q_{K,x}^2
  &=
  \frac{(q_{K,x}-m_K)^2}{1-\rho_K}-q_{K,x}^2 \notag\\
  &=
  \frac{q_{K,x}^2-2m_K q_{K,x}+m_K^2-(1-\rho_K)q_{K,x}^2}{1-\rho_K} \notag\\
  &=
  \frac{\rho_K q_{K,x}^2 - 2m_K q_{K,x}+m_K^2}{1-\rho_K}. \label{eq:y2-minus-q2}
\end{align}
We claim that
\begin{equation}\label{eq:exp-match}
  \sup_{x\in I}
  \left|
  -\frac{y_{K,x}^2-q_{K,x}^2}{2}
  -\log \lambda_K(v)
  \right|
  \to 0.
\end{equation}
Indeed, by \eqref{eq:q-asymp-logK},
\[
  \sup_{x\in I} \left|\rho_K q_{K,x}^2 - 2c_K\right|
  =
  \rho_K  \sup_{x\in I} \left|q_{K,x}^2-2\log K\right|
  \le
  \rho_K  O(\log\log K)
  =
  c_K \frac{O(\log\log K)}{\log K}
  \to 0.
\]
Next,
\[
  m_K q_{K,x}
  =
  \sqrt{\rho_K} v  q_{K,x}
  =
  v \sqrt{\rho_K q_{K,x}^2}.
\]
Since $\rho_K q_{K,x}^2\to 2c$ uniformly in $x\in I$, continuity of the square root yields
\[
  \sup_{x\in I}\left|m_K q_{K,x} - \sqrt{2c_K} v\right|\to 0.
\]
Also,
\[
  m_K^2 = \rho_K v^2 \to 0,
\qquad
  \frac{1}{1-\rho_K}\to 1.
\]
Substituting these facts into \eqref{eq:y2-minus-q2}, we obtain
\[
  \sup_{x\in I}
  \left|
  \frac{y_{K,x}^2-q_{K,x}^2}{2}
  -\left(c_K-\sqrt{2c_K} v\right)
  \right|
  \to 0.
\]
Since $\log\lambda_K(v)=-c_K+\sqrt{2c_K} v$, this proves \eqref{eq:exp-match}.

Finally, apply the standard Mills ratio inequalities both at $q_{K,x}$ and at $y_{K,x}$.
Define the Mills-ratio remainder
\[
  R(z):=\frac{\barPh(z) z}{\ph(z)},\qquad z>0.
\]
These inequalities imply
\[
  \frac{z^2}{1+z^2}\le R(z)\le 1,
\]
so in particular
\[
  \sup_{z\ge m}|R(z)-1|\le \frac{1}{1+m^2}
  \qquad (m>0).
\]
Because $q_{K,x}\to\infty$ and $y_{K,x}\to\infty$ uniformly on $I$, the quantity
\[
  \underline m_K:=\inf_{x\in I}\min\{q_{K,x},y_{K,x}\}
\]
satisfies $\underline m_K\to\infty$. Hence
\[
  \sup_{x\in I}\left|\frac{R(y_{K,x})}{R(q_{K,x})}-1\right|\to 0.
\]
Using the identity $\barPh(z)=\ph(z)R(z)/z$, we therefore obtain
\[
  \frac{\barPh(y_{K,x})}{\barPh(q_{K,x})}
  =
  \frac{\ph(y_{K,x})}{\ph(q_{K,x})} \frac{q_{K,x}}{y_{K,x}} \frac{R(y_{K,x})}{R(q_{K,x})}
  =
  \exp \left(-\frac{y_{K,x}^2-q_{K,x}^2}{2}\right)\frac{q_{K,x}}{y_{K,x}} (1+o(1))
\]
uniformly in $x\in I$.
Using \eqref{eq:q-over-y} and \eqref{eq:exp-match}, we conclude that
\[
  \sup_{x\in I}
  \left|
  \frac{\barPh(y_{K,x})}{\barPh(q_{K,x})} - \lambda_K(v)
  \right|
  \to 0.
\]
Since $\barPh(q_{K,x})=u_{K,x}=x/(\lambda_K(v)K)$, we obtain
\[
  \barPh(y_{K,x})
  =
  \frac{x}{K} (1+o(1))
\]
uniformly in $x\in I$.
Combining this with \eqref{eq:cond-prob-y} proves \eqref{eq:uniform-tail-calibration}.
\end{proof}

\subsection{Point-process approximation for Theorem 3.3}

Proposition~\ref{prop:tri-calibration} controls the upper tail of $P_{i,K}^{(v)}$, which governs large positive scores. To describe the full $1$-stable limit, we also need the lower tail, which governs large negative scores.
For this purpose write
\begin{equation*}
  \lambda_K^{+}(v):=\lambda_K(v)=\exp \left(-c_K+\sqrt{2c_K} v\right),
  \qquad
  \lambda_K^{-}(v):=\exp \left(-c_K-\sqrt{2c_K} v\right),
\end{equation*}
and
\begin{equation*}
  \lambda^{+}(v):=\exp \left(-c+\sqrt{2c} v\right),
  \qquad
  \lambda^{-}(v):=\exp \left(-c-\sqrt{2c} v\right).
\end{equation*}
Also write
\[
  Q_{i,K}^{(v)}:=\Ph \left(Z_{i,K}^{(v)}\right)=1-P_{i,K}^{(v)}.
\]
Large positive CCT scores correspond to small values of $P_{i,K}^{(v)}$, whereas large negative CCT scores correspond to small values of $Q_{i,K}^{(v)}$.

The lower-tail estimates follow by symmetry, because
\[
  Q_{1,K}^{(v)}=\barPh\left(-Z_{1,K}^{(v)}\right)
\]
and $-Z_{1,K}^{(v)}$ has the same law as $Z_{1,K}^{(-v)}$ under $(V=v)$.

Recall that
\[
  X_{i,K}^{(v)}:=f \left(Z_{i,K}^{(v)}\right)
  =\tan \left(\pi(\Ph(Z_{i,K}^{(v)})-1/2)\right).
\]
When $X_{i,K}^{(v)}>0$ we have, by \eqref{eq:f-cot},
\[
  X_{i,K}^{(v)}=\cot \left(\pi P_{i,K}^{(v)}\right),
\]
whereas when $X_{i,K}^{(v)}<0$,
\[
  X_{i,K}^{(v)}
  =
  \tan \left(\pi(Q_{i,K}^{(v)}-1/2)\right)
  =
  -\cot \left(\pi Q_{i,K}^{(v)}\right).
\]

\begin{lemma}\label{lem:score-interval-prob}
Assume $\rho_K\downarrow 0$ and $c_K:=\rho_K\log K\to c\in[0,\infty)$.
Fix $v\in\R$.
Let $0<r<s<\infty$.
Then, conditionally on $V=v$,
\begin{equation}\label{eq:score-interval-positive}
  K \Pbb \left(r<\frac{X_{1,K}^{(v)}}{K}\le s \mid V=v\right)
  \to
  \frac{\lambda^{+}(v)}{\pi}\left(\frac1r-\frac1s\right),
\end{equation}
and
\begin{equation}\label{eq:score-interval-negative}
  K \Pbb \left(-s\le\frac{X_{1,K}^{(v)}}{K}< -r \mid V=v\right)
  \to
  \frac{\lambda^{-}(v)}{\pi}\left(\frac1r-\frac1s\right).
\end{equation}
In addition,
\begin{equation}\label{eq:score-tail-positive}
  K \Pbb \left(\frac{X_{1,K}^{(v)}}{K}> M \mid V=v\right)
  \to
  \frac{\lambda^{+}(v)}{\pi M},
\end{equation}
and
\begin{equation}\label{eq:score-tail-negative}
  K \Pbb \left(\frac{X_{1,K}^{(v)}}{K}< -M \mid V=v\right)
  \to
  \frac{\lambda^{-}(v)}{\pi M}
\end{equation}
for every $M>0$.
\end{lemma}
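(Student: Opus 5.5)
We reduce everything to the uniform $p$-value tail calibration of Proposition~\ref{prop:tri-calibration}, by translating score events into $p$-value events. For $y>0$ large, since $f$ is strictly increasing (Lemma~\ref{lem:f-basic}) and $X_{1,K}^{(v)}=\cot(\pi P_{1,K}^{(v)})$ on $\{X_{1,K}^{(v)}>0\}$, we have
\[
  \{X_{1,K}^{(v)}>Ky\}=\{P_{1,K}^{(v)}<\pi^{-1}\operatorname{arccot}(Ky)\}.
\]
The map $u\mapsto\cot(\pi u)$ is decreasing on $(0,1/2)$, and $\pi^{-1}\operatorname{arccot}(w)=\frac{1}{\pi w}(1+O(w^{-2}))$ as $w\to\infty$. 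Hence the threshold equals $x_K(y)/(\lambda_K^{+}(v)K)$ with
\[
  x_K(y):=\lambda_K^{+}(v)K\,\pi^{-1}\operatorname{arccot}(Ky)=\frac{\lambda_K^{+}(v)}{\pi y}\,(1+O(K^{-2}y^{-2})).
\]
This converges to $\lambda^{+}(v)/(\pi y)$, uniformly for $y$ in compacts of $(0,\infty)$.

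\emph{Positive tail.} Fix $M>0$ and set $x_K:=x_K(M)$. All $x_K$, for $K$ large, lie in a compact interval $I\subset(0,\infty)$ around $\lambda^{+}(v)/(\pi M)$, because $\lambda_K^{+}(v)\to\lambda^{+}(v)\in(0,\infty)$ by \eqref{eq:lambdaK-limit}. Proposition~\ref{prop:tri-calibration} controls the conditional law uniformly over $x\in I$, so we may evaluate at the moving point $x=x_K$. This gives
\[
  K\,\Pbb\bigl(P_{1,K}^{(v)}\le x_K/(\lambda_K^{+}(v)K)\mid V=v\bigr)=x_K+o(1)\to\lambda^{+}(v)/(\pi M).
\]
The distinction between strict and non-strict inequality is immaterial because $P_{1,K}^{(v)}$ has a continuous law. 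This proves \eqref{eq:score-tail-positive}.

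\emph{Negative tail.} We use the symmetry recorded before the lemma. On $\{X<0\}$ we have $X=-\cot(\pi Q)$ with $Q_{1,K}^{(v)}=\barPh(-Z_{1,K}^{(v)})$, and $-Z_{1,K}^{(v)}$ has the law of $Z_{1,K}^{(-v)}$. So $Q_{1,K}^{(v)}$ has the law of $P_{1,K}^{(-v)}$. Applying Proposition~\ref{prop:tri-calibration} at the point $-v$, where $\lambda_K(-v)=\lambda_K^{-}(v)$, gives \eqref{eq:score-tail-negative} by the same computation.

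\emph{Intervals.} For $0<r<s$, write the interval probabilities as differences of tails:
\[
  \Pbb(r<X/K\le s)=\Pbb(X/K>r)-\Pbb(X/K>s),
\]
and analogously on the negative side. Taking limits yields $\frac{\lambda^{\pm}(v)}{\pi}(1/r-1/s)$, which is \eqref{eq:score-interval-positive} and \eqref{eq:score-interval-negative}.

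The main point needing care is the uniformity. The argument $x_K$ fed into Proposition~\ref{prop:tri-calibration} depends on $K$, so we must invoke the supremum over the compact $I$ rather than pointwise convergence at a fixed $x$. We must also check that $x_K$ stays inside a fixed compact subset of $(0,\infty)$, which holds because $\lambda_K^{\pm}(v)$ is bounded away from $0$ and $\infty$. Finally, the arccot expansion error $O(K^{-2}M^{-2})$ is negligible. The case $c=0$ needs no separate treatment, since then $\lambda^{\pm}\equiv1$.
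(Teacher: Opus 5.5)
Your proposal is correct and follows essentially the paper's route. You convert score thresholds into $p$-value thresholds via $\pi^{-1}\operatorname{arccot}(Kx)$, apply the uniform calibration of Proposition~\ref{prop:tri-calibration} at the moving point $\lambda_K^{+}(v)K\pi^{-1}\operatorname{arccot}(Kx)$, which stays in a compact subset of $(0,\infty)$, and handle the negative side by the $v\mapsto -v$ symmetry. The only difference is ordering: you prove the one-sided tails first and obtain the intervals by subtraction, whereas the paper writes each interval directly as a difference of two threshold events and then notes that the tails follow in the same way.
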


\begin{proof}
We first prove \eqref{eq:score-interval-positive}.
Because $Z_{1,K}^{(v)} \mid V=v$ has a Gaussian density and $\bar\Ph$, $\Ph$ are smooth monotone maps,
the conditional laws of $P_{1,K}^{(v)}$ and $Q_{1,K}^{(v)}$ are continuous. In particular,
all threshold events below have the same conditional probability whether written with $<$ or $\le$.
On the event $\{X_{1,K}^{(v)}>0\}$ we have
\[
  X_{1,K}^{(v)}=\cot \left(\pi P_{1,K}^{(v)}\right),
\]
and the function $u\mapsto \cot(\pi u)$ is strictly decreasing on $(0,1/2)$.
Therefore
\begin{align*}
  \left\{r<\frac{X_{1,K}^{(v)}}{K}\le s\right\}
  &=
  \left\{Kr<\cot \left(\pi P_{1,K}^{(v)}\right)\le Ks\right\} \\
  &=
  \left\{\frac{1}{\pi}\operatorname{arccot}(Ks)\le P_{1,K}^{(v)} < \frac{1}{\pi}\operatorname{arccot}(Kr)\right\}.
\end{align*}
Multiplying by $\lambda_K^{+}(v)K$ gives
\begin{align}
  &\left\{r<\frac{X_{1,K}^{(v)}}{K}\le s\right\} \notag\\
  &\qquad=
  \left\{
    a_{K}^{+}(s)
    \le
    \lambda_K^{+}(v)K P_{1,K}^{(v)}
    <
    a_{K}^{+}(r)
  \right\}, \label{eq:score-positive-interval-Y}
\end{align}
where
\[
  a_K^{+}(x):=\frac{\lambda_K^{+}(v)K}{\pi}\operatorname{arccot}(Kx),\qquad x>0.
\]
We claim that
\begin{equation}\label{eq:akplus-limit}
  a_K^{+}(x)\to \frac{\lambda^{+}(v)}{\pi x}\qquad\text{for every }x>0.
\end{equation}
Indeed, for $y\to\infty$,
\[
  \operatorname{arccot}(y)=\arctan(1/y)=\frac1y+O \left(\frac{1}{y^3}\right),
\]
because $\arctan u = u + O(u^3)$ as $u\to 0$.
Substituting $y=Kx$ yields
\[
  K \operatorname{arccot}(Kx)=\frac1x+O \left(\frac1{K^2}\right),
\]
hence
\[
  a_K^{+}(x)
  =
  \frac{\lambda_K^{+}(v)}{\pi}
  \left(
    \frac1x+O \left(\frac1{K^2}\right)
  \right)
  \to
  \frac{\lambda^{+}(v)}{\pi x},
\]
which proves \eqref{eq:akplus-limit}.

Fix $x\in\{r,s\}$.
Since $a_K^{+}(x)\to \lambda^{+}(v)/(\pi x)\in(0,\infty)$, there exists a compact interval $I_x\subset(0,\infty)$ such that $a_K^{+}(x)\in I_x$ for all sufficiently large $K$.
Applying Proposition~\ref{prop:tri-calibration} at the point $a_K^{+}(x)$ gives
\[
  K \Pbb \left(\lambda_K^{+}(v)K P_{1,K}^{(v)}\le a_K^{+}(x) \mid V=v\right)
  =
  a_K^{+}(x)+o(1).
\]
By \eqref{eq:score-positive-interval-Y},
\begin{align*}
  K \Pbb \left(r<\frac{X_{1,K}^{(v)}}{K}\le s \mid V=v\right)
  &=
  K \Pbb \left(\lambda_K^{+}(v)K P_{1,K}^{(v)}< a_K^{+}(r) \mid V=v\right)\\
  &\quad-
  K \Pbb \left(\lambda_K^{+}(v)K P_{1,K}^{(v)}< a_K^{+}(s) \mid V=v\right).
\end{align*}
Using the previous display at $x=r$ and $x=s$, we obtain
\[
  K \Pbb \left(r<\frac{X_{1,K}^{(v)}}{K}\le s \mid V=v\right)
  =
  a_K^{+}(r)-a_K^{+}(s)+o(1).
\]
Taking the limit and using \eqref{eq:akplus-limit} proves \eqref{eq:score-interval-positive}.

The negative interval statement follows from the same calculation applied to
$Q_{1,K}^{(v)}$ and Proposition~\ref{prop:tri-calibration} with $-v$ in place of $v$.
Equivalently, conditionally on $V=v$, the variable $Q_{1,K}^{(v)}=\barPh(-Z_{1,K}^{(v)})$ has the same distribution as $P_{1,K}^{(-v)}$ under the conditional law with latent value $-v$.
Indeed, on $\{X_{1,K}^{(v)}<0\}$ we have
\[
  X_{1,K}^{(v)}=-\cot \left(\pi Q_{1,K}^{(v)}\right),
\]
so the same arccot transformation yields \eqref{eq:score-interval-negative} with
$\lambda^{-}(v)=\lambda(-v)$.

The one-sided tail limits \eqref{eq:score-tail-positive} and \eqref{eq:score-tail-negative}
follow in exactly the same way by replacing the interval $(r,s]$ with a single threshold event.
\end{proof}

For $\eta>0$, let
\[
  B_\eta:=(-\infty,-\eta]\cup[\eta,\infty).
\]
Define the finite measure on $B_\eta$ by
\begin{equation}\label{eq:def-nu-v}
  \Lambda_v(dx)
  :=
  \frac{\lambda^{+}(v)}{\pi x^2} \1\{x>0\} dx
  +
  \frac{\lambda^{-}(v)}{\pi x^2} \1\{x<0\} dx.
\end{equation}

For $\eta>0$, define the signed extreme-score sum
\begin{equation}\label{eq:def-Seta}
  S_{K,\eta}(v)
  :=
  \sum_{i=1}^K \frac{X_{i,K}^{(v)}}{K} \1 \left\{\left|\frac{X_{i,K}^{(v)}}{K}\right|>\eta\right\}.
\end{equation}
This sum keeps only the score contributions whose size is at least $\eta$ after the natural $1/K$ scaling.

\begin{proposition}\label{prop:truncated-sum-limit}
Assume $\rho_K\downarrow 0$ and $c_K:=\rho_K\log K\to c\in[0,\infty)$.
Fix $v\in\R$ and $\eta>0$.
Then, conditionally on $V=v$,
\[
  S_{K,\eta}(v)\dto S_{\eta,v},
\]
where $S_{\eta,v}$ is an infinitely divisible random variable with characteristic function
\begin{equation}\label{eq:truncated-sum-cf}
  \E \left[e^{it S_{\eta,v}}\right]
  =
  \exp \left\{
    \int_{|x|>\eta} (e^{itx}-1) \Lambda_v(dx)
  \right\},
  \qquad t\in\R.
\end{equation}
Equivalently, $S_{\eta,v}$ is the compound-Poisson sum generated by a Poisson point process on $B_\eta$ with intensity measure $\Lambda_v$.
\end{proposition}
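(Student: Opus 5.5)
The plan is a direct characteristic-function computation that exploits conditional independence. Conditionally on $V=v$, the summands
\[
  W_{i,K}:=\frac{X_{i,K}^{(v)}}{K}\1\left\{\left|\frac{X_{i,K}^{(v)}}{K}\right|>\eta\right\},\qquad i=1,\dots,K,
\]
are i.i.d. Hence
\[
  \E\left[e^{itS_{K,\eta}(v)}\mid V=v\right]=\left(1+\E\left[e^{itW_{1,K}}-1\mid V=v\right]\right)^K .
\]
It therefore suffices to show that
\[
  K\,\E\left[e^{itW_{1,K}}-1\mid V=v\right]\to\int_{|x|>\eta}(e^{itx}-1)\Lambda_v(dx).
\]
Given this, $\E[e^{itW_{1,K}}-1]=O(1/K)$, and the elementary limit $(1+a_K/K)^K\to e^{a}$ for $a_K\to a\in\mathbb C$ yields \eqref{eq:truncated-sum-cf}. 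Lévy's continuity theorem then gives $S_{K,\eta}(v)\dto S_{\eta,v}$. The identification with a compound Poisson sum is immediate, because $\Lambda_v$ restricted to $B_\eta$ is a finite measure with total mass $(\lambda^+(v)+\lambda^-(v))/(\pi\eta)$.

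The key step is the convergence of $K$ times the law of $X_{1,K}^{(v)}/K$, restricted to $B_\eta$, to $\Lambda_v$. I would phrase it as vague convergence of the finite measures
\[
  \nu_K(\cdot):=K\,\Pbb\left(X_{1,K}^{(v)}/K\in\cdot\,\cap B_\eta\mid V=v\right).
\]
Lemma~\ref{lem:score-interval-prob} gives $\nu_K((r,s])\to\Lambda_v((r,s])$ and $\nu_K([-s,-r))\to\Lambda_v([-s,-r))$ for all $\eta\le r<s$, and also gives the tail masses $\nu_K((M,\infty))\to\lambda^+(v)/(\pi M)$ and $\nu_K((-\infty,-M))\to\lambda^-(v)/(\pi M)$. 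Consequently the total masses converge to $\Lambda_v(B_\eta)<\infty$, and the family $\{\nu_K\}$ is tight: for large $M$ the mass of $\nu_K$ outside $[-M,M]$ is below $\epsilon$ uniformly in $K$. Because $\Lambda_v$ has a density, it charges no points, in particular not the boundary points $\pm\eta$. Since intervals form a convergence-determining class, this gives weak convergence $\nu_K\Rightarrow\Lambda_v|_{B_\eta}$ of finite measures.

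Apply this with the bounded continuous test function $x\mapsto e^{itx}-1$ on $B_\eta$, which is continuous because $B_\eta$ is closed. Outside $B_\eta$ the summand $W_{1,K}$ vanishes, so $e^{itW_{1,K}}-1=0$ there. This gives the required limit of $K\,\E[e^{itW_{1,K}}-1\mid V=v]$.

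The main obstacle is the passage from interval convergence to integrals of the oscillating function $e^{itx}$ over unbounded regions. I would handle it by a direct approximation. Fix $M$ and split $B_\eta$ into $\eta\le|x|\le M$ and $|x|>M$. On the compact part, partition into finitely many small intervals and approximate $e^{itx}-1$ by step functions; uniform continuity of $e^{itx}$ on the compact set, together with the interval limits, controls the error. On the part $|x|>M$, bound the integrand by $2$ and use the tail limits, whose contribution is $O(1/M)$ uniformly in $K$. The endpoint conventions ($<$ versus $\le$) are harmless because the conditional laws are continuous, as noted in the proof of Lemma~\ref{lem:score-interval-prob}.
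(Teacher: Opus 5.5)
Your proposal is correct and follows essentially the same route as the paper. You factorize the conditional characteristic function over the i.i.d.\ summands and obtain the limit of $K\,\E[e^{itW_{1,K}}-1\mid V=v]$ from the interval and tail limits of Lemma~\ref{lem:score-interval-prob}, using a partition approximation on $[-M,-\eta]\cup[\eta,M]$ plus an $O(1/M)$ tail bound; then $(1+a_K/K)^K\to e^{a}$ and L\'evy's continuity theorem finish the argument. The only difference is cosmetic: the paper removes the upper cutoff $M$ at the level of the sum, by bounding $\Pbb(S_{K,\eta}(v)\neq S_{K,\eta,M}(v)\mid V=v)$ by a quantity of order $1/M$ and comparing characteristic functions, whereas you remove it inside the single-summand integral, which is slightly more direct.
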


\begin{proof}
Fix $t\in\R$.
For $M>\eta$, define the additionally truncated sum
\[
  S_{K,\eta,M}(v)
  :=
  \sum_{i=1}^K \frac{X_{i,K}^{(v)}}{K} 
  \1 \left\{\eta<\left|\frac{X_{i,K}^{(v)}}{K}\right|\le M\right\}.
\]
We first identify the limit of $S_{K,\eta,M}(v)$ for fixed $M$, and then remove the upper cutoff $M$.

\paragraph{Bounded truncation.}
Let
\[
  C_{\eta,M}:=[-M,-\eta]\cup[\eta,M],
  \qquad
  \widetilde h_{t,\eta,M}(x):=e^{itx}-1,
  \quad x\in C_{\eta,M}.
\]
Because the conditional law of $X_{1,K}^{(v)}/K$ is absolutely continuous and $\Lambda_v$ has a density, both $\mu_{K,v}$ and $\Lambda_v$ assign zero mass to every singleton in $C_{\eta,M}$. In particular, changing endpoint values at $\{\pm\eta,\pm M\}$ does not alter any of the integrals below, so the original cutoff integrand $x\mapsto (e^{itx}-1)\1\{\eta<|x|\le M\}$ may be replaced by the continuous representative $\widetilde h_{t,\eta,M}$ on $C_{\eta,M}$.
The function $\widetilde h_{t,\eta,M}$ is bounded and uniformly continuous on the compact set $C_{\eta,M}$.
Let
\[
  \mu_{K,v}(A):=
  K \Pbb \left(\frac{X_{1,K}^{(v)}}{K}\in A \mid V=v\right),
  \qquad A\subset C_{\eta,M}\ \text{Borel}.
\]
By Lemma~\ref{lem:score-interval-prob}, the interval asymptotics are already known for intervals of the form $(r,s]\subset(0,\infty)$ and $[-s,-r)\subset(-\infty,0)$. Since singleton boundary masses vanish for both $\mu_{K,v}$ and $\Lambda_v$, the same limits hold for any interval $B\subset C_{\eta,M}$ whose endpoints lie in $[-M,-\eta]\cup[\eta,M]$; changing endpoint inclusions affects only finitely many massless boundary points.
We now prove that
\begin{equation}\label{eq:h-compact-conv}
  \int_{C_{\eta,M}} \widetilde h_{t,\eta,M}(x) \mu_{K,v}(dx)
  \to
  \int_{C_{\eta,M}} \widetilde h_{t,\eta,M}(x) \Lambda_v(dx).
\end{equation}

Since $\widetilde h_{t,\eta,M}$ is bounded and continuous on $C_{\eta,M}$,
the convergence \eqref{eq:h-compact-conv} is the special case
$g=\widetilde h_{t,\eta,M}$ of Lemma~\ref{lem:mu-conv-away-zero} below,
whose proof relies only on Lemma~\ref{lem:score-interval-prob} and is
independent of the present proposition.

Next define
\[
  W_{i,K}^{(\eta,M)}(v)
  :=
  \frac{X_{i,K}^{(v)}}{K} 
  \1 \left\{\eta<\left|\frac{X_{i,K}^{(v)}}{K}\right|\le M\right\}.
\]
Conditionally on $V=v$, the variables $W_{1,K}^{(\eta,M)}(v),\dots,W_{K,K}^{(\eta,M)}(v)$ are i.i.d., and
\[
  S_{K,\eta,M}(v)=\sum_{i=1}^K W_{i,K}^{(\eta,M)}(v).
\]
Therefore
\begin{align}
  \E \left[e^{it S_{K,\eta,M}(v)} \mid V=v\right]
  &=
  \left(
    \E \left[e^{it W_{1,K}^{(\eta,M)}(v)} \mid V=v\right]
  \right)^K \notag\\
  &=
  \left(
    1+
    \E \left[
      (e^{itX_{1,K}^{(v)}/K}-1) 
      \1 \left\{\eta<\left|\frac{X_{1,K}^{(v)}}{K}\right|\le M\right\}
 \mid V=v
    \right]
  \right)^K \notag\\
  &=
  \left(
    1+\frac{a_{K,\eta,M}(t,v)}{K}
  \right)^K, \label{eq:cf-bounded-trunc}
\end{align}
where
\[
  a_{K,\eta,M}(t,v)
  :=
  K 
  \E \left[
    (e^{itX_{1,K}^{(v)}/K}-1) 
    \1 \left\{\eta<\left|\frac{X_{1,K}^{(v)}}{K}\right|\le M\right\}
 \mid V=v
  \right].
\]
By \eqref{eq:h-compact-conv} and the boundary-mass observation at the start of the bounded-truncation step,
\[
  a_{K,\eta,M}(t,v)
  \to
  A_{\eta,M}(t,v)
  :=
  \int_{\eta<|x|\le M}(e^{itx}-1) \Lambda_v(dx).
\]
Substituting into \eqref{eq:cf-bounded-trunc} and using
\[
  \left(1+\frac{u_K}{K}\right)^K\to e^u
  \qquad\text{whenever }u_K\to u,
\]
we obtain
\begin{equation}\label{eq:bounded-trunc-limit}
  \E \left[e^{it S_{K,\eta,M}(v)} \mid V=v\right]
  \to
  \exp \left(A_{\eta,M}(t,v)\right).
\end{equation}

\paragraph{Removing the upper cutoff.}
We compare $S_{K,\eta}(v)$ and $S_{K,\eta,M}(v)$.
If these two sums differ, then there exists at least one index $i$ such that
\[
  \left|\frac{X_{i,K}^{(v)}}{K}\right|>M.
\]
Hence, by the union bound,
\begin{align}
  \Pbb \left(S_{K,\eta}(v)\neq S_{K,\eta,M}(v) \mid V=v\right)
  &\le
  K \Pbb \left(\left|\frac{X_{1,K}^{(v)}}{K}\right|>M \mid V=v\right) \notag\\
  &=
  K \Pbb \left(\frac{X_{1,K}^{(v)}}{K}>M \mid V=v\right)
  +
  K \Pbb \left(\frac{X_{1,K}^{(v)}}{K}< -M \mid V=v\right). \label{eq:tail-union}
\end{align}
By \eqref{eq:score-tail-positive} and \eqref{eq:score-tail-negative},
\begin{equation}\label{eq:tail-union-limit}
  \limsup_{K\to\infty}
  \Pbb \left(S_{K,\eta}(v)\neq S_{K,\eta,M}(v) \mid V=v\right)
  \le
  \frac{\lambda^{+}(v)+\lambda^{-}(v)}{\pi M}.
\end{equation}
Therefore, for every fixed $t\in\R$,
\begin{align}
  &\limsup_{K\to\infty}
  \left|
    \E \left[e^{itS_{K,\eta}(v)} \mid V=v\right]
    -
    \E \left[e^{itS_{K,\eta,M}(v)} \mid V=v\right]
  \right| \notag\\
  &\qquad\le
  2 
  \limsup_{K\to\infty}
  \Pbb \left(S_{K,\eta}(v)\neq S_{K,\eta,M}(v) \mid V=v\right) \notag\\
  &\qquad\le
  \frac{2(\lambda^{+}(v)+\lambda^{-}(v))}{\pi M}. \label{eq:cf-tail-control}
\end{align}
On the other hand, since $|e^{itx}-1|\le 2$ and $\Lambda_v(|x|>\eta)<\infty$,
\[
  A_{\eta,M}(t,v)
  =
  \int_{\eta<|x|\le M}(e^{itx}-1) \Lambda_v(dx)
  \to
  A_{\eta}(t,v)
  :=
  \int_{|x|>\eta}(e^{itx}-1) \Lambda_v(dx)
\]
as $M\to\infty$ by dominated convergence.
Therefore
\[
  \exp \left(A_{\eta,M}(t,v)\right)\to \exp \left(A_{\eta}(t,v)\right).
\]

Now let $\varepsilon>0$.
Choose $M$ large enough that both
\[
  \frac{2(\lambda^{+}(v)+\lambda^{-}(v))}{\pi M}<\frac{\varepsilon}{3}
\]
and
\[
  \left|
    \exp \left(A_{\eta,M}(t,v)\right)-\exp \left(A_{\eta}(t,v)\right)
  \right|
  <
  \frac{\varepsilon}{3}
\]
hold.
Then choose $K$ large enough that, by \eqref{eq:bounded-trunc-limit},
\[
  \left|
    \E \left[e^{it S_{K,\eta,M}(v)} \mid V=v\right]
    -
    \exp \left(A_{\eta,M}(t,v)\right)
  \right|
  <
  \frac{\varepsilon}{3}.
\]
Using \eqref{eq:cf-tail-control}, we conclude that
\[
  \left|
    \E \left[e^{it S_{K,\eta}(v)} \mid V=v\right]
    -
    \exp \left(A_{\eta}(t,v)\right)
  \right|
  <\varepsilon
\]
for all sufficiently large $K$.
This proves
\[
  \E \left[e^{it S_{K,\eta}(v)} \mid V=v\right]
  \to
  \exp \left\{
    \int_{|x|>\eta}(e^{itx}-1) \Lambda_v(dx)
  \right\}.
\]
Since the right-hand side is a characteristic function, Lévy's continuity theorem gives the conditional convergence in distribution of $S_{K,\eta}(v)$ to a random variable $S_{\eta,v}$ with characteristic function \eqref{eq:truncated-sum-cf}.
Because $\Lambda_v(B_\eta)<\infty$, this law is compound Poisson.
\end{proof}

\subsection{Conditional stable limit for Theorem 3.3}\label{sec:stable}

Throughout this section assume
\[
  \rho_K\downarrow 0,
  \qquad
  c_K:=\rho_K\log K\to c\in[0,\infty),
\]
fix $v\in\R$, and work conditionally on $V=v$.
Write
\[
  Y_{i,K}^{(v)}:=\frac{X_{i,K}^{(v)}}{K},
  \qquad
  T_{K}^{(v)}:=\sum_{i=1}^K Y_{i,K}^{(v)}
  =
  \frac1K\sum_{i=1}^K X_{i,K}^{(v)}.
\]
For each $K$ and fixed $v$, define the finite measure
\[
  \mu_{K,v}(A)
  :=
  K \Pbb \left(Y_{1,K}^{(v)}\in A \mid V=v\right),
  \qquad
  A\subset\R\ \text{Borel}.
\]

\begin{lemma}\label{lem:mu-conv-away-zero}
Fix $v\in\R$ and numbers $0<\eta<M<\infty$.
Let
\[
  C_{\eta,M}:=[-M,-\eta]\cup[\eta,M].
\]
Then for every bounded continuous function $g:C_{\eta,M}\to\R$,
\[
  \int_{C_{\eta,M}} g(x) \mu_{K,v}(dx)
  \to
  \int_{C_{\eta,M}} g(x) \Lambda_v(dx),
\]
where $\Lambda_v$ is the finite measure on $\R\setminus\{0\}$ defined in \eqref{eq:def-nu-v}.
\end{lemma}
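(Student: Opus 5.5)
The plan is to deduce the convergence from the interval asymptotics of Lemma~\ref{lem:score-interval-prob} by a Riemann-sum (step-function) approximation of $g$. This is the usual route from convergence on a determining class of sets to weak convergence of finite measures. The set $C_{\eta,M}$ is a disjoint union of two compact intervals, and $\Lambda_v$ has a bounded density there, namely at most $(\lambda^{+}(v)+\lambda^{-}(v))/(\pi\eta^2)$.

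\emph{Step 1 (convergence on intervals).} For any $\eta\le r<s\le M$, Lemma~\ref{lem:score-interval-prob} gives
\[
  \mu_{K,v}((r,s])\to\Lambda_v((r,s])=\frac{\lambda^{+}(v)}{\pi}\Bigl(\frac1r-\frac1s\Bigr),
\]
and the analogous statement holds for $[-s,-r)$. The conditional law of $Y_{1,K}^{(v)}$ is absolutely continuous, and $\Lambda_v$ has a density, so singletons carry no mass under either measure. Hence endpoint conventions are irrelevant, and the same limit holds for every subinterval of $C_{\eta,M}$. In particular the total masses converge, and $\sup_K\mu_{K,v}(C_{\eta,M})=:L<\infty$ for $K$ large.

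\emph{Step 2 (step-function approximation).} Fix $\epsilon>0$. Since $g$ is continuous on the compact set $C_{\eta,M}$, it is uniformly continuous. Choose a partition of $[\eta,M]$ and of $[-M,-\eta]$ into finitely many intervals $J_1,\dots,J_m$ on each of which the oscillation of $g$ is below $\epsilon$. Set $g_\epsilon:=\sum_j g(x_j)\1_{J_j}$ with $x_j\in J_j$, so that $\sup|g-g_\epsilon|\le\epsilon$. By Step~1 and linearity,
\[
  \int g_\epsilon\,d\mu_{K,v}=\sum_j g(x_j)\,\mu_{K,v}(J_j)\to\sum_j g(x_j)\,\Lambda_v(J_j)=\int g_\epsilon\,d\Lambda_v .
\]

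\emph{Step 3 (conclude).} The triangle inequality gives
\[
  \limsup_K\Bigl|\int g\,d\mu_{K,v}-\int g\,d\Lambda_v\Bigr|\le\epsilon\Bigl(L+\Lambda_v(C_{\eta,M})\Bigr).
\]
Letting $\epsilon\downarrow0$ finishes the proof.

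The only real point needing care is the uniform bound $L$ on the masses $\mu_{K,v}(C_{\eta,M})$. It follows directly from the convergence of the finitely many interval masses in Step~1, so I expect no genuine obstacle. The lemma is essentially the portmanteau theorem for finite measures, with Lemma~\ref{lem:score-interval-prob} supplying convergence on a $\pi$-system of $\Lambda_v$-continuity sets.
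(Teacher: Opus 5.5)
Your proposal is correct and follows essentially the same route as the paper: interval convergence from Lemma~\ref{lem:score-interval-prob}, extended to arbitrary subintervals because singletons are null for both measures, then a uniform-continuity partition of $C_{\eta,M}$ into intervals with a step-function approximation of $g$. The $\epsilon$-argument is closed using the uniform bound on $\mu_{K,v}(C_{\eta,M})$, which comes from convergence of the total mass.
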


\begin{proof}
Fix $\eta$ and $M$ and abbreviate $C:=C_{\eta,M}$.
Since $C$ is compact and $g$ is continuous on $C$, the function $g$ is uniformly continuous and bounded on $C$.
Because the conditional law of $Y_{1,K}^{(v)}$ is absolutely continuous and $\Lambda_v$ has a density, both $\mu_{K,v}$ and $\Lambda_v$ assign zero mass to every singleton in $C$.

We first prove convergence of $\mu_{K,v}(B)$ to $\Lambda_v(B)$ for the canonical interval orientations furnished by Lemma~\ref{lem:score-interval-prob}.
There are two cases.

\paragraph{Case 1: $B=(r,s]\subset(0,\infty)$ with $\eta\le r<s\le M$.}
By Lemma~\ref{lem:score-interval-prob},
\[
  K \Pbb \left(r<\frac{X_{1,K}^{(v)}}{K}\le s \mid V=v\right)
  \to
  \frac{\lambda^{+}(v)}{\pi}\left(\frac1r-\frac1s\right).
\]
By definition of $\mu_{K,v}$ and $\Lambda_v$,
\[
  \mu_{K,v}(B)
  =
  K \Pbb \left(Y_{1,K}^{(v)}\in B \mid V=v\right),
\]
and
\[
  \Lambda_v(B)
  =
  \int_r^s \frac{\lambda^{+}(v)}{\pi x^2} dx
  =
  \frac{\lambda^{+}(v)}{\pi}\left(\frac1r-\frac1s\right).
\]
Therefore
\[
  \mu_{K,v}(B)\to \Lambda_v(B).
\]

\paragraph{Case 2: $B=[-s,-r)\subset(-\infty,0)$ with $\eta\le r<s\le M$.}
Again by Lemma~\ref{lem:score-interval-prob},
\[
  K \Pbb \left(-s\le\frac{X_{1,K}^{(v)}}{K}< -r \mid V=v\right)
  \to
  \frac{\lambda^{-}(v)}{\pi}\left(\frac1r-\frac1s\right).
\]
Also,
\[
  \Lambda_v(B)
  =
  \int_{-s}^{-r}\frac{\lambda^{-}(v)}{\pi x^2} dx
  =
  \frac{\lambda^{-}(v)}{\pi}\left(\frac1r-\frac1s\right).
\]
Hence
\[
  \mu_{K,v}(B)\to \Lambda_v(B).
\]

Thus the convergence just proved holds for the canonical interval orientations on the positive and negative half-axes. Since singleton boundary masses vanish for both measures, the same limit extends immediately to any interval $B\subset C$ whose endpoints lie in $[-M,-\eta]\cup[\eta,M]$; changing endpoint inclusions alters only finitely many massless boundary points. In particular,
\begin{equation}\label{eq:mu-interval-conv}
  \mu_{K,v}(B)\to \Lambda_v(B)
\end{equation}
for every partition cell $B$ used below.

We now pass from interval indicators to a general bounded continuous function.
Fix $\varepsilon>0$.
By uniform continuity of $g$ on the compact set $C$, there exists a finite partition of $C$ into pairwise disjoint half-open intervals
\[
  C=B_1\cup\cdots\cup B_L
\]
such that
\[
  \sup_{x,y\in B_\ell}|g(x)-g(y)|\le \varepsilon
  \qquad\text{for every }\ell=1,\dots,L.
\]
Choose a point $x_\ell\in B_\ell$ for each $\ell$.

Then we have
\begin{align}
  \left|
    \int_C g(x) \mu_{K,v}(dx)
    -
    \sum_{\ell=1}^L g(x_\ell) \mu_{K,v}(B_\ell)
  \right|
  &\le
  \sum_{\ell=1}^L \int_{B_\ell}|g(x)-g(x_\ell)| \mu_{K,v}(dx) \notag\\
  &\le
  \varepsilon \sum_{\ell=1}^L \mu_{K,v}(B_\ell)
  =
  \varepsilon \mu_{K,v}(C). \label{eq:g-mu-approx}
\end{align}
Likewise,
\begin{align}
  \left|
    \int_C g(x) \Lambda_v(dx)
    -
    \sum_{\ell=1}^L g(x_\ell) \Lambda_v(B_\ell)
  \right|
  &\le
  \sum_{\ell=1}^L \int_{B_\ell}|g(x)-g(x_\ell)| \Lambda_v(dx) \notag\\
  &\le
  \varepsilon \sum_{\ell=1}^L \Lambda_v(B_\ell)
  =
  \varepsilon \Lambda_v(C). \label{eq:g-nu-approx}
\end{align}

It remains to bound $\mu_{K,v}(C)$.
Since
\[
  C=[-M,-\eta]\cup[\eta,M],
\]
Lemma~\ref{lem:score-interval-prob} gives
\[
  \mu_{K,v}(C)
  =
  K \Pbb \left(\eta\le \left|\frac{X_{1,K}^{(v)}}{K}\right|\le M \mid V=v\right)
  \to
  \Lambda_v(C)<\infty.
\]
Therefore there exists a finite constant $C_{\eta,M,v}$ such that
\begin{equation}\label{eq:mu-C-bdd}
  \sup_{K\ge1}\mu_{K,v}(C)\le C_{\eta,M,v}.
\end{equation}

By \eqref{eq:mu-interval-conv}, for each $\ell$,
\[
  \mu_{K,v}(B_\ell)\to \Lambda_v(B_\ell),
\]
so
\[
  \sum_{\ell=1}^L g(x_\ell) \mu_{K,v}(B_\ell)
  \to
  \sum_{\ell=1}^L g(x_\ell) \Lambda_v(B_\ell).
\]
Combining this convergence with \eqref{eq:g-mu-approx}, \eqref{eq:g-nu-approx}, and \eqref{eq:mu-C-bdd}, we obtain
\begin{align*}
  \limsup_{K\to\infty}
  \left|
    \int_C g(x) \mu_{K,v}(dx)
    -
    \int_C g(x) \Lambda_v(dx)
  \right|
  &\le
  \varepsilon C_{\eta,M,v}
  +
  \varepsilon \Lambda_v(C).
\end{align*}
Because $\varepsilon>0$ was arbitrary, the desired convergence follows.
\end{proof}

\begin{lemma}\label{lem:uniform-tail-xinv}
Fix $v\in\R$ and assume $c_K=\rho_K\log K\to c\in[0,\infty)$.
Then there exist constants $C_v<\infty$ and $K_0\in\mathbb N$ such that for all $K\ge K_0$ and all $x\in(0,1]$,
\begin{equation}\label{eq:uniform-tail-pos}
  K \Pbb \left(Y_{1,K}^{(v)}>x \mid V=v\right)\le \frac{C_v}{x},
\end{equation}
and
\begin{equation}\label{eq:uniform-tail-neg}
  K \Pbb \left(Y_{1,K}^{(v)}< -x \mid V=v\right)\le \frac{C_v}{x}.
\end{equation}
\end{lemma}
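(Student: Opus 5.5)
The plan is to reduce both bounds to a uniform comparison between the tilted Gaussian tail $\barPh(y)$, with $y=(q-m_K)/\sigma_K$, and the untilted tail $\barPh(q)$. Here $m_K=\sqrt{\rho_K}v$ and $\sigma_K=\sqrt{1-\rho_K}$ as in the proof of Proposition~\ref{prop:tri-calibration}, but now $q$ ranges over the whole window $(-\infty,q_K]$ rather than a compact family of levels.

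\emph{Reduction to a tail ratio.} For $x\in(0,1]$, the representation $X=\cot(\pi P)$ on $\{X>0\}$ gives $\{Y_{1,K}^{(v)}>x\}=\{P_{1,K}^{(v)}<\pi^{-1}\operatorname{arccot}(Kx)\}$. The elementary inequality $\operatorname{arccot}(y)=\arctan(1/y)\le 1/y$ then gives
\[
  K\Pbb(Y_{1,K}^{(v)}>x\mid V=v)\le K\Pbb\bigl(P_{1,K}^{(v)}\le u\mid V=v\bigr),
  \qquad u:=\frac{1}{\pi Kx}\ge\frac{1}{\pi K}.
\]
\begin{itemize}
\item If $u\ge 1/4$, use the trivial bound: the left side is at most $K\le 1/(4\pi x u)\cdot 4\le 4/(\pi x)\cdot\ldots$; more simply $K=1/(\pi x u)\le 4/(\pi x)$.
\item If $u<1/4$, set $q=\barPh^{-1}(u)\in(q_0,q_K]$, where $q_0>0$ is fixed and $q_K=\barPh^{-1}(1/(\pi K))=\sqrt{2\log K}\,(1+o(1))$ by the Mills-ratio argument of Proposition~\ref{prop:tri-calibration}.
\end{itemize}
In the second case, $K\Pbb(P\le u\mid V=v)=K\barPh(y)=\bigl(\barPh(y)/\barPh(q)\bigr)\cdot Ku=\bigl(\barPh(y)/\barPh(q)\bigr)/(\pi x)$. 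So it suffices to prove
\[
  \sup_{K\ge K_0}\ \sup_{q_0\le q\le q_K}\frac{\barPh(y)}{\barPh(q)}<\infty .
\]

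\emph{Ratio bound.} The exponent is handled via \eqref{eq:y2-minus-q2}:
\[
  -\tfrac12(y^2-q^2)=\frac{-\rho_Kq^2+2m_Kq-m_K^2}{2(1-\rho_K)}\le \frac{|v|\sqrt{\rho_K}\,q_K}{1-\rho_K}\le C|v|\sqrt{c_K}+o(1),
\]
which is bounded because $c_K\to c<\infty$. For the prefactor, note that $|y-q|\le C(\rho_K q_K+\sqrt{\rho_K}|v|)\to0$ uniformly on the window. Consequently $y\ge q_0/2$ for large $K$, and $q/y$ is bounded. The Mills bounds $\frac{z}{1+z^2}\ph(z)\le\barPh(z)\le\ph(z)/z$ on $[q_0/2,\infty)$ then give
\[
  \frac{\barPh(y)}{\barPh(q)}\le \frac{q(1+q^2)}{y\,q^2}\,e^{-(y^2-q^2)/2}\le C_v ,
\]
with $C_v$ depending on $q_0,v,\sup_K c_K$. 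This proves \eqref{eq:uniform-tail-pos}.

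\emph{Negative tail.} For \eqref{eq:uniform-tail-neg}, apply the same argument to $Q_{1,K}^{(v)}$, which has the law of $P_{1,K}^{(-v)}$, and replace $v$ by $-v$.

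The main obstacle is the uniformity over the entire range $q\in[q_0,q_K]$ rather than near $\sqrt{2\log K}$. The key observation is that the linear cross term $m_Kq$ is largest at the top of the window, where it equals $O(\sqrt{c_K})$, and that the quadratic term $-\rho_Kq^2$ only helps. Small $q$ (that is, large $u$) is handled separately by the trivial bound $K\le 4/(\pi x)$, which avoids any Mills-ratio issue near zero.
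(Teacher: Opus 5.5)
Your proof is correct and follows essentially the same route as the paper. In both, small $Kx$ is handled by the trivial bound, and on the remaining window $q\in[\barPh^{-1}(1/4),\,O(\sqrt{\log K})]$ you compare $\barPh(y)$ with $\barPh(q)$ via Mills' ratio, controlling the exponent by the cross term $|m_K|q=O(|v|\sqrt{c_K})$ and the prefactor by the uniform closeness $\sup|y-q|\to0$. The differences are cosmetic: you apply $\operatorname{arccot}(y)\le 1/y$ at the start rather than the end, and you split at $Kx\le 4/\pi$ instead of $Kx<1$. Also, the garbled first clause of your $u\ge 1/4$ bullet should simply be replaced by the clean bound $K=1/(\pi x u)\le 4/(\pi x)$.
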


\begin{proof}
We prove \eqref{eq:uniform-tail-pos}; the proof of \eqref{eq:uniform-tail-neg} is identical after replacing $Z_{1,K}^{(v)}$ by $-Z_{1,K}^{(v)}$.

Fix $v$.
Because $c_K\to c$, there exists $K_1$ such that
\[
  c_K\le c+1
\]
for all $K\ge K_1$.
For each such $K$, define
\[
  m_K:=\sqrt{\rho_K} v,
  \qquad
  \sigma_K:=\sqrt{1-\rho_K}.
\]
Then $\sigma_K\to 1$ and $m_K\to 0$.

Let $x\in(0,1]$.

\paragraph{Range $0<x<1/K$.}
Since probabilities are bounded by $1$,
\[
  K \Pbb \left(Y_{1,K}^{(v)}>x \mid V=v\right)\le K.
\]
Because $x<1/K$, we have $K<1/x$, so
\[
  K \Pbb \left(Y_{1,K}^{(v)}>x \mid V=v\right)\le \frac{1}{x}.
\]
Thus \eqref{eq:uniform-tail-pos} holds in this range.

\paragraph{Range $1/K\le x\le 1$.}
Set
\[
  u:=Kx\in[1,K].
\]
Since $f$ is strictly increasing by Lemma~\ref{lem:f-basic}, the event $\{Y_{1,K}^{(v)}>x\}$ is equivalent to
\[
  \left\{X_{1,K}^{(v)}>u\right\}
  =
  \left\{f \left(Z_{1,K}^{(v)}\right)>u\right\}
  =
  \left\{Z_{1,K}^{(v)}\ge q_u\right\},
\]
where
\[
  q_u:=f^{-1}(u)=\bar\Ph^{-1} \left(\frac{1}{\pi}\operatorname{arccot}(u)\right).
\]
Define
\[
  a_u:=\frac{1}{\pi}\operatorname{arccot}(u)=\barPh(q_u),
  \qquad
  y_u:=\frac{q_u-m_K}{\sigma_K}.
\]
Then, conditionally on $V=v$,
\[
  \Pbb \left(Y_{1,K}^{(v)}>x \mid V=v\right)
  =
  \Pbb \left(Z_{1,K}^{(v)}\ge q_u \mid V=v\right)
  =
  \barPh(y_u).
\]

We first obtain deterministic bounds on $q_u$.
Since $u\in[1,K]$ and $\operatorname{arccot}$ is decreasing on $(0,\infty)$,
\[
  \frac{1}{\pi}\operatorname{arccot}(K)\le a_u\le \frac{1}{\pi}\operatorname{arccot}(1)=\frac14.
\]
Therefore
\[
  q_u\ge \bar\Ph^{-1} \left(\frac14\right)=:q_\star>0.
\]
Moreover, because $a_u\ge \frac{1}{\pi}\operatorname{arccot}(K)$ and $\operatorname{arccot}(K)\sim 1/K$, the standard Mills ratio bounds imply $q_u=O(\sqrt{\log K})$ uniformly in $u\in[1,K]$.
More explicitly, there exists a constant $C_1<\infty$ such that
\begin{equation}\label{eq:qu-upper}
  q_u^2\le C_1 \log K
  \qquad\text{for all }u\in[1,K]\text{ and all sufficiently large }K.
\end{equation}

Next we compare $y_u$ with $q_u$.
Since
\[
  y_u-q_u
  =
  q_u \left(\frac1{\sigma_K}-1\right)-\frac{m_K}{\sigma_K},
\]
and
\[
  \frac1{\sigma_K}-1
  =
  \frac{1-\sigma_K}{\sigma_K}
  =
  \frac{\rho_K}{\sigma_K(1+\sigma_K)},
\]
we obtain
\[
  |y_u-q_u|
  \le
  \frac{\rho_K q_u}{\sigma_K(1+\sigma_K)}
  +
  \frac{|m_K|}{\sigma_K}.
\]
Because $\sigma_K\to 1$, $q_u=O(\sqrt{\log K})$ uniformly by \eqref{eq:qu-upper}, $\rho_K\log K=c_K\le c+1$, and $m_K=\sqrt{\rho_K} v$, it follows that
\[
  \sup_{u\in[1,K]}|y_u-q_u|\to 0
  \qquad\text{as }K\to\infty.
\]
Hence, for all sufficiently large $K$,
\[
  y_u\ge \frac{q_\star}{2}>0
  \qquad\text{for all }u\in[1,K].
\]
Therefore Mills' inequality applies to both $q_u$ and $y_u$, and yields
\begin{align}
  \frac{\barPh(y_u)}{\barPh(q_u)}
  &\le
  \frac{\phi(y_u)}{y_u} 
  \frac{q_u+q_u^{-1}}{\phi(q_u)} \notag\\
  &=
  \frac{q_u+q_u^{-1}}{y_u} 
  \exp \left(-\frac{y_u^2-q_u^2}{2}\right). \label{eq:ratio-prebound}
\end{align}

We now bound the right-hand side of \eqref{eq:ratio-prebound}.
Since $\sup_{u\in[1,K]}|y_u-q_u|\to0$ and $q_u\ge q_\star$, we have $y_u\ge q_u/2$ for all sufficiently large $K$.
Therefore
\[
  \frac{q_u+q_u^{-1}}{y_u}
  \le
  \frac{q_u}{y_u} + \frac{q_u^{-1}}{y_u}
  \le
  2+\frac{2}{q_u^2}
  \le
  2+\frac{2}{q_\star^2}
  =:C_2.
\]

Next,
\[
  y_u^2-q_u^2
  =
  \frac{\rho_K q_u^2 -2m_K q_u + m_K^2}{1-\rho_K},
\]
exactly as in \eqref{eq:y2-minus-q2}.
Therefore
\begin{align*}
  -\frac{y_u^2-q_u^2}{2}
  &=
  -\frac{\rho_K q_u^2}{2(1-\rho_K)}
  +\frac{m_K q_u}{1-\rho_K}
  -\frac{m_K^2}{2(1-\rho_K)} \\
  &\le
  \frac{|m_K|q_u}{1-\rho_K},
\end{align*}
because the first and third terms on the right-hand side are nonpositive.
Using \eqref{eq:qu-upper},
\[
  |m_K|q_u
  =
  |v|\sqrt{\rho_K} q_u
  \le
  |v| \sqrt{\rho_K C_1\log K}
  \le
  |v| \sqrt{C_1(c+1)}.
\]
Also $1-\rho_K\to1$, so for all sufficiently large $K$,
\[
  \exp \left(-\frac{y_u^2-q_u^2}{2}\right)\le C_3
\]
for some finite constant $C_3$ depending only on $v$ and the limit $c$.
Combining this with \eqref{eq:ratio-prebound} yields
\[
  \frac{\barPh(y_u)}{\barPh(q_u)}\le C_2 C_3=:C_4.
\]
Recalling that $\barPh(q_u)=a_u=\frac{1}{\pi}\operatorname{arccot}(u)$, we obtain
\[
  \barPh(y_u)\le C_4 a_u.
\]
Since $\operatorname{arccot}(u)\le 1/u$ for all $u>0$,
\[
  a_u=\frac{1}{\pi}\operatorname{arccot}(u)\le \frac{1}{\pi u}.
\]
Therefore
\[
  \Pbb \left(Y_{1,K}^{(v)}>x \mid V=v\right)
  =
  \barPh(y_u)
  \le
  \frac{C_4}{\pi u}
  =
  \frac{C_4}{\pi Kx}.
\]
Multiplying by $K$ gives
\[
  K \Pbb \left(Y_{1,K}^{(v)}>x \mid V=v\right)\le \frac{C_4/\pi}{x}.
\]

Combining Steps 1 and 2 proves \eqref{eq:uniform-tail-pos} after enlarging the constant if necessary.
The proof of \eqref{eq:uniform-tail-neg} is identical.
\end{proof}

\begin{proposition}\label{prop:small-jumps-vanish}
Fix $v\in\R$ and let $C_v$ be the constant from Lemma~\ref{lem:uniform-tail-xinv}.
For $\eta\in(0,1]$, define
\[
  R_{K,\eta}(v)
  :=
  \sum_{i=1}^K
  \left(
    Y_{i,K}^{(v)}\1\{|Y_{i,K}^{(v)}|\le \eta\}
    -
    \E \left[Y_{1,K}^{(v)}\1\{|Y_{1,K}^{(v)}|\le \eta\} \mid V=v\right]
  \right).
\]
Then there exists $K_0=K_0(v,\eta)$ such that, for all $K\ge K_0$,
\begin{equation}\label{eq:small-jump-L2}
  \E \left[ |R_{K,\eta}(v)|^2 \mid V=v\right]
  \le 4 C_v \eta.
\end{equation}
Consequently, for every $\varepsilon>0$,
\begin{equation}\label{eq:small-jump-prob}
  \lim_{\eta\downarrow 0}\ \limsup_{K\to\infty}
  \Pbb \left(|R_{K,\eta}(v)|>\varepsilon \mid V=v\right)
  =0.
\end{equation}
\end{proposition}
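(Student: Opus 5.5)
Conditionally on $V=v$, the summands of $R_{K,\eta}(v)$ are i.i.d.\ and centred, so the variance of the sum is $K$ times the variance of a single truncated term, which is at most its second moment. The plan is therefore to prove
\[
  \E\bigl[|R_{K,\eta}(v)|^2\mid V=v\bigr]
  \le K\,\E\bigl[(Y_{1,K}^{(v)})^2\1\{|Y_{1,K}^{(v)}|\le\eta\}\mid V=v\bigr],
\]
and then to bound the right-hand side by a tail integral controlled through Lemma~\ref{lem:uniform-tail-xinv}.

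For the truncated second moment, use the layer-cake formula. With $W=|Y_{1,K}^{(v)}|$ and $\eta\le1$,
\[
  \E[W^2\1\{W\le\eta\}]=\int_0^\eta 2x\,\Pbb(x<W\le\eta)\,dx\le\int_0^\eta 2x\,\Pbb(W>x)\,dx .
\]
For $K\ge K_0$, Lemma~\ref{lem:uniform-tail-xinv} applies to every $x\in(0,1]$. Combining the positive and negative tails gives $K\Pbb(W>x\mid V=v)\le 2C_v/x$. Substituting this bound yields
\[
  K\,\E[W^2\1\{W\le\eta\}\mid V=v]\le\int_0^\eta 2x\cdot\frac{2C_v}{x}\,dx=4C_v\eta,
\]
which is exactly \eqref{eq:small-jump-L2}. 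The threshold $K_0$ is simply the one from Lemma~\ref{lem:uniform-tail-xinv}, so in fact it does not depend on $\eta$.

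The probability statement then follows from Chebyshev's inequality:
\[
  \Pbb\bigl(|R_{K,\eta}(v)|>\varepsilon\mid V=v\bigr)\le\frac{4C_v\eta}{\varepsilon^2}\qquad\text{for } K\ge K_0 .
\]
Taking $\limsup_K$ and then letting $\eta\downarrow0$ gives \eqref{eq:small-jump-prob}.

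The only point needing care is that the tail bound must hold uniformly in $x$ down to $0$, not merely for fixed $x$. The fixed-$x$ limits in Lemma~\ref{lem:score-interval-prob} would not suffice here, because the integral runs over all of $(0,\eta]$. This uniformity is precisely what Lemma~\ref{lem:uniform-tail-xinv} supplies, including its separate range $x<1/K$, so no genuine obstacle remains.
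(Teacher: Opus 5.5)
Your proposal is correct and follows the paper's proof essentially step for step: the variance is bounded by $K$ times the truncated second moment, that moment is written as a tail integral, the $C_v/x$ bound of Lemma~\ref{lem:uniform-tail-xinv} is applied, and Chebyshev finishes. The paper writes the tail-integral step as the pointwise inequality $u^2\1\{u\le\eta\}\le 2\int_0^\eta x\1\{u>x\}\,dx$, which is your layer-cake step, and your remark that $K_0$ can be taken independent of $\eta$ is also correct.
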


\begin{proof}
Conditionally on $V=v$, the random variables
\[
  Y_{1,K}^{(v)},\dots,Y_{K,K}^{(v)}
\]
are i.i.d.
Hence
\begin{align}
  \E \left[ |R_{K,\eta}(v)|^2 \mid V=v\right]
  &=
  K 
  \Var \left(
    Y_{1,K}^{(v)}\1\{|Y_{1,K}^{(v)}|\le \eta\}
 \mid V=v
  \right) \notag\\
  &\le
  K 
  \E \left[
    |Y_{1,K}^{(v)}|^2\1\{|Y_{1,K}^{(v)}|\le \eta\}
 \mid V=v
  \right]. \label{eq:RJ-var-bound}
\end{align}

Let
\[
  U_{1,K}^{(v)}:=|Y_{1,K}^{(v)}|.
\]
For every deterministic $u\ge 0$ and every $\eta>0$,
\[
  u^2\1\{u\le\eta\}
  \le
  2\int_0^\eta x \1\{u>x\} dx.
\]
Indeed, if $u\le\eta$, then the right-hand side equals
\[
  2\int_0^u x dx = u^2,
\]
whereas if $u>\eta$, the left-hand side is $0$ and the right-hand side is nonnegative.

Applying this pointwise inequality with $u=U_{1,K}^{(v)}$ and then taking conditional expectations, we obtain
\begin{align}
  \E \left[
    |Y_{1,K}^{(v)}|^2\1\{|Y_{1,K}^{(v)}|\le \eta\}
 \mid V=v
  \right]
  &\le
  2\int_0^\eta x 
  \Pbb \left(U_{1,K}^{(v)}>x \mid V=v\right) dx \notag\\
  &=
  2\int_0^\eta x 
  \Pbb \left(|Y_{1,K}^{(v)}|>x \mid V=v\right) dx. \label{eq:U2-tail-int}
\end{align}
Multiplying by $K$ and using
\[
  \Pbb \left(|Y_{1,K}^{(v)}|>x \mid V=v\right)
  \le
  \Pbb \left(Y_{1,K}^{(v)}>x \mid V=v\right)
  +
  \Pbb \left(Y_{1,K}^{(v)}< -x \mid V=v\right),
\]
we obtain from Lemma~\ref{lem:uniform-tail-xinv} that
\begin{align}
  K 
  \E \left[
    |Y_{1,K}^{(v)}|^2\1\{|Y_{1,K}^{(v)}|\le \eta\}
 \mid V=v
  \right]
  &\le
  2\int_0^\eta x 
  K \Pbb \left(|Y_{1,K}^{(v)}|>x \mid V=v\right) dx \notag\\
  &\le
  2\int_0^\eta x\left(\frac{C_v}{x}+\frac{C_v}{x}\right) dx \notag\\
  &=4C_v\eta. \label{eq:U2-final}
\end{align}
Combining \eqref{eq:RJ-var-bound} and \eqref{eq:U2-final} gives
\[
  \E \left[ |R_{K,\eta}(v)|^2 \mid V=v\right]
  \le 4C_v\eta
\]
for all sufficiently large $K$, which proves \eqref{eq:small-jump-L2}.

Finally, Chebyshev's inequality yields for every $\varepsilon>0$ and all sufficiently large $K$,
\[
  \Pbb \left(|R_{K,\eta}(v)|>\varepsilon \mid V=v\right)
  \le
  \frac{
    \E \left[ |R_{K,\eta}(v)|^2 \mid V=v\right]
  }{\varepsilon^2}
  \le
  \frac{4C_v\eta}{\varepsilon^2}.
\]
Taking $\limsup_{K\to\infty}$ and then letting $\eta\downarrow 0$ proves \eqref{eq:small-jump-prob}.
\end{proof}

For $0<\eta<1$, define the current truncated drift
\begin{equation}\label{eq:def-dKeta-current}
  d_{K,\eta}(v)
  :=
  K 
  \E \left[
    Y_{1,K}^{(v)}\1\{\eta<|Y_{1,K}^{(v)}|\le1\}
 \mid V=v
  \right].
\end{equation}

Finally, define the full truncated-mean centering
\begin{equation}\label{eq:def-bKv-early}
  b_K(v)
  :=
  K 
  \E \left[
    Y_{1,K}^{(v)}\1\{|Y_{1,K}^{(v)}|\le 1\}
 \mid V=v
  \right].
\end{equation}

\begin{theorem}[Conditional centered $1$-stable limit]\label{thm:centered-stable-limit}
Assume $\rho_K\downarrow 0$ and $c_K:=\rho_K\log K\to c\in[0,\infty)$.
Fix $v\in\R$ and let $b_K(v)$ be defined by \eqref{eq:def-bKv-early}.
Then, conditionally on $V=v$,
\[
  T_{K}^{(v)}-b_K(v)\dto S_{c,v},
\]
where $S_{c,v}$ is an infinitely divisible random variable with characteristic function
\begin{equation}\label{eq:stable-limit-cf}
  \E \left[e^{itS_{c,v}}\right]
  =
  \exp \left\{
    \int_{\R\setminus\{0\}}
    \left(e^{itx}-1-itx\1\{|x|\le 1\}\right) \Lambda_v(dx)
  \right\},
  \qquad t\in\R.
\end{equation}
Equivalently, $S_{c,v}$ is the $1$-stable law with Lévy measure $\Lambda_v$ and truncation function $x\mapsto x\1\{|x|\le1\}$.
\end{theorem}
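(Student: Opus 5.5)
Fix $\eta\in(0,1)$ and write $T_K^{(v)}-b_K(v)$ as three pieces:
\[
  T_K^{(v)}-b_K(v)
  =
  \bigl(S_{K,\eta}(v)-d_{K,\eta}(v)\bigr)
  +
  R_{K,\eta}(v),
\]
with $S_{K,\eta}(v)$ from \eqref{eq:def-Seta}, $d_{K,\eta}(v)$ from \eqref{eq:def-dKeta-current} and $R_{K,\eta}(v)$ from Proposition~\ref{prop:small-jumps-vanish}. This identity holds because
\[
  b_K(v)=K\E\bigl[Y_{1,K}^{(v)}\1\{|Y_{1,K}^{(v)}|\le\eta\}\mid V=v\bigr]+d_{K,\eta}(v),
\]
and the summands with $|Y_{i,K}^{(v)}|\le\eta$ minus their mean are exactly $R_{K,\eta}(v)$. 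The argument is the standard triangular-array route to an infinitely divisible limit: big jumps, a deterministic truncated drift, and negligible small jumps, followed by an $\eta\downarrow0$ approximation.

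\textbf{Step 1 (big jumps and drift).} Proposition~\ref{prop:truncated-sum-limit} gives $S_{K,\eta}(v)\dto S_{\eta,v}$ conditionally on $V=v$. Next show
\[
  d_{K,\eta}(v)\to d_\eta(v):=\int_{\eta<|x|\le1}x\,\Lambda_v(dx).
\]
This is Lemma~\ref{lem:mu-conv-away-zero} applied on $C_{\eta,1}$ with $g(x)=x$, which is bounded and continuous there. Boundary points at $\pm\eta,\pm1$ carry no mass, so open versus closed endpoints do not matter. Since $d_{K,\eta}(v)$ is deterministic, Slutsky gives
\[
  S_{K,\eta}(v)-d_{K,\eta}(v)\dto S_{\eta,v}-d_\eta(v).
\]
The limit has characteristic function
\[
  \exp\Bigl\{\int_{|x|>\eta}\bigl(e^{itx}-1-itx\1\{|x|\le1\}\bigr)\Lambda_v(dx)\Bigr\}.
\]

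\textbf{Step 2 (letting $\eta\downarrow0$ in the limit).} Since $\int_{0<|x|\le1}x^2\Lambda_v(dx)<\infty$ and $|e^{itx}-1-itx|\le t^2x^2/2$, dominated convergence shows the exponent converges to the exponent in \eqref{eq:stable-limit-cf} as $\eta\downarrow0$. Hence $S_{\eta,v}-d_\eta(v)\dto S_{c,v}$, and \eqref{eq:stable-limit-cf} is a valid characteristic function by Lévy--Khintchine.

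\textbf{Step 3 (combining).} Combine via characteristic functions, in the style of Billingsley's Theorem~3.2 on approximating sequences. For fixed $t$,
\[
  \bigl|\E e^{it(T_K^{(v)}-b_K(v))}-\E e^{it(S_{K,\eta}-d_{K,\eta})}\bigr|
  \le
  \E\bigl|e^{itR_{K,\eta}}-1\bigr|
  \le
  |t|\bigl(\E R_{K,\eta}^2\bigr)^{1/2}
  \le
  |t|\,(4C_v\eta)^{1/2}
\]
for large $K$, by \eqref{eq:small-jump-L2}. Take $\limsup_K$ first, using Step 1. Then let $\eta\downarrow0$, using Step 2 and the bound above. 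This gives pointwise convergence of the characteristic function of $T_K^{(v)}-b_K(v)$ to \eqref{eq:stable-limit-cf}. Lévy's continuity theorem then concludes. Identifying the law as $1$-stable follows from the form $\Lambda_v(dx)\propto x^{-2}$ on each half-line.

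\textbf{Main obstacle.} Most of the work is already done by Proposition~\ref{prop:truncated-sum-limit}, Lemma~\ref{lem:uniform-tail-xinv} and Proposition~\ref{prop:small-jumps-vanish}, so the delicate point is the bookkeeping. The truncation level $1$ used in $b_K$ must match the truncation function $x\1\{|x|\le1\}$ in the compensator. Also, $\eta$ must be removed only after $K\to\infty$, so that the $K$-uniform bound $4C_v\eta$ (valid for $K\ge K_0(v,\eta)$) suffices. No control of $b_K(v)$ itself is needed here, since it diverges in general and is simply subtracted.
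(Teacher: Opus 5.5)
Your proposal is correct and follows essentially the same route as the paper's proof: the same decomposition $T_K^{(v)}-b_K(v)=A_{K,\eta}(v)+R_{K,\eta}(v)$, the big-jump limit from Proposition~\ref{prop:truncated-sum-limit} plus Lemma~\ref{lem:mu-conv-away-zero} and Slutsky, the characteristic-function bound $|t|(4C_v\eta)^{1/2}$ from Proposition~\ref{prop:small-jumps-vanish}, and the limits taken in the order $K\to\infty$ then $\eta\downarrow0$. The differences are cosmetic. You remove $\eta$ in the exponent by dominated convergence and invoke L\'evy--Khintchine for validity of the limit, whereas the paper uses the explicit bound $\tfrac{t^2}{2}\pi^{-1}(\lambda^+(v)+\lambda^-(v))\eta$ and checks continuity of the limit at $t=0$ directly.
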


\begin{proof}
Fix $v$.
For $\eta\in(0,1)$, recall the current truncated drift $d_{K,\eta}(v)$ from \eqref{eq:def-dKeta-current}, and define
\[
  A_{K,\eta}(v)
  :=
  S_{K,\eta}(v)-d_{K,\eta}(v),
\]
where $S_{K,\eta}(v)$ is the fixed-cutoff extreme-score sum from \eqref{eq:def-Seta}, and
\[
  R_{K,\eta}(v)
  :=
  \sum_{i=1}^K
  \left(
    Y_{i,K}^{(v)}\1\{|Y_{i,K}^{(v)}|\le \eta\}
    -
    \E \left[
      Y_{1,K}^{(v)}\1\{|Y_{1,K}^{(v)}|\le \eta\}
 \mid V=v
    \right]
  \right).
\]
We first record the decomposition
\begin{equation}\label{eq:decomposition-full}
  T_{K}^{(v)}-b_K(v)=A_{K,\eta}(v)+R_{K,\eta}(v),
\end{equation}
which follows by splitting both $T_K^{(v)}=\sum_{i=1}^K Y_{i,K}^{(v)}$ and $b_K(v)$ at the threshold $|Y_{i,K}^{(v)}|=\eta$: the $\{|\cdot|>\eta\}$ parts give $A_{K,\eta}(v)=S_{K,\eta}(v)-d_{K,\eta}(v)$ and the $\{|\cdot|\le\eta\}$ parts give $R_{K,\eta}(v)$, by \eqref{eq:def-dKeta-current}, \eqref{eq:def-bKv-early}, and \eqref{eq:def-Seta}.

\paragraph{Deterministic centering.}
On the compact set
\[
  C_{\eta,1}=[-1,-\eta]\cup[\eta,1],
\]
consider the function
\[
  g_\eta(x):=x.
\]
This function is bounded and continuous on $C_{\eta,1}$.
Because the conditional law of $Y_{1,K}^{(v)}$ is absolutely continuous for every $K$ and $\Lambda_v$ has a density, both $\mu_{K,v}$ and $\Lambda_v$ assign zero mass to the boundary points $\{\pm\eta,\pm1\}$. Hence
\[
  d_{K,\eta}(v)
  =
  \int_{C_{\eta,1}} g_\eta(x) \mu_{K,v}(dx),
\]
and, defining
\begin{equation}\label{eq:def-deta}
  d_\eta(v)
  :=
  \int_{\eta<|x|\le1} x \Lambda_v(dx)
  =
  \int_{C_{\eta,1}} g_\eta(x) \Lambda_v(dx),
\end{equation}
we may apply Lemma~\ref{lem:mu-conv-away-zero} to obtain
\begin{equation}\label{eq:dKeta-conv}
  d_{K,\eta}(v)\to d_\eta(v).
\end{equation}

\paragraph{Large-jump limit at fixed $\eta$.}
By Proposition~\ref{prop:truncated-sum-limit},
\[
  S_{K,\eta}(v)\dto S_{\eta,v},
\]
where
\[
  \E \left[e^{itS_{\eta,v}}\right]
  =
  \exp \left\{
    \int_{|x|>\eta}(e^{itx}-1) \Lambda_v(dx)
  \right\}.
\]
Since $d_{K,\eta}(v)\to d_\eta(v)$ deterministically by \eqref{eq:dKeta-conv}, Slutsky's theorem implies
\[
  A_{K,\eta}(v)\dto L_{\eta,v},
\]
where $L_{\eta,v}:=S_{\eta,v}-d_\eta(v)$ has characteristic function
\begin{align}
  \E \left[e^{itL_{\eta,v}}\right]
  &=
  \exp \left\{
    \int_{|x|>\eta}(e^{itx}-1) \Lambda_v(dx)-it d_\eta(v)
  \right\} \notag\\
  &=
  \exp \left\{
    \int_{|x|>\eta}\left(e^{itx}-1-itx\1\{\eta<|x|\le1\}\right) \Lambda_v(dx)
  \right\}. \label{eq:Leta-cf}
\end{align}
Define
\[
  \Psi_{\eta,v}(t)
  :=
  \int_{|x|>\eta}\left(e^{itx}-1-itx\1\{\eta<|x|\le1\}\right) \Lambda_v(dx).
\]
Then we have
\begin{equation}\label{eq:Aeta-cf-limit}
  \E \left[e^{itA_{K,\eta}(v)} \mid V=v\right]
  \to
  e^{\Psi_{\eta,v}(t)}
  \qquad\text{for every }t\in\R.
\end{equation}

\paragraph{Small-jump remainder.}
By \eqref{eq:decomposition-full},
\[
  T_{K}^{(v)}-b_K(v)-A_{K,\eta}(v)=R_{K,\eta}(v).
\]
Therefore, using the elementary inequality $|e^{iu}-e^{iw}|\le |u-w|$ for real $u,w$,
\begin{align}
  &\left|
    \E \left[e^{it(T_{K}^{(v)}-b_K(v))} \mid V=v\right]
    -
    \E \left[e^{itA_{K,\eta}(v)} \mid V=v\right]
  \right| \notag\\
  &\qquad\le
  |t| 
  \E \left[ |R_{K,\eta}(v)| \mid V=v\right]. \label{eq:cf-diff-R}
\end{align}
By Cauchy--Schwarz,
\[
  \E \left[ |R_{K,\eta}(v)| \mid V=v\right]
  \le
  \left(
    \E \left[ |R_{K,\eta}(v)|^2 \mid V=v\right]
  \right)^{1/2}.
\]
Hence Proposition~\ref{prop:small-jumps-vanish} implies
\begin{equation}\label{eq:cf-diff-R-final}
  \lim_{\eta\downarrow0}\ \limsup_{K\to\infty}
  \left|
    \E \left[e^{it(T_{K}^{(v)}-b_K(v))} \mid V=v\right]
    -
    \E \left[e^{itA_{K,\eta}(v)} \mid V=v\right]
  \right|
  =0.
\end{equation}

\paragraph{Passage $\eta\downarrow0$.}
Define
\[
  \Psi_v(t)
  :=
  \int_{\R\setminus\{0\}}
  \left(e^{itx}-1-itx\1\{|x|\le1\}\right) \Lambda_v(dx).
\]
We first verify that this integral is absolutely convergent.
For $|x|\le1$, the elementary Taylor bound gives
\[
  |e^{itx}-1-itx|\le \frac{t^2x^2}{2}.
\]
Therefore
\[
  \int_{0<|x|\le1}
  \left|e^{itx}-1-itx\right| \Lambda_v(dx)
  \le
  \frac{t^2}{2}\int_{0<|x|\le1}x^2 \Lambda_v(dx)
  <\infty,
\]
because $x^2\Lambda_v(dx)$ is a finite multiple of Lebesgue measure on $(-1,1)\setminus\{0\}$.
For $|x|>1$,
\[
  |e^{itx}-1|\le 2
\]
and
\[
  \Lambda_v(|x|>1)
  =
  \int_{|x|>1}\Lambda_v(dx)
  <\infty,
\]
so the large-jump part is also absolutely integrable.
Thus $\Psi_v(t)$ is well defined.

Next, for every $t\in\R$,
\[
  \Psi_v(t)-\Psi_{\eta,v}(t)
  =
  \int_{0<|x|\le\eta}\left(e^{itx}-1-itx\right) \Lambda_v(dx).
\]
Using again $|e^{itx}-1-itx|\le t^2x^2/2$, we obtain
\begin{align}
  |\Psi_v(t)-\Psi_{\eta,v}(t)|
  &\le
  \frac{t^2}{2}\int_{0<|x|\le\eta}x^2 \Lambda_v(dx) \notag\\
  &=
  \frac{t^2}{2}
  \left(
    \int_0^\eta x^2\frac{\lambda^{+}(v)}{\pi x^2} dx
    +
    \int_{-\eta}^0 x^2\frac{\lambda^{-}(v)}{\pi x^2} dx
  \right) \notag\\
  &=
  \frac{t^2}{2}
  \frac{\lambda^{+}(v)+\lambda^{-}(v)}{\pi} \eta
  \to 0 \qquad (\eta\downarrow0). \label{eq:Psi-eta-to-Psi}
\end{align}
Hence
\begin{equation}\label{eq:expPsi-eta}
  e^{\Psi_{\eta,v}(t)}\to e^{\Psi_v(t)}
  \qquad (\eta\downarrow0)
\end{equation}
for every $t\in\R$.

\paragraph{Conclusion.}
Fix $t\in\R$. Combining \eqref{eq:Aeta-cf-limit}, \eqref{eq:cf-diff-R-final}, and \eqref{eq:expPsi-eta} via the triangle inequality and taking $K\to\infty$ first then $\eta\downarrow 0$ gives
\[
  \E \left[e^{it(T_{K}^{(v)}-b_K(v))} \mid V=v\right]
  \to
  e^{\Psi_v(t)}
  \qquad\text{for every }t\in\R.
\]

It remains only to verify continuity of the limit at $t=0$.
For $|t|\le1$ the same two bounds used in the absolute-convergence check above dominate the integrand of $\Psi_v(t)$ by the fixed $\Lambda_v$-integrable function $\tfrac{x^2}{2}\1\{|x|\le1\}+2\1\{|x|>1\}$, uniformly in such $t$. Since the integrand converges pointwise to $0$ as $t\to0$, dominated convergence gives $\Psi_v(t)\to0$, hence $e^{\Psi_v(t)}\to1=e^{\Psi_v(0)}$ as $t\to0$, so the pointwise limit is continuous at the origin. L\'evy's continuity theorem now yields
\[
  T_{K}^{(v)}-b_K(v)\dto S_{c,v},
\]
where $S_{c,v}$ has characteristic function \eqref{eq:stable-limit-cf}.
This completes the proof.
\end{proof}

\begin{corollary}\label{cor:explicit-stable-exponent}
Assume the setting of Theorem~\ref{thm:centered-stable-limit} with $c=0$. Then, for every fixed $v\in\R$,
\[
  S_{0,v}\sim \mathsf{C}(0,1).
\]
\end{corollary}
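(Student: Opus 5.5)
We compute the characteristic function \eqref{eq:stable-limit-cf} directly with $c=0$. Then $\lambda^{+}(v)=\lambda^{-}(v)=e^{0}=1$ for every $v$, so the Lévy measure \eqref{eq:def-nu-v} reduces to $\Lambda_v(dx)=\pi^{-1}x^{-2}dx$ on $\R\setminus\{0\}$, independently of $v$. This measure is symmetric about the origin, and that symmetry is the key point.

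The plan is to evaluate
\[
  \Psi_v(t)=\int_{\R\setminus\{0\}}\bigl(e^{itx}-1-itx\1\{|x|\le1\}\bigr)\frac{dx}{\pi x^2}.
\]
Because $\Lambda_v$ is symmetric and the truncated compensator $x\1\{|x|\le1\}$ is odd, the drift term integrates to zero; this is legitimate since $\int_{0<|x|\le1}|x|\,x^{-2}dx$ diverges only logarithmically, so we pair the $\pm x$ contributions before integrating. The imaginary part $\sin(tx)$ of $e^{itx}$ is likewise odd, and after pairing its integrand is absolutely integrable, as the absolute-convergence check in the proof of Theorem~\ref{thm:centered-stable-limit} already shows. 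What remains is
\[
  \Psi_v(t)=\frac{2}{\pi}\int_0^\infty\frac{\cos(tx)-1}{x^2}\,dx .
\]

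Next we evaluate this integral with the classical identity $\int_0^\infty (1-\cos(ux))x^{-2}dx=\pi|u|/2$. One route is to integrate by parts, which turns it into $|u|\int_0^\infty \sin(y)/y\,dy$; the Dirichlet integral then gives the value. Substituting yields $\Psi_v(t)=-|t|$, so $\E[e^{itS_{0,v}}]=e^{-|t|}$. This is exactly the characteristic function of $\mathsf{C}(0,1)$, and by uniqueness of characteristic functions $S_{0,v}\sim\mathsf{C}(0,1)$ for every $v$.

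The only delicate step is the cancellation of the drift and the sine parts, because these integrals are not absolutely convergent near $0$ and near $\infty$ when taken separately. To handle it, we first restrict to $\eta<|x|\le M$, where the integrand is absolutely integrable. On that region oddness gives zero exactly, and we then let $\eta\downarrow0$ and $M\to\infty$, using dominated convergence for the combined even integrand $(\cos(tx)-1)/x^2$. As a cross-check, the total mass of $\Lambda_v$ on $|x|>\epsilon$ equals $2/(\pi\epsilon)$, which matches the tail of a standard Cauchy law. This agrees with the direct observation that at $\rho=0$ the scores are i.i.d.\ standard Cauchy and $b_K(v)=0$ by symmetry.
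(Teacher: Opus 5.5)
Your proof is correct and follows the paper's route: with $c=0$ the Lévy measure becomes the symmetric $dx/(\pi x^2)$, and the exponent in \eqref{eq:stable-limit-cf} equals $-|t|$. The paper simply asserts this as the standard Lévy representation of the Cauchy law, while you verify it through the odd-part cancellation and the Dirichlet integral; your truncation to $\eta<|x|\le M$ is more care than needed, since $\sin(tx)-tx$ on $|x|\le1$ and $\sin(tx)$ on $|x|>1$ are already absolutely integrable against $x^{-2}dx$, but it does no harm.
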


\begin{proof}
When $c=0$, $\lambda^{+}(v)=\lambda^{-}(v)=1$, so $\Lambda_v(dx)=dx/(\pi x^2)$, which is the standard L\'evy representation of the Cauchy characteristic function; the exponent in \eqref{eq:stable-limit-cf} equals $-|t|$.
\end{proof}

\subsection{Asymptotics of the centering sequence}\label{sec:centering}

Theorem~\ref{thm:centered-stable-limit} identifies the centred conditional limit $T_K^{(v)}-b_K(v)\dto S_{c,v}$. To complete the proof of Theorem~\ref{thm:stablelimit}, it remains to determine the asymptotics of the centring sequence
\[
  b_K(v)
  =
  \E \left[
    X_{1,K}^{(v)}\1\{|X_{1,K}^{(v)}|\le K\}
 \mid V=v
  \right].
\]
We show that $\rho_K b_K(v)$ converges to the explicit limit $B_c(v)$ whenever $c_K\to c\in[0,\infty)$, and that a further expansion at $c=0$ yields the boundary-layer scale $s_K=\sqrt{\rho_K}(\log K)^{3/2}$.

Throughout this section we condition on $V=v$, write
\[
  m_K:=\sqrt{\rho_K} v,
  \qquad
  \sigma_K^2:=1-\rho_K,
\]
and denote by
\[
  g_K(z)
  :=
  \frac{1}{\sigma_K\sqrt{2\pi}}
  \exp \left(
    -\frac{(z-m_K)^2}{2\sigma_K^2}
  \right),
  \qquad z\in\R,
\]
the conditional density of $Z_{1,K}^{(v)}$.
We also write
\[
  a_K:=t(K)=f^{-1}(K)>0,
\]
so that, by monotonicity and oddness of $f$,
\[
  |X_{1,K}^{(v)}|\le K
  \qquad\Longleftrightarrow\qquad
  |Z_{1,K}^{(v)}|\le a_K.
\]

\begin{lemma}\label{lem:bK-exact}
For every $K$ and every fixed $v\in\R$,
\begin{equation}\label{eq:bK-exact-z}
  b_K(v)
  =
  \int_0^{a_K} f(z) \left(g_K(z)-g_K(-z)\right) dz.
\end{equation}
Moreover, for every $z\in\R$,
\begin{equation}\label{eq:g-difference}
  g_K(z)-g_K(-z)
  =
  \frac{2}{\sigma_K\sqrt{2\pi}}
  \exp \left(
    -\frac{z^2+m_K^2}{2\sigma_K^2}
  \right)
  \sinh \left(
    \frac{m_K z}{\sigma_K^2}
  \right).
\end{equation}
Consequently, if
\[
  u_K:=\sqrt{\rho_K} a_K,
\]
then we have
\begin{equation}\label{eq:rho-bK-integral}
  \rho_K b_K(v)=\int_0^{u_K} H_K(t,v) dt,
\end{equation}
where
\begin{equation}\label{eq:def-HK}
  H_K(t,v)
  :=
  \sqrt{\rho_K} 
  f \left(\frac{t}{\sqrt{\rho_K}}\right)
  \left[
    g_K \left(\frac{t}{\sqrt{\rho_K}}\right)
    -
    g_K \left(-\frac{t}{\sqrt{\rho_K}}\right)
  \right].
\end{equation}
\end{lemma}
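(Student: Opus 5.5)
The plan is a direct computation in three steps: symmetrize the truncated integral, expand the Gaussian density difference, and rescale $z=t/\sqrt{\rho_K}$.

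\textbf{Step 1: the exact integral \eqref{eq:bK-exact-z}.} Since $Z_{1,K}^{(v)}$ has density $g_K$ conditionally on $V=v$, and $|X_{1,K}^{(v)}|\le K$ is equivalent to $|Z_{1,K}^{(v)}|\le a_K$ (by monotonicity and oddness of $f$, Lemma~\ref{lem:f-basic}), we have
\[
  b_K(v)=\int_{-a_K}^{a_K} f(z)g_K(z)\,dz .
\]
Both $f$ and the interval are bounded here, so integrability is trivial. We split this at $0$ and substitute $z\mapsto -z$ on $[-a_K,0]$. Oddness of $f$ turns that piece into $-\int_0^{a_K}f(z)g_K(-z)\,dz$, and adding the two pieces gives \eqref{eq:bK-exact-z}.

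\textbf{Step 2: the density difference \eqref{eq:g-difference}.} We expand $(z\mp m_K)^2=z^2+m_K^2\mp 2m_Kz$. Factoring out the common term $\exp\{-(z^2+m_K^2)/(2\sigma_K^2)\}$ leaves
\[
  e^{m_Kz/\sigma_K^2}-e^{-m_Kz/\sigma_K^2}=2\sinh(m_Kz/\sigma_K^2).
\]
Keeping the normalising constant $1/(\sigma_K\sqrt{2\pi})$ gives \eqref{eq:g-difference}.

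\textbf{Step 3: the rescaled form \eqref{eq:rho-bK-integral}.} For $\rho_K>0$ we substitute $z=t/\sqrt{\rho_K}$ in \eqref{eq:bK-exact-z}, so $dz=dt/\sqrt{\rho_K}$ and the upper limit $a_K$ becomes $u_K=\sqrt{\rho_K}\,a_K$. Multiplying by $\rho_K$ leaves a factor $\rho_K/\sqrt{\rho_K}=\sqrt{\rho_K}$ in front of $f(t/\sqrt{\rho_K})\bigl[g_K(t/\sqrt{\rho_K})-g_K(-t/\sqrt{\rho_K})\bigr]$, which is exactly $H_K(t,v)$ in \eqref{eq:def-HK}.

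None of these steps is a genuine obstacle. The only points to state explicitly are the equivalence of the truncation events, which needs strict monotonicity of $f$, and the requirement $\rho_K>0$, which holds in the triangular regime, so the change of variables is legitimate.
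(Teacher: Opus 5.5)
Your proposal is correct and follows the paper's own proof essentially line for line: you symmetrize $\int_{-a_K}^{a_K} f\,g_K$ using oddness of $f$, expand $(z\mp m_K)^2$ to get the $\sinh$ form, and rescale via $t=\sqrt{\rho_K}\,z$. Your two explicit checks are appropriate details that the paper leaves implicit: the truncation event $\{|X_{1,K}^{(v)}|\le K\}$ equals $\{|Z_{1,K}^{(v)}|\le a_K\}$ by strict monotonicity and oddness of $f$, and the substitution requires $\rho_K>0$.
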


\begin{proof}
By definition, $b_K(v)=\int_{-a_K}^{a_K}f(z)g_K(z)dz$. Substituting $z\mapsto-z$ and using $f(-z)=-f(z)$ (Lemma~\ref{lem:f-basic}) yields \eqref{eq:bK-exact-z}. Expanding the squares $(z\mp m_K)^2$ and applying $e^x-e^{-x}=2\sinh x$ gives \eqref{eq:g-difference}. Finally, multiplying \eqref{eq:bK-exact-z} by $\rho_K$ and substituting $t=\sqrt{\rho_K}z$ yields \eqref{eq:rho-bK-integral} with $H_K$ as in \eqref{eq:def-HK}.
\end{proof}

\begin{theorem}[Asymptotic formula for the centering sequence]\label{thm:bK-general}
Assume $\rho_K\downarrow0$ and $c_K:=\rho_K\log K\to c\in[0,\infty)$.
Fix $v\in\R$ and define
\begin{equation}\label{eq:def-Bc}
  B_c(v)
  :=
  \frac{2}{\pi}
  \int_0^{\sqrt{2c}} t e^{-t^2/2}\sinh(vt) dt.
\end{equation}
Then we have
\begin{equation}\label{eq:rho-bK-limit}
  \rho_K b_K(v)\to B_c(v).
\end{equation}
\end{theorem}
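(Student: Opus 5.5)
The argument starts from the exact representation \eqref{eq:rho-bK-integral} of Lemma~\ref{lem:bK-exact}, $\rho_K b_K(v)=\int_0^{u_K}H_K(t,v)\,dt$ with $u_K=\sqrt{\rho_K}a_K$. I will show that $H_K(\cdot,v)$ converges pointwise on $(0,\sqrt{2c})$ to $h(t):=\frac{2}{\pi}te^{-t^2/2}\sinh(vt)$, that the upper limit satisfies $u_K\to\sqrt{2c}$, and that the convergence is dominated. Dominated convergence then gives \eqref{eq:rho-bK-limit}.

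\textbf{Upper limit.} By Lemma~\ref{lem:inv-asymp}, $a_K=t(K)=\sqrt{2\log K}(1+o(1))$. Hence $u_K^2=\rho_K a_K^2=2c_K(1+o(1))\to 2c$. When $c=0$ the interval shrinks to a point, and the target $B_0(v)=0$ will follow from the domination bound alone.

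\textbf{Pointwise limit.} Fix $t>0$ and set $z=t/\sqrt{\rho_K}\to\infty$. Lemma~\ref{lem:f-growth} gives $f(z)=\sqrt{2/\pi}\,z e^{z^2/2}(1+o(1))$. By \eqref{eq:g-difference},
\[
g_K(z)-g_K(-z)=\frac{2}{\sigma_K\sqrt{2\pi}}e^{-(z^2+m_K^2)/(2\sigma_K^2)}\sinh(m_K z/\sigma_K^2).
\]
Here $m_K z=vt$ exactly. Also $z^2/(2\sigma_K^2)-z^2/2=\rho_K z^2/(2(1-\rho_K))=t^2/(2(1-\rho_K))\to t^2/2$, and $m_K^2\to0$, $\sigma_K\to1$. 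Multiplying the pieces,
\[
H_K(t,v)=\sqrt{\rho_K}\sqrt{2/\pi}\,\frac{t}{\sqrt{\rho_K}}\cdot\frac{2}{\sqrt{2\pi}}\,e^{-t^2/2}\sinh(vt)(1+o(1))\to \frac{2}{\pi}te^{-t^2/2}\sinh(vt).
\]
The $\sqrt{\rho_K}$ prefactor cancels the $1/\sqrt{\rho_K}$ coming from $z$, which is exactly why the scaling $\rho_K b_K$ is the right one.

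\textbf{Domination (main obstacle).} The asymptotic for $f$ is only valid for large $z$, so I must handle $t$ near $0$, where $z$ need not be large. I will use the global bound \eqref{eq:f-upper}, $|f(z)|\le C(1+|z|)e^{z^2/2}$, together with $|\sinh x|\le |x|e^{|x|}$ and the bound $e^{z^2/2-z^2/(2\sigma_K^2)}\le1$. These give
\[
|H_K(t,v)|\le C'\sqrt{\rho_K}\Bigl(1+\frac{t}{\sqrt{\rho_K}}\Bigr)\frac{|v|t}{\sigma_K^2}e^{|v|t/\sigma_K^2}\le C''\bigl(\sqrt{\rho_K}+t\bigr)|v|t\,e^{2|v|t}.
\]
The right-hand side is uniformly bounded on $t\in[0,\sup_K u_K]$, which is a bounded interval. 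Dominated convergence with the indicator $\1\{t\le u_K\}\to\1\{t<\sqrt{2c}\}$ (almost everywhere) then yields the limit $\int_0^{\sqrt{2c}}h(t)\,dt=B_c(v)$. When $c=0$, the same bound integrated over the interval $[0,u_K]$, whose length tends to $0$, gives $o(1)=B_0(v)$.

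The only delicate point is the pointwise step. There one must check that the $(1+o(1))$ from Lemma~\ref{lem:f-growth} is legitimate for each fixed $t>0$; it is, because $z=t/\sqrt{\rho_K}\to\infty$. No uniformity in $t$ is needed, since domination handles the rest.
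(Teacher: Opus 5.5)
Your proposal is correct and follows the paper's proof essentially step for step. It uses the same representation $\rho_K b_K(v)=\int_0^{u_K}H_K(t,v)\,dt$ from Lemma~\ref{lem:bK-exact}, the limit $u_K\to\sqrt{2c}$ via Lemma~\ref{lem:inv-asymp}, the pointwise limit $H_K(t,v)\to\frac{2}{\pi}te^{-t^2/2}\sinh(vt)$ from the cancellation $z^2/2-z^2/(2\sigma_K^2)=-t^2/(2\sigma_K^2)$, and dominated convergence with the moving indicator $\1\{t\le u_K\}$. The only cosmetic difference is the domination step: you use the global bound \eqref{eq:f-upper}, which costs a harmless extra $\sqrt{\rho_K}$ term, whereas the paper uses boundedness on $(0,\infty)$ of the remainder $r$ in $f(z)=\sqrt{2/\pi}\,ze^{z^2/2}(1+r(z))$.
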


\begin{proof}
Let
\[
  u:=\sqrt{2c}.
\]
Squaring Lemma~\ref{lem:inv-asymp} and multiplying by $\rho_K$ gives
\begin{equation}\label{eq:uK-limit}
  u_K^2=\rho_K a_K^2=2c_K\cdot a_K^2/(2\log K)\to 2c,
  \qquad\text{hence }u_K\to u.
\end{equation}
Lemma~\ref{lem:f-growth} gives $f(z)=\sqrt{2/\pi}z e^{z^2/2}(1+r(z))$ with $r(z)\to 0$ as $|z|\to\infty$ and $r$ bounded on $(0,\infty)$ (the ratio is continuous on $(0,\infty)$, has a finite limit as $z\downarrow0$ since both $f(z)$ and $\sqrt{2/\pi}z e^{z^2/2}$ vanish to first order, and tends to $1$ as $z\to\infty$). Substituting this and \eqref{eq:g-difference} into \eqref{eq:def-HK}, and simplifying using $\sigma_K^2=1-\rho_K$ so that $t^2/(2\rho_K)-t^2/(2\rho_K\sigma_K^2)=-t^2/(2\sigma_K^2)$, we obtain
\begin{equation}\label{eq:HK-simplified}
  H_K(t,v)
  =\frac{2t}{\pi\sigma_K}\exp\left(-\frac{t^2+\rho_K v^2}{2\sigma_K^2}\right)
  \sinh\left(\frac{vt}{\sigma_K^2}\right)\bigl[1+r(t/\sqrt{\rho_K})\bigr].
\end{equation}
Since $\sigma_K\to 1$, $\rho_K\to 0$, and $r(t/\sqrt{\rho_K})\to 0$ for each $t>0$,
\begin{equation}\label{eq:HK-pointwise-limit}
  H_K(t,v)\to h(t,v):=\frac{2}{\pi}t e^{-t^2/2}\sinh(vt).
\end{equation}
Fix $M>u+1$. By \eqref{eq:HK-simplified} and $|\sinh(x)|\le |x|\cosh|x|$, for $K$ large enough that $\sigma_K\ge 1/2$ and $u_K\le M$, we have $|H_K(t,v)|\le C_{M,v}(t+t^2)$ on $[0,M]$, which is integrable.

Define
\[
  \widetilde H_K(t,v):=H_K(t,v) \1\{t\le u_K\},
  \qquad
  \widetilde h(t,v):=h(t,v) \1\{t\le u\}.
\]
For almost every $t\in[0,M]$, \eqref{eq:HK-pointwise-limit} and \eqref{eq:uK-limit} imply
\[
  \widetilde H_K(t,v)\to \widetilde h(t,v).
\]
Because $|\widetilde H_K(t,v)|\le C_{M,v}(t+t^2)$, dominated convergence yields
\[
  \int_0^M \widetilde H_K(t,v) dt
  \to
  \int_0^M \widetilde h(t,v) dt.
\]
Since $u_K\le M$ for all large $K$ and $u\le M$, the left-hand side is
\[
  \int_0^{u_K} H_K(t,v) dt = \rho_K b_K(v)
\]
by \eqref{eq:rho-bK-integral}, whereas the right-hand side is
\[
  \int_0^u h(t,v) dt
  =
  \frac{2}{\pi}\int_0^{\sqrt{2c}} t e^{-t^2/2}\sinh(vt) dt
  =
  B_c(v).
\]
This proves \eqref{eq:rho-bK-limit}.
\end{proof}

\begin{theorem}[Sharp small-$c_K$ asymptotic of the centering sequence]\label{thm:bK-smallc}
Assume $\rho_K\downarrow0$ and $c_K=\rho_K\log K\to0$.
Define
\[
  s_K:=\sqrt{\rho_K} (\log K)^{3/2}.
\]
Then, for every fixed $v\in\R$,
\begin{equation}\label{eq:bK-smallc-main}
  b_K(v)
  =
  \frac{4\sqrt{2}}{3\pi} v s_K
  +
  o(s_K).
\end{equation}
\end{theorem}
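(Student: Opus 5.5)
Starting from the exact representation of Lemma~\ref{lem:bK-exact}, I will redo the limit argument of Theorem~\ref{thm:bK-general}, now keeping track of the rate. By \eqref{eq:rho-bK-integral} and \eqref{eq:HK-simplified},
\[
  b_K(v)=\frac{1}{\rho_K}\int_0^{u_K}\frac{2t}{\pi\sigma_K}\exp\Bigl(-\frac{t^2+\rho_Kv^2}{2\sigma_K^2}\Bigr)\sinh\Bigl(\frac{vt}{\sigma_K^2}\Bigr)\bigl[1+r(t/\sqrt{\rho_K})\bigr]\,dt,
\]
with $u_K=\sqrt{\rho_K}a_K$. Since $c_K\to0$, Lemma~\ref{lem:inv-asymp} gives $u_K=\sqrt{2c_K}(1+o(1))\to0$, so the domain of integration shrinks. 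On $[0,u_K]$ we have $\sinh(vt/\sigma_K^2)=vt(1+O(u_K^2))$ uniformly, and the exponential and $\sigma_K$ factors equal $1+O(u_K^2+\rho_K)$ uniformly. The main term is therefore
\[
  \frac{1}{\rho_K}\frac{2v}{\pi}\int_0^{u_K}t^2\,dt=\frac{2v u_K^3}{3\pi\rho_K},
\]
and substituting $u_K^3=(2c_K)^{3/2}(1+o(1))$ gives $\frac{2v}{3\pi}2^{3/2}\rho_K^{1/2}(\log K)^{3/2}(1+o(1))=\kappa v s_K+o(s_K)$, since $\frac{2\cdot 2\sqrt2}{3\pi}=\frac{4\sqrt2}{3\pi}$. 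One caveat: $u_K^3/(2c_K)^{3/2}\to1$ needs only the ratio statement $a_K/\sqrt{2\log K}\to1$, which Lemma~\ref{lem:inv-asymp} provides; no additive precision is required because everything is multiplicative.

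The remaining error, which I expect to be the main obstacle, comes from the factor $r(t/\sqrt{\rho_K})$. Its contribution is bounded by
\[
  \frac{C|v|}{\rho_K}\int_0^{u_K}t^2\,|r(t/\sqrt{\rho_K})|\,dt.
\]
This must be $o(u_K^3/\rho_K)$, but $r$ is not small near $t=0$. To handle this I fix $\epsilon>0$, choose $R_\epsilon$ with $|r(z)|\le\epsilon$ for $z\ge R_\epsilon$, and split at $t=R_\epsilon\sqrt{\rho_K}$. The piece above the split is at most $\epsilon\, u_K^3/3$. The piece below is at most $\sup|r|\,(R_\epsilon\sqrt{\rho_K})^3/3$. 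Relative to $u_K^3\asymp c_K^{3/2}$ this second piece is of order $(\rho_K/c_K)^{3/2}=(\log K)^{-3/2}\to0$. Hence the total remainder is $o(u_K^3/\rho_K)=o(s_K)$.

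The other error terms are handled in the same way. Because $u_K^2\to0$ and $\rho_K\to0$, the multiplicative factors $1+O(u_K^2+\rho_K)$ contribute only $o(1)$ relative to the main term. For $v=0$ both sides vanish identically, since $\sinh(0)=0$. This gives \eqref{eq:bK-smallc-main}.
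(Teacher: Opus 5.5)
Your proposal is correct and follows essentially the paper's own route. You use the exact integral representation of Lemma~\ref{lem:bK-exact}, expand $H_K(t,v)\approx\frac{2v}{\pi}t^2$ on the shrinking interval $[0,u_K]$, split at $z=R_\epsilon$ (the paper's $M_\varepsilon$) to control the Mills-ratio remainder $r$, and convert $u_K^3/\rho_K=2\sqrt{2}\,s_K(1+o(1))$. The only cosmetic difference is the split: the paper splits the whole integral and bounds the low-$z$ piece via $\sup_{[0,M_\varepsilon]}|f|$, whereas you keep the main term on all of $[0,u_K]$ and split only the $r$-contribution, using boundedness of $r$ on $(0,\infty)$ as asserted in the proof of Theorem~\ref{thm:bK-general}.
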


\begin{proof}
We keep the notation of Lemma~\ref{lem:bK-exact} and write $b_K(v)=I_{1,K}+I_{2,K}$ with
\[
  I_{1,K}:=\int_0^{M_\varepsilon}f(z)(g_K(z)-g_K(-z))dz,\qquad
  I_{2,K}:=\int_{M_\varepsilon}^{a_K}f(z)(g_K(z)-g_K(-z))dz,
\]
where $M_\varepsilon\ge 1$ is chosen so that $|r(z)|\le\varepsilon$ for $z\ge M_\varepsilon$, with $r(z):=f(z)/(\sqrt{2/\pi}z e^{z^2/2})-1\to 0$ (Lemma~\ref{lem:f-growth}). By Lemma~\ref{lem:inv-asymp} and $c_K\to 0$, $u_K^2=2c_K\cdot a_K^2/(2\log K)\to 0$, so $u_K\to 0$.

\paragraph{Bounded region.} On $[0,M_\varepsilon]$, $|m_K z/\sigma_K^2|\to 0$ uniformly, so $|\sinh(m_K z/\sigma_K^2)|\le 2|m_K z/\sigma_K^2|$ eventually; together with $\sup_{[0,M_\varepsilon]}|f|<\infty$, this gives $|I_{1,K}|\le C_{M_\varepsilon,v}\sqrt{\rho_K}$, hence
\begin{equation}\label{eq:I1rho}
  \rho_K|I_{1,K}|\le C_{M_\varepsilon,v}\rho_K^{3/2}.
\end{equation}

\paragraph{Large-$z$ region.} Substituting $t=\sqrt{\rho_K}z$, $\rho_K I_{2,K}=\int_{M_\varepsilon\sqrt{\rho_K}}^{u_K}H_K(t,v)dt$. On this range, \eqref{eq:HK-simplified} holds with the remainder factor $1+\omega_K(t)$, $|\omega_K(t)|\le\varepsilon$. The expansions $1/\sigma_K=1+O(\rho_K)$, $\exp(-t^2/(2\sigma_K^2)-\rho_K v^2/(2\sigma_K^2))=1+O(t^2+\rho_K)$, and $\sinh(vt/\sigma_K^2)=vt+O(\rho_K t+t^3)$, all uniform in $t\in[0,u_K]$, give
\[
  H_K(t,v)=\frac{2v}{\pi}t^2+O(\varepsilon t^2)+O(\rho_K t^2+t^4)
  \qquad\text{uniformly on }[M_\varepsilon\sqrt{\rho_K},u_K].
\]
Integrating and using $\int t^2dt=u_K^3/3+O(\rho_K^{3/2})$, $\int t^4dt\le u_K^5$, and $u_K^3=\rho_K^{3/2}a_K^3$ with $a_K\to\infty$, $u_K\to 0$ (so $\rho_K^{3/2}$, $u_K^5$, $\rho_K u_K^3$ are all $o(u_K^3)$),
\[
  \rho_K I_{2,K}=\frac{2v}{3\pi}u_K^3+O(\varepsilon u_K^3)+o(u_K^3).
\]
Combined with $\rho_K|I_{1,K}|=O(\rho_K^{3/2})=o(u_K^3)$ from \eqref{eq:I1rho}, and letting $\varepsilon\downarrow 0$,
\begin{equation}\label{eq:rho-bK-smallc-local}
  \rho_K b_K(v)=\frac{2v}{3\pi}u_K^3+o(u_K^3).
\end{equation}
Finally, Lemma~\ref{lem:inv-asymp} gives $a_K^3=(2\log K)^{3/2}(1+o(1))=2\sqrt{2}(\log K)^{3/2}(1+o(1))$, hence $u_K^3/\rho_K=\sqrt{\rho_K}a_K^3=2\sqrt{2}s_K(1+o(1))$. Dividing \eqref{eq:rho-bK-smallc-local} by $\rho_K$ yields \eqref{eq:bK-smallc-main}.
\end{proof}

\begin{proof}[Proof of Theorem~\ref{thm:stablelimit}]
Under the conditional law $(V=v)$, the statistic $T_K$ is exactly $T_K^{(v)}$.
Hence the conditional convergence in \eqref{eq:main-conditional-stable}, together with the characteristic function
\eqref{eq:main-stable-cf}, is exactly Theorem~\ref{thm:centered-stable-limit}.
The centering limit \eqref{eq:main-bk-limit} is Theorem~\ref{thm:bK-general}.
If, in addition, $c_K\to0$, then the sharper expansion \eqref{eq:main-bk-smallc} is Theorem~\ref{thm:bK-smallc}.
This proves Theorem~\ref{thm:stablelimit}.
\end{proof}

\subsection{Proof of Corollary~\ref{cor:phasediagram}}

\begin{corollary}\label{cor:raw-smallc}
Assume $\rho_K\downarrow0$, $c_K=\rho_K\log K\to0$, and
\[
  s_K=\sqrt{\rho_K}(\log K)^{3/2}\to s\in[0,\infty).
\]
Fix $v\in\R$.
Then, conditionally on $V=v$,
\begin{equation}\label{eq:raw-smallc-cond-limit}
  T_K^{(v)}
  \dto
  \mathsf{C} \left(
    \frac{4\sqrt{2}}{3\pi} s v,\ 1
  \right).
\end{equation}
\end{corollary}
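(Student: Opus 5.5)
The plan is to combine the centred stable limit of Theorem~\ref{thm:centered-stable-limit} with the small-$c$ identification of the limit law in Corollary~\ref{cor:explicit-stable-exponent}, and then add back the deterministic centring using Theorem~\ref{thm:bK-smallc}. Fix $v\in\R$ and work conditionally on $V=v$. Since $c_K\to0$, Theorem~\ref{thm:centered-stable-limit} applies with $c=0$. It gives
\[
  T_K^{(v)}-b_K(v)\dto S_{0,v}.
\]
By Corollary~\ref{cor:explicit-stable-exponent}, $S_{0,v}\sim\mathsf{C}(0,1)$, because $\lambda^{\pm}(v)=1$ when $c=0$.

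Next I treat the centring. Theorem~\ref{thm:bK-smallc} gives
\[
  b_K(v)=\kappa v s_K+o(s_K),\qquad \kappa=\tfrac{4\sqrt2}{3\pi}.
\]
Since $s_K\to s\in[0,\infty)$, the sequence $s_K$ is bounded, so the remainder $o(s_K)$ tends to $0$. Hence $b_K(v)\to\kappa s v$ as a deterministic limit. One point needs care: if $s=0$, the statement $o(s_K)$ still means $b_K(v)/s_K\to\kappa v$ (or $b_K(v)=o(1)$ trivially when $s_K=0$). Either way $b_K(v)-\kappa v s_K\to0$, because $s_K$ is bounded.

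Finally, Slutsky's theorem gives
\[
  T_K^{(v)}=\bigl(T_K^{(v)}-b_K(v)\bigr)+b_K(v)\dto \mathsf{C}(0,1)+\kappa s v=\mathsf{C}(\kappa s v,1),
\]
since adding a constant shifts the Cauchy location. This is \eqref{eq:raw-smallc-cond-limit}.

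There is no real obstacle here, since all the analytic work is in the cited results. The only point to check is that the hypotheses of Theorem~\ref{thm:bK-smallc} ($c_K\to0$) and of Theorem~\ref{thm:centered-stable-limit} (a limit $c\in[0,\infty)$) are both met. Both hold, because $s_K$ bounded forces
\[
  c_K=\frac{s_K^2}{(\log K)^2}\to0,
\]
so the assumption $c_K\to0$ is in fact implied by the bounded boundary-layer hypothesis.
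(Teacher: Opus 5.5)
Your proposal is correct and follows the paper's proof essentially verbatim: Theorem~\ref{thm:centered-stable-limit} with $c=0$ and Corollary~\ref{cor:explicit-stable-exponent} give $T_K^{(v)}-b_K(v)\dto\mathsf{C}(0,1)$, Theorem~\ref{thm:bK-smallc} with bounded $s_K$ gives $b_K(v)\to\kappa s v$, and Slutsky's theorem finishes the argument. Your side remark that $c_K=s_K^2/(\log K)^2\to0$ is already implied by the boundedness of $s_K$ is also correct.
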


\begin{proof}
When $c_K\to0$, Corollary~\ref{cor:explicit-stable-exponent} shows that the conditional limit law from Theorem~\ref{thm:centered-stable-limit} is $\mathsf{C}(0,1)$ for every fixed $v\in\R$. Hence, conditionally on $V=v$,
\[
  T_K^{(v)}-b_K(v)\dto \mathsf{C}(0,1).
\]
By Theorem~\ref{thm:bK-smallc},
\[
  b_K(v)
  =
  \frac{4\sqrt{2}}{3\pi} v s_K+o(s_K)
  \to
  \frac{4\sqrt{2}}{3\pi} s v.
\]
Slutsky's theorem therefore implies
\[
  T_K^{(v)}
  =
  \left(T_K^{(v)}-b_K(v)\right)+b_K(v)
  \dto
  \mathsf{C} \left(
    \frac{4\sqrt{2}}{3\pi} s v,\ 1
  \right),
\]
which is \eqref{eq:raw-smallc-cond-limit}.
\end{proof}

\begin{corollary}\label{cor:raw-size-function}
Under the assumptions of Corollary~\ref{cor:raw-smallc},
for every fixed $\alpha\in(0,1/2)$,
\begin{equation}\label{eq:def-Psi-alpha}
  \Pbb \left(T_K>t_\alpha\right)
  \to
  \Psi_\alpha(s)
  :=
  \E \left[
    \frac12-\frac{1}{\pi}
    \arctan \left(
      t_\alpha-\frac{4\sqrt{2}}{3\pi} s V
    \right)
  \right],
\end{equation}
where $V\sim N(0,1)$ and $t_\alpha=\cot(\pi\alpha)$.
In particular,
\[
  \Psi_\alpha(0)=\alpha.
\]
\end{corollary}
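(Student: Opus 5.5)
The approach is to condition on the latent factor, apply the conditional Cauchy limit of Corollary~\ref{cor:raw-smallc}, and then integrate over $V$ by dominated convergence.

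Fix $v\in\R$. Under the conditional law given $V=v$, the statistic $T_K$ equals $T_K^{(v)}$. By Corollary~\ref{cor:raw-smallc}, $T_K^{(v)}$ converges in distribution to $\mathsf{C}(\kappa s v,1)$. This limit law is continuous, so $t_\alpha$ is a continuity point. The Portmanteau theorem therefore gives
\[
  p_K(v):=\Pbb(T_K>t_\alpha\mid V=v)\to \frac12-\frac1\pi\arctan\left(t_\alpha-\kappa s v\right)
\]
for every fixed $v$. Here we use that the survival function of $\mathsf{C}(a,1)$ at $t$ equals $\frac12-\frac1\pi\arctan(t-a)$.

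Next, write $\Pbb(T_K>t_\alpha)=\E[p_K(V)]$. This uses the one-factor representation \eqref{eq:main-onefactor}: the $\varepsilon_i$ are independent of $V$, so the conditional law of $(Z_1,\dots,Z_K)$ given $V=v$ is exactly that of $(Z^{(v)}_{i,K})_i$, and Fubini applies. Since $0\le p_K(v)\le 1$ and the pointwise convergence holds for every $v$, bounded convergence yields
\[
  \E[p_K(V)]\to \E\left[\frac12-\frac1\pi\arctan(t_\alpha-\kappa sV)\right]=\Psi_\alpha(s).
\]
This is \eqref{eq:def-Psi-alpha}, with $\kappa=4\sqrt2/(3\pi)$ as in Theorem~\ref{thm:stablelimit}. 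The pointwise (not uniform) convergence in $v$ suffices here, so no uniformity over the factor is needed.

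Finally, at $s=0$ the integrand no longer depends on $V$. It equals $\frac12-\frac1\pi\arctan(\cot(\pi\alpha))$. For $\alpha\in(0,1/2)$ we have $\arctan(\cot\pi\alpha)=\pi/2-\pi\alpha$, so this quantity is $\alpha$, and $\Psi_\alpha(0)=\alpha$.

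The only potentially delicate point is justifying the measurable disintegration that gives $\Pbb(T_K>t_\alpha)=\E[p_K(V)]$. It follows directly from the independence of $V$ and $(\varepsilon_i)$, so I expect no real obstacle. All of the analytic work is already contained in Corollary~\ref{cor:raw-smallc}.
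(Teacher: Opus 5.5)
Your proposal is correct and follows essentially the same route as the paper: condition on $V=v$, use Corollary~\ref{cor:raw-smallc} to obtain the conditional Cauchy survival probability at $t_\alpha$, and pass to the unconditional limit by bounded convergence over $V$. The identity $\Psi_\alpha(0)=\alpha$ then follows from evaluating the standard Cauchy survival function at $t_\alpha=\cot(\pi\alpha)$.
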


\begin{proof}
Fix $v\in\R$.
By Corollary~\ref{cor:raw-smallc},
\[
  \Pbb \left(T_K>t_\alpha \mid V=v\right)
  \to
  \frac12-\frac{1}{\pi}
  \arctan \left(
    t_\alpha-\frac{4\sqrt{2}}{3\pi} s v
  \right),
\]
because the survival function of $\mathsf{C}(\mu,1)$ is
\[
  x\mapsto \frac12-\frac{1}{\pi}\arctan(x-\mu).
\]
The conditional probabilities are bounded between $0$ and $1$, so dominated convergence yields
\begin{align*}
  \Pbb \left(T_K>t_\alpha\right)
  &=
  \E \left[
    \Pbb \left(T_K>t_\alpha \mid V\right)
  \right] \\
  &\to
  \E \left[
    \frac12-\frac{1}{\pi}
    \arctan \left(
      t_\alpha-\frac{4\sqrt{2}}{3\pi} s V
    \right)
  \right].
\end{align*}
This proves \eqref{eq:def-Psi-alpha}.

If $s=0$, then
\[
  \Psi_\alpha(0)
  =
  \frac12-\frac{1}{\pi}\arctan(t_\alpha),
\]
which is the standard Cauchy survival function evaluated at $t_\alpha=\cot(\pi\alpha)$. Hence $\Psi_\alpha(0)=\alpha$.
\end{proof}

\begin{proposition}\label{prop:large-mean}
Assume $\rho_K\downarrow0$ and $c_K=\rho_K\log K\to\infty$.
Fix $v\neq0$ and write
\[
  \mu_K(v):=\mu_{\rho_K}(v).
\]
Then we have
\[
  \Pbb \left(
    \left|T_K^{(v)}-\mu_K(v)\right|>\frac12|\mu_K(v)|
 \mid V=v
  \right)\to 0.
\]
Consequently, for every fixed $\alpha\in(0,1/2)$, we have
\[
  \Pbb \left(T_K>t_\alpha \mid V=v\right)\to \1\{v>0\}.
\]
\end{proposition}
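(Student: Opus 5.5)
The plan is to treat each $K$ as a fixed-$\rho$ problem with $\rho=\rho_K$, apply the finite-$K$ deviation bound of Proposition~\ref{prop:finiteK-dev} with an exponent $q=q_K$ tuned to $\rho_K$, and compare the resulting bound with $|\mu_K(v)|^{q_K}$. For each $K$ we have $\rho_K\in(0,1)$, so conditionally on $V=v$ the scores $X_{i,K}^{(v)}$ are i.i.d. with finite mean $\mu_K(v)$ (Proposition~\ref{prop:cond-L1}). The von Bahr--Esseen and Markov argument then gives, for any $1<q<\min\{2,1/(1-\rho_K)\}$,
\[
  \Pbb\left(|T_K^{(v)}-\mu_K(v)|>\tfrac12|\mu_K(v)|\mid V=v\right)
  \le 2^{1+q}K^{1-q}\frac{\E[|Y_1|^q\mid V=v]}{|\mu_K(v)|^q}.
\]
This inequality is non-asymptotic in $\rho$, so it applies for each $K$.

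I would choose $q_K:=1+\rho_K/2$, which is admissible for small $\rho_K$ because $1/(1-\rho_K)>1+\rho_K$. Then $K^{1-q_K}=e^{-c_K/2}$, which decays because $c_K\to\infty$. The main obstacle is tracking the $\rho$-dependence of the moment bound \eqref{eq:qmoment-bound} in Proposition~\ref{prop:uniform-qmoment}. With $\theta=1/(1-\rho_K)$ we get $\eta=(\theta-q_K)/4\asymp\rho_K$, and hence
\[
  A_{q_K,\eta}=\sup_z(1+|z|)^{q_K}e^{-\eta z^2}=O(\eta^{-q_K/2})=O(\rho_K^{-1/2}).
\]
Next, $1-2\lambda\sigma^2=(1-q_K\sigma^2)/2\asymp\rho_K$, so the prefactor $(1-2\lambda\sigma^2)^{-1/2}$ is $O(\rho_K^{-1/2})$. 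The exponent $\lambda\rho_K v^2/(1-2\lambda\sigma^2)$ stays bounded. Altogether this gives $\E[|Y_1|^{q_K}\mid V=v]\le C_v\rho_K^{-1}$. Before relying on this, I would recheck that the constant $C_f$ and the factor $2^q$ are uniform in $q\in[1,2]$.

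For the denominator, I would use Theorem~\ref{thm:mu-smallrho}: $\rho_K\mu_K(v)\to\sqrt{2/\pi}\,ve^{v^2/2}\ne0$. Hence $|\mu_K(v)|^{q_K}\ge c_v\rho_K^{-q_K}$ for large $K$. The ratio is then bounded by
\[
  C_v\, e^{-c_K/2}\rho_K^{q_K-1}=C_v\,e^{-c_K/2}\rho_K^{\rho_K/2}.
\]
This tends to $0$, because $\rho_K^{\rho_K/2}\to1$ and $c_K\to\infty$. That proves the first claim.

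For the consequence, consider the event $|T_K^{(v)}-\mu_K(v)|\le\frac12|\mu_K(v)|$, which has conditional probability tending to $1$. On this event $T_K^{(v)}$ has the sign of $v$ and satisfies $|T_K^{(v)}|\ge\frac12|\mu_K(v)|$. By Theorem~\ref{thm:mu-smallrho}, $\mu_K(v)\to\mathrm{sign}(v)\cdot\infty$. If $v>0$, then eventually $\frac12\mu_K(v)>t_\alpha$, so $\Pbb(T_K>t_\alpha\mid V=v)\to1$. If $v<0$, then $T_K^{(v)}<0<t_\alpha$ with probability tending to $1$, so the conditional rejection probability tends to $0$. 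Together these give the limit $\1\{v>0\}$.
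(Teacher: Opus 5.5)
Your proposal is correct and follows essentially the same route as the paper: both apply Proposition~\ref{prop:finiteK-dev} with $\delta=|\mu_K(v)|/2$ and a $\rho_K$-tuned exponent with $q_K-1\asymp\rho_K$ (the paper takes $q_K=(1+\theta_K)/2$, so $q_K-1=\rho_K/(2(1-\rho_K))$, where you take $\rho_K/2$). Both then track the $\rho_K$-dependence of Proposition~\ref{prop:uniform-qmoment}, bound $|\mu_K(v)|^{-q_K}$ via Theorem~\ref{thm:mu-smallrho}, and conclude from $K^{1-q_K}=e^{-(q_K-1)\log K}\to0$ with the leftover power of $\rho_K$ tending to $1$.
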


\begin{proof}
Fix $v\neq0$ and assume $\rho_K<1/2$. Set $\theta_K=(1-\rho_K)^{-1}$, $q_K=(1+\theta_K)/2\in(1,\theta_K)$ (so $q_K<2$ for large $K$), and $Y_{1,K}^{(v)}=f(Z_{1,K}^{(v)})-\mu_K(v)$. Proposition~\ref{prop:finiteK-dev} with $\delta=|\mu_K(v)|/2$ gives
\begin{equation}\label{eq:large-mean-dev}
  \Pbb\bigl(|T_K^{(v)}-\mu_K(v)|>|\mu_K(v)|/2\mid V=v\bigr)
  \le 2^{q_K+1}|\mu_K(v)|^{-q_K}K^{1-q_K}\E[|Y_{1,K}^{(v)}|^{q_K}\mid V=v].
\end{equation}
A direct computation with the parameters of Proposition~\ref{prop:uniform-qmoment} ($\eta_K=(\theta_K-1)/8$, $\lambda_K=(1+3\theta_K)/8$) gives $1-2\lambda_K\sigma_K^2=(\theta_K-1)/(4\theta_K)$ and $\lambda_K\rho_K/(1-2\lambda_K\sigma_K^2)=(1+3\theta_K)/2$, and the elementary bound $A_{q,\eta}\le C\eta^{-q/2}$ for $q\in[1,2]$, $\eta\in(0,1]$. Substituting and using $\theta_K-1\ge\rho_K$ yields
\begin{equation}\label{eq:qmoment-large-mean}
  \E[|Y_{1,K}^{(v)}|^{q_K}\mid V=v]\le C_v\rho_K^{-(q_K+1)/2}.
\end{equation}
By Theorem~\ref{thm:mu-smallrho}, $|\mu_K(v)|\ge m_v/\rho_K$ for some $m_v>0$ and all large $K$, so $|\mu_K(v)|^{-q_K}\le C_v\rho_K^{q_K}$. Combining with \eqref{eq:qmoment-large-mean} gives the product factor $\rho_K^{q_K-(q_K+1)/2}=\rho_K^{(q_K-1)/2}$, which is bounded (in fact $\to 1$) because $(q_K-1)\log\rho_K=\rho_K\log\rho_K/(2(1-\rho_K))\to 0$. Substituting into \eqref{eq:large-mean-dev} yields
\[
  \Pbb\bigl(|T_K^{(v)}-\mu_K(v)|>|\mu_K(v)|/2\mid V=v\bigr)\le C_v K^{1-q_K}
  =C_v\exp\left(-\frac{c_K}{2(1-\rho_K)}\right)\to 0,
\]
since $q_K-1=\rho_K/(2(1-\rho_K))$ and $c_K\to\infty$. This is the relative concentration statement.

For the rejection probability, $\mu_K(v)\to\mathrm{sign}(v)\cdot\infty$ by Theorem~\ref{thm:mu-smallrho}; hence eventually $|\mu_K(v)|>2t_\alpha$ with the sign of $v$, and the concentration above implies $\Pbb(T_K>t_\alpha\mid V=v)\to\1\{v>0\}$.
\end{proof}

\begin{corollary}\label{cor:half-limit-large}
Assume $\rho_K\downarrow0$ and
\[
  s_K=\sqrt{\rho_K}(\log K)^{3/2}\to\infty.
\]
Then, for every fixed $\alpha\in(0,1/2)$,
\[
  \Pbb(T_K>t_\alpha)\to \frac12.
\]
More precisely, for every fixed $v\neq0$,
\[
  \Pbb \left(T_K>t_\alpha \mid V=v\right)\to \1\{v>0\}.
\]
\end{corollary}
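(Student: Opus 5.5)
The plan is to prove the conditional statement $\Pbb(T_K>t_\alpha\mid V=v)\to\1\{v>0\}$ for each fixed $v\neq0$ first. The unconditional limit then follows by dominated convergence. The conditional probabilities lie in $[0,1]$ and $\Pbb(V=0)=0$, so
\[
\Pbb(T_K>t_\alpha)=\E\bigl[\Pbb(T_K>t_\alpha\mid V)\bigr]\to\Pbb(V>0)=\tfrac12 .
\]
Since $c_K=\rho_K\log K$ need not converge, I will use a subsequence argument. Every subsequence of $K$ has a further subsequence along which $c_K\to c\in[0,\infty]$. It therefore suffices to show $\Pbb(T_K>t_\alpha\mid V=v)\to\1\{v>0\}$ along such a further subsequence, in each of three cases for $c$.

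\emph{Case $c=\infty$.} This is exactly Proposition~\ref{prop:large-mean}, which already gives $\Pbb(T_K>t_\alpha\mid V=v)\to\1\{v>0\}$. Theorem~\ref{thm:stablelimit} is stated along full sequences with $c_K\to c$, but its proof applies verbatim along a subsequence. I will use it in the two remaining cases.

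\emph{Case $c\in(0,\infty)$.} Theorem~\ref{thm:stablelimit} gives $T_K-b_K(v)\dto S_{c,v}$, so $T_K-b_K(v)=O_p(1)$ conditionally on $V=v$. It also gives $\rho_K b_K(v)\to B_c(v)$. The integrand $t e^{-t^2/2}\sinh(vt)$ has the sign of $v$ on $(0,\sqrt{2c}]$, so $B_c(v)\neq0$ with $\operatorname{sign}B_c(v)=\operatorname{sign}(v)$. As $\rho_K\to0$, this forces $b_K(v)\to\operatorname{sign}(v)\cdot\infty$.

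\emph{Case $c=0$.} Again $T_K-b_K(v)\dto\mathsf{C}(0,1)$ is tight, by Corollary~\ref{cor:explicit-stable-exponent}. Theorem~\ref{thm:bK-smallc} gives $b_K(v)=\kappa v s_K+o(s_K)$; that theorem assumes only $c_K\to0$ and no bound on $s_K$. Since $s_K\to\infty$, we get $b_K(v)\to\operatorname{sign}(v)\cdot\infty$.

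In both finite cases, a tight variable plus a deterministic term diverging to $\pm\infty$ exceeds the fixed cutoff $t_\alpha$ with conditional probability tending to $\1\{v>0\}$. This closes the subsequence argument.

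The step needing the most care is the case $c=0$. There one must confirm that the expansion in Theorem~\ref{thm:bK-smallc} remains valid when $s_K\to\infty$, i.e.\ that its $o(s_K)$ remainder is controlled relative to $s_K$ without boundedness. In its proof, the error terms are $O(\varepsilon u_K^3)+o(u_K^3)$ together with $O(\rho_K^{3/2})$, where $u_K=\sqrt{\rho_K}\,a_K$. These are relative to $u_K^3$ and need only $u_K\to0$ and $a_K\to\infty$. So the expansion should hold, but I would re-check this uniformity explicitly.
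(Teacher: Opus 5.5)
Your proposal is correct and follows the paper's proof essentially step for step. You use the same subsequence reduction to $c_K\to c\in[0,\infty]$, Proposition~\ref{prop:large-mean} for $c=\infty$, and tightness from Theorem~\ref{thm:centered-stable-limit} combined with divergence of $b_K(v)$ via Theorem~\ref{thm:bK-general} when $c>0$ or Theorem~\ref{thm:bK-smallc} when $c=0$. Your concern about the $c=0$ case resolves as you expect: Theorem~\ref{thm:bK-smallc} assumes only $c_K\to0$, its error terms are relative to $u_K^3$, and the paper applies it with $s_K\to\infty$ in exactly the same way.
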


\begin{proof}
Fix $v\neq0$. We show $b_K(v)\to\mathrm{sign}(v)\cdot\infty$ and the centred sequence $\{T_K^{(v)}-b_K(v)\}_K$ is tight; then $\Pbb(T_K>t_\alpha\mid V=v)\to\1\{v>0\}$ is immediate from Slutsky's argument, and the unconditional claim $\Pbb(T_K>t_\alpha)\to 1/2$ follows by bounded convergence over $V$ (the null set $\{V=0\}$ contributes nothing).

By the subsequence principle, it suffices to show that any subsequence $(K_j)$ has a further sub-subsequence on which the desired conditional limit holds. Along any such sub-subsequence we may assume $c_{K_j}\to c\in[0,\infty]$. The case $c=\infty$ is Proposition~\ref{prop:large-mean}, so assume $c\in[0,\infty)$. Then Theorem~\ref{thm:centered-stable-limit} gives $T_{K_j}^{(v)}-b_{K_j}(v)\dto S_{c,v}$, hence tightness. For the divergence of $b_{K_j}(v)$: if $c>0$, Theorem~\ref{thm:bK-general} gives $\rho_{K_j}b_{K_j}(v)\to B_c(v)$, whose sign matches that of $v$ since the integrand has the sign of $v$ on $(0,\sqrt{2c}]$; combined with $\rho_{K_j}\downarrow0$, $b_{K_j}(v)\to\mathrm{sign}(v)\cdot\infty$. If $c=0$, Theorem~\ref{thm:bK-smallc} gives $b_{K_j}(v)=\kappa v s_{K_j}+o(s_{K_j})$ with $\kappa=4\sqrt2/(3\pi)$, and the hypothesis $s_K\to\infty$ (which descends to the sub-subsequence) yields the same conclusion.
\end{proof}

\begin{proof}[Proof of Corollary~\ref{cor:phasediagram}]
If $s_K\to s\in[0,\infty)$, then
\[
  c_K
  =
  \rho_K\log K
  =
  \frac{s_K^2}{(\log K)^2}
  \to 0.
\]
Hence Corollary~\ref{cor:raw-size-function} applies and yields \eqref{eq:main-Psi}.
If instead $s_K\to\infty$, then Corollary~\ref{cor:half-limit-large} gives \eqref{eq:main-raw-half}.
This proves Corollary~\ref{cor:phasediagram}.
\end{proof}

\subsection{Proof of Corollary~\ref{cor:iffexact}}

\begin{proof}[Proof of Corollary~\ref{cor:iffexact}]
The ``if'' direction follows from Corollary~\ref{cor:raw-size-function} with $s=0$, because
\[
  \rho_K(\log K)^3\to0
  \qquad\Longleftrightarrow\qquad
  s_K=\sqrt{\rho_K}(\log K)^{3/2}\to0,
\]
and then
\[
  c_K=\rho_K\log K=\frac{s_K^2}{(\log K)^2}\to 0.
\]

For the converse, suppose
\[
  \Pbb(T_K>t_\alpha)\to \alpha,
\]
but
\[
  s_K:=\sqrt{\rho_K}(\log K)^{3/2}\not\to 0.
\]
Then there exist $\varepsilon>0$ and a subsequence $(K_m)$ such that
\[
  s_{K_m}\ge \varepsilon
  \qquad\text{for all }m.
\]
Passing to a further subsequence if necessary, either
\[
  s_{K_m}\to \infty
\]
or
\[
  s_{K_m}\to s\in[\varepsilon,\infty).
\]

If $s_{K_m}\to\infty$, then Corollary~\ref{cor:half-limit-large} yields
\[
  \Pbb(T_{K_m}>t_\alpha)\to \frac12,
\]
contradicting $\alpha\in(0,1/2)$.

If $s_{K_m}\to s\in[\varepsilon,\infty)$, then we have
\[
  c_{K_m}
  =
  \rho_{K_m}\log K_m
  =
  \frac{s_{K_m}^2}{(\log K_m)^2}
  \to 0.
\]
Hence Corollary~\ref{cor:raw-size-function} applies along this subsequence and gives
\[
  \Pbb(T_{K_m}>t_\alpha)\to \Psi_\alpha(s).
\]
For every $s>0$, write
\[
  \Psi_\alpha(s)
  =
  \E\left[g(t_\alpha-\kappa sV)\right],
  \qquad
  g(x):=\frac12-\frac{1}{\pi}\arctan(x).
\]
The derivative of $g(t_\alpha-\kappa sV)$ with respect to $s$ is
\[
  \frac{\kappa V}{\pi\{1+(t_\alpha-\kappa sV)^2\}},
\]
whose absolute value is bounded by $\kappa |V|/\pi$. Since $V\in L^1$, dominated convergence justifies differentiating under the expectation. Symmetrising the resulting expression over $V$ and $-V$ gives
\[
  \Psi_\alpha'(s)
  =
  \frac{4\kappa^2 t_\alpha s}{\pi}
  \int_0^\infty
  \frac{v^2\ph(v)}
  {\left(1+(t_\alpha-\kappa sv)^2\right)\left(1+(t_\alpha+\kappa sv)^2\right)}
   dv
  >0,
\]
because $t_\alpha>0$.
Since $\Psi_\alpha(0)=\alpha$ by Corollary~\ref{cor:raw-size-function}, it follows that
\[
  \Psi_\alpha(s)>\alpha.
\]
This again contradicts the assumed exactness.

Therefore $s_K\to0$, which is equivalent to
\[
  \rho_K(\log K)^3\to0.
\]
This proves the converse implication.
\end{proof}

\section{Proofs in Section~\ref{sec:calibration}: compact and vanishing $c$-scales}\label{app:blcompact}

\subsection{Proof of Proposition 4.1}

We work with the boundary-layer survival function $p_{\mathrm{BL}}(t;s)$ introduced in \eqref{eq:main-pbl}.

\begin{proposition}\label{app-prop:bl-structure}
Let $T_s$ denote a random variable with survival function $t\mapsto p_{\mathrm{BL}}(t;s)$.
Then one may write
\[
  T_s\stackrel{d}{=}\mathsf{C}+\kappa sV,
\]
where $\mathsf{C}$ and $V\sim N(0,1)$ are independent.
Equivalently, the characteristic function of $T_s$ is
\[
  \phi_{T_s}(u)
  =
  \exp \left(
    -|u|-\frac{\kappa^2 s^2u^2}{2}
  \right),
  \qquad u\in\R.
\]
If $0\le s_1\le s_2$, then we have
\[
  T_{s_2}
  \stackrel{d}{=}
  T_{s_1}+\kappa\sqrt{s_2^2-s_1^2} Z,
\]
where $Z\sim N(0,1)$ is independent of $T_{s_1}$.
Moreover, for each $s\ge0$, the law of $T_s$ has a continuous symmetric density $f_s$, and $f_s$ is strictly decreasing on $[0,\infty)$.
\end{proposition}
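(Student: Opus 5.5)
The plan is to identify $p_{\mathrm{BL}}(\cdot;s)$ directly as the survival function of a convolution. For independent $\mathsf{C}\sim\mathsf{C}(0,1)$ and $V\sim N(0,1)$, condition on $V$. Given $V=w$, the variable $\mathsf{C}+\kappa s w$ is $\mathsf{C}(\kappa s w,1)$, whose survival function at $t$ is $\tfrac12-\tfrac1\pi\arctan(t-\kappa s w)$. Taking expectation over $V$ (the integrand is bounded, so Fubini applies) gives exactly \eqref{eq:main-pbl}. Hence $T_s\stackrel{d}{=}\mathsf{C}+\kappa sV$. The characteristic function then factorises by independence as $e^{-|u|}\cdot e^{-\kappa^2s^2u^2/2}$.

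The semigroup identity follows from the Gaussian factor alone. For $s_1\le s_2$, write $\kappa s_2 V\stackrel{d}{=}\kappa s_1V_1+\kappa\sqrt{s_2^2-s_1^2}\,Z$ with $V_1,Z$ independent standard normals independent of $\mathsf{C}$. Equivalently, multiply characteristic functions: $\phi_{T_{s_1}}(u)\,e^{-\kappa^2(s_2^2-s_1^2)u^2/2}=\phi_{T_{s_2}}(u)$. Uniqueness of characteristic functions then gives the claimed equality in law.

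For the density, take $f_s(t)=\E[\pi^{-1}(1+(t-\kappa sV)^2)^{-1}]$, the convolution of the Cauchy density with the $N(0,\kappa^2s^2)$ density. For $s=0$ we simply read it as the Cauchy density. Continuity follows by dominated convergence, since the integrand is bounded by $1/\pi$. Symmetry follows from $V\stackrel{d}{=}-V$ together with evenness of the Cauchy density.

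Strict decrease on $[0,\infty)$ is the only step needing an argument. Both factors are symmetric and unimodal, but I want strictness, so I would argue directly. Set $c(x)=\pi^{-1}(1+x^2)^{-1}$ and differentiate under the integral, which is justified because $|c'|$ is bounded:
\[
f_s'(t)=\int_{\R}c'(t-y)\,\gamma(y)\,dy ,
\]
where $\gamma$ is the $N(0,\kappa^2s^2)$ density. Next, pair $y$ with its reflection $2t-y$ about $t$. This gives
\[
f_s'(t)=\int_{y<t}c'(t-y)\bigl[\gamma(y)-\gamma(2t-y)\bigr]\,dy .
\]
For $t>0$ and $y<t$ we have $c'(t-y)<0$, and $|y|<|2t-y|$ yields $\gamma(y)>\gamma(2t-y)$ (this uses $s>0$). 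So the integrand is strictly negative and $f_s'(t)<0$ on $(0,\infty)$; with continuity, $f_s$ is strictly decreasing on $[0,\infty)$. When $s=0$ the claim is immediate from the explicit Cauchy density. No genuine obstacle is expected; the only care needed is in justifying differentiation under the integral and handling the case $s=0$ separately.
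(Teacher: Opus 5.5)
Your proposal is correct and follows the paper's proof essentially step for step. The shared steps are: conditioning on $V$ to identify $p_{\mathrm{BL}}(\cdot;s)$ as the survival function of $\mathsf{C}+\kappa sV$; the factorised characteristic function; the semigroup identity obtained by multiplying characteristic functions; and the convolution density for continuity and symmetry.

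The one variation is in the strict-decrease step. You differentiate under the integral and pair $y$ with its reflection $2t-y$. The paper avoids derivatives: for $0<x_1<x_2$, with $m=(x_1+x_2)/2$ and $d=(x_2-x_1)/2$, it writes
\[
f_s(x_1)-f_s(x_2)=\int_0^\infty\{f_0(m-u)-f_0(m+u)\}\{\ph_\sigma(d-u)-\ph_\sigma(d+u)\}\,du
\]
and observes that both factors are positive. This is the same reflection idea in finite-difference form. It needs no smoothness of the Cauchy factor, only evenness and strict monotonicity on $[0,\infty)$ of both densities.
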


\begin{proof}
Let
\[
  g(x):=\frac12-\frac1\pi\arctan(x).
\]
Note that $g(x)=\Pbb(\mathsf{C}>x)$ for every $x\in\R$.
Hence, using independence of $\mathsf{C}$ and $V$,
\[
  p_{\mathrm{BL}}(t;s)
  =
  \E[g(t-\kappa sV)]
  =
  \E \left[\Pbb(\mathsf{C}>t-\kappa sV \mid V)\right]
  =
  \Pbb(\mathsf{C}+\kappa sV>t).
\]
So $T_s\stackrel{d}{=}\mathsf{C}+\kappa sV$, as claimed.
The characteristic function therefore factorises:
\[
  \phi_{T_s}(u)
  =
  \E[e^{iu\mathsf{C}}] \E[e^{iu\kappa sV}]
  =
  e^{-|u|} e^{-\kappa^2 s^2u^2/2}
  =
  \exp \left(
    -|u|-\frac{\kappa^2 s^2u^2}{2}
  \right).
\]

The semigroup relation follows by multiplying characteristic functions: $\phi_{T_{s_1}}(u)\phi_{\kappa\sqrt{s_2^2-s_1^2}Z}(u)=\phi_{T_{s_2}}(u)$.

Let
\[
  f_0(x):=\frac{1}{\pi(1+x^2)},
  \qquad x\in\R.
\]
Then $f_0$ is the standard Cauchy density. For $s>0$, if
\[
  \ph_{\kappa s}(x):=\frac{1}{\kappa s}\ph\left(\frac{x}{\kappa s}\right),
\]
then $T_s$ has density
\[
  f_s=f_0*\ph_{\kappa s}.
\]
For $s=0$, the density is simply $f_0$.
In every case the density is continuous and symmetric.

If $s=0$, strict decrease on $[0,\infty)$ is immediate from the formula for $f_0$.
Now fix $s>0$ and write $\sigma:=\kappa s$.
Take $0<x_1<x_2$ and set
\[
  m:=\frac{x_1+x_2}{2},
  \qquad
  d:=\frac{x_2-x_1}{2}.
\]
Then $m,d>0$ and, using the convolution representation and the change of variables $y=m+u$,
\begin{align*}
  f_s(x_1)-f_s(x_2)
  &=
  \int_{\R}
    f_0(y)\left\{\ph_\sigma(x_1-y)-\ph_\sigma(x_2-y)\right\}dy \\
  &=
  \int_{\R}
    f_0(m+u)\left\{\ph_\sigma(d+u)-\ph_\sigma(d-u)\right\}du \\
  &=
  \int_0^\infty
    \left\{f_0(m-u)-f_0(m+u)\right\}
    \left\{\ph_\sigma(d-u)-\ph_\sigma(d+u)\right\}du.
\end{align*}
Since $f_0$ and $\ph_\sigma$ are even and strictly decreasing on $[0,\infty)$, we have
\[
  f_0(m-u)-f_0(m+u)>0
  \quad\text{and}\quad
  \ph_\sigma(d-u)-\ph_\sigma(d+u)>0
\]
for every $u>0$, because $|m-u|<m+u$ and $|d-u|<d+u$.
Therefore the last integral is strictly positive, so $f_s(x_1)>f_s(x_2)$.
Hence $f_s$ is strictly decreasing on $[0,\infty)$.
\end{proof}

\subsection{Proof of Theorem 4.2}

\begin{proof}[Proof of Theorem~\ref{thm:bluniform}]
Let
\[
  D_K:=\sup_{t\in\R}\left|\Pbb(T_K\le t)-F_{s_K}(t)\right|,
  \qquad
  F_{s_K}(t):=1-p_{\mathrm{BL}}(t;s_K).
\]

We first prove \eqref{eq:main-bl-cdf}, i.e.\ that $D_K\to0$.
Suppose, for contradiction, that $D_K\not\to0$.
Then there exist $\varepsilon>0$ and a subsequence $(K_j)$ such that $D_{K_j}\ge\varepsilon$ for all $j$.
Because $(s_{K_j})$ is bounded in $[0,S]$, there is a further subsequence, still denoted $(K_j)$, such that
\[
  s_{K_j}\to s\in[0,S].
\]
Since
\[
  c_{K_j}:=\rho_{K_j}\log K_j
  =
  \frac{s_{K_j}^2}{(\log K_j)^2}
  \to 0,
\]
Corollary~\ref{cor:raw-smallc} applies along this subsequence.

Fix $t\in\R$.
For every fixed $v\in\R$, Corollary~\ref{cor:raw-smallc} yields
\[
  \Pbb(T_{K_j}\le t \mid V=v)
  \to
  \frac12+\frac{1}{\pi}\arctan \left(t-\kappa s v\right).
\]
The conditional probabilities are bounded by $1$, so bounded convergence gives
\[
  \Pbb(T_{K_j}\le t)
  \to
  F_s(t).
\]
Thus $T_{K_j}\dto T_s$, where $T_s$ has continuous distribution function $F_s$.
By P\'olya's theorem,
\[
  \sup_{t\in\R}
  \left|\Pbb(T_{K_j}\le t)-F_s(t)\right|
  \to 0.
\]

Next let $\mathsf{C}$ and $V\sim N(0,1)$ be independent, and define the whole family on a common probability space by
\[
  T_s=\mathsf{C}+\kappa sV,
  \qquad s\ge0.
\]
Then $T_{s_{K_j}}\to T_s$ almost surely because $s_{K_j}\to s$, and hence $T_{s_{K_j}}\dto T_s$.
Since $F_s$ is continuous, another application of P\'olya's theorem yields
\[
  \sup_{t\in\R}
  \left|F_{s_{K_j}}(t)-F_s(t)\right|
  \to 0.
\]
Combining the last two displays gives
\[
  D_{K_j}
  \le
  \sup_{t\in\R}\left|\Pbb(T_{K_j}\le t)-F_s(t)\right|
  +
  \sup_{t\in\R}\left|F_{s_{K_j}}(t)-F_s(t)\right|
  \to 0,
\]
contradicting $D_{K_j}\ge\varepsilon$.
Therefore $D_K\to0$.

This proves \eqref{eq:main-bl-cdf}.

We next prove the oracle statement.
For $u\in(0,1)$, let $q_{K,u}$ denote the unique solution to
\[
  p_{\mathrm{BL}}(q_{K,u};s_K)=u.
\]
Such a solution exists and is unique because, for each fixed $s\ge0$, the map $t\mapsto p_{\mathrm{BL}}(t;s)$ is continuous and strictly decreasing from $1$ to $0$.
For each fixed $v$, the conditional law of $T_K$ under $(V=v)$ is the $K^{-1}$-scaled $K$-fold convolution of the continuous law of $X_{1,K}^{(v)}$, and therefore has no atoms. Hence, for every $t\in\R$,
\[
  \Pbb(T_K=t)=\E\left[\Pbb(T_K=t \mid V)\right]=0,
\]
so the unconditional law of $T_K$ is continuous.
Thus
\[
  \Pbb \left(p_{\mathrm{BL}}(T_K;s_K)\le u\right)
  =
  \Pbb(T_K\ge q_{K,u})
  =
  1-F_K(q_{K,u}),
\]
where $F_K(t):=\Pbb(T_K\le t)$.
Also,
\[
  u
  =
  p_{\mathrm{BL}}(q_{K,u};s_K)
  =
  1-F_{s_K}(q_{K,u}).
\]
Therefore, we have
\[
  \left|\Pbb \left(p_{\mathrm{BL}}(T_K;s_K)\le u\right)-u\right|
  =
  \left|F_K(q_{K,u})-F_{s_K}(q_{K,u})\right|
  \le D_K.
\]
The endpoint cases $u=0$ and $u=1$ are trivial, so taking the supremum over $u\in(0,1)$ and using $D_K\to0$ yields
\[
  \sup_{u\in[0,1]}
  \left|
    \Pbb \left(p_{\mathrm{BL}}(T_K;s_K)\le u\right)-u
  \right|
  \to 0.
\]
This proves the oracle claim.

For the plug-in version, since $|g'(x)|\le 1/\pi$ for $g(x)=\frac12-\frac1\pi\arctan(x)$, the mean value theorem gives the global Lipschitz bound
\[
  \left|p_{\mathrm{BL}}(T_K;\widehat s_K)-p_{\mathrm{BL}}(T_K;s_K)\right|
  \le
  \frac{\kappa}{\pi}\E|V| |\widehat s_K-s_K|.
\]
Hence $p_{\mathrm{BL}}(T_K;\widehat s_K)-p_{\mathrm{BL}}(T_K;s_K)=o_p(1)$ whenever $\widehat s_K-s_K\to0$ in probability.
Since the oracle statistic already converges to $\mathrm{Unif}(0,1)$ and the uniform law has continuous cdf, Slutsky's theorem and P\'olya's theorem imply
\[
  \sup_{u\in[0,1]}
  \left|
    \Pbb \left(p_{\mathrm{BL}}(T_K;\widehat s_K)\le u\right)-u
  \right|
  \to 0.
\]
This completes the proof.
\end{proof}

\subsection{Proof of Corollary 4.3}

\begin{proposition}\label{prop:bl-small-s}
Fix $\alpha\in(0,1/2)$ and let $t_\alpha=\cot(\pi\alpha)$.
Then, for every fixed $t\in\R$,
\begin{equation}\label{eq:pbl-small-s-general}
  p_{\mathrm{BL}}(t;s)
  =
  \frac12-\frac1\pi\arctan(t)
  +
  \frac{\kappa^2 t}{\pi(1+t^2)^2}s^2
  +O_t(s^4)
  \qquad (s\downarrow0).
\end{equation}
In particular,
\begin{equation}\label{eq:Psi-small-s}
  \Psi_\alpha(s)
  =
  \alpha
  +
  \frac{\kappa^2 t_\alpha}{\pi(1+t_\alpha^2)^2}s^2
  +O_\alpha(s^4).
\end{equation}
Moreover, for each fixed $s\ge0$, the map $t\mapsto p_{\mathrm{BL}}(t;s)$ is continuous and strictly decreasing from $1$ to $0$.
Hence, for every $\alpha\in(0,1)$, there is a unique $q_\alpha(s)\in\R$ such that
$p_{\mathrm{BL}}(q_\alpha(s);s)=\alpha$.
If $\alpha\in(0,1/2)$, then the map $s\mapsto q_\alpha(s)$ is strictly increasing on $[0,\infty)$.
As $s\downarrow0$,
\begin{equation}\label{eq:qalpha-small-s}
  q_\alpha(s)
  =
  t_\alpha
  +
  \frac{\kappa^2 t_\alpha}{1+t_\alpha^2}s^2
  +O_\alpha(s^4).
\end{equation}
\end{proposition}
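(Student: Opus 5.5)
Set $g(x):=\frac12-\frac1\pi\arctan(x)$, so that $p_{\mathrm{BL}}(t;s)=\E[g(t-\kappa sV)]$. The plan is to Taylor-expand $g$ around $t$ and integrate against the law of $V$. Since $V$ is symmetric, the odd moments $\E V=\E V^3=0$ vanish, and so do all odd-order terms. We get
\[
  p_{\mathrm{BL}}(t;s)=g(t)+\tfrac12 g''(t)\kappa^2s^2+R(t,s).
\]
Here $R$ is the fourth-order Lagrange remainder, and $|R|\le \frac{1}{24}\sup|g^{(4)}|\,\kappa^4s^4\,\E V^4$. Because $g'(x)=-\frac{1}{\pi(1+x^2)}$ is a rational function decaying at infinity, all its derivatives are bounded on $\R$, so the remainder is in fact $O(s^4)$ uniformly in $t$. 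Differentiating once more gives $g''(x)=\frac{2x}{\pi(1+x^2)^2}$, and $\frac12 g''(t)\kappa^2$ is exactly the coefficient in \eqref{eq:pbl-small-s-general}. Substituting $t=t_\alpha$ and using $\Psi_\alpha(s)=p_{\mathrm{BL}}(t_\alpha;s)$, which holds by the definitions in Corollary~\ref{cor:raw-size-function} and \eqref{eq:main-pbl}, together with $g(t_\alpha)=\alpha$, gives \eqref{eq:Psi-small-s}.

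For the monotonicity in $t$, I will use that $p_{\mathrm{BL}}(\cdot;s)$ is the survival function of $T_s=\mathsf{C}+\kappa sV$ (Proposition~\ref{app-prop:bl-structure}). That law has a continuous, strictly positive density $f_s$: it is a convolution of the Cauchy density with a Gaussian, and the Cauchy density is everywhere positive. Hence $t\mapsto p_{\mathrm{BL}}(t;s)$ is continuous and strictly decreasing from $1$ to $0$, and $q_\alpha(s)$ exists and is unique for every $\alpha\in(0,1)$.

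For strict monotonicity of $q_\alpha$ in $s$ when $\alpha<1/2$, I use the semigroup relation $T_{s_2}\stackrel{d}{=}T_{s_1}+\sigma Z$ with $\sigma=\kappa\sqrt{s_2^2-s_1^2}>0$. Put $q=q_\alpha(s_1)>0$, which is positive because $\alpha<1/2$ and $T_{s_1}$ is symmetric. Then
\[
  \Pbb(T_{s_2}>q)-\alpha=\E\bigl[\Pbb(T_{s_1}>q-\sigma Z)-\Pbb(T_{s_1}>q)\bigr].
\]
Symmetrising over $\pm Z$, the integrand for $Z=z>0$ becomes $\int_{q}^{q+\sigma z}f_{s_1}-\int_{q-\sigma z}^{q}f_{s_1}$. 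Compare the two intervals under the reflection $x\mapsto 2q-x$: a point $x\in(q,q+\sigma z)$ maps to a point $2q-x$ that is closer to $0$ in absolute value. Since $f_{s_1}$ is symmetric and strictly decreasing on $[0,\infty)$ (Proposition~\ref{app-prop:bl-structure}), $f_{s_1}(x)<f_{s_1}(2q-x)$, so the integrand is negative and $\Pbb(T_{s_2}>q)<\alpha$. This forces $q_\alpha(s_2)<q$, which is the opposite of what we want. So I must have the sign wrong somewhere, and I will recheck this step.

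Intuitively, adding Gaussian noise should fatten the upper tail beyond $q$, since $\Psi_\alpha(s)>\alpha$ by the expansion above. So the correct claim is $\Pbb(T_{s_2}>q)>\alpha$, which gives $q_\alpha(s_2)>q_\alpha(s_1)$. The right comparison is between the mass of $T_{s_1}$ moving out across $q$ and the mass moving in. The tail decreases away from $q$ more slowly on one side, and the naive reflection argument fails. I expect this to be the main obstacle.

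My first fallback is a derivative argument. By the heat-equation structure of the Gaussian smoothing, $\partial_{s}\,\Pbb(T_s>q)=\kappa^2 s\,\bigl(-f_s'(q)\bigr)$, where the sign comes from differentiating $1-F_s$ twice. Since $f_s$ is strictly decreasing on $(0,\infty)$, we have $-f_s'(q)\ge0$ for $q>0$, with strict positivity almost everywhere. This gives $\partial_s p_{\mathrm{BL}}(q;s)>0$ at every $q>0$. Then the implicit function theorem applied to $p_{\mathrm{BL}}(q_\alpha(s);s)=\alpha$, using $q_\alpha(s)>0$, gives
\[
  q_\alpha'(s)=\frac{\partial_s p_{\mathrm{BL}}}{f_s(q_\alpha(s))}>0,
\]
provided $f_s'(q)<0$ strictly. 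Strictness holds because $f_s$ is real-analytic, so $f_s'$ cannot vanish on an interval.

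Finally, for \eqref{eq:qalpha-small-s}, write $q=t_\alpha+\delta$ and solve $p_{\mathrm{BL}}(q;s)=\alpha$ using the uniform expansion above. Since $g'(t_\alpha)=-\frac{1}{\pi(1+t_\alpha^2)}$, this gives
\[
  \delta=\frac{\kappa^2 t_\alpha}{\pi(1+t_\alpha^2)^2}\,s^2\Big/\frac{1}{\pi(1+t_\alpha^2)}+O(s^4)=\frac{\kappa^2t_\alpha}{1+t_\alpha^2}s^2+O(s^4).
\]
To justify this, note first that $\delta=O(s^2)$ by the inverse function theorem, and then a second-order Taylor expansion in $\delta$ contributes only $O(\delta^2)=O(s^4)$.
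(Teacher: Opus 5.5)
Your expansions and your monotonicity-in-$t$ argument are fine. Bounding the fourth-order Lagrange remainder by $\frac1{24}\|g^{(4)}\|_\infty\kappa^4s^4\E V^4$ gives an $O(s^4)$ that is uniform in $t$, and that uniformity is what justifies substituting $t=t_\alpha+\delta$. This route is more direct than the paper's, which proves $C^4$ regularity, uses evenness in $s$, and differentiates the implicit equation twice.

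The gap is in the strict monotonicity of $s\mapsto q_\alpha(s)$, and it is partly self-inflicted: the reflection argument you abandon is correct, and it only failed because of a sign slip in the symmetrisation.
\begin{itemize}
\item For $Z=z>0$ one has $\Pbb(T_{s_1}>q-\sigma z)-\Pbb(T_{s_1}>q)=\int_{q-\sigma z}^{q}f_{s_1}$.
\item For $Z=-z$ the term is $-\int_q^{q+\sigma z}f_{s_1}$.
\end{itemize}
So the symmetrised integrand is $\int_{q-\sigma z}^{q}f_{s_1}-\int_q^{q+\sigma z}f_{s_1}$, the negative of what you wrote. Your own observation that $|2q-x|<x$ for $x\in(q,q+\sigma z)$ holds because $q>0$, including when $q-\sigma z<0$. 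It then makes this integrand strictly positive for every $z>0$. Hence $\Pbb(T_{s_2}>q_\alpha(s_1))>\alpha$, and so $q_\alpha(s_2)>q_\alpha(s_1)$. This is essentially the paper's proof. The paper symmetrises over the window $[-t,t]$ via $H_t(a)=\Pbb(T_{s_1}\in[-t-a,t-a])$ and shows $\Pbb(|T_{s_2}|\le t)<\Pbb(|T_{s_1}|\le t)$.

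The fallback you actually commit to does not close as written.
\begin{itemize}
\item The identity $\partial_s p_{\mathrm{BL}}(q;s)=-\kappa^2 s f_s'(q)$ is correct; it follows from Gaussian integration by parts.
\item Strict decrease of $f_s$ only gives $f_s'\le 0$. ``Strict positivity almost everywhere'' does not give positivity at every $q$.
\item Real-analyticity of $f_s$ in $x$ only rules out $f_s'$ vanishing on an $x$-interval. It says nothing about the particular point $q_\alpha(s)$, so $q_\alpha'(s)>0$ is not established.
\end{itemize}
The missing fact is true and easy to prove. Write
\[
f_s'(x)=\int_0^\infty f_0'(u)\{\ph_{\kappa s}(x-u)-\ph_{\kappa s}(x+u)\}\,du .
\]
For $x,u>0$ we have $f_0'(u)<0$ and $\ph_{\kappa s}(x-u)>\ph_{\kappa s}(x+u)$, so $f_s'(x)<0$ for all $x>0$ and $s>0$. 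This is the same pairing used in Proposition~\ref{app-prop:bl-structure}. With either repair the proposal is complete.
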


\begin{proof}
Write
\[
  g(x):=\frac12-\frac1\pi\arctan(x),
  \qquad
  p_{\mathrm{BL}}(t;s)=\E\left[g(t-\kappa sV)\right].
\]
Extend the definition to all real $s$ by
\[
  \widetilde p_{\mathrm{BL}}(t,s):=\E\left[g(t-\kappa sV)\right],
  \qquad (t,s)\in\R^2.
\]
Then $\widetilde p_{\mathrm{BL}}(t,s)=p_{\mathrm{BL}}(t;s)$ for $s\ge0$.
For each pair of nonnegative integers $(a,b)$ with $a+b\le4$,
\[
  \partial_t^a\partial_s^b g(t-\kappa sV)
  =
  (-\kappa V)^b g^{(a+b)}(t-\kappa sV).
\]
The derivatives $g^{(m)}$ are rational functions whose denominators are powers of $1+x^2$;
in particular, for each $m\le4$ there is a constant $C_m<\infty$ with $\|g^{(m)}\|_\infty\le C_m$.
Hence
\[
  \left|\partial_t^a\partial_s^b g(t-\kappa sV)\right|
  \le
  C_{a+b}\kappa^b |V|^b,
  \qquad a+b\le4,
\]
and the right-hand side is integrable because the Gaussian variable $V$ has finite moments of every order.
Dominated convergence therefore allows differentiation under the expectation for all mixed derivatives of total order at most four, so $(t,s)\mapsto \widetilde p_{\mathrm{BL}}(t,s)$ is $C^4$ on $\R^2$.

Because $V\sim N(0,1)$ is symmetric, $\widetilde p_{\mathrm{BL}}(t,s)=\widetilde p_{\mathrm{BL}}(t,-s)$ for all $(t,s)$.
Hence, for each fixed $t$, the odd $s$-derivatives vanish at $s=0$.
Using Taylor's theorem in $s$ around $0$, we obtain
\[
  p_{\mathrm{BL}}(t;s)
  =
  p_{\mathrm{BL}}(t;0)
  +
  \frac12 \partial_{ss}p_{\mathrm{BL}}(t;0) s^2
  +O_t(s^4).
\]
Now
\[
  p_{\mathrm{BL}}(t;0)=g(t)=\frac12-\frac1\pi\arctan(t),
\]
and
\[
  g''(x)=\frac{2x}{\pi(1+x^2)^2}.
\]
Therefore
\[
  \partial_{ss}p_{\mathrm{BL}}(t;0)
  =
  \kappa^2\E[V^2]g''(t)
  =
  \frac{2\kappa^2 t}{\pi(1+t^2)^2},
\]
which proves \eqref{eq:pbl-small-s-general}. Taking $t=t_\alpha$ and using
\[
  \frac12-\frac1\pi\arctan(t_\alpha)=\alpha
\]
gives \eqref{eq:Psi-small-s}.

For the monotonicity claim in $t$, differentiate:
\[
  \partial_t p_{\mathrm{BL}}(t;s)
  =
  \E\left[g'(t-\kappa sV)\right]
  =
  -\frac1\pi\E\left[\frac{1}{1+(t-\kappa sV)^2}\right]
  <0.
\]
Thus $t\mapsto p_{\mathrm{BL}}(t;s)$ is strictly decreasing and continuous.
Since $0\le g(t-\kappa sV)\le1$ and $g(x)\to1$ as $x\to-\infty$ while $g(x)\to0$ as $x\to+\infty$, bounded convergence yields
\[
  \lim_{t\to-\infty} p_{\mathrm{BL}}(t;s)=1,
  \qquad
  \lim_{t\to+\infty} p_{\mathrm{BL}}(t;s)=0.
\]
Hence there is a unique $q_\alpha(s)$ with $p_{\mathrm{BL}}(q_\alpha(s);s)=\alpha$.

Because $\partial_t p_{\mathrm{BL}}(t_\alpha;0)=-1/[\pi(1+t_\alpha^2)]\neq0$,
the implicit function theorem gives a $C^4$ function $q_\alpha(s)$ in a neighbourhood of $0$ satisfying
$p_{\mathrm{BL}}(q_\alpha(s);s)=\alpha$ and $q_\alpha(0)=t_\alpha$.
By symmetry, $p_{\mathrm{BL}}(t;s)=p_{\mathrm{BL}}(t;-s)$, so uniqueness implies $q_\alpha(s)=q_\alpha(-s)$ for sufficiently small $|s|$.
Thus $q_\alpha'(0)=q_\alpha^{(3)}(0)=0$.
Differentiating the identity $p_{\mathrm{BL}}(q_\alpha(s);s)=\alpha$ twice and evaluating at $s=0$ gives
\[
  0
  =
  \partial_t p_{\mathrm{BL}}(t_\alpha;0) q_\alpha''(0)
  +
  \partial_{ss} p_{\mathrm{BL}}(t_\alpha;0),
\]
because $q_\alpha'(0)=0$ and $\partial_{ts}p_{\mathrm{BL}}(t_\alpha;0)=0$.
Using the formulas above,
\[
  q_\alpha''(0)
  =
  -\frac{\partial_{ss} p_{\mathrm{BL}}(t_\alpha;0)}{\partial_t p_{\mathrm{BL}}(t_\alpha;0)}
  =
  \frac{2\kappa^2 t_\alpha}{1+t_\alpha^2}.
\]
Since $q_\alpha$ is $C^4$ and even near $0$, Taylor's theorem yields
\[
  q_\alpha(s)
  =
  q_\alpha(0)
  +
  \frac12 q_\alpha''(0)s^2
  +O_\alpha(s^4)
  =
  t_\alpha
  +
  \frac{\kappa^2 t_\alpha}{1+t_\alpha^2}s^2
  +O_\alpha(s^4),
\]
which is \eqref{eq:qalpha-small-s}.

To prove strict monotonicity in $s$, fix $0\le s_1<s_2$ and let
\[
  \sigma:=\kappa\sqrt{s_2^2-s_1^2}.
\]
By Proposition~\ref{app-prop:bl-structure}, we may write
\[
  T_{s_2}\stackrel{d}{=}T_{s_1}+\sigma Z,
\]
where $Z\sim N(0,1)$ is independent of $T_{s_1}$.
Fix $t>0$ and define
\[
  H_t(a):=\Pbb\left(T_{s_1}\in[-t-a,t-a]\right)
  =\int_{-t-a}^{t-a} f_{s_1}(x)dx,
  \qquad a\in\R,
\]
where $f_{s_1}$ is the density from Proposition~\ref{app-prop:bl-structure}.
Because $f_{s_1}$ is even, $H_t$ is even.
Moreover, for $a>0$, Leibniz' rule and symmetry give
\[
  H_t'(a)=f_{s_1}(t+a)-f_{s_1}(|t-a|)<0,
\]
since $|t-a|<t+a$ and $f_{s_1}$ is strictly decreasing on $[0,\infty)$.
Thus $H_t(a)<H_t(0)$ for every $a\ne0$.
Because $\sigma Z$ is nondegenerate,
\[
  \Pbb(|T_{s_2}|\le t)
  =
  \E[H_t(\sigma Z)]
  <
  H_t(0)
  =
  \Pbb(|T_{s_1}|\le t).
\]
Each $T_s$ is symmetric and continuous, so for $t>0$,
\[
  p_{\mathrm{BL}}(t;s)
  =
  \Pbb(T_s>t)
  =
  \frac12\left(1-\Pbb(|T_s|\le t)\right).
\]
Therefore
\[
  p_{\mathrm{BL}}(t;s_2)>p_{\mathrm{BL}}(t;s_1)
  \qquad (t>0).
\]
Now let $\alpha\in(0,1/2)$.
Since $p_{\mathrm{BL}}(0;s)=1/2$ and $t\mapsto p_{\mathrm{BL}}(t;s)$ is strictly decreasing, we have $q_\alpha(s)>0$ for every $s\ge0$.
Taking $t=q_\alpha(s_1)$ in the last display gives
\[
  p_{\mathrm{BL}}(q_\alpha(s_1);s_2)
  >
  p_{\mathrm{BL}}(q_\alpha(s_1);s_1)
  =
  \alpha.
\]
Because $t\mapsto p_{\mathrm{BL}}(t;s_2)$ is strictly decreasing, it follows that
\[
  q_\alpha(s_2)>q_\alpha(s_1).
\]
Hence $s\mapsto q_\alpha(s)$ is strictly increasing on $[0,\infty)$.
\end{proof}

\begin{proposition}\label{prop:bl-large-s}
Let $\mathsf{C}$ and $V\sim N(0,1)$ be independent, and define
\[
  T_s:=\mathsf{C}+\kappa sV,
  \qquad s\ge0.
\]
Then $T_s$ has survival function $t\mapsto p_{\mathrm{BL}}(t;s)$.
Moreover, as $s\to\infty$,
\[
  \frac{T_s}{\kappa s}\dto N(0,1).
\]
Equivalently, for every fixed $x\in\R$,
\[
  p_{\mathrm{BL}}(\kappa s x;s)\to \barPh(x).
\]
Consequently, if $q_\alpha(s)$ is the unique solution to $p_{\mathrm{BL}}(q_\alpha(s);s)=\alpha$ for $\alpha\in(0,1/2)$, then we have
\[
  \frac{q_\alpha(s)}{\kappa s}\to z_{1-\alpha}:=\Ph^{-1}(1-\alpha)
\]
and
\[
  q_\alpha(s)=\kappa z_{1-\alpha}s+O_\alpha(1).
\]
\end{proposition}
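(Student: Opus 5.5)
The survival-function identity was already established in Proposition~\ref{app-prop:bl-structure}, so I would start from the representation $T_s=\mathsf{C}+\kappa sV$ and write
\[
  \frac{T_s}{\kappa s}=V+\frac{\mathsf{C}}{\kappa s}.
\]
Since $\mathsf{C}$ is a fixed, almost surely finite random variable, $\mathsf{C}/(\kappa s)\to0$ almost surely as $s\to\infty$, so $T_s/(\kappa s)\dto N(0,1)$. An equivalent route is via characteristic functions: $\phi_{T_s}(u/(\kappa s))=\exp(-|u|/(\kappa s)-u^2/2)\to e^{-u^2/2}$. Because $\Ph$ is continuous, Portmanteau then gives $p_{\mathrm{BL}}(\kappa sx;s)=\Pbb(T_s/(\kappa s)>x)\to\barPh(x)$ for every fixed $x$, and by P\'olya's theorem this convergence is uniform in $x$.

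For the quantile limit I would use monotonicity. Fix $\delta>0$ and let $x_\pm=z_{1-\alpha}\pm\delta$. Then $p_{\mathrm{BL}}(\kappa sx_+;s)\to\barPh(x_+)<\alpha$ and $p_{\mathrm{BL}}(\kappa sx_-;s)\to\barPh(x_-)>\alpha$. Since $t\mapsto p_{\mathrm{BL}}(t;s)$ is strictly decreasing (Proposition~\ref{prop:bl-small-s}), for large $s$ we get $\kappa sx_-<q_\alpha(s)<\kappa sx_+$. Hence $q_\alpha(s)/(\kappa s)\to z_{1-\alpha}$.

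The $O_\alpha(1)$ refinement is the main obstacle, because it needs a quantitative bound rather than a limit. I would take $t=\kappa sz_{1-\alpha}+a$ for a fixed constant $a$ and compare $\Pbb(\mathsf{C}+\kappa sV>t)$ with $\alpha=\Pbb(\kappa sV>\kappa sz_{1-\alpha})$. Conditioning on $\mathsf{C}$ gives
\[
  p_{\mathrm{BL}}(t;s)-\alpha=\E\bigl[\barPh\bigl(z_{1-\alpha}+(a-\mathsf{C})/(\kappa s)\bigr)-\barPh(z_{1-\alpha})\bigr].
\]
The integrand is bounded by $\min\{1,|a-\mathsf{C}|/(\kappa s\sqrt{2\pi})\}$. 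Its expectation is of order $(\log s)/s$, which comes from the Cauchy tail, and not $O(1/s)$ because $\E|\mathsf{C}|=\infty$. The sign is still controlled, though. Write the integrand as a function of $\mathsf{C}$; its expectation is
\[
  -\ph(z_{1-\alpha})\,\frac{a}{\kappa s}+\E\bigl[\text{odd-in-}\mathsf{C}\text{ part}\bigr]+O\bigl(\E\min\{1,\mathsf{C}^2/s^2\}\bigr).
\]
The odd part vanishes by the symmetry of $\mathsf{C}$, and the remainder is $O(1/s)$, since $\E\min\{1,\mathsf{C}^2/s^2\}\asymp1/s$ for Cauchy. So $p_{\mathrm{BL}}(t;s)-\alpha=-\ph(z_{1-\alpha})a/(\kappa s)+O(1/s)$ with an $a$-independent $O$-constant. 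Choosing $|a|$ large makes this negative for $a=+A$ and positive for $a=-A$. Monotonicity then traps $q_\alpha(s)$ in $[\kappa sz_{1-\alpha}-A,\kappa sz_{1-\alpha}+A]$ for large $s$, and for moderate $s$ the bound follows from continuity of $q_\alpha$. The step that needs care is making the second-order Taylor remainder of $\barPh$ uniform. For this I would use $|\barPh''|\le C$ and split on $|\mathsf{C}|\le s$ versus $|\mathsf{C}|>s$.
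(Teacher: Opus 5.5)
Your proposal is correct. The first two steps match the paper: the Slutsky/characteristic-function limit, and the convergence $q_\alpha(s)/(\kappa s)\to z_{1-\alpha}$. For the latter, the paper uses P\'olya plus quantile convergence where you use a monotonicity sandwich, which is equivalent. For the $O_\alpha(1)$ refinement, however, you take a genuinely different route.

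\textbf{The paper's route.} It works with the characteristic function $\exp(-u^2/2-\varepsilon_s|u|)$ of $R_s=T_s/(\kappa s)$, where $\varepsilon_s=1/(\kappa s)$. Fourier inversion gives the uniform density bound $\sup_x|f^R_s(x)-\ph(x)|\le\varepsilon_s/\pi$. Symmetry supplies the anchor $F^R_s(0)=\Ph(0)=1/2$, so integrating gives $|F^R_s(x)-\Ph(x)|\le|x|\varepsilon_s/\pi$. The quantile error $|x_s-z_{1-\alpha}|=O(\varepsilon_s)$ then follows from a lower bound on the density of $R_s$ near $z_{1-\alpha}$.

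\textbf{Your route.} You condition on $\mathsf{C}$ and use symmetry to cancel the non-integrable linear term of the Taylor expansion of $\barPh$. What remains is controlled by the truncated second moment $\E\min\{1,\mathsf{C}^2/s^2\}\asymp 1/s$. Monotonicity of $t\mapsto p_{\mathrm{BL}}(t;s)$ at $t=\kappa s z_{1-\alpha}\pm A$ then traps the quantile.

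\textbf{Comparison.} In both arguments, the symmetry of the Cauchy law is exactly what secures an $O(1/s)$ rate despite $\E|\mathsf{C}|=\infty$. The paper's version is shorter and uniform in $x$. Yours is purely real-variable, needs no density lower bound, and yields the local expansion $p_{\mathrm{BL}}(\kappa s z_{1-\alpha}+a;s)=\alpha-\ph(z_{1-\alpha})a/(\kappa s)+O(1/s)$ for fixed $a$.

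\textbf{Two bookkeeping points.} First, the cancellation must be performed on the bounded integrand, not by splitting off the term $\ph(z_{1-\alpha})\mathsf{C}/(\kappa s)$, which is not integrable. You can either symmetrise first, bounding $\tfrac12[G(y)+G(-y)]-G(0)$ by $\min\{1,\tfrac12\|\barPh''\|_\infty y^2\}$, or truncate symmetrically at $|\mathsf{C}|\le s$ as you indicate. Second, the remainder also contains an $O(a^2/s^2)$ piece from expanding $\barPh(z_{1-\alpha}+a/(\kappa s))$, so the $O(1/s)$ constant is not literally uniform in $a$. This piece is $o(1/s)$ for the fixed choice $a=\pm A$, so the trapping argument goes through unchanged.
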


\begin{proof}
By Proposition~\ref{app-prop:bl-structure}, $T_s\stackrel{d}{=}\mathsf{C}+\kappa sV$, so $R_s:=T_s/(\kappa s)\stackrel{d}{=}V+\varepsilon_s\mathsf{C}$ with $\varepsilon_s:=1/(\kappa s)$. Slutsky's theorem gives $R_s\dto V\sim N(0,1)$ and, since $\Ph$ is continuous, $p_{\mathrm{BL}}(\kappa sx;s)\to\barPh(x)$.

We sharpen this to an $O_\alpha(\varepsilon_s)$ density estimate. Write $f_s^R$ and $F_s^R$ for the density and distribution function of $R_s$. The characteristic function $\phi_{R_s}(u)=\exp(-u^2/2-\varepsilon_s|u|)$ and Fourier inversion give
\[
  \sup_{x\in\R}|f_s^R(x)-\ph(x)|
  \le\frac{1}{2\pi}\int_\R e^{-u^2/2}\bigl|e^{-\varepsilon_s|u|}-1\bigr|du
  \le\frac{\varepsilon_s}{\pi}.
\]
Set $z=\Ph^{-1}(1-\alpha)$ and $x_s=q_\alpha(s)/(\kappa s)$, so $F_s^R(x_s)=1-\alpha=\Ph(z)$. By symmetry $F_s^R(0)=\Ph(0)=1/2$, and integrating the density bound from $0$ gives $|F_s^R(x)-\Ph(x)|\le |x|\varepsilon_s/\pi$ for every $x$. By P\'olya's theorem $\sup_x|F_s^R(x)-\Ph(x)|\to 0$, since $\Ph$ is continuous and strictly increasing, convergence of the $(1-\alpha)$-quantiles yields $x_s\to z$; in particular $x_s\in[z/2,3z/2]$ for $s$ large enough. On this fixed interval $\ph\ge c_\alpha>0$, so $|F_s^R(x_s)-F_s^R(z)|\ge c_\alpha|x_s-z|/2$ once $s$ is large enough that $f_s^R\ge c_\alpha/2$ on $[z/2,3z/2]$. Combining with $|F_s^R(x_s)-F_s^R(z)|\le 3z\varepsilon_s/\pi$ yields $|x_s-z|=O_\alpha(\varepsilon_s)$; multiplying by $\kappa s$ gives $q_\alpha(s)=\kappa z_{1-\alpha}s+O_\alpha(1)$.
\end{proof}

Together, Proposition~\ref{app-prop:bl-structure}, Proposition~\ref{prop:bl-small-s}, and Proposition~\ref{prop:bl-large-s} prove Proposition~\ref{prop:bl-structure}.

\begin{proof}[Proof of Corollary~\ref{cor:bl-c0}]
Set
\[
  D_K:=
  \sup_{u\in[0,1]}
  \left|
    \Pbb \left(p_{\mathrm{BL}}(T_K;s_K)\le u\right)-u
  \right|.
\]
Suppose, for contradiction, that $D_K\not\to0$.
Then there exist $\varepsilon>0$ and a subsequence $(K_j)$ such that
\[
  D_{K_j}\ge \varepsilon
  \qquad\text{for all }j.
\]

Either $(s_{K_j})$ has a bounded subsequence or it has a subsequence tending to $+\infty$.

\paragraph{Case 1: bounded subsequence.}
Passing to a further subsequence if necessary, still denoted $(K_j)$, assume
\[
  \sup_j s_{K_j}<\infty.
\]
Then Theorem~\ref{thm:bluniform} applies along this subsequence, yielding
\[
  D_{K_j}\to 0,
\]
contradicting $D_{K_j}\ge\varepsilon$.

\paragraph{Case 2: divergent subsequence.}
Passing to a further subsequence if necessary, still denoted $(K_j)$, assume
\[
  s_{K_j}\to\infty.
\]
Let
\[
  Y_j:=\frac{T_{K_j}}{\kappa s_{K_j}},
  \qquad
  G_j(x):=\Pbb \left(\frac{T_{s_{K_j}}}{\kappa s_{K_j}}\le x\right),
\]
where $T_s$ denotes the BL family from Proposition~\ref{prop:bl-large-s}.
We first claim that
\[
  Y_j\dto N(0,1).
\]
Fix $v\in\R$.
Because $c_{K_j}\to0$, Corollary~\ref{cor:explicit-stable-exponent} and
Theorem~\ref{thm:centered-stable-limit} yield
\[
  T_{K_j}^{(v)}-b_{K_j}(v)\dto \mathsf{C}(0,1)
\]
under $(V=v)$.
Hence the centered conditional laws are tight, and since $s_{K_j}\to\infty$ we have
\[
  \frac{T_{K_j}^{(v)}-b_{K_j}(v)}{\kappa s_{K_j}}\to 0
  \qquad\text{in probability under }(V=v).
\]
Also, Theorem~\ref{thm:bK-smallc} gives
\[
  \frac{b_{K_j}(v)}{\kappa s_{K_j}}\to v.
\]
Therefore
\[
  \frac{T_{K_j}^{(v)}}{\kappa s_{K_j}}\to v
  \qquad\text{in probability under }(V=v).
\]
Let $h:\R\to\R$ be bounded and continuous.
Then
\[
  \E \left[
    h \left(\frac{T_{K_j}^{(v)}}{\kappa s_{K_j}}\right)
 \mid V=v
  \right]
  \to h(v)
\]
for every fixed $v$, and dominated convergence with respect to the law of $V$ yields
\[
  \E[h(Y_j)]\to \E[h(V)].
\]
Hence $Y_j\dto N(0,1)$.

Also, Proposition~\ref{prop:bl-large-s} gives
\[
  \varepsilon_j:=\sup_{x\in\R}|G_j(x)-\Ph(x)|\to 0.
\]

By the exact survival representation from Proposition~\ref{prop:bl-large-s}, for each deterministic $t$,
\[
  p_{\mathrm{BL}}(t;s_{K_j})
  =
  \Pbb(T_{s_{K_j}}>t)
  =
  1-G_j \left(\frac{t}{\kappa s_{K_j}}\right).
\]
Evaluating this identity at $t=T_{K_j}$ gives the pointwise relation
\[
  p_{\mathrm{BL}}(T_{K_j};s_{K_j})
  =
  1-G_j(Y_j).
\]
With
\[
  U_j:=1-\Ph(Y_j)=\barPh(Y_j),
\]
we have the deterministic bound
\[
  \left|p_{\mathrm{BL}}(T_{K_j};s_{K_j})-U_j\right|
  \le \varepsilon_j.
\]
Since $Y_j\dto N(0,1)$ and $\barPh$ is continuous,
\[
  U_j=\barPh(Y_j)\dto \barPh(V)\sim \mathrm{Unif}(0,1).
\]
By P\'olya's theorem,
\[
  \Delta_j:=
  \sup_{u\in[0,1]}
  \left|\Pbb(U_j\le u)-u\right|
  \to 0.
\]

Fix $u\in[0,1]$.
From $|p_{\mathrm{BL}}(T_{K_j};s_{K_j})-U_j|\le\varepsilon_j$, we get
\[
  \Pbb(U_j\le u-\varepsilon_j)
  \le
  \Pbb \left(p_{\mathrm{BL}}(T_{K_j};s_{K_j})\le u\right)
  \le
  \Pbb(U_j\le u+\varepsilon_j),
\]
where probabilities at arguments outside $[0,1]$ are understood via truncation at $0$ and $1$.
Therefore
\[
  \left|
    \Pbb \left(p_{\mathrm{BL}}(T_{K_j};s_{K_j})\le u\right)-u
  \right|
  \le
  \Delta_j+\varepsilon_j.
\]
Taking the supremum over $u\in[0,1]$ yields
\[
  D_{K_j}\le \Delta_j+\varepsilon_j\to 0,
\]
again contradicting $D_{K_j}\ge\varepsilon$.
This contradiction proves that $D_K\to0$.

For the cutoff statement, note that the proof of Theorem~\ref{thm:bluniform} already showed that the law of $T_K$ is continuous for every fixed $K$. Therefore
\[
  \{p_{\mathrm{BL}}(T_K;s_K)\le \alpha\}
  =
  \{T_K\ge q_\alpha(s_K)\}
\]
almost surely, and hence
\[
  \Pbb \left(T_K>q_\alpha(s_K)\right)
  =
  \Pbb \left(p_{\mathrm{BL}}(T_K;s_K)\le \alpha\right).
\]
Substituting $u=\alpha$ in the already proved uniformity gives
\[
  \Pbb \left(T_K>q_\alpha(s_K)\right)\to \alpha.
\]
\end{proof}

\section{Proofs in Section~\ref{sec:calibration}: broader $c$-scale and plug-in estimators}\label{app:blpositive}

\subsection{Proof of Proposition 4.4}

\begin{proof}[Proof of Proposition~\ref{prop:bl-c-phase}]
\medskip\noindent\textit{Constant-$c$ regime.}
Assume first that $c_K\to c\in(0,\infty)$.
Fix $v\in\R$.
By Theorem~\ref{thm:centered-stable-limit},
\[
  T_K^{(v)}-b_K(v)\dto S_{c,v}
\]
for an explicit $1$-stable law $S_{c,v}$.
In particular, the conditional laws of $T_K^{(v)}-b_K(v)$ are tight, so
\[
  \frac{T_K^{(v)}-b_K(v)}{\log K}\to 0
  \qquad\text{in probability under }(V=v).
\]
Also, Theorem~\ref{thm:bK-general} yields
\[
  \rho_K b_K(v)\to B_c(v).
\]
Since $c_K=\rho_K\log K\to c$,
\[
  \frac{b_K(v)}{\log K}
  =
  \frac{\rho_K b_K(v)}{c_K}
  \to
  \frac{B_c(v)}{c}.
\]
Therefore, conditionally on $V=v$,
\[
  \frac{T_K^{(v)}}{\log K}
  =
  \frac{T_K^{(v)}-b_K(v)}{\log K}
  +
  \frac{b_K(v)}{\log K}
  \to
  \frac{B_c(v)}{c}
\]
in probability.

Now let $h:\R\to\R$ be bounded and continuous.
For each fixed $v$, we have
\[
  \E \left[
    h \left(\frac{T_K^{(v)}}{\log K}\right)
  \right]
  \to
  h \left(\frac{B_c(v)}{c}\right).
\]
Since $|h|\le\|h\|_\infty$, dominated convergence with respect to the law of $V$ gives
\[
  \E \left[
    h \left(\frac{T_K}{\log K}\right)
  \right]
  \to
  \E \left[
    h \left(\frac{B_c(V)}{c}\right)
  \right].
\]
Hence
\[
  \frac{T_K}{\log K}\dto M_c:=\frac{B_c(V)}{c}.
\]

Next, for every $c>0$,
\[
  B_c'(v)
  =
  \frac{2}{\pi}\int_0^{\sqrt{2c}} t^2 e^{-t^2/2}\cosh(vt) dt
  >0,
  \qquad v\in\R.
\]
Thus $B_c$ is continuous and strictly increasing, so the random variable $M_c=B_c(V)/c$ has a continuous distribution.

By Proposition~\ref{prop:bl-large-s},
\[
  \frac{q_\alpha(s_K)}{\log K}
  =
  \frac{q_\alpha(s_K)}{s_K}\cdot \frac{s_K}{\log K}
  \to
  \kappa z_{1-\alpha}\sqrt c,
\]
because $s_K/\log K=\sqrt{c_K}\to\sqrt c$.
Slutsky's theorem therefore yields
\[
  \frac{T_K-q_\alpha(s_K)}{\log K}
  \dto
  M_c-\kappa z_{1-\alpha}\sqrt c.
\]
Since $M_c$ has a continuous law,
\[
  \Pbb \left(T_K>q_\alpha(s_K)\right)
  \to
  \Pbb \left(M_c>\kappa z_{1-\alpha}\sqrt c\right)
  =:\Xi_\alpha(c).
\]

\medskip\noindent\textit{Regime $c_K\to\infty$.}
Assume now that $c_K\to\infty$.
Fix $v\neq0$ and write
\[
  \mu_K(v):=\mu_{\rho_K}(v).
\]
By Proposition~\ref{prop:large-mean},
\[
  \Pbb \left(
    \left|T_K^{(v)}-\mu_K(v)\right|>\frac12|\mu_K(v)|
 \mid V=v
  \right)\to0.
\]
Also, Theorem~\ref{thm:mu-smallrho} gives
\[
  \rho_K \mu_K(v)\to \sqrt{2/\pi} v e^{v^2/2}.
\]
Because Proposition~\ref{prop:bl-large-s} yields
\[
  q_\alpha(s_K)\sim \kappa z_{1-\alpha}s_K
  =\kappa z_{1-\alpha}\sqrt{\rho_K}(\log K)^{3/2},
\]
while Theorem~\ref{thm:mu-smallrho} gives
\[
  |\mu_K(v)|
  \sim
  \frac{\sqrt{2/\pi}}{\rho_K}|v|e^{v^2/2}
  \qquad (v\neq0),
\]
we obtain
\[
  \frac{q_\alpha(s_K)}{|\mu_K(v)|}
  \asymp
  \rho_K^{3/2}(\log K)^{3/2}
  =
  c_K^{3/2}
  \to
  \infty.
\]
Hence, for every fixed $v\neq0$, we have
\[
  q_\alpha(s_K)>2|\mu_K(v)|
\]
for all sufficiently large $K$.
On the event
\[
  \left\{\left|T_K^{(v)}-\mu_K(v)\right|\le\frac12|\mu_K(v)|\right\},
\]
it follows that
\[
  T_K^{(v)}
  \le
  |\mu_K(v)|+\frac12|\mu_K(v)|
  <
  q_\alpha(s_K).
\]
Therefore, for every fixed $v\neq0$,
\[
  \Pbb \left(T_K>q_\alpha(s_K) \mid V=v\right)
  \le
  \Pbb \left(
    \left|T_K^{(v)}-\mu_K(v)\right|>\frac12|\mu_K(v)|
 \mid V=v
  \right)
  \to 0.
\]
The conditional probability is bounded by $1$, and the set $\{V=0\}$ has probability zero under the standard normal law.
Dominated convergence with respect to $V$ gives
\[
  \Pbb \left(T_K>q_\alpha(s_K)\right)
  =
  \E \left[
    \Pbb \left(T_K>q_\alpha(s_K) \mid V\right)
  \right]
  \to 0.
\]
This completes the proof.
\end{proof}

\subsection{Auxiliary continuity lemma for Proposition 4.5}

\begin{lemma}\label{lem:Mc-continuity}
Fix $0<c_-<c_+<\infty$.
For each $c\in[c_-,c_+]$, define
\[
  M_c:=\frac{B_c(V)}{c},
  \qquad
  H_c(x):=\Pbb(M_c\le x).
\]
Then the following hold.
\begin{enumerate}
\item For every $c\in[c_-,c_+]$, the law of $M_c$ has a continuous density $h_c$.
\item There exists a finite constant $C_{c_-,c_+}$ such that
\[
  \sup_{c\in[c_-,c_+]}\|h_c\|_\infty\le C_{c_-,c_+}.
\]
\item If $c_n\to c\in[c_-,c_+]$, then we have
\[
  M_{c_n}\dto M_c
  \qquad\text{and}\qquad
  \sup_{x\in\R}|H_{c_n}(x)-H_c(x)|\to0.
\]
\end{enumerate}
Consequently, for each fixed $\alpha\in(0,1/2)$, the map
\[
  \Xi_\alpha(c):=\Pbb \left(M_c>\kappa z_{1-\alpha}\sqrt c\right)
\]
is continuous on $(0,\infty)$.
\end{lemma}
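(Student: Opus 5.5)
Since $B_c$ is smooth and strictly increasing in $v$ (the proof of Proposition~\ref{prop:bl-c-phase} gives $B_c'(v)=\frac{2}{\pi}\int_0^{\sqrt{2c}}t^2e^{-t^2/2}\cosh(vt)\,dt>0$), the plan is to use the change-of-variables formula for a monotone transform of $V$. Set $G_c(v):=B_c(v)/c$. Then $G_c$ is a $C^1$ odd bijection of $\R$ onto $\R$: it is surjective because $B_c(v)\ge \frac{2}{\pi}\int_{\sqrt{2c}/2}^{\sqrt{2c}}te^{-t^2/2}\sinh(vt)\,dt\to\infty$. Hence $M_c=G_c(V)$ has density
\[
  h_c(x)=\frac{\ph\bigl(G_c^{-1}(x)\bigr)}{G_c'\bigl(G_c^{-1}(x)\bigr)},
\]
which is continuous because $G_c^{-1}$ is continuous and $G_c'>0$ is continuous. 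This gives item (1).

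For item (2), I use the lower bound $\cosh(vt)\ge1$:
\[
  G_c'(v)\ge \frac{2}{\pi c}\int_0^{\sqrt{2c}}t^2e^{-t^2/2}\,dt\ge \frac{2}{\pi c_+}\int_0^{\sqrt{2c_-}}t^2e^{-t^2/2}\,dt=:\gamma_{c_-,c_+}>0,
\]
uniformly in $v$ and $c\in[c_-,c_+]$. Since $\ph\le(2\pi)^{-1/2}$, it follows that $\|h_c\|_\infty\le (2\pi)^{-1/2}/\gamma_{c_-,c_+}$.

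For item (3), fix $v$. The integrand $te^{-t^2/2}\sinh(vt)$ is continuous, so the upper limit $\sqrt{2c_n}\to\sqrt{2c}$ gives $B_{c_n}(v)\to B_c(v)$, and hence $G_{c_n}(v)\to G_c(v)$ pointwise. With the common $V$ we get $M_{c_n}\to M_c$ almost surely, and therefore in distribution. Because $H_c$ is continuous by (1), P\'olya's theorem upgrades this to uniform convergence of the distribution functions.

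For the consequence, fix $c\in(0,\infty)$ and take a compact window $[c_-,c_+]\ni c$ with $c_->0$. If $c_n\to c$, write
\[
  \Xi_\alpha(c_n)-\Xi_\alpha(c)=\bigl[H_c(x_c)-H_{c_n}(x_{c_n})\bigr],
  \qquad x_c:=\kappa z_{1-\alpha}\sqrt c .
\]
This difference is bounded by $\sup_x|H_{c_n}(x)-H_c(x)|+|H_c(x_{c_n})-H_c(x_c)|$. The first term vanishes by (3). The second is at most $\|h_c\|_\infty|x_{c_n}-x_c|\to0$ by (2).

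The only mildly delicate step is the surjectivity and uniform positivity of $G_c'$, and the bound $\cosh\ge1$ handles it. No step appears to be a genuine obstacle.
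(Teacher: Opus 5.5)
Your proof is correct and follows the paper's argument essentially step for step. The shared steps are the monotone change of variables with the uniform lower bound $\frac{d}{dv}\bigl(B_c(v)/c\bigr)\ge \frac{2}{\pi c_+}\int_0^{\sqrt{2c_-}}t^2e^{-t^2/2}\,dt$, almost-sure convergence of $B_{c_n}(V)/c_n$ upgraded to uniform convergence by P\'olya's theorem, and the same two-term bound for $\Xi_\alpha$. The only minor difference is in (3): you get continuity in $c$ directly from the fact that $c$ enters $B_c(v)$ only through the upper limit of integration, whereas the paper rescales $t=\sqrt{u}\,r$ to a fixed domain and applies dominated convergence; both are valid, and yours is slightly more direct.
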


\begin{proof}
Fix $c\in[c_-,c_+]$ and write
\[
  \psi_c(v):=\frac{B_c(v)}{c}.
\]
For each fixed $v$ and $c>0$, differentiation under the integral sign in \eqref{eq:def-Bc} is justified because the derivative of the integrand is continuous and integrable on $[0,\sqrt{2c}]$. We therefore obtain
\[
  \psi_c'(v)
  =
  \frac{2}{\pi c}
  \int_0^{\sqrt{2c}} t^2 e^{-t^2/2}\cosh(vt) dt.
\]
Hence $\psi_c$ is $C^1$, strictly increasing, and, using $c\le c_+$, $\sqrt{2c}\ge\sqrt{2c_-}$, and $\cosh(vt)\ge1$, satisfies the explicit uniform lower bound
\[
  \psi_c'(v)
  \ge
  m_{c_-,c_+}
  :=
  \frac{2}{\pi c_+}\int_0^{\sqrt{2c_-}} t^2 e^{-t^2/2} dt
  >0
  \qquad\text{for every }c\in[c_-,c_+].
\]
Moreover,
\[
  \lim_{v\to\infty}\psi_c(v)=\infty,
  \qquad
  \lim_{v\to-\infty}\psi_c(v)=-\infty,
\]
because \eqref{eq:def-Bc} integrates $\sinh(vt)$ over a nontrivial interval of positive $t$.
Therefore $\psi_c$ is a $C^1$ bijection from $\R$ onto $\R$, and the change-of-variables formula gives the density
\[
  h_c(x)=\frac{\ph(\psi_c^{-1}(x))}{\psi_c'(\psi_c^{-1}(x))}.
\]
Since $\ph\le(2\pi)^{-1/2}$ and $\psi_c'\ge m_{c_-,c_+}$, we obtain
\[
  \|h_c\|_\infty
  \le
  \frac{1}{\sqrt{2\pi} m_{c_-,c_+}}
\]
uniformly in $c\in[c_-,c_+]$.
This proves (i) and (ii).

Now let $c_n\to c\in[c_-,c_+]$.
For each fixed $v\in\R$, the change of variables $t=\sqrt u r$ in \eqref{eq:def-Bc} gives
\[
  \frac{B_u(v)}{u}
  =
  \frac{2}{\pi}
  \int_0^{\sqrt2}
  r e^{-ur^2/2}\sinh(v\sqrt u r) dr,
  \qquad u>0.
\]
For $u\in[c_-,c_+]$ and $r\in[0,\sqrt2]$, the integrand is continuous in $u$ and bounded in absolute value by
\[
  \frac{2}{\pi} r \sinh \left(|v|\sqrt{c_+} r\right),
\]
which is integrable on $[0,\sqrt2]$.
Dominated convergence therefore gives continuity of $u\mapsto B_u(v)/u$ on $[c_-,c_+]$.
Hence
\[
  \psi_{c_n}(V)\to \psi_c(V)
  \qquad\text{almost surely.}
\]
If $g:\R\to\R$ is bounded and continuous, then we have
\[
  g(\psi_{c_n}(V))\to g(\psi_c(V))
  \qquad\text{almost surely,}
\]
and bounded convergence yields
\[
  \E[g(M_{c_n})]\to \E[g(M_c)].
\]
Thus $M_{c_n}\dto M_c$.

Because $M_c$ has a continuous distribution by (i), Polya's theorem implies
\[
  \sup_{x\in\R}|H_{c_n}(x)-H_c(x)|\to0.
\]
Finally, fix $\alpha\in(0,1/2)$ and write $z=z_{1-\alpha}$.
Then we have
\begin{align*}
  \left|\Xi_\alpha(c_n)-\Xi_\alpha(c)\right|
  &=
  \left|
    H_{c_n} \left(\kappa z\sqrt{c_n}\right)
    -
    H_c \left(\kappa z\sqrt c\right)
  \right| \\
  &\le
  \sup_{x\in\R}|H_{c_n}(x)-H_c(x)|
  +
  C_{c_-,c_+}\kappa |z| \left|\sqrt{c_n}-\sqrt c\right|,
\end{align*}
which tends to zero by (ii).
This proves continuity of $\Xi_\alpha$ on $[c_-,c_+]$.
Since every compact subset of $(0,\infty)$ can be embedded into such an interval, $\Xi_\alpha$ is continuous on $(0,\infty)$.
\end{proof}

\subsection{Proof of Proposition 4.5}

\begin{proof}[Proof of Proposition~\ref{prop:bl-c-current}]
Set
\[
  F_K(x):=\Pbb \left(\frac{T_K}{\log K}\le x\right).
\]
Suppose, for contradiction, that
\[
  \sup_{x\in\R}|F_K(x)-H_{c_K}(x)|
  \not\to 0.
\]
Then there exist $\varepsilon>0$ and a subsequence $K_j$ such that
\[
  \sup_{x\in\R}|F_{K_j}(x)-H_{c_{K_j}}(x)|\ge\varepsilon
\]
for every $j$.
Since $c_{K_j}\in[c_-,c_+]$, compactness gives a further subsequence, not relabelled, with
\[
  c_{K_j}\to c\in[c_-,c_+].
\]
For each fixed $v\in\R$, Theorem~\ref{thm:centered-stable-limit} and Theorem~\ref{thm:bK-general} imply
\[
  \frac{T_{K_j}^{(v)}-b_{K_j}(v)}{\log K_j}\to 0
  \qquad\text{in probability under }(V=v),
\]
and
\[
  \frac{b_{K_j}(v)}{\log K_j}
  =
  \frac{\rho_{K_j} b_{K_j}(v)}{c_{K_j}}
  \to
  \frac{B_c(v)}{c}.
\]
Hence
\[
  \frac{T_{K_j}^{(v)}}{\log K_j}\to \frac{B_c(v)}{c}
  \qquad\text{in probability under }(V=v).
\]
If $h:\R\to\R$ is bounded and continuous, dominated convergence with respect to the law of $V$ yields
\[
  \E\left[h\left(\frac{T_{K_j}}{\log K_j}\right)\right]
  \to
  \E[h(M_c)].
\]
Thus
\[
  \frac{T_{K_j}}{\log K_j}\dto M_c.
\]
Lemma~\ref{lem:Mc-continuity} yields
\[
  \sup_{x\in\R}|H_{c_{K_j}}(x)-H_c(x)|\to0,
\]
while Polya's theorem gives
\[
  \sup_{x\in\R}|F_{K_j}(x)-H_c(x)|\to0
\]
because $M_c$ has a continuous distribution.
Therefore
\[
  \sup_{x\in\R}|F_{K_j}(x)-H_{c_{K_j}}(x)|
  \le
  \sup_{x\in\R}|F_{K_j}(x)-H_c(x)|
  +
  \sup_{x\in\R}|H_{c_{K_j}}(x)-H_c(x)|
  \to0,
\]
a contradiction.
This proves the first claim.

Now fix $\alpha\in(0,1/2)$ and write $z:=z_{1-\alpha}$ and
\[
  x_K:=\frac{q_\alpha(s_K)}{\log K}.
\]
Because $c_K\in[c_-,c_+]$, we have $s_K=\sqrt{c_K} \log K\to\infty$.
Proposition~\ref{prop:bl-large-s} therefore gives
\[
  \frac{q_\alpha(s_K)}{s_K}\to \kappa z,
\]
and hence
\[
  x_K-\kappa z\sqrt{c_K}
  =
  \left(
    \frac{q_\alpha(s_K)}{s_K}-\kappa z
  \right)\sqrt{c_K}
  \to 0.
\]
Using the first part together with the uniform density bound from Lemma~\ref{lem:Mc-continuity}, we obtain
\begin{align*}
  \left|
    \Pbb \left(T_K>q_\alpha(s_K)\right)-\Xi_\alpha(c_K)
  \right|
  &=
  \left|
    \Pbb \left(\frac{T_K}{\log K}>x_K\right)
    -
    \Pbb \left(M_{c_K}>\kappa z\sqrt{c_K}\right)
  \right| \\
  &\le
  \sup_{x\in\R}|F_K(x)-H_{c_K}(x)|
  +
  \left|
    H_{c_K}(x_K)-H_{c_K}(\kappa z\sqrt{c_K})
  \right| \\
  &\le
  \sup_{x\in\R}|F_K(x)-H_{c_K}(x)|
  +
  C_{c_-,c_+} \left|x_K-\kappa z\sqrt{c_K}\right|
  \to 0.
\end{align*}
This proves the size formula.
\end{proof}

\subsection{Proof of Corollary 4.6}

\begin{proof}
Fix $\alpha\in[\barPh(\sqrt{3}),1/2)$ and write
\[
  z:=z_{1-\alpha}=\Ph^{-1}(1-\alpha),
\]
so $0<z\le\sqrt{3}$.
For every $x>0$,
\[
  \frac{\sinh x}{x}
  =
  \sum_{m=0}^\infty \frac{x^{2m}}{(2m+1)!}
  <
  \sum_{m=0}^\infty \frac{(x^2/6)^m}{m!}
  =
  e^{x^2/6},
\]
where the strict inequality uses $(2m+1)!\ge 6^m m!$ for all $m\ge0$ and $(2m+1)!>6^m m!$ for every $m\ge2$.
Hence, for every $t>0$,
\[
  e^{-t^2/2}\sinh(zt)
  <
  zt\exp \left(-\frac{3-z^2}{6}t^2\right)
  \le
  zt.
\]
Let $a:=\sqrt{2c}$.
By the definition of $B_c$,
\begin{align*}
  B_c(z)
  &=
  \frac{2}{\pi}\int_0^a t e^{-t^2/2}\sinh(zt) dt \\
  &<
  \frac{2z}{\pi}\int_0^a t^2 dt
  =
  \frac{2za^3}{3\pi}
  =
  \kappa z c^{3/2}.
\end{align*}

Now define $r_\alpha(c)>0$ as the unique solution of
\[
  B_c \left(r_\alpha(c)\right)=\kappa z c^{3/2}.
\]
To justify existence and uniqueness, first note that $v\mapsto B_c(v)$ is continuous and
\[
  \partial_v B_c(v)
  =
  \frac{2}{\pi}\int_0^{\sqrt{2c}} t^2 e^{-t^2/2}\cosh(vt) dt
  >0,
\]
so it is strictly increasing on $\R$. Moreover,
\[
  B_c(v)
  \ge
  \frac{2}{\pi}\int_{\sqrt{2c}/2}^{\sqrt{2c}} t e^{-t^2/2}\sinh(vt) dt
  \to \infty
  \qquad (v\to\infty),
\]
because the integrand is positive and grows exponentially in $v$ on the fixed interval
$[\sqrt{2c}/2,\sqrt{2c}]$. Since $B_c(z)<\kappa z c^{3/2}$ by the previous display, the intermediate value theorem yields a solution $r_\alpha(c)>z$, and strict monotonicity gives uniqueness.
Consequently, we have
\begin{align*}
  \Xi_\alpha(c)
  &= \Pbb \left(M_c>\kappa z\sqrt{c}\right) \\
  &= \Pbb \left(B_c(V)>\kappa z c^{3/2}\right) \\
  &= \Pbb \left(V>r_\alpha(c)\right) \\
  &= \barPh \left(r_\alpha(c)\right) \\
  &< \barPh(z)
   = \alpha.
\end{align*}
This proves the conservative inequality $\Xi_\alpha(c)<\alpha$.

If $c_K\to0$, then Corollary~\ref{cor:bl-c0} gives
\[
  \Pbb \left(T_K>q_\alpha(s_K)\right)\to \alpha.
\]
Conversely, suppose that $c_K$ does not converge to $0$.
Then there exists a subsequence $K_j$ such that either $c_{K_j}\to c\in(0,\infty)$ or $c_{K_j}\to\infty$.
In the first case, Proposition~\ref{prop:bl-c-phase} yields
\[
  \Pbb \left(T_{K_j}>q_\alpha(s_{K_j})\right)
  \to
  \Xi_\alpha(c)
  <
  \alpha.
\]
In the second case, Proposition~\ref{prop:bl-c-phase} gives
\[
  \Pbb \left(T_{K_j}>q_\alpha(s_{K_j})\right)\to 0.
\]
Thus the full sequence cannot converge to $\alpha$ unless $c_K\to0$.
This proves the exactness criterion at conventional levels.
\end{proof}

\subsection{Plug-in consistency under the null model}

\begin{proposition}\label{prop:plugin}
Under the equicorrelated Gaussian null with possibly $K$-dependent correlation parameter $\rho_K\in[0,1)$, let
\[
  \widehat\rho_K
  :=
  \max \left\{0, 1-\frac{1}{K-1}\sum_{i=1}^K (z_i-\bar z)^2\right\},
  \qquad
  \widehat s_K:=\sqrt{\widehat\rho_K} (\log K)^{3/2},
\]
where $z_i=\Ph^{-1}(1-p_i)=Z_i$ and $\bar z=K^{-1}\sum_{i=1}^K z_i$.
Then we have
\[
  \widehat\rho_K-\rho_K=O_p(K^{-1/2}).
\]
Moreover:
\begin{enumerate}[label=(\roman*)]
\item If $\sup_K s_K<\infty$, then $\widehat s_K-s_K\to0$ in probability and therefore
\[
  \sup_{u\in[0,1]}
  \left|
    \Pbb \left(p_{\mathrm{BL}}(T_K;\widehat s_K)\le u\right)-u
  \right|
  \to 0.
\]
\item If
\[
  \frac{K\rho_K}{(\log K)^3}\to\infty,
\]
then
\[
  |\widehat s_K-s_K|
  =
  O_p\left(\frac{(\log K)^{3/2}}{\sqrt{K\rho_K}}\right)
  =
  o_p(1).
\]
Consequently,
\[
  \sup_{t\in\R}
  \left|
    p_{\mathrm{BL}}(t;\widehat s_K)-p_{\mathrm{BL}}(t;s_K)
  \right|
  =
  o_p(1).
\]
In particular, this condition is satisfied whenever $\rho_K\ge c_0/\log K$ eventually for some $c_0>0$.
\end{enumerate}
\end{proposition}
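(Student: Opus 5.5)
We use the one-factor representation \eqref{eq:main-onefactor}: $Z_i-\bar Z=\sqrt{1-\rho_K}(\varepsilon_i-\bar\varepsilon)$, and the latent factor cancels. Hence
\[
  \widehat\sigma^2_K:=\frac{1}{K-1}\sum_{i=1}^K(z_i-\bar z)^2=(1-\rho_K)\,\frac{W_K}{K-1},\qquad W_K\sim\chi^2_{K-1},
\]
exactly and independently of $V$. Then $\E\widehat\sigma_K^2=1-\rho_K$ and $\Var\widehat\sigma_K^2=2(1-\rho_K)^2/(K-1)$. Chebyshev gives $1-\widehat\sigma_K^2-\rho_K=O_p(K^{-1/2})$ uniformly in $\rho_K\in[0,1)$. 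The truncation at $0$ is $1$-Lipschitz and $\rho_K\ge0$, so $|\widehat\rho_K-\rho_K|\le|1-\widehat\sigma_K^2-\rho_K|$, which proves $\widehat\rho_K-\rho_K=O_p(K^{-1/2})$.

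For (i), we use $|\sqrt a-\sqrt b|\le\sqrt{|a-b|}$, which gives
\[
  |\widehat s_K-s_K|\le(\log K)^{3/2}|\widehat\rho_K-\rho_K|^{1/2}=O_p\bigl((\log K)^{3/2}K^{-1/4}\bigr)=o_p(1).
\]
This needs no lower bound on $\rho_K$. Then $\sup_Ks_K<\infty$ together with $\rho_K\downarrow0$ (the regime of Theorem~\ref{thm:bluniform}) lets us invoke the plug-in clause of Theorem~\ref{thm:bluniform} directly to get the uniformity statement.

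For (ii), we use instead $|\sqrt a-\sqrt b|=|a-b|/(\sqrt a+\sqrt b)\le|a-b|/\sqrt{\rho_K}$, so
\[
  |\widehat s_K-s_K|\le\frac{(\log K)^{3/2}}{\sqrt{\rho_K}}\,O_p(K^{-1/2})=O_p\left(\frac{(\log K)^{3/2}}{\sqrt{K\rho_K}}\right),
\]
which is $o_p(1)$ under the hypothesis $K\rho_K/(\log K)^3\to\infty$. The uniform-in-$t$ bound on the $p$-values then follows from the global Lipschitz estimate already used in the proof of Theorem~\ref{thm:bluniform}:
\[
  \sup_t|p_{\mathrm{BL}}(t;\widehat s)-p_{\mathrm{BL}}(t;s)|\le\frac{\kappa}{\pi}\E|V|\,|\widehat s-s|.
\]
Finally, if $\rho_K\ge c_0/\log K$, then $K\rho_K/(\log K)^3\ge c_0K/(\log K)^4\to\infty$.

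The only delicate point is the non-uniformity of the square root near $0$. Case (i) avoids it through the Hölder-$1/2$ bound, at the cost of the $K^{-1/4}$ rate, which is still enough. Case (ii) uses the Lipschitz bound away from $0$. The exact $\chi^2$ representation removes any difficulty with the dependence.
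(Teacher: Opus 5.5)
Your proposal is correct and matches the paper's proof essentially step for step. Both use cancellation of the latent factor in $z_i-\bar z$, giving the exact scaled $\chi^2_{K-1}$ representation, and the $1$-Lipschitz truncation at $0$. For (i), both use the H\"older-$1/2$ bound $|\sqrt a-\sqrt b|\le\sqrt{|a-b|}$ and then the plug-in clause of Theorem~\ref{thm:bluniform}. For (ii), both use the bound $|\widehat\rho_K-\rho_K|/\sqrt{\rho_K}$ together with the global Lipschitz estimate for $p_{\mathrm{BL}}$. One small remark on (i): you need not import $\rho_K\downarrow0$ as a separate assumption, because $\sup_K s_K<\infty$ already forces $\rho_K=O((\log K)^{-3})\to0$.
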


\begin{proof}
Under the null model we have $z_i=Z_i$, so by the one-factor representation
\[
  z_i=\sqrt{\rho_K} V+\sqrt{1-\rho_K} \varepsilon_i.
\]
Subtracting the sample mean removes the common factor:
\[
  z_i-\bar z=\sqrt{1-\rho_K} (\varepsilon_i-\bar\varepsilon),
  \qquad
  \bar\varepsilon:=\frac1K\sum_{i=1}^K \varepsilon_i.
\]
Hence
\[
  \frac{1}{K-1}\sum_{i=1}^K (z_i-\bar z)^2
  =
  (1-\rho_K) \frac{1}{K-1}\sum_{i=1}^K (\varepsilon_i-\bar\varepsilon)^2.
\]
The Gaussian sample variance satisfies
\[
  \frac{1}{K-1}\sum_{i=1}^K (\varepsilon_i-\bar\varepsilon)^2
  \sim \frac{\chi^2_{K-1}}{K-1}
  =
  1+O_p(K^{-1/2}).
\]
Therefore
\[
  1-\frac{1}{K-1}\sum_{i=1}^K (z_i-\bar z)^2
  =
  \rho_K+O_p(K^{-1/2}).
\]
Because $x\mapsto\max\{0,x\}$ is $1$-Lipschitz and $\rho_K\ge0$, this implies
\[
  \widehat\rho_K-\rho_K=O_p(K^{-1/2}).
\]

Now assume $\sup_K s_K<\infty$. Then we have
\[
  |\widehat s_K-s_K|
  =
  (\log K)^{3/2} |\sqrt{\widehat\rho_K}-\sqrt{\rho_K}|
  \le
  (\log K)^{3/2}\sqrt{|\widehat\rho_K-\rho_K|},
\]
so
\[
  |\widehat s_K-s_K|
  =
  O_p \left((\log K)^{3/2}K^{-1/4}\right)
  =
  o_p(1).
\]
The final uniformity statement in part (i) now follows directly from Theorem~\ref{thm:bluniform}.

For part (ii), assume $K\rho_K/(\log K)^3\to\infty$. Then $\rho_K>0$ eventually, and
\[
  |\sqrt{\widehat\rho_K}-\sqrt{\rho_K}|
  =
  \frac{|\widehat\rho_K-\rho_K|}
       {\sqrt{\widehat\rho_K}+\sqrt{\rho_K}}
  \le
  \frac{|\widehat\rho_K-\rho_K|}{\sqrt{\rho_K}}.
\]
Therefore
\[
  |\widehat s_K-s_K|
  \le
  (\log K)^{3/2}
  \frac{|\widehat\rho_K-\rho_K|}{\sqrt{\rho_K}}
  =
  O_p\left(\frac{(\log K)^{3/2}}{\sqrt{K\rho_K}}\right)
  =
  o_p(1).
\]
Finally, for every $t\in\R$ the same Lipschitz calculation used in the proof of Theorem~\ref{thm:bluniform} gives
\[
  \left|p_{\mathrm{BL}}(t;\widehat s_K)-p_{\mathrm{BL}}(t;s_K)\right|
  \le
  \frac{\kappa}{\pi}\E|V||\widehat s_K-s_K|,
\]
uniformly in $t$, proving the displayed oracle--plug-in approximation.
\end{proof}

\section{Proofs in Section~\ref{sec:power}: power analysis}\label{app:power}

We retain the notation of Section~\ref{sec:power}.

\subsection{Perturbation lemmas}

\begin{lemma}[Local Gaussian perturbations]\label{lem:power-local-hetero}
Let $a_{1,K},\ldots,a_{K,K}$ be deterministic real numbers and let
\[
  Z_{i,K}=a_{i,K}+\sigma_K\varepsilon_i,\qquad i=1,\ldots,K,
\]
where the $\varepsilon_i$'s are independent $N(0,1)$ variables and
$\sigma_K^2=1+\delta_K>0$.  Assume
\[
  \delta_K \log K\to0,\qquad
  \max_{1\le i\le K}|a_{i,K}|\sqrt{\log K}\to0,
\]
and
\begin{equation}\label{eq:local-l1-assumption}
  \frac{(\log K)^{3/2}}{K}\sum_{i=1}^K |a_{i,K}|=O(1),
  \qquad
  \frac{\kappa (\log K)^{3/2}}{K}\sum_{i=1}^K a_{i,K}\to h .
\end{equation}
Then
\[
  \frac{1}{K}\sum_{i=1}^K f(Z_{i,K})\dto \mathsf{C}(h,1).
\]
More precisely, with
\[
  B_K:=\frac{1}{K}\sum_{i=1}^K
  \E\left[
    f(Z_{i,K})\mathbf 1\{|f(Z_{i,K})|\le K\}
  \right],
\]
one has $B_K\to h$ and
\[
  \frac{1}{K}\sum_{i=1}^K f(Z_{i,K})-B_K\dto \mathsf{C}(0,1).
\]
\end{lemma}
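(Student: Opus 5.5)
The plan follows the conditional proof of Theorem~\ref{thm:stablelimit}, now for a non-identically distributed but independent triangular array. The per-coordinate perturbations $a_{i,K}$ and $\delta_K$ are small enough that every coordinate behaves, to first order in the extreme tail, like an $N(0,1)$ coordinate. This gives the $\mathsf{C}(0,1)$ fluctuation. The centring $B_K$ collects a linear-in-$a_{i,K}$ drift of size $\kappa a_{i,K}(\log K)^{3/2}$ per coordinate, exactly as $b_K(v)\approx\kappa v s_K$ arises in Theorem~\ref{thm:bK-smallc}.

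\emph{Step 1 (uniform tail calibration).} I would redo the argument of Proposition~\ref{prop:tri-calibration} with $m_K$ replaced by $a_{i,K}$ and $1-\rho_K$ replaced by $\sigma_K^2$. In the exponent $y^2-q^2=(q^2(1-\sigma_K^2)-2a_{i,K}q+a_{i,K}^2)/\sigma_K^2$, both $\delta_Kq^2=O(\delta_K\log K)\to0$ and $\max_i|a_{i,K}|q\to0$, uniformly in $i$ and in $q$ of order $\sqrt{\log K}$. The conclusion is
\[
K\,\Pbb(f(Z_{i,K})>Kx)\to\frac{1}{\pi x},\qquad K\,\Pbb(f(Z_{i,K})<-Kx)\to\frac{1}{\pi x},
\]
uniformly in $1\le i\le K$ and in $x$ in compacts of $(0,\infty)$, so the limiting multiplier is $\lambda^\pm=1$. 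The same computation as in Lemma~\ref{lem:uniform-tail-xinv} gives a uniform bound $K\,\Pbb(|f(Z_{i,K})|>Kx)\le C/x$ for $x\in(0,1]$, with $C$ independent of $i$.

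\emph{Step 2 (stable fluctuation).} With $Y_{i,K}:=f(Z_{i,K})/K$, the characteristic function of the large-jump sum factorises as $\prod_i(1+\alpha_{i,K}(t)/K)$. Here $\alpha_{i,K}(t)\to\int_{\eta<|x|\le M}(e^{itx}-1)\,dx/(\pi x^2)$ uniformly in $i$, by Step~1 and the partition argument of Lemma~\ref{lem:mu-conv-away-zero}. Since $\max_i|\alpha_{i,K}|/K\to0$, the product is $\exp\{K^{-1}\sum_i\alpha_{i,K}(t)+o(1)\}$, and Proposition~\ref{prop:truncated-sum-limit} carries over. The small-jump variance is a sum of independent variances, each bounded by $4C\eta/K$ as in Proposition~\ref{prop:small-jumps-vanish}, so the total is at most $4C\eta$. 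The passage $\eta\downarrow0$ then proceeds verbatim as in Theorem~\ref{thm:centered-stable-limit}, with Corollary~\ref{cor:explicit-stable-exponent}. This yields $K^{-1}\sum_i f(Z_{i,K})-B_K\dto\mathsf{C}(0,1)$.

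\emph{Step 3 (centring; the main obstacle).} Write $b_{i,K}:=\E[f(Z_{i,K})\1\{|f(Z_{i,K})|\le K\}]$. As in Lemma~\ref{lem:bK-exact},
\[
b_{i,K}=\int_0^{A_K}f(z)\{g_{i}(z)-g_{i}(-z)\}\,dz,\qquad A_K:=f^{-1}(K),
\]
where $g_i$ is the $N(a_{i,K},\sigma_K^2)$ density and
\[
g_i(z)-g_i(-z)=\frac{2}{\sigma_K\sqrt{2\pi}}\,e^{-(z^2+a_{i,K}^2)/(2\sigma_K^2)}\sinh\bigl(a_{i,K}z/\sigma_K^2\bigr).
\]
I split the integral at a fixed $M_\varepsilon$. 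The bounded region contributes $O(|a_{i,K}|)$. On $[M_\varepsilon,A_K]$ I substitute $f(z)=\sqrt{2/\pi}\,z e^{z^2/2}(1+O(\varepsilon))$. The factor $e^{z^2/2-z^2/(2\sigma_K^2)}=e^{\delta_K z^2/(2\sigma_K^2)}$ equals $1+o(1)$ uniformly for $z\le A_K$, whatever the sign of $\delta_K$, because $\delta_K A_K^2=O(\delta_K\log K)\to0$. Similarly $\sinh(a_{i,K}z/\sigma_K^2)=a_{i,K}z(1+o(1))$ uniformly, since $\max_i|a_{i,K}|A_K\to0$. This gives
\[
b_{i,K}=\frac{2a_{i,K}}{3\pi}A_K^3\{1+O(\varepsilon)+o(1)\}+O(|a_{i,K}|),
\]
with all error terms uniform in $i$. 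By Lemma~\ref{lem:inv-asymp}, $A_K^3\sim2\sqrt2(\log K)^{3/2}$, so the leading coefficient is $\kappa(\log K)^{3/2}a_{i,K}$. Averaging over $i$, the main terms converge to $h$ by \eqref{eq:local-l1-assumption}. The error terms are bounded by $(\varepsilon+o(1))\,\frac{(\log K)^{3/2}}{K}\sum_i|a_{i,K}|+O\bigl(K^{-1}\sum_i|a_{i,K}|\bigr)$, which tends to $0$ by the $\ell^1$ assumption as $K\to\infty$ and then $\varepsilon\downarrow0$. Hence $B_K\to h$, and Slutsky's theorem finishes the proof.

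The delicate point throughout is keeping every error uniform in $i$, so that only the $\ell^1$ control is ever needed when averaging.
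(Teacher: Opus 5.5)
Your proposal is correct and follows essentially the same route as the paper. Both arguments use uniform-in-$i$ tail calibration with unit multipliers $\lambda^\pm=1$, a triangular-array stable limit driven by the $C/x$ tail bound and the small-jump variance estimate, and the centring computation of Theorem~\ref{thm:bK-smallc} with errors uniform in $i$, averaged via the $\ell^1$ condition. The only difference is that you write out the characteristic-function product for the non-identically distributed array, whereas the paper simply invokes the general triangular-array criterion for infinitely divisible limits.
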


\begin{proof}
Put $Y_{i,K}=f(Z_{i,K})/K$ and condition on the deterministic means $a_{i,K}$.

\emph{Tail asymptotics.}
For $u$ in a compact subset of $(0,\infty)$, set $q_{K,u}:=\barPh^{-1}(u/K)$; Mills'
ratio gives $q_{K,u}=\sqrt{2\log K}\{1+o(1)\}$ uniformly. Since
$\max_i|a_{i,K}|\sqrt{\log K}\to0$ and $\delta_K\log K\to0$,
\[
  \tfrac12\Big[q_{K,u}^2-\big((q_{K,u}-a_{i,K})/\sigma_K\big)^2\Big]
  =a_{i,K}q_{K,u}+\tfrac{\delta_Kq_{K,u}^2}{2}+o(1)=o(1),
\]
so $\barPh((q_{K,u}-a_{i,K})/\sigma_K)/\barPh(q_{K,u})\to1$ uniformly in $i$ and
locally uniformly in $u$; the same calculation applied to $-Z_{i,K}$ gives the lower
tail. Transforming $p=\barPh(z)$ through $f(z)=\cot(\pi p)$ exactly as in
Lemma~\ref{lem:score-interval-prob} yields, for $0<r<s<\infty$,
\[
  \sum_{i=1}^K\Pbb\{r<Y_{i,K}\le s\}\to\tfrac1\pi\Big(\tfrac1r-\tfrac1s\Big),
  \qquad
  \sum_{i=1}^K\Pbb\{-s\le Y_{i,K}<-r\}\to\tfrac1\pi\Big(\tfrac1r-\tfrac1s\Big),
\]
together with the uniform bound $\sum_{i=1}^K\Pbb\{|Y_{i,K}|>x\}\le C/x$ for
$K^{-1}\le x\le\eta_0$.

\emph{Stable convergence.}
From this uniform tail bound, the integration-by-parts argument of
Proposition~\ref{prop:small-jumps-vanish} gives the small-jump condition
$\lim_{\eta\downarrow0}\limsup_K\sum_i\E[Y_{i,K}^2\mathbf 1\{|Y_{i,K}|\le\eta\}]=0$. The
triangular-array criterion for infinitely divisible limits then yields
\[
  \sum_{i=1}^K Y_{i,K}-B_K\dto\mathsf{C}(0,1),
  \qquad
  B_K:=\sum_{i=1}^K\E\left[Y_{i,K}\mathbf 1\{|Y_{i,K}|\le1\}\right].
\]

\emph{Centering.}
With $a_K=f^{-1}(K)=\sqrt{2\log K}\{1+o(1)\}$ (Lemma~\ref{lem:inv-asymp}) and $g_{i,K}$
the $N(a_{i,K},\sigma_K^2)$ density, the integral representation of
Lemma~\ref{lem:bK-exact} gives
\[
  \E\left[f(Z_{i,K})\mathbf 1\{|f(Z_{i,K})|\le K\}\right]
  =\int_0^{a_K} f(z)\{g_{i,K}(z)-g_{i,K}(-z)\}dz .
\]
Running the small-$c$ computation of Theorem~\ref{thm:bK-smallc} with mean $a_{i,K}$ in
place of $\sqrt{\rho_K}v$ (that is, $g_{i,K}(z)-g_{i,K}(-z)=2a_{i,K}z\ph(z)\{1+o(1)\}$
and $f(z)\ph(z)=z/\pi\{1+o_M(1)\}$ on $[M,a_K]$), gives, uniformly in $i$,
\[
  \E\left[f(Z_{i,K})\mathbf 1\{|f(Z_{i,K})|\le K\}\right]
  =\frac{2a_{i,K}}{\pi}\int_0^{a_K}z^2dz+o\big(|a_{i,K}|(\log K)^{3/2}\big)+O(|a_{i,K}|).
\]
Averaging over $i$ under the $L^1$-locality assumption
\eqref{eq:local-l1-assumption},
\[
  B_K=\frac{2a_K^3}{3\pi K}\sum_{i=1}^K a_{i,K}+o(1)
  =\frac{\kappa (\log K)^{3/2}}{K}\sum_{i=1}^K a_{i,K}+o(1)\to h .
\]
\end{proof}

\begin{lemma}[Deleting a sparse null set]\label{lem:power-delete-null}
Assume the null one-factor model and
\[
  \rho_K\downarrow0,\qquad \rho_K\log K=O(1).
\]
Condition on $V=v$.  Let $I_K\subset\{1,\ldots,K\}$ be deterministic, or random but
independent of the null noises, and suppose
\[
  \frac{|I_K|\log K}{K}\to0
\]
in probability.  Then
\[
  \frac{1}{K}\sum_{i\in I_K}
  f\left(\sqrt{\rho_K}v+\sqrt{1-\rho_K}\varepsilon_i\right)
  \xrightarrow[]{p}0
\]
conditionally on $V=v$.  Consequently, deleting $I_K$ changes the null CCT statistic
by $o_p(1)$.
\end{lemma}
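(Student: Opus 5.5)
Conditionally on $V=v$ and on $I_K$ (which is independent of the noises), the summands $X_{i,K}^{(v)}=f(\sqrt{\rho_K}v+\sqrt{1-\rho_K}\varepsilon_i)$, $i\in I_K$, are i.i.d.\ with the law studied in Appendix~\ref{app:rawboundary}. We therefore argue for a deterministic index set of size $n_K$ with $n_K\log K/K\to0$; the random case follows by conditioning on $I_K$ and applying a subsequence argument on the event $\{|I_K|\log K/K\le\delta_K\}$ for a suitable $\delta_K\to0$. Write $Y_i=X_{i,K}^{(v)}/K$. We split each summand at the level $|Y_i|\le1$:
\[
  \sum_{i\in I_K}Y_i
  =\sum_{i\in I_K}Y_i\1\{|Y_i|>1\}
  +\sum_{i\in I_K}\bigl(Y_i\1\{|Y_i|\le1\}-\E[Y_1\1\{|Y_1|\le1\}]\bigr)
  +\frac{n_K}{K}\,b_K(v).
\]

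\textbf{Large jumps.} By the union bound and Lemma~\ref{lem:uniform-tail-xinv} with $x=1$,
\[
  \Pbb\Bigl(\textstyle\sum_{i\in I_K}Y_i\1\{|Y_i|>1\}\neq0\Bigr)
  \le n_K\Pbb(|Y_1|>1)\le \frac{2C_v n_K}{K}\to0 .
\]

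\textbf{Small jumps.} Using the integration-by-parts argument of Proposition~\ref{prop:small-jumps-vanish} with $\eta=1$, but now summing over only $n_K$ terms, the variance of the centred truncated sum is at most
\[
  n_K\,\E[Y_1^2\1\{|Y_1|\le1\}]\le \frac{n_K}{K}\cdot 4C_v\to0 .
\]
Chebyshev's inequality then shows that this term is $o_p(1)$.

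\textbf{Centring.} This is the step where the hypothesis $n_K\log K/K\to0$ is genuinely needed. We need $|b_K(v)|=O(\log K)$ under $\rho_K\log K=O(1)$. By Theorem~\ref{thm:bK-general}, along any subsequence with $c_K\to c\in[0,\infty)$ we have $\rho_K b_K(v)\to B_c(v)$, but this yields only $b_K=O(1/\rho_K)$, which is too weak when $c=0$. Instead we bound directly from Lemma~\ref{lem:bK-exact} and \eqref{eq:HK-simplified}, using $|\sinh x|\le|x|\cosh x$:
\[
  |b_K(v)|\le \rho_K^{-1}\int_0^{u_K}|H_K(t,v)|\,dt\le C_v\,\rho_K^{-1}u_K^3 .
\]
Since $u_K^2=\rho_K a_K^2\le C\rho_K\log K$, this gives
\[
  |b_K(v)|\le C_v\,\rho_K^{1/2}(\log K)^{3/2}=C_v\sqrt{c_K}\,\log K=O(\log K).
\]
Here the constant $C_v$ is uniform because $u_K$ stays bounded when $c_K=O(1)$. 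Hence $(n_K/K)\,|b_K(v)|=O(n_K\log K/K)\to0$.

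Combining the three bounds gives the claimed convergence in probability to zero. For the consequence, note that the full null statistic equals the sum over the complement of $I_K$ plus this $o_p(1)$ piece. The main obstacle is the uniform $O(\log K)$ control of $b_K(v)$ when $c_K\to0$; this is handled by the explicit $u_K^3/\rho_K$ bound above rather than by appealing to the limit theorems.
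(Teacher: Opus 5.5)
Your proof is correct. It uses the same three ingredients as the paper: the uniform $C_v/x$ tail bound of Lemma~\ref{lem:uniform-tail-xinv}, the integration-by-parts second-moment bound of Proposition~\ref{prop:small-jumps-vanish}, and $b_K(v)=O(\log K)$. You organise them more economically, in two ways.

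\textbf{Truncation level.} The paper truncates at a fixed $\eta\in(0,1)$ and shows the scores with $|Y|>\eta$ are absent with probability tending to one. It then has to control the mean of $Y\1\{\eta<|Y|\le1\}$ separately, picking up a $\log(1/\eta)$ factor, to relate the $\eta$-truncated mean to $b_K(v)$. You truncate once at level $1$. That suffices because only $n_K=o(K)$ summands are present, so the large-jump probability and the variance are both already $O(n_K/K)$ with no $\eta$ needed.

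\textbf{Centring.} The paper gets $b_K(v)=O(\log K)$ by a subsequence argument. For $c>0$ it uses Theorem~\ref{thm:bK-general}; for $c=0$ it uses Theorem~\ref{thm:bK-smallc}, where $b_K=O(s_K)=o(\log K)$. So your remark that the limit theorems are too weak at $c=0$ applies only to Theorem~\ref{thm:bK-general} taken alone. Your direct estimate $|H_K(t,v)|\le C_v t^2$ on $[0,u_K]$, read off from \eqref{eq:HK-simplified}, is nonetheless valid. It gives the explicit bound $|b_K(v)|\le C_v s_K=C_v\sqrt{c_K}\log K$, which needs no subsequences.

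\textbf{One point to add.} Lemma~\ref{lem:uniform-tail-xinv} is stated for convergent $c_K$, whereas here only $c_K=O(1)$ is assumed. As the paper notes, its proof uses only $\sup_K c_K<\infty$. Alternatively, pass to subsequences along which $c_K$ converges, since the conclusion is preserved under subsequences.
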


\begin{proof}
It is enough to prove the claim for deterministic $I_K$.  Indeed, the estimates below
are uniform over all deterministic sets with $n_K=|I_K|$, and the random case follows
by conditioning on $I_K$ on events where $n_K\log K/K$ is small.  Put
\[
  n_K=|I_K|,
  \qquad
  Y_K=\frac{1}{K}f\left(\sqrt{\rho_K}v+\sqrt{1-\rho_K}\varepsilon\right).
\]
We shall prove that the sum of $n_K$ independent copies of $Y_K$ converges to zero in
probability.

The uniform score-tail bound needed here is the bound proved in
Lemma~\ref{lem:uniform-tail-xinv}.  That lemma is stated for convergent $c_K$, but the
proof depends only on an upper bound for $c_K$ and on the fixed value of $v$.  Hence,
under the present assumption $c_K=\rho_K\log K=O(1)$, there is a finite constant $C=C(v)$
such that, for all large $K$,
\[
  K\Pbb\{|Y_K|>x\}\le \frac{C}{x},
  \qquad K^{-1}\le x\le 1 .
\]
In particular, for every fixed $\eta\in(0,1)$,
\[
  n_K\Pbb\{|Y_K|>\eta\}
  \le \frac{C}{\eta}\frac{n_K}{K}
  \to0 .
\]
Thus the contribution of scores with $|Y_K|>\eta$ is absent with probability tending to
one.  We next control the truncated mean at the same cutoff $\eta$.  Since
\[
  K\E\left[Y_K\mathbf 1\{|Y_K|\le1\}\right]=b_K(v)
\]
and $b_K(v)=O(\log K)$ whenever $\rho_K\log K=O(1)$, Theorems
\ref{thm:bK-general}--\ref{thm:bK-smallc} and a subsequence argument give
\[
  n_K\E\left[Y_K\mathbf 1\{|Y_K|\le1\}\right]
  =\frac{n_K}{K}b_K(v)=o(1).
\]
Moreover, the same tail bound gives, for fixed $\eta\in(0,1)$,
\[
\begin{aligned}
  n_K\E\left[
    |Y_K|\mathbf 1\{\eta<|Y_K|\le1\}
  \right]
  &\le
  n_K\eta\Pbb\{|Y_K|>\eta\}
  +
  n_K\int_\eta^1 \Pbb\{|Y_K|>x\}dx       \\
  &\le
  C\frac{n_K}{K}\{1+\log(1/\eta)\}
  =o(1).
\end{aligned}
\]
Consequently
\[
  n_K\E\left[Y_K\mathbf 1\{|Y_K|\le\eta\}\right]=o(1)
\]
for every fixed $\eta\in(0,1)$.

Finally, by the integration-by-parts bound of
Proposition~\ref{prop:small-jumps-vanish} together with the displayed uniform tail bound,
\[
  K\E\left[Y_K^2\mathbf 1\{|Y_K|\le\eta\}\right]\le \frac1K+C\eta,
\]
so that
\[
  n_K\E\left[Y_K^2\mathbf 1\{|Y_K|\le\eta\}\right]
  =\frac{n_K}{K}K\E\left[Y_K^2\mathbf 1\{|Y_K|\le\eta\}\right]
  =o(1).
\]
The variance of the sum of the $n_K$ $\eta$-truncated variables is thus $o(1)$, and
its mean is $o(1)$.  Combining this with the extreme-score bound proves
\[
  \sum_{i\in I_K}
  \frac{1}{K}f\left(\sqrt{\rho_K}v+\sqrt{1-\rho_K}\varepsilon_i\right)
  \xrightarrow[]{p}0
\]
conditionally on $V=v$.  This is the desired statement.
\end{proof}

\begin{lemma}[Null anti-concentration on bounded common-correlation scales]\label{lem:power-null-anticonc}
Assume the null one-factor model and
\[
  \rho_K\downarrow0,\qquad c_K:=\rho_K\log K=O(1).
\]
Let $T_K^{(0)}$ be the null CCT statistic.  Then, for every deterministic real
sequence $d_K$,
\[
  \lim_{\varepsilon\downarrow0}\limsup_{K\to\infty}
  \Pbb_0\{|T_K^{(0)}-d_K|\le\varepsilon\}=0 .
\]
\end{lemma}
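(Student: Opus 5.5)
We argue by contradiction along subsequences, and we make each conditional law smooth with a bound that does not depend on $K$. Suppose the claim fails. Then there are $\delta>0$, a sequence $\varepsilon_m\downarrow0$ and indices $K_m\to\infty$ such that $\Pbb_0\{|T_{K_m}^{(0)}-d_{K_m}|\le\varepsilon_m\}\ge\delta$. Because $c_K=O(1)$, we may pass to a further subsequence along which $c_{K_m}\to c\in[0,\infty)$. Conditioning on $V$ gives
\[
  \Pbb_0\{|T_K^{(0)}-d_K|\le\varepsilon\}
  =\E\left[\Pbb\{|T_K^{(v)}-d_K|\le\varepsilon\mid V=v\}\big|_{v=V}\right].
\]
By bounded convergence over $V$, it therefore suffices to show that for each fixed $v$,
\[
  \lim_{\varepsilon\downarrow0}\limsup_m\sup_{d\in\R}\Pbb\{|T_{K_m}^{(v)}-d|\le\varepsilon\mid V=v\}=0 .
\]
Since the supremum is over $d$, the bound holds uniformly for any deterministic sequence $d_K$. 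In particular the possibly divergent centring $b_K(v)$ plays no role, and this is why we avoid trying to locate $d_K$ relative to $b_K(v)$.

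For fixed $v$, Theorem~\ref{thm:centered-stable-limit} gives $T_{K_m}^{(v)}-b_{K_m}(v)\dto S_{c,v}$. The limit $S_{c,v}$ is a nondegenerate $1$-stable law, since its Lévy measure $\Lambda_v$ has infinite mass near $0$ with density $\lambda^\pm(v)/(\pi x^2)$. Its characteristic function satisfies
\[
  |\E e^{itS_{c,v}}|=\exp\{-(\lambda^+(v)+\lambda^-(v))|t|/2\},
\]
which is integrable, so $S_{c,v}$ has a bounded continuous density. By Pólya's theorem the conditional distribution functions $G_m$ of $T_{K_m}^{(v)}-b_{K_m}(v)$ converge uniformly to the distribution function $G$ of $S_{c,v}$. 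Putting $e=d-b_{K_m}(v)$,
\[
  \sup_d\Pbb\{|T_{K_m}^{(v)}-d|\le\varepsilon\mid V=v\}
  \le 2\sup_x|G_m(x)-G(x)|+\sup_e\bigl[G(e+\varepsilon)-G(e-\varepsilon)\bigr]+\sup_e\Pbb\{T_{K_m}^{(v)}-b_{K_m}(v)=e-\varepsilon\mid V=v\}.
\]
The atom term is zero because the conditional law is continuous (Theorem~\ref{thm:bluniform}'s proof). Letting $m\to\infty$ kills the first term. The middle term is at most $2\varepsilon\|g\|_\infty$, where $g$ is the density of $S_{c,v}$, and this tends to $0$ as $\varepsilon\downarrow0$. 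This contradicts the choice of $(K_m,\varepsilon_m)$ once we note one point. Because $\varepsilon_m$ shrinks, we use the monotone bound: for $\varepsilon_m\le\varepsilon$, the window probability at $\varepsilon_m$ is at most the window probability at $\varepsilon$. Then $\limsup_m$ is at most $2\varepsilon\|g\|_\infty$ for every $\varepsilon$, so it is $0$.

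Unconditionally, we then have $\limsup_m\Pbb_0\{|T_{K_m}^{(0)}-d_{K_m}|\le\varepsilon_m\}\le\E[\limsup_m(\cdot)]=0$ by reverse Fatou, since the integrand is bounded by $1$. This contradicts $\ge\delta$.

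The main obstacle is that the theorem applies only along convergent $c$-subsequences and with fixed $v$. The subsequence extraction and the reverse-Fatou step handle this, so the work lies in bookkeeping the uniformity in $d$ and the shrinking $\varepsilon$, which the density bound and monotonicity in $\varepsilon$ resolve.
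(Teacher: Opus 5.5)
Your proof is correct, and it takes a genuinely different route from the paper's.

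\textbf{The paper's route.} The paper works unconditionally. After extracting $c_K\to c$ (and $s_K\to s\in[0,\infty]$ when $c=0$), it finds a normalisation $\beta_K$ and a non-atomic limit in each of three regimes:
\begin{itemize}
\item for $c>0$, $T_K^{(0)}/\log K\dto M_c$;
\item for bounded $s$, $T_K^{(0)}\dto\mathsf{C}+\kappa sV$;
\item for $s_K\to\infty$, $T_K^{(0)}/s_K\dto\kappa V$.
\end{itemize}
It then extracts $d_K/\beta_K\to d\in[-\infty,\infty]$ and concludes by portmanteau or by tightness. This uses the centring asymptotics (Theorems~\ref{thm:bK-general}--\ref{thm:bK-smallc}), Proposition~\ref{prop:bl-c-phase}, Lemma~\ref{lem:Mc-continuity} and Corollary~\ref{cor:raw-smallc}. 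As a by-product it shows that whenever $\beta_K\to\infty$, the window probability vanishes for each fixed $\varepsilon$, because the law spreads out on a diverging scale.

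\textbf{Your route.} You condition on $V$ and use only Theorem~\ref{thm:centered-stable-limit}, together with the explicit modulus $|\E e^{itS_{c,v}}|=\exp\{-(\lambda^{+}(v)+\lambda^{-}(v))|t|/2\}$. Your P\'olya-plus-bounded-density estimate is uniform in the location $d$. So the size of $b_K(v)$ never enters, and you need neither the centring asymptotics nor any case split on $s_K$ or on $d_K$.

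\textbf{What your route buys.} It gives the stronger statement $\lim_{\varepsilon\downarrow0}\limsup_K\sup_{d}\Pbb_0\{|T_K^{(0)}-d|\le\varepsilon\}=0$, uniform over all $d$. It can also be made quantitative. Since $\lambda^{+}(v)+\lambda^{-}(v)=2e^{-c}\cosh(\sqrt{2c}\,v)\ge 2e^{-c}$, the limiting conditional density is at most $e^{c}/\pi$ uniformly in $v$. This yields an $O(\varepsilon)$ bound of the form $2e^{\bar c}\varepsilon/\pi$ with $\bar c:=\limsup_K c_K$.
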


\begin{proof}
By the subsequence principle it suffices to bound
$\limsup_K\Pbb_0\{|T_K^{(0)}-d_K|\le\varepsilon\}$ along an arbitrary subsequence; pass to
a further subsequence with $c_K\to c\in[0,\infty)$ and, when $c=0$, with
$s_K\to s\in[0,\infty]$. On each scale the null statistic, normalised by a sequence
$\beta_K$, converges to a non-degenerate limit $L$ with continuous (hence non-atomic)
law:
\begin{itemize}
\item if $c\in(0,\infty)$, then $\beta_K=\log K$ and $T_K^{(0)}/\log K\dto M_c=B_c(V)/c$
  (Proposition~\ref{prop:bl-c-phase}), with $M_c$ non-atomic by
  Lemma~\ref{lem:Mc-continuity};
\item if $c=0$ and $s_K\to s<\infty$, then $\beta_K=1$ and
  $T_K^{(0)}\dto\mathsf{C}+\kappa sV$ (Corollary~\ref{cor:raw-smallc}, integrated over
  $V$), whose density $x\mapsto\E[\pi^{-1}\{1+(x-\kappa sV)^2\}^{-1}]$ is continuous;
\item if $c=0$ and $s_K\to\infty$, then $\beta_K=s_K$ and $T_K^{(0)}/s_K\dto\kappa V$
  by Theorems~\ref{thm:centered-stable-limit} and~\ref{thm:bK-smallc}, since
  $T_K^{(0)}-b_K(V)=O_p(1)$ and $b_K(V)/s_K\dto\kappa V$.
\end{itemize}
Pass to a further subsequence with $d_K/\beta_K\to d\in[-\infty,\infty]$. If $d$ is
finite, then for all large $K$ one has
$\{|T_K^{(0)}-d_K|\le\varepsilon\}\subseteq\{|T_K^{(0)}/\beta_K-d|\le\eta\}$ when
$\beta_K\to\infty$, and
$\{|T_K^{(0)}-d_K|\le\varepsilon\}\subseteq\{|T_K^{(0)}-d|\le\varepsilon+\eta\}$ when
$\beta_K\equiv1$; the portmanteau theorem and non-atomicity of $L$ give
\[
  \limsup_K\Pbb_0\{|T_K^{(0)}-d_K|\le\varepsilon\}\le\Pbb\{|L-d|\le\varepsilon+\eta\},
\]
which vanishes as $\eta\downarrow0$ and then $\varepsilon\downarrow0$. If
$d=\pm\infty$, tightness of $T_K^{(0)}/\beta_K$ gives the bound directly. Since every
subsequence admits such a further subsequence, the conclusion holds along the original
sequence.
\end{proof}

\subsection{Proof of Theorem~\ref{thm:power-local}}

\begin{proof}[Proof of Theorem~\ref{thm:power-local}]
Condition on $V=v$.  The conditional means are
\[
  a_{i,K}(v)=\mu_{i,K}+\sqrt{\rho_K}v,\qquad i=1,\ldots,K,
\]
and the conditional variance is $1-\rho_K$.  Since $s_K\to s<\infty$,
$\rho_K\log K=s_K^2/(\log K)^2\to0$.  Also
\[
  \max_i |a_{i,K}(v)|\sqrt{\log K}\to0
\]
for each fixed $v$, and
\[
  \frac{(\log K)^{3/2}}{K}\sum_{i=1}^K |a_{i,K}(v)|=O(1).
\]
Finally,
\[
  \frac{\kappa (\log K)^{3/2}}{K}\sum_{i=1}^K a_{i,K}(v)
  =
  \frac{\kappa (\log K)^{3/2}}{K}\sum_{i=1}^K\mu_{i,K}
  +
  \kappa s_Kv
  \to h+\kappa sv .
\]
Lemma~\ref{lem:power-local-hetero} gives the conditional Cauchy limit.  Conditional
survival probabilities at $t_\alpha$ and $q_\alpha(s_K)\to q_\alpha(s)$ therefore
converge to the displayed Cauchy survival probabilities.  They are bounded by one, so
dominated convergence over $V$ gives the unconditional limits.

The gap formula is the difference between two Cauchy survival functions.  Since the
Cauchy density is bounded by $1/\pi$,
\[
  0\le
  \Pi^{\rm raw}_\alpha(h,s)-\Pi^{\rm BL}_\alpha(h,s)
  \le \frac{q_\alpha(s)-t_\alpha}{\pi}.
\]
The expansion
\[
  q_\alpha(s)-t_\alpha
  =
  \frac{\kappa^2t_\alpha}{1+t_\alpha^2}s^2+O_\alpha(s^4)
\]
from Proposition~\ref{prop:bl-small-s} proves the $O_\alpha(s^2)$ bound.  If
$\rho_K(\log K)^3\to0$, then $s_K\to0$, $q_\alpha(s_K)\to t_\alpha$, and the two limits
coincide.
\end{proof}

\subsection{Sparse signals}

For $x\in\R$, write $x_-:=\max\{-x,0\}$.

\begin{lemma}[Sparse signal score estimates]\label{lem:power-sparse-score}
Assume $\rho_K\log K=O(1)$, fix $v\in\R$, and set
\[
  Z_K^{\rm sig}=\sqrt{2r\log K}+\sqrt{\rho_K}v+\sqrt{1-\rho_K}\varepsilon .
\]
For every fixed $\gamma\ge0$,
\[
  \Pbb\{f(Z_K^{\rm sig})>K^{1+\gamma}\mid V=v\}
  =
  K^{-(\sqrt{1+\gamma}-\sqrt r)^2+o(1)} ,
\]
and
\[
  \Pbb\{f(Z_K^{\rm sig})<-K^{1+\gamma}\mid V=v\}
  =
  K^{-(\sqrt{1+\gamma}+\sqrt r)^2+o(1)} .
\]
The same two exponent estimates remain valid if $K^{1+\gamma}$ is replaced by
$xK^{1+\gamma}$, for any fixed $x\in(0,\infty)$.
For every fixed $\eta>0$,
\[
  \E\left[
    f(Z_K^{\rm sig})_+
    \mathbf 1\{f(Z_K^{\rm sig})\le K\eta\}
    \middle|V=v
  \right]
  \le K^{2\sqrt r-r+o(1)} ,
\]
and for every fixed $\gamma\ge0$,
\[
  \E\left[
    f(Z_K^{\rm sig})_-
    \mathbf 1\{f(Z_K^{\rm sig})_-\le K^{1+\gamma}\}
    \middle|V=v
  \right]
  \le K^{o(1)} .
\]
The $o(1)$ terms are locally uniform for $v$ in compact sets.
\end{lemma}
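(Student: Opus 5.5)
The plan is to translate every score event into a Gaussian event for $Z_K^{\rm sig}\mid V=v\sim N(m_K,\sigma_K^2)$, where $m_K:=\sqrt{2r\log K}+\sqrt{\rho_K}v$ and $\sigma_K^2=1-\rho_K$. I will use the threshold asymptotics of Lemma~\ref{lem:inv-asymp}: for $y\to\infty$, $f^{-1}(y)=\sqrt{2\log y}\,\{1+o(1)\}$. Consequently the cutoff $f^{-1}(xK^{1+\gamma})$ equals $\sqrt{2(1+\gamma)\log K}+O(\log\log K/\sqrt{\log K})$, uniformly for $x$ in compacts. The correlation terms are lower order. The shift $\sqrt{\rho_K}v$ is $O(1)\cdot|v|$ and contributes only $O(\sqrt{\log K})$ to the exponent, which is $o(\log K)$. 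Since $\rho_K\log K=O(1)$, the variance correction satisfies $\rho_K\cdot(\text{threshold})^2=O(1)$, again negligible. All these bounds are locally uniform in $v$.

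\emph{Tail exponents.} For the upper tail I would write
\[
\Pbb\{Z\ge f^{-1}(xK^{1+\gamma})\}=\barPh\bigl((f^{-1}(xK^{1+\gamma})-m_K)/\sigma_K\bigr).
\]
The standardized argument is $(\sqrt{1+\gamma}-\sqrt r)\sqrt{2\log K}+o(\sqrt{\log K})$, which is positive because $r<1$. Applying Mills' ratio (both inequalities) gives $\log\barPh=-(\sqrt{1+\gamma}-\sqrt r)^2\log K+o(\log K)$. The lower tail is the same argument applied to $-Z$, with mean $-m_K$, which produces $(\sqrt{1+\gamma}+\sqrt r)^2$. Both estimates therefore reduce to the standard fact $\log\barPh(t)=-t^2/2+O(\log t)$.

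\emph{Truncated positive mean.} I would split $f(Z)_+\1\{f(Z)\le K\eta\}$ according to whether $Z\le a:=f^{-1}(K\eta)=\sqrt{2\log K}\{1+o(1)\}$. By Lemma~\ref{lem:f-growth}, $f(z)\le C(1+z)e^{z^2/2}$. The expectation is then at most
\[
C\int_0^{a}(1+z)e^{z^2/2}\ph\bigl((z-m_K)/\sigma_K\bigr)\,dz/\sigma_K .
\]
Up to $\sigma_K$ corrections, which are negligible on $[0,a]$ because $\rho_K a^2=O(1)$, the exponent is $z^2/2-(z-m_K)^2/2=m_Kz-m_K^2/2$. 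This is increasing in $z$, so the integrand is maximized at $z=a$. There the exponent equals $2\sqrt r\log K-r\log K+o(\log K)$, and the polynomial prefactor is $K^{o(1)}$. This gives the bound $K^{2\sqrt r-r+o(1)}$. The main care point is handling the $\sigma_K^{-2}$ factor in the Gaussian exponent: the extra term $\rho_K z^2/(2\sigma_K^2)$ works in our favor, since it carries a minus sign, and the cross-term corrections are $O(\rho_K m_K a)=O(1)$.

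\emph{Truncated negative mean.} The same bound applies with $z\le0$. Now the exponent $m_Kz-m_K^2/2$ is at most $-m_K^2/2<0$, and the truncation restricts $|z|\le\sqrt{2(1+\gamma)\log K}\{1+o(1)\}$. The integrand is therefore $\le K^{o(1)}$ over a range of length $O(\sqrt{\log K})$, which yields $K^{o(1)}$. In fact it is $K^{-r+o(1)}$ times polylogarithmic factors, but only the weaker bound is claimed. I expect the main obstacle to be the bookkeeping for uniformity of the $o(1)$ terms in $v$ and $x$. It is routine, because every error term is an explicit function of $|v|\le V_0$, $\rho_K\log K$ and $\log\log K/\log K$.
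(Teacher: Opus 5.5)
Your proposal is correct and follows essentially the same route as the paper: Mills'-ratio exponents for the Gaussian tail events at the thresholds $f^{-1}(xK^{1+\gamma})=\sqrt{2(1+\gamma)\log K}\,\{1+o(1)\}$, and, for the truncated moments, the envelope $|f(z)|\le C(1+|z|)e^{z^2/2}$ integrated against the $N(m_K,1-\rho_K)$ density, completing the square to the exponent $m_Kz-m_K^2/2$ up to $O(\rho_K\log K)=O(1)$ corrections. Your sharper remark that the negative truncated moment is in fact $K^{-r+o(1)}$ is also correct, though only the weaker $K^{o(1)}$ bound is needed.
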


\begin{proof}
Let $a_{K,\gamma}:=f^{-1}(K^{1+\gamma})$.  Lemma~\ref{lem:inv-asymp} gives
\[
  a_{K,\gamma}=\sqrt{2(1+\gamma)\log K}\{1+o(1)\}.
\]
More generally, for each fixed $x\in(0,\infty)$,
\[
  f^{-1}(xK^{1+\gamma})
  =
  \sqrt{2(1+\gamma)\log K}\{1+o(1)\},
\]
so fixed multiplicative constants in the score threshold do not change the polynomial
exponents below.  Since $\rho_K\log K=O(1)$, Mills' ratio yields
\[
  \Pbb\{f(Z_K^{\rm sig})>K^{1+\gamma}\mid V=v\}
  =
  \barPh\left(
    \frac{a_{K,\gamma}-\sqrt{2r\log K}-\sqrt{\rho_K}v}{\sqrt{1-\rho_K}}
  \right)
  =
  K^{-(\sqrt{1+\gamma}-\sqrt r)^2+o(1)}.
\]
The negative-tail display follows in the same way from the threshold
$-a_{K,\gamma}$:
\[
  \Pbb\{Z_K^{\rm sig}<-a_{K,\gamma}\mid V=v\}
  =
  K^{-(\sqrt{1+\gamma}+\sqrt r)^2+o(1)}.
\]

For the positive truncated moment, let $a_{K,\eta}:=f^{-1}(K\eta)$.  By
Lemma~\ref{lem:f-growth} and the Gaussian density bound, uniformly for fixed $v$,
\[
\begin{aligned}
  \E\left[
    f(Z_K^{\rm sig})_+
    \mathbf 1\{f(Z_K^{\rm sig})\le K\eta\}
    \middle|V=v
  \right]
  &\le
  K^{o(1)}
  \int_0^{a_{K,\eta}}
      (1+z)
      \exp\{\sqrt{2r\log K}z-r\log K\}dz      \\
  &\le K^{2\sqrt r-r+o(1)}.
\end{aligned}
\]
For the negative truncated moment, substitute $u=-z\ge0$.  On
$0\le u\le a_{K,\gamma}$,
\[
  f(-u)_- = f(u)
  \le C(1+u)e^{u^2/2},
\]
and the density of $Z_K^{\rm sig}$ at $-u$ contributes the factor
\[
  \exp\left\{
    -\frac{(u+\sqrt{2r\log K}+\sqrt{\rho_K}v)^2}{2(1-\rho_K)}
  \right\}.
\]
Hence
\[
\begin{aligned}
  \E\left[
    f(Z_K^{\rm sig})_-
    \mathbf 1\{f(Z_K^{\rm sig})_-\le K^{1+\gamma}\}
    \middle|V=v
  \right]
  &\le
  K^{o(1)}
  \int_0^{a_{K,\gamma}}
      (1+u)\exp\{-\sqrt{2r\log K}u\}du       \\
  &\le K^{o(1)}.
\end{aligned}
\]
This proves all four estimates.
\end{proof}

\begin{proof}[Proof of Theorem~\ref{thm:power-sparse}]
For $\delta>0$, define the high-probability event
\[
  A_{K,\delta}:=
  \left\{
    K^{1-\beta-\delta}\le |S_K|\le K^{1-\beta+\delta}
  \right\}.
\]
The assumption $|S_K|=K^{1-\beta+o_p(1)}$ means that
$\Pbb(A_{K,\delta})\to1$ for every fixed $\delta>0$.  Throughout the proof we first
condition on $V=v$ and on a realised signal set $S_K$ satisfying the relevant event
$A_{K,\delta}$.  The conditional estimates are then integrated over $V$ and
$S_K$ at the end of each case.

First suppose $r>r_{\rm max}(\beta)$.  Choose $\gamma>0$ so small that
\[
  1-\beta-(\sqrt{1+\gamma}-\sqrt r)^2>0 .
\]
Then choose $\delta>0$ small enough that
\[
  a_*:=
  1-\beta-\delta-(\sqrt{1+\gamma}-\sqrt r)^2>0,
  \qquad
  \delta<\beta .
\]
Let $N_{K,\gamma}$ be the number of signal coordinates satisfying
$f(Z_i)>K^{1+\gamma}$.  By Lemma~\ref{lem:power-sparse-score}, conditionally on
$V=v$ and on $S_K\in A_{K,\delta}$,
\[
  \E[N_{K,\gamma}\mid V=v,S_K]
  =
  |S_K|K^{-(\sqrt{1+\gamma}-\sqrt r)^2+o(1)}
  \ge K^{a_*+o(1)}\to\infty .
\]
Since $N_{K,\gamma}$ is binomial under this conditioning,
\[
  \Pbb(N_{K,\gamma}=0\mid V=v,S_K)
  \le
  \{\E[N_{K,\gamma}\mid V=v,S_K]\}^{-1}
  \to0 .
\]
Hence, with probability tending to one, the positive signal contribution to $T_K$ is
at least $K^\gamma$.

We now rule out cancellation.  The full null statistic is $O_p(\log K)$ under
$\rho_K\log K=O(1)$, by Theorem~\ref{thm:centered-stable-limit} together with
Theorems~\ref{thm:bK-general}--\ref{thm:bK-smallc}; if $\rho_K\log K$ does not converge,
apply these results along arbitrary convergent subsequences of the bounded sequence
$\rho_K\log K$.  Since $|S_K|\log K/K\le K^{-\beta+\delta}\log K\to0$,
Lemma~\ref{lem:power-delete-null} also shows that replacing the full null statistic by
the non-signal null part changes it by $o_p(1)$.  Thus the non-signal part of the
alternative statistic is $O_p(\log K)$.

For the signal coordinates, Lemma~\ref{lem:power-sparse-score} gives
\[
  \Pbb\{\exists i\in S_K:\ f(Z_i)<-K^{1+\gamma}\mid V=v,S_K\}
  \le
  K^{1-\beta+\delta-(\sqrt{1+\gamma}+\sqrt r)^2+o(1)}
  \to0
\]
after decreasing $\delta$, if necessary.  On the complement of this event, the total
negative signal contribution is bounded by its truncated part.  Its conditional
expectation is at most
\[
  \frac{|S_K|}{K} K^{o(1)}
  \le K^{-\beta+\delta+o(1)}
  \to0
\]
by the final estimate of Lemma~\ref{lem:power-sparse-score}.  Markov's inequality shows
that the negative signal contribution is $o_p(1)$.  Consequently,
\[
  T_K\ge K^\gamma-O_p(\log K)-o_p(1)\to+\infty
\]
in conditional probability.  Since $\max\{\log d_K,0\}=o(\log K)$, we have $d_K=o(K^\gamma)$,
and hence
\[
  \Pbb(T_K>d_K\mid V=v,S_K)\to1
\]
for every fixed $v$ and every realised $S_K\in A_{K,\delta}$.  The conditional
probabilities are bounded by one and $\Pbb(A_{K,\delta})\to1$.  Integrating over
$V$ and $S_K$ gives
\[
  \Pbb_{\rm alt}(T_K>d_K)\to1 .
\]

Now suppose $r<r_{\rm max}(\beta)$.  Choose $\delta>0$ so small that
\[
  1-\beta+\delta-(1-\sqrt r)^2<0,
  \qquad
  -\beta+\delta+2\sqrt r-r<0,
  \qquad
  \delta<\beta .
\]
Again condition on $V=v$ and on $S_K\in A_{K,\delta}$.  Fix $\eta>0$.  By the
fixed-multiplicative-constant version of the tail estimates in
Lemma~\ref{lem:power-sparse-score},
\[
  \Pbb\{\exists i\in S_K:\ f(Z_i)>K\eta\mid V=v,S_K\}
  \le
  K^{1-\beta+\delta-(1-\sqrt r)^2+o(1)}
  \to0,
\]
and similarly
\[
  \Pbb\{\exists i\in S_K:\ f(Z_i)<-K\eta\mid V=v,S_K\}
  \le
  K^{1-\beta+\delta-(1+\sqrt r)^2+o(1)}
  \to0 .
\]
On the complement of these extreme events, the positive and negative signal parts are
truncated at $K\eta$.  Lemma~\ref{lem:power-sparse-score} and Markov's inequality give
\[
\begin{aligned}
  \E\left[
     \frac{1}{K}\sum_{i\in S_K}
       f(Z_i)_+\mathbf 1\{f(Z_i)\le K\eta\}
     \middle| V=v,S_K
  \right]
  &\le
  K^{-\beta+\delta+2\sqrt r-r+o(1)}
  \to0 ,
\end{aligned}
\]
and
\[
  \E\left[
     \frac{1}{K}\sum_{i\in S_K}
       f(Z_i)_-\mathbf 1\{f(Z_i)_-\le K\eta\}
     \middle| V=v,S_K
  \right]
  \le
  K^{-\beta+\delta+o(1)}
  \to0 .
\]
Thus the whole signal contribution is $o_p(1)$, conditionally on $V=v$ and
$S_K\in A_{K,\delta}$.

Let $T_K^{(0)}$ be the null statistic obtained from the same noises after setting all
signals to zero.  The difference between $T_K$ and $T_K^{(0)}$ is the actual signal
contribution, just shown to be $o_p(1)$, minus the null contribution over the signal
locations.  Since $|S_K|\log K/K\le K^{-\beta+\delta}\log K\to0$,
Lemma~\ref{lem:power-delete-null} shows that this deleted null contribution is also
$o_p(1)$.  Hence
\[
  T_K=T_K^{(0)}+o_p(1)
\]
conditionally on $V=v$ and $S_K\in A_{K,\delta}$, and therefore unconditionally
because $\Pbb(A_{K,\delta})\to1$.

It remains only to translate this $o_p(1)$ equivalence into rejection probabilities.
For every $\varepsilon>0$,
\[
  \bigl|
    \Pbb_{\rm alt}(T_K>d_K)-\Pbb_0(T_K^{(0)}>d_K)
  \bigr|
  \le
  \Pbb_0\{|T_K^{(0)}-d_K|\le\varepsilon\}
  +
  \Pbb_{\rm alt}\{|T_K-T_K^{(0)}|>\varepsilon\}.
\]
Taking $\limsup_K$ and then $\varepsilon\downarrow0$, Lemma~\ref{lem:power-null-anticonc}
and $T_K-T_K^{(0)}=o_p(1)$ give
\[
  \Pbb_{\rm alt}(T_K>d_K)-\Pbb_0(T_K^{(0)}>d_K)\to0 .
\]

The raw and BL consequences follow by taking $d_K=t_\alpha$ and
$d_K=q_\alpha(s_K)$ and applying the null results in
Corollary~\ref{cor:raw-size-function}, Corollary~\ref{cor:phasediagram}, and
Corollary~\ref{cor:bl-c0}.
\end{proof}

\subsection{Dense Gaussian random effects}

\begin{lemma}[Gaussian random effects]\label{lem:power-random-effects}
Assume
\[
  \tau_K^2\log K\to w\in[0,\infty),\qquad
  \rho_K\log K\to c\in[0,\infty).
\]
Conditionally on $V=v$, after integrating out the random effects, let
\[
  m_K=\sqrt{\rho_K}v,\qquad
  \sigma_{K,w}^2=1-\rho_K+\tau_K^2,
\]
and let $Z_K\sim N(m_K,\sigma_{K,w}^2)$.  Define
\[
  b_{K,w}(v):=
  \E\left[f(Z_K)\mathbf 1\{|f(Z_K)|\le K\}\right].
\]
Let $\Lambda_v^{(w)}:=e^w\Lambda_v$, where $\Lambda_v$ is the null Lévy measure in
\eqref{eq:def-nu-v}, and let $S_{c,v}^{(w)}$ be the infinitely divisible law with
characteristic function
\[
  \E e^{itS_{c,v}^{(w)}}
  =
  \exp\left\{
    \int_{\R\setminus\{0\}}
      \bigl(e^{itx}-1-itx\mathbf 1\{|x|\le1\}\bigr)
      \Lambda_v^{(w)}(dx)
  \right\}.
\]
Then, conditionally on $V=v$,
\[
  T_K-b_{K,w}(v)\dto S_{c,v}^{(w)} .
\]
If $c\in(0,\infty)$, then
\[
  \frac{b_{K,w}(v)}{\log K}\to
  D_{c,w}(v):=
  \frac{2}{\pi}\int_0^{\sqrt2}
       y\exp\left\{\frac{(w-c)y^2}{2}\right\}
       \sinh(\sqrt cvy)dy .
\]
If instead $\rho_K\log K\to0$ and
\[
  s_K=\sqrt{\rho_K}(\log K)^{3/2}\to s<\infty,
\]
then
\[
  b_{K,w}(v)\to \kappa_wsv,\qquad
  \kappa_w:=\frac{2}{\pi}\int_0^{\sqrt2} y^2e^{wy^2/2}dy .
\]
In particular, $\kappa_0=\kappa$, and for $c=0$,
\[
  S_{0,v}^{(w)}\sim\mathsf{C}(0,e^w).
\]
\end{lemma}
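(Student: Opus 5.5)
The plan is to rerun the null machinery of Appendix~\ref{app:rawboundary} with the conditional variance $\sigma_{K,w}^2=1-\rho_K+\tau_K^2$ in place of $1-\rho_K$. Conditionally on $V=v$ and after integrating out the random effects, the coordinates are i.i.d.\ $N(m_K,\sigma_{K,w}^2)$. The only new feature is that $\delta_K:=\sigma_{K,w}^2-1=\tau_K^2-\rho_K$ satisfies $\delta_K\log K\to w-c$, which need not vanish.

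\textbf{Step 1 (tail calibration).} I will redo Proposition~\ref{prop:tri-calibration} with $y_{K,x}=(q_{K,x}-m_K)/\sigma_{K,w}$. Using $q_{K,x}^2=2\log K+O(\log\log K)$, the expansion \eqref{eq:y2-minus-q2} becomes
\[
  \tfrac12\bigl(q_{K,x}^2-y_{K,x}^2\bigr)=\frac{(\sigma_{K,w}^2-1)q_{K,x}^2+2m_Kq_{K,x}-m_K^2}{2\sigma_{K,w}^2}\to (w-c)+\sqrt{2c}\,v .
\]
So the upper-tail intensity is $\lambda^{+}(v)e^{w}$, and symmetrically the lower-tail intensity is $\lambda^{-}(v)e^{w}$; the ratio $q/y\to1$ is unaffected. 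Lemma~\ref{lem:score-interval-prob} then holds with $\Lambda_v^{(w)}=e^w\Lambda_v$. The proof of Lemma~\ref{lem:uniform-tail-xinv} goes through unchanged, because the only cross term $-\tfrac12(y_u^2-q_u^2)$ is bounded by $C(|v|\sqrt{c+1}+w+1)$. Propositions~\ref{prop:truncated-sum-limit} and~\ref{prop:small-jumps-vanish} and Theorem~\ref{thm:centered-stable-limit} then apply verbatim, giving $T_K-b_{K,w}(v)\dto S^{(w)}_{c,v}$. When $c=0$ we have $\lambda^\pm=1$, so the L\'evy measure is $e^w dx/(\pi x^2)$, which is $\mathsf{C}(0,e^w)$.

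\textbf{Step 2 (centering).} I will use the representation of Lemma~\ref{lem:bK-exact} with $g_K$ replaced by the $N(m_K,\sigma_{K,w}^2)$ density, substitute $f(z)=\sqrt{2/\pi}ze^{z^2/2}(1+r(z))$, and rescale $z=y\sqrt{\log K}$ with $y\in[0,a_K/\sqrt{\log K}]\to[0,\sqrt2]$. The integrand becomes
\[
\frac{2}{\pi}\, z\exp\Bigl\{\tfrac{z^2}{2}\bigl(1-\sigma_{K,w}^{-2}\bigr)-\tfrac{m_K^2}{2\sigma_{K,w}^2}\Bigr\}\sinh\bigl(m_Kz/\sigma_{K,w}^2\bigr)\,\sigma_{K,w}^{-1}(1+r(z)).
\]
Since $\tfrac{z^2}{2}(1-\sigma^{-2})\to \tfrac{(w-c)y^2}{2}$ and $m_Kz\to \sqrt c\,vy$, dividing by $\log K$ (from $z\,dz=\log K\,y\,dy$) and applying dominated convergence on the compact $y$-range, with the bounded region $z\le M$ contributing $O(\sqrt{\rho_K})$, gives $b_{K,w}(v)/\log K\to D_{c,w}(v)$.

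For the small-$c$ case I will instead use $\sinh(m_Kz)\approx m_Kz$, while the factor $e^{(w-c)y^2/2}\to e^{wy^2/2}$ is kept. This gives
\[
  b_{K,w}(v)=\frac{2}{\pi}\sqrt{\rho_K}\,v(\log K)^{3/2}\int_0^{\sqrt2}y^2e^{wy^2/2}dy+o(s_K),
\]
so $b_{K,w}(v)\to\kappa_w s v$, exactly as in the $\varepsilon$--$M_\varepsilon$ argument of Theorem~\ref{thm:bK-smallc}. Substituting $w=0$ recovers $\kappa$.

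\textbf{Main obstacle.} The delicate part is Step~2: the $e^{wy^2/2}$ tilt is no longer $1+o(1)$. The error $r(z)$ must therefore be controlled uniformly on $[M_\varepsilon,a_K]$ against an integrand of size $\log K\cdot e^{wy^2/2}$, and the change of the truncation point $a_K$ must be shown negligible. I would handle both by bounding $e^{wy^2/2}\le e^{w}$ on $y\le\sqrt2+o(1)$, which gives a domination uniform in $K$.
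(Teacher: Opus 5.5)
Your proposal is correct and follows the paper's proof essentially step for step: the Mills-ratio exponent $(w-c)+\sqrt{2c}\,v$ identifies the L\'evy measure $e^w\Lambda_v$, the null triangular-array argument is rerun with variance $\sigma_{K,w}^2$, and the centring comes from the odd-part integral representation after rescaling $z=\sqrt{\log K}\,y$, keeping the tilt $e^{(w-c)y^2/2}$ and using $\sinh x\approx x$ in the boundary-layer case. You also note that in Lemma~\ref{lem:uniform-tail-xinv} the first term of $-\tfrac12(y_u^2-q_u^2)$ is no longer nonpositive but remains bounded by a constant depending on $w$; the paper passes over this point by saying that lemma holds ``verbatim''.
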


\begin{proof}
We first identify the tail measure.  The conditional law $Z_K\mid V=v\sim N(m_K,\sigma_{K,w}^2)$ differs from the null marginal only through the extra variance $\tau_K^2$.  With $q_{K,u}:=\barPh^{-1}(u/K)=\sqrt{2\log K}\{1+o(1)\}$, locally uniformly for $u$ in compact subsets of $(0,\infty)$,
\[
  \frac{1}{2}\left[
    q_{K,u}^2-\left(\frac{q_{K,u}-m_K}{\sigma_{K,w}}\right)^2
  \right]
  =
  (\tau_K^2-\rho_K)\log K+\sqrt{2\rho_K\log K}v+o(1)
  \to
  w-c+\sqrt{2c}v ,
\]
so that, by Mills' ratio, both score-tail intensities are multiplied by the constant factor $e^w$ relative to their null values $\lambda^\pm(v)=e^{-c\pm\sqrt{2c}v}$.  The remaining steps are those of the null analysis with $1-\rho_K$ replaced throughout by $\sigma_{K,w}^2$: the interval limits of Lemma~\ref{lem:score-interval-prob} now produce the measure $\Lambda_v^{(w)}=e^w\Lambda_v$, the uniform tail bound of Lemma~\ref{lem:uniform-tail-xinv} and the small-jump estimate of Proposition~\ref{prop:small-jumps-vanish} hold verbatim along the present sequence, and the triangular-array argument in the proof of Theorem~\ref{thm:centered-stable-limit}, centred by $b_{K,w}(v)=K\E[Y_K\mathbf 1\{|Y_K|\le1\}\mid V=v]$ with $Y_K=f(Z_K)/K$, gives
\[
  T_K-b_{K,w}(v)\dto S_{c,v}^{(w)} .
\]

It remains to compute $b_{K,w}(v)$.  Let $a_K=f^{-1}(K)$, so
$a_K/\sqrt{\log K}\to\sqrt2$.  If $g_{K,w}$ is the density of
$N(m_K,\sigma_{K,w}^2)$, then
\[
  b_{K,w}(v)
  =
  \int_0^{a_K} f(z)\{g_{K,w}(z)-g_{K,w}(-z)\}dz,
\]
and
\[
  g_{K,w}(z)-g_{K,w}(-z)
  =
  \frac{2}{\sigma_{K,w}\sqrt{2\pi}}
  \exp\left\{-\frac{z^2+m_K^2}{2\sigma_{K,w}^2}\right\}
  \sinh\left(\frac{m_Kz}{\sigma_{K,w}^2}\right).
\]

Fix $M\ge1$.  The bounded region $0\le z\le M$ contributes $O(|m_K|)=o(\log K)$ when
$c>0$, and $o(1)$ in the boundary-layer case $s_K=O(1)$.  On
$M\le z\le a_K$, Lemma~\ref{lem:f-growth} gives
\[
  f(z)=\sqrt{2/\pi}z e^{z^2/2}\{1+r_M(z)\},
  \qquad \sup_{z\ge M}|r_M(z)|\to0\quad(M\to\infty).
\]

First assume $c\in(0,\infty)$.  Put $z=\sqrt{\log K}y$.  After dividing by $\log K$, the
integrand on $M\le z\le a_K$ is
\[
  \frac{2}{\pi}
  y
  \exp\left\{
    \frac{\log Ky^2}{2}-\frac{\log Ky^2}{2\sigma_{K,w}^2}
  \right\}
  \sinh\left(\frac{m_K\sqrt{\log K}y}{\sigma_{K,w}^2}\right)
  \{1+o_M(1)\}.
\]
Here
\[
  \frac{\log Ky^2}{2}-\frac{\log Ky^2}{2\sigma_{K,w}^2}
  \to \frac{(w-c)y^2}{2},
  \qquad
  \frac{m_K\sqrt{\log K}y}{\sigma_{K,w}^2}\to \sqrt cvy .
\]
The integrands are dominated on $0\le y\le\sqrt2+o(1)$ by an integrable function of the
form $C_v y e^{C y^2+C_v y}$, while the omitted interval $0\le z\le M$ is negligible
after division by $\log K$.  Letting first $K\to\infty$ and then $M\to\infty$, dominated
convergence gives
\[
  \frac{b_{K,w}(v)}{\log K}
  \to
  \frac{2}{\pi}\int_0^{\sqrt2}
       y\exp\left\{\frac{(w-c)y^2}{2}\right\}
       \sinh(\sqrt cvy)dy .
\]

Now assume $\rho_K\log K\to0$ and $s_K\to s<\infty$.  Again put
$z=\sqrt{\log K}y$, but do not divide by $\log K$.  Since
\[
  \frac{m_K\sqrt{\log K}y}{\sigma_{K,w}^2}
  =
  \frac{s_Kvy}{\log K}\{1+o(1)\},
\]
we have, uniformly for $0\le y\le\sqrt2+o(1)$,
\[
  \log K\sinh\left(\frac{m_K\sqrt{\log K}y}{\sigma_{K,w}^2}\right)
  \to s v y.
\]
Also
\[
  \frac{\log Ky^2}{2}-\frac{\log Ky^2}{2\sigma_{K,w}^2}
  \to \frac{wy^2}{2}.
\]
The same domination and bounded-region argument gives
\[
  b_{K,w}(v)
  \to
  \frac{2sv}{\pi}\int_0^{\sqrt2} y^2e^{wy^2/2}dy
  =
  \kappa_wsv .
\]
When $c=0$, $\lambda^+(v)=\lambda^-(v)=1$, so the limiting Lévy measure is symmetric
with density $e^w/(\pi x^2)$ on both half-lines.  The corresponding characteristic
exponent is $-e^w|t|$, i.e. $S_{0,v}^{(w)}\sim\mathsf{C}(0,e^w)$.
\end{proof}

\begin{proof}[Proof of Theorem~\ref{thm:power-random-effects}]
Part (i) follows from Lemma~\ref{lem:power-random-effects}.  On the boundary-layer scale,
\[
  b_{K,w}(v)\to\kappa_wsv,\qquad
  S_{0,v}^{(w)}\sim\mathsf{C}(0,e^w).
\]
Thus the conditional limit is $\mathsf{C}(\kappa_wsv,e^w)$.  Taking conditional
survival probabilities at $t_\alpha$ and at $q_\alpha(s_K)\to q_\alpha(s)$, and then
integrating over $V$, gives the two power limits.  The gap formula is the difference of
two Cauchy survival functions.  Since the density of a Cauchy law with scale $e^w$ is
bounded by $1/(\pi e^w)$,
\[
  0\le
  \Pi^{\rm raw,G}_\alpha(w,s)-\Pi^{\rm BL,G}_\alpha(w,s)
  \le \frac{q_\alpha(s)-t_\alpha}{\pi e^w}
  =O_\alpha(s^2)
\]
by Proposition~\ref{prop:bl-small-s}.  If $\rho_K(\log K)^3\to0$, then $s_K\to0$, and both
limits reduce to $\bar F_{e^w}(t_\alpha)$.

For part (ii), Lemma~\ref{lem:power-random-effects} gives, conditionally on $V=v$,
\[
  \frac{T_K}{\log K}=D_{c,w}(v)+o_p(1),
\]
because the centred stable fluctuation is $O_p(1)$.  The function $D_{c,w}$ is
continuous, odd, and strictly increasing in $v$, with $D_{c,w}(v)>0$ if and only if
$v>0$.  Therefore
\[
  \Pbb(T_K>t_\alpha\mid V=v)\to\mathbf 1\{v>0\},
\]
and integration over $V\sim N(0,1)$ gives the raw limit $1/2$.  Proposition
\ref{prop:bl-large-s} gives
\[
  q_\alpha(s_K)=\kappa z_{1-\alpha}s_K+O_\alpha(1)
  =
  \kappa z_{1-\alpha}\sqrt c\log K+o(\log K).
\]
Hence
\[
  \Pbb(T_K>q_\alpha(s_K)\mid V=v)
  \to
  \mathbf 1\{D_{c,w}(v)>\kappa z_{1-\alpha}\sqrt c\}.
\]
The boundary set has probability zero, because $D_{c,w}$ is continuous and strictly
increasing.  Dominated convergence gives the BL limit.  Monotonicity in $w$ for
$v>0$ follows by differentiating the positive integrand with respect to $w$.
\end{proof}

\end{document}